\definecolor{processblue}{cmyk}{0.96,0,0,0}
\newcommand{\CC}{\mathbb{C}}
\newcommand{\RR}{\mathbb{R}}
\newcommand{\QQ}{\mathbb{Q}}
\newcommand{\ZZ}{\mathbb{Z}}
\newcommand{\PP}{\mathbb{P}}
\newcommand{\GG}{\mathbb{G}}
\newcommand{\AAA}{\mathbb{A}}
\newcommand{\FF}{\mathbb{F}}
\newcommand{\pf}{\mathfrak{p}}
\newcommand{\mf}{\mathfrak{m}}
\newcommand{\nf}{\mathfrak{n}}
\newcommand{\af}{\mathfrak{a}}
\newcommand{\Sf}{\mathfrak{S}}
\newcommand{\gf}{\mathfrak{g}}
\newcommand{\tf}{\mathfrak{t}}
\newcommand{\bff}{\mathfrak{b}}
\newcommand{\wf}{\mathfrak{w}}
\newcommand{\uf}{\mathfrak{u}}
\newcommand{\Bh}{\mathcal{B}}
\newcommand{\Dh}{\mathcal{D}}
\newcommand{\Eh}{\mathcal{E}}
\newcommand{\Fh}{\mathcal{F}}
\newcommand{\Gh}{\mathcal{G}}
\newcommand{\Ih}{\mathcal{I}}
\newcommand{\Lh}{\mathcal{L}}
\newcommand{\Mh}{\mathcal{M}}
\newcommand{\Oh}{\mathcal{O}}
\newcommand{\Ph}{\mathcal{P}}
\newcommand{\Sh}{\mathcal{S}}
\newcommand{\Uh}{\mathcal{U}}
\newcommand{\Zh}{\mathcal{Z}}
\newcommand{\abf}{\mathbf{a}}
\newcommand{\fbf}{\mathbf{f}}
\newcommand{\gbf}{\mathbf{g}}
\newcommand{\Ascr}{\mathscr{A}}
\newcommand{\Bscr}{\mathscr{B}}
\newcommand{\Cscr}{\mathscr{C}}
\newcommand{\Gscr}{\mathscr{G}}
\newcommand{\Hscr}{\mathscr{H}}
\newcommand{\Iscr}{\mathscr{I}}
\newcommand{\Nscr}{\mathscr{N}}
\newcommand{\Sscr}{\mathscr{S}}
\newcommand{\Tscr}{\mathscr{T}}
\newcommand{\Uscr}{\mathscr{U}}
\newcommand{\Vscr}{\mathscr{V}}
\newcommand{\Wscr}{\mathscr{W}}
\newcommand{\Zscr}{\mathscr{Z}}
\newcommand{\Grm}{\mathrm{G}}
\newcommand{\Irm}{\mathrm{I}}
\newcommand{\Prm}{\mathrm{P}}
\newcommand{\Res}{\on{R}}
\newcommand{\spec}{\on{Spec}}
\newcommand{\spf}{\on{Spf}}
\newcommand{\Gr}{\on{Gr}}
\newcommand{\on}{\operatorname}
\theoremstyle{plain}
\newtheorem{thm}{Théorème}[subsection]
\newtheorem{lem}[thm]{Lemme}
\newtheorem{propn}[thm]{Proposition}
\newtheorem{cor}[thm]{Corollaire}
\theoremstyle{definition}
\newtheorem{defn}[thm]{Définition}
\newtheorem{exmp}[thm]{Exemple}
\newtheorem{rem}[thm]{Remarque}
\newtheorem{cons}[thm]{Construction}
\newtheorem{hypt}[thm]{Hypothèse}
\theoremstyle{plain}
\newtheorem{thmintro}{Théorème}[section]
\newtheorem{quesintro}[thmintro]{Question}
\newcommand{\pot}[1]{ [\hspace{-0,5mm}[ {#1} ]\hspace{-0,5mm}] }
\newcommand{\rpot}[1]{ (\hspace{-0,7mm}( {#1} )\hspace{-0,7mm}) }
\numberwithin{equation}{subsection}
\begin{document}
\title{Grassmanniennes affines tordues sur les entiers}
\author[J. Louren{\c c}o]{Jo\~ao Louren{\c c}o}
\address{Mathematisches Institut der Universität Bonn, Endenicher Allee 60, 53115 Bonn, Deutschland}
\email{lourenco@math.uni-bonn.de}
\address{Department of Mathematics, Imperial College London, London SW7 2AZ, UK}
\email{j.nuno-lourenco@imperial.ac.uk}
\dedicatory{À Jacques Tits pour son nonantième anniversaire}
\subjclass{14M15 (primary) 11G18, 14L15, 20G15, 20G25, 20G44 (secondary)}
\keywords{grassmanniennes affines, schémas en groupes, théorie de Bruhat--Tits, groupes parahoriques, groupes pseudo-réductifs, variétés de Schubert, algèbres de Kac--Moody, groupes de Kac--Moody, modèles locaux, variétés de Shimura}

\selectlanguage{french}

\begin{abstract}
On généralise les travaux de Pappas--Rapoport--Zhu sur les grassmanniennes affines tordues aux groupes sauvagement ramifiés, quasi-déployés et résiduellement déployés, à tores maximaux induits. Ceci repose sur la construction, inspirée des travaux de Tits, de certains $\ZZ[t]$-groupes lisses, affines et connexes de type parahorique. Les $\FF_p(t)$-groupes échéants sont pseudo-réductifs éventuellement non standards au sens de Conrad--Gabber--Prasad, et leurs $\FF_p\pot{t}$-modèles sont parahoriques à notre sens généralisé. On étudie leurs grassmanniennes affines, démontrant notamment la normalité des variétés de Schubert et un théorème de cohérence à la Zhu.
\end{abstract}

\maketitle
{\hypersetup{linkcolor=violet}

\tableofcontents
}
\section{Introduction}

Les grassmanniennes affines $\Gr_G$ de $k$-groupes réductifs $G$ sont des $k$-ind-schémas qui ont été introduits dans les années 90 en théorie géométrique des représentations pour étudier le champ des $G$-fibrés sur une courbe à l'aide de l'uniformisation de Beauville--Laszlo \cite{BLConfBlocks}. Cependant, leur importance n'a été mise en relief que dans les décennies suivantes, dans fondamentalement deux programmes : d'un côté, l'équivalence de Satake géométrique, et de l'autre, la théorie des modèles entiers de variétés de Shimura ou plutôt de ce qu'on appelle leurs modèles locaux. Pour cette dernière application, il faut comprendre vraiment la structure géométrique de certains sous-schémas fermés de $\Gr_G$. En fait, la stratégie adoptée à la suite de Faltings \cite{Fal} pour parvenir aux résultats désirés consiste à tirer profit du fait que l'on ait une réalisation entière de la grassmannienne affine. Lorsqu'on veut traiter les groupes non constants et tordus, la situation devient plus compliquée et les divers travaux monumentaux \cite{PR}, \cite{ZhuCoh}, \cite{PZh} de Pappas--Rapoport--Zhu consacrés à ce problème ne réussirent pas à éviter des hypothèses de ramification modérée - à savoir la grassmannienne affine tordue n'est réalisée qu'au-dessus de $\ZZ[1/e]$, où l'on note $e$ l'indice de ramification. 

Dans l'article présent, on se propose d'étendre la théorie existante pour les groupes quasi-déployés aux cas de ramification sauvage et d'obtenir ainsi des réalisations entières des grassmanniennes affines tordues, voir le théorème \ref{thintro2}. L'étape-clé est la construction d'un certain $\ZZ[t]$-groupe $\underline{\Gscr_{\fbf}}$ affine, lisse et connexe, qu'on regarde comme une famille entière de groupes parahoriques au-dessus de chaque corps premier et dont la grassmannienne affine est celle qu'on cherche, sous des hypothèses légères sur le $\QQ(t)$-groupe $G$ fibre générique de $\underline{\Gscr_{\fbf}}$, voir le théorème \ref{thintro 1}. Notre approche diffère de celle de \cite{PZh} en ce que les auteurs de ce dernier article choisissent relever leurs groupes en caractéristique mixte aux anneaux des polynômes, tandis que nous adoptons le point de vue de la réduction de la caractéristique nulle à la positive. Toutefois, on s'attend à ce que notre article fournisse les outils nécessaires, voir le théorème \ref{thintro3}, pour faire des avances dans la théorie des modèles locaux $p$-adiques en des cas de ramification sauvage, telle qu'elle fut exposée dans les travaux de Scholze--Weinstein \cite{SchBerk} et de He--Pappas--Rapoport \cite{HPR}, voir notre dissertation \cite{LouDiss} et le dernier paragraphe de cette introduction.

Signalons que, afin d'étudier l'arithmétique de nos groupes en les caractéristiques sauvages, il faut utiliser la théorie générale et classification des groupes pseudo-réductifs sur les corps développée par Conrad--Gabber--Prasad \cite{CGP}, ainsi que leur théorie schématique de Bruhat--Tits sur les corps discrètement valués, complets et à corps résiduels parfaits, qui fut développée en \cite{LouBT}. Cet outil-là a été développé pour qu'on puisse démontrer certaines propriétés générales concernant les groupes algébriques linéaires sur les corps de fonctions, comme en \cite{ConFin} ou \cite{Ros}. Néanmoins, le présent article est le seul à mettre au centre de l'attention les groupes pseudo-réductifs non standards, depuis les travaux de Tits consacrés aux groupes de Kac--Moody, \cite{TitsOber} et \cite{TitsBour}, où les $\ZZ[t^{\pm 1}]$-groupes $\underline{G}$ furent construits, voir la question \ref{quesintro1}. Cependant, ce fut originalement à travers l'article \cite{Lev} de Levin, consacré aux modèles locaux $p$-adiques pour les restrictions des scalaires de groupes modérés, que nous sommes conduit au cadre pseudo-réductif, car celui-là faisait implicitement usage des extensions purement inséparables.

Pour conclure en ce qui touche aux références historiques, nous aimerions rappeler au lecteur que, dans le cadre de Kac--Moody, la communauté mathématique avait déjà étudié à fond les variétés de drapeaux sur $\CC$ typiquement de dimension infinie associées aux algèbres de Kac--Moody $\gf$, qui admettent un prolongement naturel en des $\ZZ$-ind-schémas d'après Mathieu \cite{Mat}. Comme produit dérivé des résultats qu'on va présenter, se dégage un dictionnaire arithmétique complet entre les grassmanniennes affines tordues et les variétés de drapeaux de Kac--Moody affines, voir l'annexe \ref{comparaison avec les varietes de demazure de kac-moody}, ce qui était attendu en les caractéristiques petites depuis longtemps. Le théorème \ref{thintro2} sur la normalité des variétés de Schubert retient néanmoins son importance, parce que leurs définitions ne se ressemblent guère et qu'on y recourt pour établir la comparaison. Il ne doit être alors aucune surprise que Tits soit arrivé au milieu de sa recherche sur les groupes de Kac--Moody aux constructions sur lesquelles on s'appuie, et que cette théorie soit progressivement devenue une grosse source d'inspiration de cet article. 

\subsection{Groupes tordus sur $\ZZ[t]$.} Soient $k$ un corps algébriquement clos, $K=k\rpot{t}$ le corps de séries de Laurent à coefficients dans $k$ et $G$ un $K$-groupe réductif, connexe, quasi-déployé, simplement connexe et absolument presque simple. Fixons un épinglage $(H, T_H, B_H, X_H)$ au-dessus de $\ZZ$ de la $\ZZ$-forme déployée $H$ de $G$ et un isomorphisme $G \otimes_K K^{\on{sép}} \simeq H \otimes_K K^{\on{sép}}$ de telle sorte que le groupe de Galois $\on{Gal}(K^{\on{sép}}/K)$ opère sur le membre de droite par automorphismes qui préservent l'épinglage donné. Comme le groupe $\on{Aut}(H, T_H, B_H, X_H)$ est isomorphe au groupe d'automorphismes du diagramme de Dynkin de $H$, il s'identifie au groupe symétrique $\Sf_e$ d'indice $e=1$, $2$ ou $3$. Si $e$ est différent de la caractéristique $p$ de $k$, alors la plus petite extension de $K$ déployant $G$ est $L=k\rpot{t^{1/e}}$ et l'on a un isomorphisme $G =(\Res_{L/K} H)^{\Gamma}$, où $\Res_{L/K} H$ désigne la restriction des scalaires à la Weil de $H$ sur laquelle $\Gamma:=\on{Gal}(L/K)$ opère de façon galoisienne sur les coefficients et diagrammatique sur $H$, cf. constr. \ref{construction groupe quasi deploye}. Cependant, si $e=p$, alors le même type d'énoncé reste valide, mais il faut remplacer l'extension $k\rpot{t^{1/e}}$ de $k\rpot{t}$ par des classes d'isomorphisme en nombre infini de clôtures galoisiennes d'extensions séparables de degré $e$. 

Cette différence arithmétique se manifeste aussi dans la mesure où les groupes $G$ à ramification modérée et indice de Tits fixé, voir la classification de \cite{TitsClas}, qu'on vient de décrire peuvent être rassemblés dans un $\ZZ[1/e][t^{\pm 1}]$-groupe lisse, affine et connexe $\underline{G}$, en prenant la même définition comme invariants de la restriction des scalaires du groupe de Chevalley $H$. La question qui a déclenché ce travail fut la suivante :

\begin{quesintro}\label{quesintro1}
Y a-t-il un prolongement \og canonique \fg{} de $G$ en un schéma en groupes $\underline{G}$ au-dessus de $\ZZ[t^{\pm 1}]$ ? Si oui, quel groupe obtient-on en caractéristique $e$ ?
\end{quesintro}

La réponse est heureusement toujours affirmative et ce schéma en groupes fut originalement construit par Tits \cite{TitsOber}, en faisant usage des techniques de recollement de Bruhat--Tits \cite{BTII}, voir le th. \ref{theoreme d'existence et unicite des groupes attaches aux drs}, pour se ramener à construire une \og donnée radicielle schématique \fg{} $(\underline{T}, \underline{U_a})_{a \in \Phi^{\on{nd}}}$ constituée de certains $\ZZ[t^{\pm 1}]$-modèles naturels induits par le choix d'un système cohérent d'épinglages, voir la déf.~\ref{definition drs groupe de tits}. Il est toujours loisible d'identifier le $\ZZ[t^{\pm 1}]$-groupe $\underline{G}$ à un sous-schéma en groupes fermé de la restriction des scalaires de $H$ le long de $\ZZ[\zeta_e,t^{1/e}]/\ZZ[t]$, voir la prop. \ref{proposition plongement restrictions des scalaires groupe de tits}, mais on doit s'attendre au besoin de modifier à la Tits par quelques facteurs de $2$ le choix canonique de quasi-épinglages dû à Steinberg \cite{St1} et \cite{St2}, cf. défs. \ref{definition quasi-systemes de chevalley} et \ref{definition quasi-systeme de Tits}. Il s'ensuit que $\underline{G}$ est lisse, affine et à fibres connexes.

Quant au groupe algébrique que l'on obtient en caractéristique $e$, le lecteur aura peut-être soupçonné qu'il ne s'agit pas non plus d'un groupe réductif, à cause de la présence d'une extension de corps purement inséparable $\FF_e(t^{1/e})/\FF_e(t)$, et en fait $\underline{G}\otimes \FF_e(t)$ est un groupe pseudo-réductif non standard et pseudo-déployé au sens de \cite{CGP}, comme mentionné plus haut, cf. prop. \ref{proposition fibre generique quasi-reductive}. Ces groupes pseudo-réductifs non standards forment une classe assez exceptionnelle qui a été découverte par Tits et qui n'existe qu'en caractéristiques petites $p\leq 3$. La raison d'être de tels groupes, selon les auteurs de \cite{CGP}, est qu'il y ait des phénomènes de $p$-divisibilité dans le système de racines $\Phi_G$. Nous préférons les regarder comme le fruit de la volonté de prendre avec insistance des $\Gamma$-invariants galoisiens-diagrammatiques d'un groupe déployé, lorsque cela cesse d'être naïvement possible.

Venons-en à présent aux constructions de type parahorique. Soit $S$ le plus grand tore déployé de $G$ contenu dans le Cartan $T$. Admettons pour l'instant l'existence d'identifications canoniques entre les appartements $\Ascr(\underline{G},\underline{S},\FF_p\rpot{t})$ en caractéristiques différentes, voir la prop. \ref{proposition combinatoire caracteristiques differentes}, qu'on note plutôt $\Ascr(\underline{G}, \underline{S}, \ZZ\rpot{t})$ pour signaler que la combinatoire est indépendante de l'arithmétique. Ceci soulève le même type de questions que plus haut, comparer avec la question \ref{quesintro1} :

\begin{quesintro}\label{quesintro2}
Si on se donne une partie bornée et non vide $\Omega \subset \Ascr(\underline{G}, \underline{S}, \ZZ\rpot{t})$, existe-t-il un $\ZZ[t]$-groupe lisse, affine et connexe de type \og immobilier\fg{} $\underline{\Gscr_{\Omega}}$ modelant $\underline{G}$ ?
\end{quesintro}

Le mot \og immobilier \fg{} dans l'énoncé veut dire tout simplement que, au-dessus de $\FF_p\pot{t}$ pour tout $p \in \PP \cup \{0\}$, on obtient des modèles en groupes canoniquement associés à $\Omega$ au sens de la théorie de Bruhat--Tits généralisée de \cite{LouBT}. Il faut noter que, en dehors de $\{e=0\}$, soit dans \cite{PR}, soit dans \cite{PZh}, on peut essentiellement trouver une construction de ce schéma en groupes à l'aide des techniques des $\Gamma$-invariants, mais lorsqu'on insiste pour travailler à coefficients entiers, il n'existe pas à notre connaissance une référence dans la littérature où ce schéma en groupes soit construit. Pour construire cet objet, on se propose de trouver une donnée radicielle valuée et puis d'appliquer le théorème sur l'existence de solutions aux lois birationnelles.
 
Ceci étant, précisons le cadre un petit peu plus général dans lequel on va travailler. On se donne un $\QQ(t)$-groupe réductif connexe $G$ quasi-déployé, $\QQ(\zeta_e,t^{1/e})$-déployé, $\QQ\rpot{t}$-résiduellement déployé, dont le tore maximal $T$ est induit, ainsi qu'un système cohérent d'épinglages à la Tits des sous-groupes radiciels $U_a$. Le groupe de Tits $\underline{G}$, ainsi que ses modèles immobiliers $\underline{\Gscr_\Omega}$ associés à une partie bornée et non vide $\Omega\subseteq \Ascr(\underline{G}, \underline{S}, \ZZ\rpot{t})$, se construisent de manière analogue, cf. défs. \ref{definition groupe de Tits} et \ref{definition modeles immobiliers du groupe de tits}. Les conclusions de la première partie de l'article peuvent être résumées comme suit :

\begin{thmintro}[prop. \ref{proposition drs modele immobilier groupe de tits}, th. \ref{theoreme affinite modeles immobiliers}]\label{thintro 1}
Sous les hypothèses précédentes, il existe un et un seul $\ZZ[t]$-modèle en groupes lisse, affine, connexe et immobilier $\underline{\Gscr_\Omega}$ de $\underline{G}$ associé à $\Omega$.
\end{thmintro}

On dit un mot sur la preuve de l'affinité. Malheureusement, on ne peut pas suivre l'argument de \cite{PZh} verbatim, car celui utilise la cohomologie de groupes et l'interprétation en tant que $\Gamma$-invariants. Néanmoins, on réussit encore à démontrer l'affinité de la manière suivante. On observe que l'enveloppe affine, lequel est lisse d'après Raynaud \cite{Ray}, ne peut pas posséder des composantes connexes supplémentaires dans les fibres, en appliquant le fait que les groupes d'Iwahori--Weyl de $\underline{G}\otimes \FF_p\rpot{t}$ ne dépendent pas de la caractéristique.

\subsection{La géométrie des grassmanniennes affines tordues} Soient $A$ un anneau et $\Grm$ un $A\pot{t}$-groupe affine et lisse. La grassmannienne affine $\Gr_{\Grm}$ de $\Grm$ est le pré-faisceau sur la catégorie des $A$-algèbres $B$ qui classifie les $\Grm$-torseurs sur le disque $B\pot{t}$ munis d'une trivialisation sur le disque épointé $B\rpot{t}$, cf. déf. \ref{definition grassmannienne affine locale}. On dispose aussi des groupes d'arcs $L^+\Grm$ et de lacets $L\Grm$ donnés par $R \mapsto \Grm(R\pot{t})$ et $R\mapsto \Grm(R\rpot{t})$, tels que le faisceau quotient $L\Grm/L^+\Grm$ pour la topologie étale soit isomorphe à $\Gr_{\Grm}$. Sa représentabilité par un $A$-ind-schéma en est assurée dès que $A$ soit un anneau de Dedekind et $\Grm$ soit lisse, affine et connexe, cf. prop. \ref{proposition representabilite grassmannienne affine locale}.

On s'intéresse surtout au cas où $A=\ZZ$ et $\Grm=\underline{\Gscr_\fbf}$ est parahorique associé à une facette $\fbf \subset \Ascr(\underline{G},\underline{S},\ZZ\rpot{t})$, en reprenant les notations de la partie précédente concernant les groupes tordus, cf. question \ref{quesintro2} et théorème \ref{thintro 1}. La parahoricité est condition nécessaire et suffisante pour que la grassmannienne affine soit un ind-schéma projectif, cf. \ref{proposition projectivite grassmannienne affine locale}, grâce à un théorème de Richarz \cite{Rich} que nous avons étendu au cas pseudo-réductif dans \cite{LouBT}. 

Pour une alcôve fixe $\abf$ adhérente à $\fbf$ et chaque classe $w \in \widetilde{W}/W_{\textbf{f}}$ du groupe d'Iwahori--Weyl, on dispose du schéma de Schubert $\Gr_{\underline{\Gscr_\fbf},\leq w}$, défini comme l'orbite fermée sous $L^+\underline{\Gscr_\abf}$ d'un représentant convenable $n_w \in L\underline{N}(\ZZ)$ de $w$, cf. déf. \ref{definition schemas de schubert}. Pour étudier leur géométrie, on se sert des schémas de Demazure $\on{Dem}_\wf$ lisses et projectifs sur $\ZZ$, produit tordu de droites de Schubert, tel que le morphisme vers la grassmannienne se factorise en une application birationnelle et surjective $\on{Dem}_\wf \rightarrow \Gr_{\underline{\Gscr_\fbf},\leq w}$. Voici le résultat central :

\begin{thmintro}[th. \ref{theoreme normalite des schemas de Schubert}]\label{thintro2}
Supposons que $G^{\on{dér}}=G^{\on{sc}}$. Alors les schémas de Schubert $\Gr_{\underline{\Gscr_\fbf},\leq w}$ sont géométriquement normaux sur $\ZZ$, de Cohen--Macaulay et leurs fibres modulo $p>0$ sont scindées.
\end{thmintro}

La démonstration du théorème ci-dessus ressemble à celles de Faltings \cite{Fal} lorsque $G$ est déployé ou de Pappas--Rapoport \cite{PR} au-dessus de $\ZZ[1/e]$, mais il y a des différences importantes que nous allons expliquer dans la suite. Avant de commencer, nous voudrions aussi signaler que l'hypothèse $G^{\on{dér}}=G^{\on{sc}}$ est impérative, si l'on veut éviter des complications dont la découverte fut le sujet de notre collaboration récente avec Haines--Richarz \cite{HLR} : en effet, dans ce cas les schémas de Schubert possèdent des fibres spéciales non réduites à réduits non normaux, cf. cors. \ref{corollaire grassmannienne affine reduite} et \ref{corollaire non normalite varietes de schubert}.

La première étape est composée d'un traitement exhaustif de la structure du groupe d'arcs $L^+\underline{\Gscr_\fbf}$, voir les props. \ref{proposition decomposition de levi} et \ref{proposition engendrement produit direct sous-groupes radiciels}, telle que de son opposé le groupe de cordes $L^-\underline{\Gscr_\fbf}$. Notre approche de celui-ci est en fonction d'une extension convenable du $\AAA_{\ZZ}^1$-groupe $\underline{\Gscr_\fbf}$ à $\PP_\ZZ^1$, qui nous permet de poser $L^-\underline{\Gscr_\fbf}(R)=\underline{\Gscr_\fbf}(R[t^{-1}])$, cf. déf. \ref{definition groupe de cordes}. La méthode de preuve ici adoptée pour la plupart des propriétés fondamentales concernant le groupe de cordes se distingue ainsi complètement de celle de de Cataldo--Haines--Li dans \cite{dHL18} pour les groupes déployés (reprise dans \cite{HLR} pour les groupes tordus modérément ramifiés), en substituant aux calculs interminables une analyse raffinée n'impliquant que l'action de rotation, voir le th. \ref{theoreme proprietes groupe de cordes}. 

Puis on construit les $L^-\underline{\Gscr_\abf}$-orbites fermées $\Gr_{\underline{\Gscr_\fbf}}^{\geq w}$ dans la grassmannienne affine à la suite de \cite{Fal}, cf. th. \ref{theoreme cycles de schubert topologie et normalite}, dont des quotients locaux pour certains sous-groupes de congruence sont de type fini de codimension $l(w)$, et même géométriquement normaux et de Cohen--Macaulay, d'après Kashiwara--Shimozono \cite{KS09}. Ces résultats et constructions devraient être mis en relief, car ils sont incontournables pour qu'on parvienne à obtenir tous les faisceaux inversibles amples de la grassmannienne affine $\Gr_{\underline{\Gscr_\fbf}}$, cf. cor. \ref{corollaire groupe de picard}, s'agissant d'une lacune très sérieuse dans la preuve du théorème de normalité dans \cite{PR} en caractéristique $p=2$.

Pour la deuxième étape, on fait usage des fibrés en droites amples pour montrer que les variétés de Schubert normalisées en caractéristique $p$ sont scindées, cf. prop. \ref{proposition varietes de schubert proprietes}, donc les schémas de Schubert normalisés satisfont aux propriétés du théorème \ref{thintro2}, formant en outre un ind-schéma $\widetilde{\Gr}_{\underline{\Gscr_\fbf}}$ opéré par les groupes de lacets $L\underline{U_\pm}$ associés aux sous-groupes radiciels, cf. cor. \ref{corollaire normalises des schemas de schubert}. Alors que la normalité en caractéristique $0$ est due à Kumar \cite{KumDem}, il ne reste qu'à montrer que l'application d'espaces tangents induite par la normalisation est surjective, dès que $G=G^{\on{sc}}$. Malheureusement, l'algèbre de Lie de $L\underline{G}$ sur $\ZZ$ n'est topologiquement pas engendrée par celles associées aux sous-groupes radicielles, dès que $\Phi_G$ soit non réduit, cf. lem. \ref{lemme engendrement algebres de lie}, en vertu de quelques facteurs quadratiques méchants, ce qui n'était jamais le cas sous les hypothèses de \cite{Fal} ou de \cite{PR}. Pour remédier à ce problème, nous devons considérer les espaces de distributions, c'est-à-dire opérateurs différentiels supérieurs, cf. lem. \ref{lemme engendrement distributions}.

Ceci fournit en outre le lien entre nos schémas de Schubert et ceux de Mathieu \cite{Mat} encadrés dans la théorie de Kac--Moody, cf. \S\ref{subsection theorie de kac-moody}, dont les groupes d'arcs parahoriques sont définis par dualité en fonction de tels opérateurs différentiels supérieurs, cf. lem. \ref{lemme identification forme de tits et distributions}. Puis la comparaison des schémas de Schubert s'ensuit en appliquant les théorèmes de normalité, voir le th. \ref{theoreme comparaison kac-moody schubert}. Bien sûr, le lecteur cynique pourrait objecter à ce stade, nonobstant l'énorme travail géométrique que l'on a eu besoin de faire en avance, que les grassmanniennes affines ici construites ne sont alors que des objets bien connus depuis trente ans. 

En réponse à cela, nous objecterions que la déformation de Beilinson--Drinfeld \cite{BDr}, nous ne l'avions pas ! Par là, on entend le pré-faisceau $\on{GR}_{\Grm}$ associé à une courbe lisse affine relative $A \rightarrow B$ sur un anneau $A$ et à un $B$-groupe affine $\Grm$, qui fait correspondre à toute $B$-algèbre $C$ les classes d'isomorphisme de $\Grm$-torseurs sur $B\otimes_A C$ munis d'une trivialisation en dehors du graphe $B \otimes_A C \rightarrow C $, cf. déf. \ref{definition grassmannienne affine globale}. On a aussi des groupes de lacets $L \Grm$ et d'arcs $L^+ \Grm$ opérant sur $\on{GR}_{\Grm}$ et dont le quotient pour la topologie étale $L \Grm/L^+\Grm=\on{GR}_{\Grm}$ s'identifie à la grassmannienne affine. 

Soient alors $A=\ZZ$, $B=\ZZ[t]$ et $\Grm=\underline{\Gscr_\fbf}$ comme ci-dessus. La grassmannienne affine globale $\on{GR}_{\underline{\Gscr_\fbf}}$ est représentable par un ind-schéma projectif, voir la prop. \ref{proposition representabilite grassmannienne affine globale} et le th. \ref{theoreme grassmannienne globale projective}. Cet ind-schéma admet certains sous-schémas fermés invariants par le groupe d'arcs pour chaque copoids géométrique dominant $\mu$ de $G$, voir la déf. \ref{definition schemas de schubert globaux}, appelés les schémas globaux de Schubert $\on{GR}_{\underline{\Gscr_\fbf},\leq \mu}$ et définis sur $\ZZ[\zeta_\infty, t^{1/\infty}]$. Encore mieux, l'on a une description géométrique de ces variétés de Schubert globales.

\begin{thmintro}[th. \ref{theoreme de coherence cohomologie fibres amples}, th. \ref{theoreme normalite schema de schubert global et fibres geometriques admissibles}]\label{thintro3}
Supposons que $G^{\on{dér}}=G^{\on{sc}}$. Alors les schémas de Schubert globaux $\on{GR}_{\underline{\Gscr_\fbf},\leq \mu}$ sont normaux, plats, géométriquement réduits et leurs fibres sont égales au lieu admissible de $\Gr_{\underline{\Gscr_\fbf}\otimes k\pot{z_x}}$.
\end{thmintro}

Le lieu admissible est classiquement défini en termes de l'application de Kottwitz, mais ici, on se sert de la finitude au sens inductif de $\on{GR}_{\underline{\Tscr}}$, cf. lem. \ref{lemme grassmannienne globale tore}. Faisons aussi quelques commentaires relativement à la démonstration du théorème, dont l'affirmation reste nouvelle en ce degré de généralité même pour les groupes déployés ou tordus modérément ramifiés. L'empêchement le plus sérieux est qu'on ne sait plus sur une base bidimensionnelle si $\on{GR}_{\underline{\Gscr_\fbf},\leq \mu}$ est plat, contrairement à ce qui se passe sur les anneaux de dimension $1$, auquel cas l'assertion résulterait du théorème de cohérence de Zhu \cite{ZhuCoh}, cf. th. \ref{theoreme de coherence cohomologie fibres amples}. Toutefois, cette assertion plus faible sur un anneau de dimension $1$ permet d'entendre assez bien le normalisé de $\on{GR}_{\underline{\Gscr_\fbf},\leq \mu}$ et de montrer que l'application de normalisation est un isomorphisme, cf. th. \ref{theoreme normalite schema de schubert global et fibres geometriques admissibles}.

Enfin, venons-en bref aux applications à la théorie des modèles locaux. Ceux-ci sont étroitement liés à l'arithmétique des variétés de Shimura et plus récemment Scholze--Weinstein \cite{SchBerk} ont conjecturé qu'un certain v-faisceau de Schubert minuscule dans leur grassmannienne affine $p$-adique soit représentable par un schéma projectif, plat et géométriquement réduit. Dans une première version de cet article, nous avons généralisé les arguments de \cite{SchBerk} et de He--Pappas--Rapoport \cite{HPR}, afin de prouver la conjecture en presque tous les cas de type abélien. Vu que l'article présent est devenu trop grand et que l'encadrement perfectoïde $p$-adique ne ressemble guère aux méthodes employées, on s'est décidé de revenir sur la conjecture générale dans un futur travail. Les gens intéressés trouveront des informations supplémentaires dans \cite[IV]{LouDiss}.

\subsection{Leitfaden}
L'article est organisé à peu près comme l'introduction. Le \S\ref{rappels et preparatifs} est un rappel de divers concepts fondamentaux autour des épinglages de sous-groupes radiciels et de leurs systèmes cohérents. Au \S\ref{groupes sur Z[t]} il s'agit de construire les $\ZZ[t]$-groupes immobiliers tordus $\underline{\Gscr_\Omega}$ qui sont la base de tout ce travail. Le \S\ref{section grassmanniennes affines} concerne la géométrie des grassmanniennes affines tordues entières associées aux groupes parahoriques $\underline{\Gscr_\fbf}$, tandis que l'on étudie au \S\ref{section grassmanniennes beilinson-drinfeld} la déformation de Beilinson--Drinfeld et démontre notre variante du théorème de la cohérence. Enfin, on décrit dans l'annexe \ref{comparaison avec les varietes de demazure de kac-moody} le lien entre notre théorie et celle de Kac--Moody au sens de Mathieu. Maintenant, on va détailler le contenu de chaque sous-section.

Le \S\ref{epinglages et systemes de Chevalley-Steinberg} explique quelques notions préparatoires concernant les automorphismes de groupes épinglés et les systèmes de Chevalley--Steinberg. Le \S\ref{formes tordues quasi-deployees} concerne la construction de formes tordues quasi-déployées associées à ces données. Au \S\ref{facteurs de 2 dans le cas non reduit}, nous éclaircissons l'aspect technique qu'on appelle la modification de Tits, et dans \S\ref{derives et coproduits} on aborde une procédure d'élargissement central. Le \S\ref{subsection theorie de bt pseudo-reductive} s'agit d'une rétrospective de la théorie de Bruhat--Tits visant le cas pseudo-réductif. Au \S\ref{donnees radicielles schematiques}, nous donnons une nouvelle approche des données radicielles schématiques de Bruhat--Tits, particulièrement utile sur les bases de dimension supérieure. Dans \S\ref{subsection groupes de Tits} les $\ZZ[t^{\pm 1}]$-groupes de Tits $\underline{G}$ sont construits et puis dans \S\ref{subsection groupes immobiliers modelant tits} il en est de même de leurs $\ZZ[t]$-modèles immobiliers $\underline{\Gscr_{\Omega}}$. Nous démontrons notamment qu'ils sont lisses, affines et connexes.

Au \S\ref{grassmanniennes affines}, on commence par présenter les définitions et propriétés basiques des grassmanniennes affines. Le \S\ref{subsection cordes et fibres en droites} établit certains résultats structuraux sur les groupes d'arcs $L^+\underline{\Gscr_{\fbf}}$ parahoriques, ainsi que leurs opposés, les groupes de cordes $L^-\underline{\Gscr_{\fbf}}$, qui sont exploités pour introduire les cycles de Schubert. Dans \S\ref{normalite des varietes de schubert} nous démontrons le théorème de normalité sur les schémas de Schubert. À partir du \S\ref{grassmannienne de beilison-drinfeld et ind-projectivite} on se penche sur la déformation globale de Beilinson--Drinfeld $\on{GR}_{\underline{\Gscr_{\fbf}}}$ et l'on montre le théorème de la cohérence sur les schémas de Schubert globaux au \S\ref{normalite des varietes de schubert globales et leurs composantes irreductibles}. Le \S\ref{subsection theorie de kac-moody} sert de briève introduction aux notions de la théorie de Kac--Moody, tandis qu'au \S\ref{subsection comparaison de kac-moody} le lien entre les deux théories des groupes est établi. 

\subsection{Remerciements} Premièrement, il faut évidemment que je remercie mon directeur de thèse Prof. Dr. P.~Scholze, qui m'a suggéré d'étudier les modèles locaux des variétés de Shimura et m'a aidé constamment avec des conseils précieux. Ensuite, je tiens à remercier vivement T.~Haines et T.~Richarz, qui ont toujours montré un grand intérêt pour mon travail, qui ont partagé très généreusement leurs idées mathématiques avec moi et qui m'ont appris beaucoup, notamment pendant l'écriture de \cite{HLR}. Je veux remercier aussi le Prof. Dr. G.~Faltings en tant que deuxième rapporteur de ma dissertation \cite{LouDiss}. B.~Conrad mérite aussi une mention distinguée pour sa façon d'échanger de courriels très énergique et à la vitesse de l'éclair. C'est avec grand plaisir que je remercie J.~Anschütz, G.~Pappas, M.~Santos et X.~Zhu pour des discussions fructueuses autour de ce sujet et/ou des commentaires pertinents sur le texte. De plus, je tiens à remercier A.-C.~le Bras, qui a corrigé plusieurs fautes de français. Enfin, un remerciement spécial est adressé au rapporteur, qui n'a pas seulement fait des suggestions très pertinentes et attentives, mais qui m'a aussi forcé à renoncer à mes aspirations littéraires et ainsi à faire l'écriture de cet article correctement. Du point de vue historique, je suis immensément redevable à J.~Tits, pour la façon parfois inattendue dont son travail a imprégné tout ce que j'ai fait, et sans lequel je n'aurais jamais pu résoudre le cas non réduit. 

Ce travail repose sur d'idées obtenues pendant mon doctorat à l'Université de Bonn et la première écriture de l'article a été réalisée avec le soutien financier du SFB/TR 45 de la Deutsche Forschungsgemeinschaft et du prix Leibniz attribué au Prof. Dr. P.~Scholze. Durant la réécriture du texte, j'ai été post-doctorant à l'Imperial College de Londres et soutenu par le Conseil européen de la recherche, au moyen de la bourse de démarrage numéro d'identification 804176 attribuée à la Prof. A.~Caraiani dans le cadre du programme Horizon 2020 de l'Union européenne.

\subsection{Notations et conventions} Tous les anneaux considérés seront unitaires et commutatifs. Le foncteur de restriction des scalaires le long d'une extension plate et finie $A \rightarrow B$ d'anneaux sera noté $\Res_{B/A}$. On dit qu'un $A$-groupe $G$ est connexe si toutes ses fibres sont connexes. Dans le présent article, on ne s'occupera que des formes tordues quasi-déployées. Sauf mention contraire, les immeubles ici considérés sont les réduits et pas les élargis. Tous les ind-schémas sont stricts.

\section{Automorphismes de Dynkin et leurs formes tordues}\label{rappels et preparatifs}

\subsection{Systèmes de Chevalley--Steinberg}\label{epinglages et systemes de Chevalley-Steinberg}

Donnons-nous un schéma en groupes de Chevalley $H$ sur $\ZZ$ muni d'un épinglage, c'est-à-dire de la donnée d'un couple de Killing $T_H \subseteq B_H \subseteq H$ constitué par un tore maximal et un Borel, la base correspondante $\Delta_H$ du système de racines $\Phi_H$ de $H$ suivant $T_H$ et des isomorphismes $y_\alpha:\GG_a \rightarrow U_\alpha$ pour toute racine simple $\alpha \in \Delta_H$. La définition suivante d'une base de Chevalley est tirée directement de \cite[3.2.1, 3.2.2]{BTII}.

\begin{defn}\label{definition d'un systeme de Chevalley} On dit qu'une famille d'isomorphismes $y_\alpha:\GG_a \rightarrow U_\alpha$ pour toute racine $\alpha \in \Phi_H$ forme un système de Chevalley du groupe épinglé $H$, si
	\begin{enumerate}
		\item les $y_\alpha$ pour les racines positives simples $\alpha \in \Delta_H$ sont les épinglages donnés,
		\item l'élément $n_\alpha=y_\alpha(1)y_{-\alpha}(1)y_\alpha(1)$ normalise $T_H$ pour toute racine $\alpha\in \Phi_H$,
		\item et l'on a  $n_\alpha y_\beta(r)n_\alpha^{-1}=y_{s_\alpha(b)}(\epsilon_{\alpha,\beta} r)$, pour un certain signe $\epsilon_{\alpha,\beta} \in \{\pm 1\}$, où $\alpha, \beta \in \Phi_H$ et $s_\alpha$ désigne la réflexion dans le groupe de Weyl $W_H$ par rapport à $\alpha$.
	\end{enumerate}
\end{defn}
	
 Il faut noter qu'il n'est pas possible en général de se débarrasser de ces signes méchants $\epsilon_{\alpha,\beta}$. Par conséquent, le choix du prolongement d'épinglage dans un système de Chevalley n'est pas unique, parce qu'on se permet toujours de remplacer $y_\alpha$ et $y_{-\alpha}$ par leurs inverses, pour $\alpha \in \Phi_H \setminus \pm \Delta_H$. Notre convention pour le signe de la racine opposée $-\alpha$ diffère de celle de \cite[exp. XXIII, 1.2]{SGA3} comme le lecteur le verra dans l'exemple ci-dessous :
 
 \begin{exmp}\label{systeme de chevalley pour sl2}
 	Soit $H$ le groupe de rang un $\on{SL}_2$ épinglé de la manière habituelle : $T_H$ est le tore diagonal, $B_H$ le groupe des matrices triangulaires supérieures et $y_\alpha: \GG_a \rightarrow U_\alpha$ est donné par:
 	\begin{equation} r \mapsto \begin{pmatrix}
 	 1 & r\\
 	 0 & 1
 	\end{pmatrix}.\end{equation}
 	Alors, le seul système de Chevalley de ce groupe épinglé satisfait à la formule
 	\begin{equation} y_{-\alpha}(r)= \begin{pmatrix}
 	1 & 0\\
 	-r & 1
 	\end{pmatrix},\end{equation}
puisque des calculs fournissent $n_\alpha= n_{-\alpha}=	\begin{pmatrix}
0 & 1\\
-1 & 0
\end{pmatrix}$.  
 \end{exmp}  

Alors, \cite[exp. XXIII, prop. 6.2]{SGA3} affirme qu'il existe toujours un système de Chevalley pour un tel groupe épinglé $H$ :

\begin{propn}[Chevalley]
	Il y a un et un seul système de Chevalley au signe près de $H$ prolongeant l'épinglage donné.
\end{propn}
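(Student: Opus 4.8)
The plan is to reduce the construction to rank-one subgroups and then transport it across the whole root system by the Weyl group, the only indeterminacy being a sign coming from the $2$-torsion of $T_H$.

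First I would fix the parametrizations on the simple roots and their opposites. For $\alpha\in\Delta_H$ let $H_\alpha=\langle U_\alpha,U_{-\alpha}\rangle$ be the rank-one subgroup, isogenous to $\on{SL}_2$ and containing $\alpha^\vee(\GG_m)\subseteq T_H$. Starting from the given pinning $y_\alpha$, the explicit $\on{SL}_2$ computation of Example~\ref{systeme de chevalley pour sl2}, carried out inside $H_\alpha$, shows that there is a \emph{unique} isomorphism $y_{-\alpha}\colon\GG_a\to U_{-\alpha}$ for which $n_\alpha=y_\alpha(1)y_{-\alpha}(1)y_\alpha(1)$ normalizes $T_H$ and induces the reflection $s_\alpha$. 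This determines $y_{-\alpha}$ and the element $n_\alpha$ for every simple root with no sign ambiguity.

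Next I would propagate to arbitrary roots. For each $\gamma\in\Phi_H$ choose $w\in W_H$ and $\alpha\in\Delta_H$ with $w(\alpha)=\gamma$, lift $w$ to $n_w=n_{\alpha_1}\cdots n_{\alpha_k}$ along a reduced expression, and set $y_\gamma:=\on{Int}(n_w)\circ y_\alpha$; since $\on{Int}(n_w)$ carries $U_\alpha$ isomorphically onto $U_\gamma$, this is an isomorphism $\GG_a\to U_\gamma$. The relations~(2) and~(3) of Definition~\ref{definition d'un systeme de Chevalley} then follow formally: (2) for a general $\gamma$ is the rank-one statement applied inside $H_\gamma$, while for~(3) conjugation by $n_\alpha$ sends $U_\beta$ isomorphically onto $U_{s_\alpha(\beta)}$, so $n_\alpha y_\beta(r)n_\alpha^{-1}$ and $y_{s_\alpha(\beta)}(r)$ differ by a $\ZZ$-group automorphism of $\GG_a$; the crucial integrality input is that over $\ZZ$ the only such automorphisms are $\pm 1$, which forces the comparison scalar to be a unit $\epsilon_{\alpha,\beta}\in\{\pm1\}$. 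I expect the genuine obstacle to sit precisely in the well-definedness of this transport: two admissible pairs $(w,\alpha)$ reaching the same $\gamma$, or two lifts of the same $w$, differ by an element of the $2$-torsion subgroup $T_H[2]$ (this is where the rank-two structure constants and Tits' normalization of the $n_\alpha$ enter, through the braid relations holding modulo $T_H[2]$), and such an element acts on $U_\gamma$ through the character $\gamma$ valued in $\{\pm1\}$; hence $y_\gamma$ is well defined up to sign, which simultaneously proves existence and already reveals the source of the non-uniqueness.

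Finally, for uniqueness I would compare two Chevalley systems $(y_\gamma)$ and $(y'_\gamma)$ extending the same pinning by writing $y'_\gamma=\eta_\gamma\cdot y_\gamma$ with $\eta_\gamma\in\{\pm1\}$, again because $\on{Aut}(\GG_a)=\{\pm1\}$ over $\ZZ$, with $\eta_\alpha=1$ for $\alpha\in\Delta_H$ by~(1). The rank-one computation shows that~(2) holds at $\gamma$ if and only if $\eta_{-\gamma}=\eta_\gamma$; combined with~(1) this rigidifies $y_{\pm\alpha}$ for every simple $\alpha$. For the remaining roots the only surviving relation is $\eta_\gamma=\eta_{-\gamma}$, and one checks directly that~(3) is then automatic, the structure signs transforming by $\epsilon'_{\alpha,\beta}=\eta_\beta\,\eta_{s_\alpha(\beta)}\,\epsilon_{\alpha,\beta}$. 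Thus the Chevalley systems form a torsor under the simultaneous sign flips of the pairs $\{y_\gamma,y_{-\gamma}\}$ for $\gamma\in\Phi_H\setminus\pm\Delta_H$, which is exactly the asserted uniqueness up to sign and matches the remark preceding the statement.
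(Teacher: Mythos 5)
Votre démonstration est correcte et suit essentiellement la même stratégie que celle du texte : on transporte l'épinglage des racines simples aux racines quelconques par conjugaison sous les $n_{\alpha_i}$, l'ambiguïté se réduisant à un signe puisque $\on{Aut}_\ZZ(\GG_a)=\{\pm 1\}$, et le point délicat — la cohérence des choix, que vous localisez dans l'action de $T_H[2]$ via les relations de tresses — est précisément le contenu du lemme de rang $2$ de \cite[exp. XXIII, lem. 6.3]{SGA3} invoqué dans le texte. Votre analyse supplémentaire de l'unicité (la relation (2) forçant $\eta_\gamma=\eta_{-\gamma}$, d'où le torseur sous les renversements simultanés des paires $\{y_\gamma,y_{-\gamma}\}$ pour $\gamma\notin\pm\Delta_H$) est exacte et explicite ce que l'énoncé entend par \og au signe près \fg{}.
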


\begin{proof}
	Écrivons une racine arbitraire $\alpha \in \Phi_H$ comme la transformée $w(\beta)$ d'une racine positive simple $\beta \in \Delta_H$. L'élément $w$ admet une écriture $s_1\dots s_k$ comme produit de réflexions $s_i$ associées aux racines simples. On a ainsi \begin{equation}y_\alpha(r)=\on{int}(n_{\alpha_1}\dots n_{\alpha_k})(y_\beta( \pm r)),\end{equation} ce qui nous permet aussi de définir une famille d'isomorphismes $y_\alpha$ en prenant un signe quelconque. On n'a qu'à vérifier que telle construction, malgré ses choix arbitraires, préserve l'épinglage donné lorsque appliquée à une racine simple. Ceci est le contenu du \cite[exp. XXIII, lem. 6.3]{SGA3}, lequel repose sur des calculs en rang $2$.
\end{proof}

Considérons maintenant le groupe $\Xi_H$ d'automorphismes de la donnée radicielle épinglée correspondante à $\Delta_H \subseteq \Phi_H \subseteq X^*(T_H)$ et identifions-le au groupe des automorphismes de $H$ fixant l'épinglage donné, d'après le \cite[exp. XXIV, th. 1.3]{SGA3}. On peut associer à ces données un nouveau système de racines $\Phi_G $ dans $X^*(T_H)^{\Xi_H}$ formé par les moyennes
\begin{equation} a=\lvert \Xi_H \rvert^{-1}\sum_{\sigma \in \Xi_H} \sigma(\alpha)\end{equation}
des orbites finies de $\Phi_H$ pour l'action de $\Xi_H$ (ceci conserve son sens même si $\Xi_H$ est infini), cf. \cite[lem. 4.2]{Hai15}. Ce système de racines vient muni d'une application évidente de restriction $\Phi_H \rightarrow \Phi_G$. On remarque que la définition suivante d'un système de Chevalley--Steinberg coïncide avec celle de \cite[4.1.3]{BTII} et de \cite[déf. 4.2]{LndvCpc}, sauf que les auteurs y travaillent avec les formes tordues quasi-déployées qu'on étudiera au \S\ref{formes tordues quasi-deployees}.

\begin{defn}\label{systeme de chevalley-steinberg}
	Un système de Chevalley s'appelle de Chevalley--Steinberg si, pour tout automorphisme $\sigma \in \Xi_H$, l'on a $\sigma (y_\alpha(r))= y_{\sigma(\alpha)}(\epsilon_{\alpha,\sigma} r)$ où $\epsilon_{\alpha,\sigma} \in \{\pm 1\}$ et est même égal à $1$ si la moyenne $a \in \Phi_{G}^{\on{nd}}$ n'est pas divisible.
\end{defn} 

En particulier, une condition suffisante pour que le signe $\epsilon_{\alpha,\sigma}$ soit toujours égal à $1$ est que $\Phi_G$ soit réduit, c'est-à-dire que $H^{\on{sc}}$ ne contienne aucun facteur isomorphe à $\on{SL}_{2n+1}$. L'exemple suivant montre que cette condition est aussi nécessaire :

\begin{exmp}\label{exemple chevalley-steinberg pour sl3}
	Soit $H$ le groupe $\on{SL}_3$ épinglé de la façon canonique, c'est-à-dire $T_H$ est le tore diagonal, $B_H$ le sous-groupes des matrices triangulaires unipotentes et les sous-groupes radiciels associés aux racines positives simples $\alpha$ et $\beta$ sont donnés par \begin{equation}y_\alpha: r\mapsto \begin{pmatrix}
	1& r & 0\\
	0 & 1 & 0\\
	0 & 0 & 1
	\end{pmatrix}, y_\beta: r\mapsto \begin{pmatrix}
	1& 0 & 0\\
	0 & 1 & r\\
	0 & 0 & 1
	\end{pmatrix}.\end{equation} D'après \cite[ex. 7.1.5]{Co14}, on sait que le seul automorphisme $\sigma$ du groupe épinglé $H$ est :
	\begin{equation} \begin{pmatrix}
	a& b & c\\
	d & e & f\\
	g & h & i
	\end{pmatrix}\mapsto \begin{pmatrix}
	ei-fh & af-cd & bf-ce\\
	ah-bg & ai-cg & bi-ch\\
	dh-eg & di-fg & ae-bd
	\end{pmatrix},\end{equation}
	ce qui entraîne $\epsilon_{\alpha+\beta,\sigma}=-1$ comme voulu.
	Par alternative, nous pourrions avoir calculé directement l'effet de $\sigma$ sur $\on{Lie}H$, utilisant le crochet.
\end{exmp}

Ce sont justement ces signes méchants, qui nous forceront à traiter de façon spéciale les groupes unitaires de dimension impaire, voir l'ex. \ref{exemple su3} et surtout le \S\ref{facteurs de 2 dans le cas non reduit}. Finissons-en par énoncer l'existence des systèmes de Chevalley--Steinberg, dont la démonstration remonte essentiellement à l'\oe{}uvre \cite{St1} de Steinberg.

\begin{propn}[Steinberg]
	Il existe toujours un système de Chevalley--Steinberg.
\end{propn}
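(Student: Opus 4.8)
Le plan est de partir d'un système de Chevalley arbitraire, dont l'existence est garantie par la proposition de Chevalley ci-dessus, puis de corriger ses signes en exploitant la liberté de remplacer $y_\alpha$ et $y_{-\alpha}$ par leurs inverses. D'abord, j'observe que tout système de Chevalley est automatiquement équivariant au signe près sous $\Xi_H$ : puisque chaque $\sigma \in \Xi_H$ préserve l'épinglage et que les conditions (1)--(3) de la déf. \ref{definition d'un systeme de Chevalley} sont intrinsèques, l'image du système par $\sigma$ est encore un système de Chevalley, de sorte que l'unicité au signe près fournit des signes $\epsilon_{\alpha,\sigma} \in \{\pm 1\}$ vérifiant $\sigma(y_\alpha(r)) = y_{\sigma(\alpha)}(\epsilon_{\alpha,\sigma} r)$. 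On a $\epsilon_{\alpha,\sigma} = 1$ pour $\alpha$ simple, car $\sigma$ fixe l'épinglage, et la relation $\sigma\tau(y_\alpha) = \sigma(\tau(y_\alpha))$ livre l'identité de cocycle $\epsilon_{\alpha,\sigma\tau} = \epsilon_{\tau(\alpha),\sigma}\,\epsilon_{\alpha,\tau}$. Il ne reste donc qu'à trivialiser ces signes sur la partie non divisible.

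Ensuite, remplacer $y_{\pm\alpha}(\cdot)$ par $y_{\pm\alpha}(-\cdot)$ pour les racines $\alpha \in \Phi_H \setminus \pm\Delta_H$ revient à choisir une fonction $\eta \colon \Phi_H \to \{\pm 1\}$ avec $\eta_\alpha = \eta_{-\alpha}$ et $\eta \equiv 1$ sur $\pm\Delta_H$, et un calcul immédiat montre que les signes se transforment selon $\epsilon_{\alpha,\sigma} \mapsto \eta_{\sigma(\alpha)}\,\epsilon_{\alpha,\sigma}\,\eta_\alpha$. Je cherche donc $\eta$ tel que $\eta_{\sigma(\alpha)}\,\eta_\alpha = \epsilon_{\alpha,\sigma}$ pour toute racine $\alpha$ de moyenne non divisible. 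Comme $\Xi_H$ préserve $\Phi_H^+$, les orbites positives et négatives sont disjointes, et la contrainte $\eta_\alpha = \eta_{-\alpha}$ se satisfait en transportant sur $-O$ le choix fait sur une orbite positive $O$; c'est loisible car $\sigma(n_\alpha)$ normalise encore $T_H$, d'où $\epsilon_{-\alpha,\sigma} = \epsilon_{\alpha,\sigma}$ par un calcul en rang un (ex. \ref{systeme de chevalley pour sl2}). On peut alors raisonner orbite par orbite : sur l'orbite d'une racine simple, constituée de racines simples, $\eta \equiv 1$ est imposé et compatible puisqu'on y a déjà $\epsilon_{\alpha,\sigma} = 1$; sur une orbite $O = \Xi_H \cdot \alpha$ disjointe de $\pm\Delta_H$, je fixe $\eta_\alpha = 1$ et pose $\eta_{\sigma(\alpha)} := \epsilon_{\alpha,\sigma}$, définition licite grâce à l'identité de cocycle dès que $\epsilon_{\alpha,\rho} = 1$ pour tout $\rho \in \on{Stab}_{\Xi_H}(\alpha)$.

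Le point délicat, et c'est là le c\oe{}ur de l'argument de Steinberg, est donc le calcul du signe $\epsilon_{\alpha,\rho}$ pour $\rho$ stabilisant $\alpha$. Comme dans la preuve de la proposition de Chevalley, il se ramène à une analyse en rang $\leq 2$ de l'action de $\rho$ sur $U_\alpha$, à effectuer dans chacun des pliages. Lorsque le système de racines $\Phi_G$ est réduit --- c'est-à-dire pour $A_{2n-1} \rightsquigarrow C_n$, $D_n \rightsquigarrow B_{n-1}$, $D_4 \rightsquigarrow G_2$ et $E_6 \rightsquigarrow F_4$ --- on vérifie que ce signe vaut toujours $1$, ce qui trivialise complètement le cocycle. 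Le seul cas récalcitrant est le pliage $A_{2n} \rightsquigarrow BC_n$, donc un facteur $\on{SL}_{2n+1}$ de $H^{\on{sc}}$ : la racine $\gamma$ fixée par $\sigma$ y est divisible, $\gamma = 2a$, et l'on est forcé d'avoir $\epsilon_{\gamma,\sigma} = -1$, comme l'illustre déjà l'ex. \ref{exemple chevalley-steinberg pour sl3} pour $n = 1$. Mais c'est précisément le cas où la déf. \ref{systeme de chevalley-steinberg} tolère le signe $-1$, de sorte qu'aucune obstruction ne subsiste : le système modifié par le $\eta$ construit ci-dessus est un système de Chevalley--Steinberg.
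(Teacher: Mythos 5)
Votre preuve est correcte et suit essentiellement la même démarche que celle de l'article : partir d'un système de Chevalley quelconque, le corriger par un cobord de signes, et tout ramener au calcul du signe $\epsilon_{\alpha,\rho}$ pour $\rho$ stabilisant $\alpha$ (invariant par changement de signes, donc incontournable), ce qui est précisément le contenu du lemme 3.2 de Steinberg pour les involutions et de la vérification de Landvogt pour $\Sf_3$ que l'article se contente de citer. Vous explicitez utilement la réduction formelle (identité de cocycle, égalité $\epsilon_{-\alpha,\sigma}=\epsilon_{\alpha,\sigma}$, bonne définition de $\eta$ orbite par orbite) et localisez correctement l'obstruction dans le seul pliage $A_{2n}\rightsquigarrow BC_n$ sur les racines à moyenne divisible ; la vérification finale en rang $\leq 2$ reste, comme dans l'article, renvoyée à un calcul cas par cas.
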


\begin{proof}
Vu que $\sigma$ est un automorphisme et que les $\ZZ$-automorphismes de $\GG_a$ sont déterminés par les signes $\{\pm 1\}$, on déduit que $\sigma (y_\alpha(r))= y_{\sigma(\alpha)}(\epsilon_{\alpha,\sigma} r)$ avec $\epsilon_{\alpha,\sigma} \in \{\pm 1\}$. Fixé un système de Chevalley quelconque du groupe épinglé $H$, on doit montrer que, quitte à faire des changements de signes, on arrive à la condition $\epsilon_{\alpha,\sigma}=1$ pour toutes les racines $\alpha \in \Phi_H$ telles que leurs moyennes $a\in \Phi_G^{\on{nd}}$ soient non divisibles.

On se ramène aisément au cas où $H$ est adjoint simple. Le cas des involutions fut traité dans \cite[lem. 3.2]{St1} et le cas du groupe symétrique $\Sf_3$ en trois éléments fut vérifié dans la preuve de \cite[prop. 4.4]{LndvCpc}. 
\end{proof}

\subsection{Formes tordues et quasi-systèmes de Chevalley} \label{formes tordues quasi-deployees} Soient $k$ un corps et $G$ un $k$-groupe réductif et connexe. Rappelons que $G$ s'appelle quasi-déployé si le centralisateur de tout sous-$k$-tore déployé maximal $S$ est un tore maximal $T$. En particulier, on peut toujours choisir un sous-groupe de Borel $B\supset T\supset S$ correspondant à une base $\Delta_G$ du système de racines $\Phi_G$ de $G$ relativement à $S$. Soit $l/k$ une extension galoisienne à groupe $\Gamma$ déployant $G$. Le lemme suivant est du folklore :

\begin{lem}\label{lemme forme deployee epinglage}
	Soit $H$ la $k$-forme déployée de $G$ munie d'un épinglage. Alors, quitte à conjuguer par $H^{\emph{ad}}(l)$, il y a un et un seul prolongement de l'isomorphisme donné $G_l\simeq H_l$ à un isomorphisme de triples $(G,B,T)\otimes_k l \simeq (H,B_H,T_H)\otimes_k l$ tel que l'action de $\Gamma$ transportée au membre de droite préserve l'épinglage.
\end{lem}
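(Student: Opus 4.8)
The plan is to measure the failure of the given identification to respect pinnings by a single Galois cocycle valued in $\mathrm{Aut}(H,B_H,T_H)$, and then to kill its toric part by Hilbert~90. First I would arrange that the identification respects the triples. Since $l$ splits $G$, the torus $T\otimes_k l$ is split, so the image of $(B,T)\otimes_k l$ under the given isomorphism $G_l\simeq H_l$ is a pair consisting of a Borel of $H_l$ and a maximal split torus that it contains. In the split group $H_l$ all maximal split tori are conjugate under $H^{\on{ad}}(l)$, and the Borels containing a fixed one are permuted simply transitively by $N_{H}(T_H)(l)$; hence, after conjugating the given isomorphism by a suitable element of $H^{\on{ad}}(l)$, I may assume it carries $(B,T)\otimes_k l$ onto $(B_H,T_H)\otimes_k l$, i.e. that it is an isomorphism of triples $\psi$.

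Next I would compare the two $k$-structures. Writing $\sigma^G_\gamma$, $\sigma^H_\gamma$ for the $\gamma$-semilinear automorphisms of $G_l$, $H_l$ encoding descent to $k$ (which preserve $(B,T)$ and $(B_H,T_H,\{y_\alpha\})$ respectively, the former because $G$ is quasi-déployé and $T=Z_G(S)$), the transported action $\tau_\gamma:=\psi\,\sigma^G_\gamma\,\psi^{-1}$ is $\gamma$-semilinear and stabilises $(B_H,T_H)$. Hence $c_\gamma:=\tau_\gamma\,(\sigma^H_\gamma)^{-1}$ is an $l$-linear element of $\mathrm{Aut}(H,B_H,T_H)(l)$ satisfying $c_{\gamma\delta}=c_\gamma\,{}^\gamma c_\delta$, and the transported action preserves the pinning exactly when every $c_\gamma$ lies in the pinning-fixing subgroup. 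By \cite[exp. XXIV, th. 1.3]{SGA3} one has $\mathrm{Aut}(H,B_H,T_H)=T^{\on{ad}}_H\rtimes\Xi_H$, the splitting being given by the pinned automorphisms, which form precisely the pinning-fixing subgroup; write $c_\gamma=t_\gamma\,\xi_\gamma$ accordingly. Replacing $\psi$ by $a\psi$ with $a\in\mathrm{Aut}(H,B_H,T_H)(l)$ replaces $c_\gamma$ by $a\,c_\gamma\,{}^\gamma a^{-1}$, so the goal becomes to modify $c_\gamma$ by a $T^{\on{ad}}_H(l)$-coboundary until it lands in $\Xi_H$, i.e. until $t_\gamma$ is trivialised.

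The projection $\gamma\mapsto\xi_\gamma$ is a homomorphism into $\Xi_H$, the Galois action on the pinned automorphisms being trivial since they are defined over $k$; this is precisely the $\ast$-action of $\Gamma$ on the Dynkin diagram attached to $G$, and it is unchanged by the permitted modifications. A short computation with the semidirect product shows that $t_\gamma$ is then a $1$-cocycle for the $\ast$-twisted action $\gamma\cdot x=\xi_\gamma\,{}^\gamma x\,\xi_\gamma^{-1}$ on $T^{\on{ad}}_H(l)$, on which the allowed $T^{\on{ad}}_H(l)$-modifications act by twisted coboundaries; thus existence reduces to the vanishing of $H^1(\Gamma,-)$ with values in this twisted torus. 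The crucial point — the step I expect to carry the whole argument — is that this twisted torus is the maximal torus $T^{\on{ad}}_G$ of $G^{\on{ad}}$, whose character group is the root lattice $\bigoplus_{\alpha\in\Delta_H}\ZZ\alpha$; as $\Gamma$ permutes the basis $\Delta_H$ through the $\ast$-action, this is a permutation $\Gamma$-module, so $T^{\on{ad}}_G$ is an induced torus $\prod_O\Res_{l^{\Gamma_O}/k}\GG_m$ (one factor per $\Gamma$-orbit $O$ in $\Delta_H$, with $\Gamma_O$ the stabiliser of a root in $O$). By Shapiro's lemma and Hilbert~90, $H^1(\Gamma,T^{\on{ad}}_G(l))=0$, so $t_\gamma$ is a coboundary and a suitable $a\in T^{\on{ad}}_H(l)$ makes the transported action preserve the pinning. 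I stress that using the adjoint torus is essential here: only for $T^{\on{ad}}$ is the character lattice a permutation module, whereas for the simply connected torus (weight lattice) this fails.

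For uniqueness, two pinned triple-isomorphisms differ by an element $u\in\mathrm{Aut}(H,B_H,T_H)(l)$ commuting with the pinned descent datum $\xi_\gamma\sigma^H_\gamma$; its toric part is then fixed by the $\ast$-twisted action, hence lies in $T^{\on{ad}}_G(k)$ and acts by conjugation through $T^{\on{ad}}_H(l)\subset H^{\on{ad}}(l)$, while its diagram part commutes with the $\ast$-action and is absorbed into the same ambiguity. This yields exactly the asserted uniqueness up to conjugation by $H^{\on{ad}}(l)$.
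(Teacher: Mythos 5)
Your proof is correct and follows the same route as the paper: first conjugate by $H^{\on{ad}}(l)$ to match the Borel pairs via the conjugacy theorems, then adjust by an element of $T^{\on{ad}}_H(l)$ to make the transported Galois action pinned, with uniqueness read off from $\on{Aut}(H,B_H,T_H)=T^{\on{ad}}_H\rtimes\Xi_H$. The only difference is one of detail: the paper asserts the final adjustment in one sentence, whereas you justify it by identifying the obstruction as a $1$-cocycle in the $\ast$-twisted torus $T^{\on{ad}}_G$ and killing it by Shapiro plus Hilbert~90 for this induced torus — which is exactly the implicit content of the paper's last step.
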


 Il en résulte un certain homomorphisme $\tau: \Gamma \rightarrow \Xi_H$ vers le groupe d'automorphismes épinglés, tel que l'opération de $\Gamma$ sur $G_l$ transportée à $H_l$ devienne égale à $\gamma \mapsto \tau(\gamma)\otimes \gamma$.

\begin{proof}
	L'unicité est évidente en vertu du \cite[exp. XXIV, th. 1.3]{SGA3}. Grâce aux théorèmes de conjugaison pour les tores maximaux et les paraboliques minimaux qui les contiennent, cf. par exemple \cite[th. C.2.3, th. C.2.5]{CGP}, on peut choisir $h \in H(l)$, tel que $\varphi: G_l \simeq H_l$ envoie $(B,T)$ sur $(B_H,T_H)$. Pour préserver l'épinglage, il suffit de conjuguer par un élément de $T^{\on{ad}}(l)$.
\end{proof}

Par conséquent, tous les $k$-formes $G$ quasi-déployées et $l$-déployées du $k$-groupe épinglé $H$ s'obtiennent par une procédure très explicite de tordument comme suit : 
\begin{cons}\label{construction groupe quasi deploye}
Soient $\Gamma$ le groupe de Galois de $l/k$ et $\tau: \Gamma \rightarrow \Xi_H$ un homomorphisme de groupes continu. Comme $\Gamma$ commute aux $k$-automorphismes de $H$, l'homomorphisme $\tau:\Gamma \rightarrow \Xi_H$ induit une classe $c_\tau \in H^1(k,\on{Aut}_{l}(H))$ et l'on note $G$ le $k$-groupe réductif, connexe et quasi-déployé obtenu en tordant $H$ par $c_\tau$, comp. avec \cite[lem. 7.1.1, prop. 7.1.6]{Co14}. Plus explicitement, si $\Gamma$ est fini, considérons la restriction des scalaires $\Res_{l/k} H$ munie de l'opération de $\Gamma$ qui fait correspondre à $\gamma$ l'automorphisme $\tau(\gamma) \otimes \gamma$. Si l'on pose alors $G=(\Res_{l/k} H)^{\Gamma}$, on obtient la $k$-forme quasi-déployée cherchée de $H$.
\end{cons}

La définition classique d'un quasi-épinglage d'un groupe quasi-déployé $G$ consiste de la donnée d'un couple de Killing $B\supset T$ et d'un élément $X \in \on{Lie} B$, tel que son image dans $\on{Lie}B_H$ coïncide avec la somme des vecteurs générateurs $Y_\alpha=y_\alpha(1) \in \on{Lie}U_\alpha$ pour chaque racine simple $\alpha \in \Delta_H$, cf. \cite[exp. XXIV, 3.9]{SGA3}. Bien que cela est raisonnable, nous aurons besoin de recourir plutôt à d'isomorphismes décrivant les sous-groupes radiciels $U_a$ pour les racines relatives $a \in \Phi_G$, où ce symbole-ci désigne le système de racines relatif\footnote{Il n'y a aucun conflit entre cette notation et celle utilisée dans \S\ref{epinglages et systemes de Chevalley-Steinberg}, grâce au résultat de Haines qu'on avait déjà cité, cf. \cite[lem. 4.2]{Hai15}.} de $G$ par rapport à $S$. 	
    
  Ensuite, on étudie la structure des sous-groupes radiciels $U_a$ lorsque $a \in \Phi_G^{\on{nd}}$ n'est pas divisible.
  
  \begin{exmp}[non-multipliable]\label{exemple sl2}
  	Supposons que $H=\prod_{I} \on{SL}_2$, épinglé comme d'habitude et où $I$ est un ensemble fini sur lequel $\Gamma$ agit de façon transitive. Alors, le groupe $\Gamma$ y opère canoniquement par d'automorphismes de Dynkin qui échangent les différents facteurs simples de rang $1$, selon l'action naturelle de $\Gamma$ sur l'espace homogène $I$. On veut calculer le groupe $G=(\Res_{l/k} H)^\Gamma$. Mais si $(h_{\gamma})\in \on{SL}_2(R\otimes_kl)^{I}$ est invariant par $\Gamma$, alors $\sigma(h_{i})=h_{\sigma i}$. Par conséquent, on arrive à la conclusion que $G \simeq \Res_{m_0/k}  \on{SL}_2$, où $m_0$ désigne le corps fixe du stabilisateurs $ \Gamma_0 \subset \Gamma$ de $i_0 \in I$, et que l'inclusion dans $\Res_{l/k} H$ est donnée par la diagonale tordue $(\gamma)_{\gamma \in \Gamma/\Gamma_0}$.
  \end{exmp}

Revenons alors au cas général d'une racine relative ni divisible ni multipliable $a \in \Phi_G^{\on{nd,nm}}$ obtenue en moyennant $\alpha\in \Phi_H$. En termes d'un système de Chevalley--Steinberg de notre groupe épinglé $H$, on obtient un isomorphisme
\begin{equation}x_a: \Res_{l_a/k}\GG_a \rightarrow  U_a, \end{equation}\begin{equation} r\mapsto \prod_{\gamma\alpha} y_{\gamma\alpha}(\gamma(r)), \end{equation}
où l'on note $l_a$ le corps fixe du stabilisateur de $\alpha$ pour l'action de $\Gamma$, car $\epsilon_{\alpha,\gamma}=1$ pour tout $\gamma \in \Gamma$, comp. aussi avec \cite[4.1.5]{BTII}.
Si l'on avait choisi soit un autre système de Chevalley, soit une racine conjuguée de $\alpha$, on n'aurait changé $x_a$ qu'au signe et à la conjugaison près, cf. aussi \cite[4.1.7]{BTII}. On va dire que deux tels isomorphismes sont similaires et donc que $x_a$ ne dépend que de $a$ à similitude près.

\begin{exmp}[multipliable]\label{exemple su3}
	Considérons maintenant $H=\on{SL}_3$ muni de l'épinglage usuel et supposons que $l/k$ soit une extension quadratique séparable. Dans l'exemple \ref{exemple chevalley-steinberg pour sl3}, nous avons déterminé l'involution de Dynkin $\sigma$ de $\on{SL}_3$. Le groupe échéant $G=(\Res_{l/k}H)^\Gamma$ sera toujours noté $\on{SU}_{3,l/k}$ et regardé en tant que fermé de $\Res_{l/k}\on{SL}_3$ ; il s'agit même du groupe unitaire attaché à la forme quadratique $xy+yx-z^2$. Son système de racines est $\Phi_G=\{\pm a, \pm 2a\}$ et on souhaiterait d'avoir une description explicite du groupe $U_a$. Grâce à l'exemple sus-mentionné, on tire
	\begin{equation}\label{equation matrices unipotentes su3} U_a =\Bigg\{ \begin{pmatrix}
	1& r & s\\
	0 & 1 & \sigma(r)\\
	0 & 0 & 1  \end{pmatrix} : s+\sigma(s)=r\sigma(r) \Bigg\}. \end{equation}
	Notant $x_a(r,s)$ la matrice ci-dessus, on peut regarder $x_a$ comme un isomorphisme entre un certain groupe matriciel et le groupe algébrique abstrait $U_a$. En termes du système de Chevalley--Steinberg de $\on{SL}_3$, il s'écrit de la façon suivante :
	\begin{equation}x_a(r,s)=y_{\alpha}(r)y_{\alpha+\beta}(-\sigma(s))y_\beta(\sigma(r))=y_\beta(\sigma(r))y_{\alpha+\beta}(s)y_\alpha(r).\end{equation}
\end{exmp}

Cet exemple nous amène à introduire de la notation pour ce groupe matriciel unipotent, cf. aussi \cite[4.1.9]{BTII}
\begin{defn}\label{definition groupe pluriel}
	Soit $l/k$ une extension quadratique séparable. Le groupe unipotent pluriel $\GG_{p,l/k}$ est le sous-groupe des matrices triangulaires supérieures unipotentes de $\Res_{l/k}\on{SL}_3$ de la forme de (\ref{equation matrices unipotentes su3}).
\end{defn}
  
Soient $a \in \Phi_G^{\on{m}}$ une racine relative multipliable et $\alpha,\beta\in \Phi_H$ deux racines conjuguées dont la somme est une racine et dont les moyennes sont égales à $a$. En termes d'un système de Chevalley--Steinberg commode de notre groupe épinglé $H$, on obtient un isomorphisme
\begin{equation}
x_a: \Res_{l_{2a}/k}\GG_{p,l_a/l_{2a}} \rightarrow  U_a, \end{equation} \begin{equation} (r,s)\mapsto \prod_{\{\gamma\alpha,\gamma\beta\}} y_{\gamma\beta}(\gamma\sigma(r))y_{\gamma(\alpha+\beta)}(\gamma(s))y_{\gamma\alpha}(\gamma(r)), 
\end{equation}
où l'on note $l_a$ le corps fixe du stabilisateur de $\alpha$ pour l'action de $\Gamma$ et $l_{2a}$ celui de $\alpha +\beta$, et $\sigma$ l'involution de $l_a/l_{2a}$, voir \cite[4.1.9]{BTII}. Notons que les ensembles $\{\gamma\alpha,\gamma\beta\}$ doivent être regardés comme étant ordonnés et $\gamma$ doit préserver cet ordre, pour que cette écriture ait du sens, donc on dira que $\gamma \alpha$ resp. $\gamma\beta$ est un relèvement de premier type resp. second type de $a$. Finalement, l'isomorphisme $x_a$ ne dépend que de $a$ à similitude près, cf. \cite[4.1.13]{BTII} et voir \cite[A.4]{BTII} pour les opérations permises dans ce cas.

L'ensemble des isomorphismes $x_a$ pour $a \in \Delta_G$ resp. $a \in \Phi_G$ ainsi obtenus est exactement ce que nous aurions envie d'appeler un quasi-épinglage resp. quasi-système de Chevalley. La seule nuisance est qu'une telle notion nous semble à l'instant extrinsèque et dépendante de la réalisation de $G$ comme $\Gamma$-invariants d'une restriction des scalaires de $G$. L'approche idéale pour s'y référer intrinsèquement est celle de \cite[2.4]{LouBT}, dont le truc principal est de considérer plutôt les homomorphismes naturels \begin{equation}\zeta_a:\Res_{l_a/k} \on{SL}_2 \rightarrow G, \end{equation} \begin{equation}
\zeta_a:\Res_{l_{2a}/k} \on{SU}_{3,l_a/l_{2a}} \rightarrow G ,\end{equation} prolongeant les isomorphismes $x_a$, selon $a$ appartienne à $\Phi_G^{\on{nd,nm}}$ ou à $\Phi_G^{\on{m}}$. En effet, leur existence découle de la description des groupes quasi-déployés de rang $1$ conformément aux ex. \ref{exemple sl2} et \ref{exemple su3}. Voici la \cite[déf. 2.4]{LouBT} :

\begin{defn}\label{definition quasi-systemes de chevalley}
	Soit $G \supset B \supset T \supset S$ un groupe quasi-déployé. Un quasi-épinglage de $G$ est une collection d'homomorphismes $\zeta_a$ de $\Res_{l_a/k} \on{SL}_2$ vers $G$ si $a \in \Delta_G^{\on{nd,nm}}$ (resp. de $\Res_{l_{2a}/k} \on{SU}_{3,l_a/l_{2a}}$ si $a \in \Delta_G^{\on{m}}$) tels que leurs restrictions aux tores diagonaux déployés soient égales à $a^{\vee}:\GG_m \rightarrow G$ et que les matrices triangulaires supérieures unipotentes s'envoient sur $U_a$. Un quasi-système de Chevalley de $G$ quasi-épinglé est une extension de la collection $\zeta_a$ aux racines non divisibles $a \in\Phi_G^{\on{nd}}$ de telle sorte que $\on{int}(m_a) \circ \zeta_b$ soit semblable à $\zeta_{s_a(b)}$.
\end{defn}

 Les $m_a$ s'écrivent comme $\prod_{\gamma \alpha} n_{\gamma \alpha}$ dans le cas non multipliable (resp. $\prod_{\gamma (\alpha+\beta)} n_{\gamma (\alpha+\beta)}$ dans le cas multipliable), voir la déf. \ref{definition d'un systeme de Chevalley} pour le cas déployé. Ceux-ci se généralisent\footnote{En fait, lorsque $a$ est multipliable, l'élément $m_a$ ne s'écrit comme $m_a(r,s)$ pour un certain couple $(r,s)$ que si $1$ est une norme de $l$. Mais cela ne change presque rien.} en des éléments \begin{equation}
 m_a(r)=x_{a}(r^{-1})x_{-a}(r)x_{a}(r^{-1})
 \end{equation} pour toute racine $a\in \Phi_G^{\on{nd},\on{nm}}$ ni divisible ni multipliable, et \begin{equation}m_a(r,s)= x_a(rs^{-1},\sigma(s)^{-1})x_{-a}(r,s)x_a(r\sigma(s)^{-1},\sigma(s)^{-1})\end{equation} pour toute racine multipliable $a \in \Phi_G^{\on{m}}$, voir \cite[4.1.5, 4.1.11]{BTII}.
 
 \begin{propn}\label{proposition existence et conjugaison des quasi systemes de chevalley}
 	Les classes de similitude des quasi-systèmes de Chevalley de $G$ forment un torseur trivial sous $T^{\on{ad}}(k)/T^{\on{ad}}[2](k)$.
 \end{propn}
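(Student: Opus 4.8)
Le plan est d'établir séparément la structure de pseudo-torseur (action simplement transitive) et la non-vacuité, cette dernière entraînant la trivialité. Commençons par décrire l'action : le groupe $T^{\on{ad}}(k)$ opère par conjugaison sur l'ensemble des quasi-systèmes de Chevalley, en envoyant $x_a$ sur $\on{int}(t)\circ x_a$, ce qui revient à multiplier le paramètre de $x_a$ par un scalaire $a(t)\in l_a^\times$ pour toute racine non divisible $a\in\Phi_G^{\on{nd}}$. Comme $a\mapsto a(t)$ est multiplicative, cette opération préserve les relations $\on{int}(m_a)\circ\zeta_b\sim\zeta_{s_a(b)}$ de la déf. \ref{definition quasi-systemes de chevalley} : elle envoie donc bien un quasi-système de Chevalley sur un autre et descend aux classes de similitude.

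Pour la transitivité, on montrerait d'abord qu'un quasi-système de Chevalley est déterminé à similitude près par sa restriction aux racines simples $a\in\Delta_G^{\on{nd}}$, c'est-à-dire par le quasi-épinglage sous-jacent. Cela reprend verbatim l'argument de la proposition de Chevalley ci-dessus : on écrit chaque $a\in\Phi_G^{\on{nd}}$ comme transformée d'une racine simple par le groupe de Weyl relatif, et la formule $x_a=\on{int}(m_{a_1}\cdots m_{a_k})(x_b)$ fixe $x_a$ au signe près, donc à similitude près. Ensuite, deux quasi-épinglages diffèrent, racine simple par racine simple, par un facteur $\mu_a\in l_a^\times$ (resp. par un élément soumis à la condition de norme dans le cas multipliable, cf. ex. \ref{exemple su3}). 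Puisque le tore $T^{\on{ad}}$ est induit et adjoint, l'accouplement $t\mapsto(a(t))_{a\in\Delta_G}$ identifie $T^{\on{ad}}(k)$ au produit des groupes de scalaires concernés, de sorte qu'il existe $t$ rendant $\on{int}(t)\Phi$ et $\Phi'$ égaux sur les racines simples, donc similaires partout par l'étape de détermination précédente. D'où la transitivité.

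Pour la liberté modulo la $2$-torsion, on observerait qu'un élément $t\in T^{\on{ad}}(k)$ fixe une classe de similitude si et seulement si chaque $x_a$ n'est modifié qu'au signe, soit $a(t)\in\{\pm 1\}$ pour toute racine $a$. Comme les racines simples engendrent le réseau des caractères du tore adjoint, ceci équivaut à $a(t)\in\{\pm 1\}$ pour $a\in\Delta_G$, c'est-à-dire à $t^2=1$, soit $t\in T^{\on{ad}}[2](k)$. Le stabilisateur d'une classe est donc exactement $T^{\on{ad}}[2](k)$, et l'action de $T^{\on{ad}}(k)/T^{\on{ad}}[2](k)$ est par conséquent simplement transitive.

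Il ne reste qu'à vérifier la non-vacuité : un système de Chevalley--Steinberg de $H$, dont l'existence est assurée par la proposition de Steinberg, fournit via les formules explicites pour $x_a$ (cas non multipliable et multipliable) un quasi-système de Chevalley de $G$ ; sa classe de similitude sert de point-base et trivialise le torseur. Le principal obstacle me semble résider dans le traitement soigneux des racines multipliables : le groupe source $\Res_{l_{2a}/k}\on{SU}_{3,l_a/l_{2a}}$ n'admet qu'une action de mise à l'échelle contrainte par une condition de norme (cf. la note de bas de page sur $m_a(r,s)$), et il faut s'appuyer sur l'analyse des opérations permises de \cite[A.4]{BTII} pour identifier précisément le facteur de $2$ et confirmer que l'accouplement avec $T^{\on{ad}}$ est surjectif ; une vigilance supplémentaire est requise en caractéristique $2$ pour identifier $T^{\on{ad}}[2](k)$ aux $t$ vérifiant $a(t)\in\{\pm 1\}$.
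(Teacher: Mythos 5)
Votre démonstration est correcte et suit pour l'essentiel la même démarche que celle de l'article (laquelle renvoie à \cite[prop. 2.5]{LouBT}) : existence par réduction au cas déployé au moyen des formules explicites des $x_a$, $m_a$ en termes des $y_\alpha$, $n_\alpha$, et structure de torseur via la détermination d'un quasi-système de Chevalley à similitude près par son quasi-épinglage. Votre explicitation de l'action, de l'identification $T^{\on{ad}}(k)\simeq\prod_{a\in\Delta_G}l_a^\times$ et du stabilisateur $T^{\on{ad}}[2](k)$ ne fait que détailler ce que l'article qualifie de \og presque évident \fg{}, y compris le point délicat des racines multipliables que vous signalez à juste titre.
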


\begin{proof}
	On renvoie à \cite[prop. 2.5]{LouBT} pour plus de détails. Ce que deux classes de similitude de quasi-systèmes de Chevalley ne diffèrent qu'en conjuguant par $T^{\on{ad}}(k)/T^{\on{ad}}[2](k)$ est presque évidente. La méthode de preuve dans le cas déployé permet d'étendre un quasi-épinglage donné à une famille d'homomorphismes $\zeta_a$. Pour vérifier qu'ils satisfont à la contrainte de similitude, on utilise les formules explicites pour les $x_a$, $m_a$, etc. en termes des $y_\alpha$, $n_\alpha$, etc. pour se ramener au cas déployé.
\end{proof}

\begin{rem}
	Outre les résultats de conjugaison, qui ne marchent qu'au-dessus des corps, le genre de constructions entreprises dans ce numéro, voir par exemple la constr. \ref{construction groupe quasi deploye}, s'étendent facilement aux bases générales. On en laisse le soin au lecteur.
\end{rem}
\subsection{La modification de Tits}\label{facteurs de 2 dans le cas non reduit}

Malheureusement, le prolongement naturel du groupe $\GG_{p,l/k}$, voir la déf. \ref{definition groupe pluriel}, au cas où les coefficients sont entiers (ceci doit être pris au sens du \S\ref{groupes sur Z[t]}) n'est plus lisse en caractéristique résiduelle égale à $2$. Par conséquent, on va introduire un certain $k$-groupe unipotent $\GG^{\on{T}}_{p,l/k}$ explicite à la suite de \cite[annexe 2]{TitsOber}, qui s'identifie encore, lorsque $2$ est inversible dans $k$, au groupe unipotent pluriel de la déf. \ref{definition groupe pluriel} noté $\GG^{\on{CS}}_{p,l/k}$ pendant tout ce numéro. L'avantage de la variante $\GG^{\on{T}}_{p,l/k}$ de Tits est qu'elle admet une description plus commode en termes du sous-espace vectoriel $l^0 \subset l$ des éléments de trace nulle, qui s'étendra sans peine de manière lisse au cas où les coefficients sont entiers, voir la déf. \ref{definition drs groupe de tits}.

\begin{defn}\label{definition pluriel tits}
	Soit $k$ un corps de caractéristique différente de $2$ et $l/k$ une extension quadratique. Alors, $\GG^{\on{T}}_{p,l/k}$ est la variété $\Res_{l/k}\AAA^1_l \times \AAA^1_{l_0}$ munie de la loi de groupe
	\begin{equation}\label{equation loi de groupe pluriel} (u,v)\cdot (u',v')=(u+u',v+v'+\sigma(u)u'-u\sigma(u')) .\end{equation}
	Ici, l'on regarde $\AAA^1_{l_0}$ comme la droite affine sur $k$ au moyen d'un isomorphisme quelconque de $k$-espaces vectoriels $k \simeq l_0$. 
\end{defn}

Notons que cette définition se ressemble à celle de \cite[4.1.15]{BTII}, sauf en ce qui concerne le choix d'un élément $\lambda$ de trace $1$, bien que nous avons choisi $\lambda=1$. Nous en tirons l'isomorphisme
\begin{equation}
\GG_{p,l/k}^{\on{T}} \rightarrow \GG_{p,l/k}^{\on{CS}},
\end{equation} 
\begin{equation} 
(u,v)\mapsto (u,2v-u\sigma(u)),\end{equation}
\begin{equation}
(r,r/2+s\sigma(s)/2) \mapsfrom (r,s), 
\end{equation}
lequel profite du fait que $2$ est inversible dans $k$. Avant de continuer, étudions la situation limite en caractéristique $2$, qui est implicitement liée avec toutes les modification que nous sommes en train d'introduire :

\begin{rem}\label{remarque limite inseparable pluriel}
	Supposons pour l'instant que $l/k$ est une extension quadratique inséparable de caractéristique $2$, c'est-à-dire $\sigma$ est trivial. Alors la déf. \ref{definition groupe pluriel} préserve son sens et l'on obtient un isomorphisme $\GG^{\on{CS}}_{p,l/k}=\Res_{l/k}\alpha_2 \times \Res_{l/k}\GG_a $, donc le groupe commutatif dont il s'agit n'est pas lisse. Par contre, si l'on remplace $l^0$ dans la déf. \ref{definition pluriel tits} par un choix arbitraire de complément linéaire de $k$ dans $l$, alors on obtient que $\GG^{\on{T}}_{p,l/k}$ s'identifie au groupe commutatif lisse $\Res_{l/k}\GG_a \times \GG_a$.
\end{rem}

Moyennant l'isomorphisme donné $\GG^{\on{T}}_{p,l/k} \simeq \GG^{\on{CS}}_{p,l/k}$, on peut donc considérer les isomorphismes du quasi-système de Chevalley de $G$ introduit précédemment, voir la déf. \ref{definition quasi-systemes de chevalley}, comme partant du membre de gauche. On obtient alors l'écriture
\begin{equation}x_a(u,v)=y_\beta(\sigma(u))y_{\alpha+\beta}(v/2+N(u)/2)y_\alpha(u) \end{equation}
dans le cas où $\Phi_H$ est irréductible (en général, il faut considérer un produit à plusieurs termes). Cet état de choses est indésirable en vertu du \S\ref{groupes sur Z[t]}, puisque le morphisme vers $H$, pour qu'il soit prolongeable aux coefficients entiers, cf. prop. \ref{proposition plongement restrictions des scalaires groupe de tits}, ne devrait pas impliquer des divisions par $2$. On souhaiterait donc modifier les isomorphismes du système de Chevalley--Steinberg de $H$, de manière que $x_a$ s'exprime en fonction de $u$, $v$ et de leurs conjugués à coefficients entiers. Ceci sera achevé au lem. \ref{lemme coordonnees de la modification de Tits}.

\begin{exmp}\label{exemple modification de tits rang un}
	Reprenons les exemples \ref{exemple chevalley-steinberg pour sl3} et \ref{exemple su3}, alors $H=\on{SL}_3$ et $G=\on{SU}_{3,l/k}$. D'après la prop. \ref{proposition existence et conjugaison des quasi systemes de chevalley}, on se permet de conjuguer le système de Chevalley de $H$ par un élément de $T_H^{\on{ad}}(k)$. Choisissons d'abord $\varpi_{\alpha}^\vee(2)$. En notant le quasi-système de Chevalley et le système de Chevalley--Steinberg par $x_{\bullet}^{\on{CS}}$ et $y_{\bullet}^{\on{CS}}$, posons alors $y_{\bullet}^{\on{T}}=\on{int}(\varpi_{\alpha}^\vee(2)) \circ y_{\bullet}^{\on{CS}}$. On en tire les relations suivantes :
	\begin{equation}x_a^{\on{CS}}(u,v)=y_{\alpha}^{\on{T}}(u/2)y_{\alpha+\beta}^{\on{T}}(-\sigma(v)/4-N(u)/4)y_\beta^{\on{T}}(\sigma(u)) \end{equation}
	\begin{equation}
	x_{-a}^{\on{CS}}(u,v)=y_{-\alpha}^{\on{T}}(2u)y_{-\alpha-\beta}^{\on{T}}(-\sigma(v)-N(u))y_{-\beta}^{\on{T}}(\sigma(u)).
	\end{equation}
	En particulier, il suffit de prendre $x_a^{\on{T}}(u,v)=x_a^{\on{CS}}(2u,4v)$ et $x_{-a}^{\on{T}}(u,v)=x_{-a}^{\on{CS}}(u,v)$, pour que la contrainte d'intégralité soit satisfaite.
\end{exmp}

Cependant, il faut typiquement modifier aussi les isomorphismes associés aux racines ni divisibles ni multipliables. La définition suivante généralise de façon naturelle la modification de Tits dans le cas où $H$ est simplement connexe et simple, voir \cite[pp. 216-217]{TitsOber} :

\begin{defn}[Tits]\label{definition quasi-systeme de Tits}
	Soient $c$ la somme des racines simples multipliables de $G$ et $\gamma$ la somme de leurs relèvements de premier type. Alors, on pose
	\begin{equation}
	y_\alpha^{\text{T}}(r)=y_a^{\text{CS}}(2^{\langle \varpi_{\gamma}^{\vee}, \alpha\rangle}r), \end{equation} \begin{equation}
	x_a^{\text{T}}(r)=x_a^{\text{CS}}(2^{\langle \varpi_{2c}^{\vee}, a\rangle}r),\end{equation} \begin{equation}
	x_a^{\text{T}}(u,v)=x_a^{\text{CS}}(2^{\chi_{+}(a)}u,4^{\chi_{+}(a)}v),
	\end{equation}
	et l'on dit que les quasi-systèmes correspondants sont de Tits. Ici, $\chi_{+}$ désigne la fonction caractéristique de $\Phi_G^+$ dans $\Phi_G$.
\end{defn}

Nous sommes amené à définir les \begin{equation}\label{equation elements m du normalisateur singulier tits}m_a^{\on{T}}(r)=x^{\on{T}}_{a}(r^{-1})x^{\on{T}}_{-a}(r)x^{\on{T}}_{a}(r^{-1})\end{equation} pour toute racine $a\in \Phi_G^{\on{nd},\on{nm}}$ ni divisible ni multipliable, et \begin{equation}\label{equation elements m du normalisateur pluriel tits}
m_a^{\on{T}}(u,v)= x^{\on{T}}_{a}\Big(\frac{u}{s(u,v)},\frac{1}{s(u,\sigma(v))}\Big)x^{\on{T}}_{-a}(u,v)x^{\on{T}}_{a}\Big(\frac{u}{s(u,\sigma(v))},\frac{1}{s(u,\sigma(v))}\Big)
\end{equation} pour toute racine multipliable $a \in \Phi_G^{\on{m}}$, où l'on pose $s(u,v)=v+u\sigma(u)$. Nous posons aussi $m_a^{\on{T}}=m_a^{\on{T}}(1)$ si $a$ n'est ni divisible ni multipliable, et $m_a^{\on{T}}=m_a^{\on{T}}(1,0)$ si $a$ est multipliable. 

Considérons l'action de $\Gamma$ par automorphismes de Dynkin sur le groupe épinglé $(H,y_\alpha^{\on{CS}})$ selon le système de Chevalley-Steinberg originel et transportons-la au groupe épinglé $(H,y_\alpha^{\on{T}})$. Il est important de remarquer que ces automorphismes ne préservent pas le système de Tits non plus, car ils introduisent des facteurs de $2$ supplémentaires. Plus tard, il faudra en faire usage et ces automorphismes seront dénommés désormais de Dynkin--Tits.

Enregistrons la propriété la plus significative de cette modification.
\begin{lem}\label{lemme coordonnees de la modification de Tits}
	Les coordonnées des $x_a^{\on{T}}$ par rapport aux $y_\alpha^{\on{T}}$ sont des polynômes entiers dans les variables $\gamma(u)$, $\gamma(v)$ et les $m_b^{\on{T}}$ permutent les $x_a^{\on{T}}$ à similitude près.
\end{lem}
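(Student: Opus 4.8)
The plan is to read the statement as a piece of $2$-adic bookkeeping: the Tits modification of Definition~\ref{definition quasi-systeme de Tits} merely rescales the Chevalley--Steinberg data by powers of $2$, and both assertions come down to checking that the exponents introduced there are exactly those needed to clear denominators and to preserve the conjugation relations. First I would reduce to the case where $H$ is absolutely simple and simply connected, since the pinning, the Dynkin action of $\Gamma$, the coweights $\varpi_\gamma^\vee$ and $\varpi_{2c}^\vee$, and the products defining the $x_a$ all split over the irreducible factors of $\Phi_H$. Among those, only type $A_{2n}$ with its order-$2$ diagram automorphism produces a non-reduced relative system $\Phi_G$, every other folding being reduced; and when $\Phi_G$ is reduced one has $c=\gamma=0$, the Tits and Chevalley--Steinberg systems coincide, and the claim is vacuous. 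Hence all the content is concentrated in the groups $\on{SU}_{2n+1}$.

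For the coordinate claim in the non-multipliable case I would combine the defining relation $x_a^{\on{T}}(r)=x_a^{\on{CS}}(2^{\langle\varpi_{2c}^\vee,a\rangle}r)$ with the expansion $x_a^{\on{CS}}(r)=\prod_\gamma y_{\gamma\alpha}^{\on{CS}}(\gamma(r))$ and the rescaling $y_{\gamma\alpha}^{\on{CS}}(s)=y_{\gamma\alpha}^{\on{T}}(2^{-\langle\varpi_\gamma^\vee,\gamma\alpha\rangle}s)$, to obtain
\begin{equation}
x_a^{\on{T}}(r)=\prod_\gamma y_{\gamma\alpha}^{\on{T}}\bigl(2^{\langle\varpi_{2c}^\vee,a\rangle-\langle\varpi_\gamma^\vee,\gamma\alpha\rangle}\,\gamma(r)\bigr),
\end{equation}
so that integrality reduces to the inequality $\langle\varpi_{2c}^\vee,a\rangle\ge\langle\varpi_\gamma^\vee,\gamma\alpha\rangle$ for every lift $\gamma\alpha$ of $a$, which I would verify directly in the weight lattice of $A_{2n}$. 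In the multipliable case I would start from Example~\ref{exemple modification de tits rang un}, where $x_a^{\on{T}}(u,v)=x_a^{\on{CS}}(2u,4v)$ already turns the offending factors $\tfrac12,\tfrac14$ into integers, and propagate this through the ordered Galois-orbit product
\begin{equation}
x_a^{\on{T}}(u,v)=\prod_{\{\gamma\alpha,\gamma\beta\}} y_{\gamma\beta}^{\on{T}}(\cdots)\,y_{\gamma(\alpha+\beta)}^{\on{T}}(\cdots)\,y_{\gamma\alpha}^{\on{T}}(\cdots),
\end{equation}
tracking how the exponents $\chi_+(a)$ meet the weight shifts $\varpi_\gamma^\vee$. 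The positive-root normalization $\chi_+$ accounts for the $a$ versus $-a$ asymmetry seen in the example, and the manifestly integral commutation term $\sigma(u)u'-u\sigma(u')$ of the Tits plural group (Definition~\ref{definition pluriel tits}) keeps the cross terms integral; only the rank-$\le 2$ groups $\on{SU}_3$ and $\on{SU}_5$ are genuinely needed.

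For the permutation statement I would work first over a field $k$ in which $2$ is invertible, where $y_\alpha^{\on{T}}=\on{int}(\varpi_\gamma^\vee(2))\circ y_\alpha^{\on{CS}}$ exhibits the Tits pinning as a $T^{\on{ad}}(k)$-conjugate of the Chevalley--Steinberg one; by Proposition~\ref{proposition existence et conjugaison des quasi systemes de chevalley} it is again a quasi-system of Chevalley, so the $m_a^{\on{T}}$ normalize $T$, lift the reflections $s_a$, and conjugation sends $U_b$ to $U_{s_a(b)}$. Transporting the relation $\on{int}(m_a^{\on{CS}})\circ x_b^{\on{CS}}\sim x_{s_a(b)}^{\on{CS}}$ through the rescaling, and absorbing the residual source-scaling and sign into the similitude equivalence, yields $\on{int}(m_a^{\on{T}})\circ x_b^{\on{T}}\sim x_{s_a(b)}^{\on{T}}$. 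Since by the coordinate claim both sides are now given by integral polynomials, this identity of morphisms, valid over $\QQ$, persists over $\ZZ$ and specializes to characteristic~$2$.

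The hard part will be the multipliable bookkeeping: matching the source-rescalings governed by $\varpi_{2c}^\vee$ and $\chi_+$ against the pinning-conjugation governed by $\varpi_\gamma^\vee$, and confirming that the terms coming from the plural group law and the norms $N(u)$ survive integrally. This is precisely where the choice of $2^{\chi_+(a)}$ versus $4^{\chi_+(a)}$ in Definition~\ref{definition quasi-systeme de Tits} is forced, and it resists any soft argument; everything else becomes formal once the reduction to $A_{2n}$ and the rank-$\le 2$ computations are in place.
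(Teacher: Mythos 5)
The paper does not actually prove this lemma: its ``proof'' consists of the single sentence that the relevant factors of $2$ cancel, that reproducing the calculations would be tedious, and a pointer back to the rank-one example \ref{exemple modification de tits rang un}. Your proposal is therefore a genuine fleshing-out rather than a reproduction, and the skeleton is the right one: the modification is invisible unless $\Phi_G$ has multipliable roots, which for the quasi-split forms under hypothesis \ref{hypothese groupe dans la fibre generique} pins everything down to the folding of $A_{2n}$ and the relative system $BC_n$; the integrality of the coordinates reduces to an explicit inequality between $\langle\varpi_{2c}^{\vee},a\rangle$ and the pairings $\langle\varpi_{\gamma}^{\vee},\gamma'\alpha\rangle$ of the lifts (which in fact holds with equality for the non-multipliable roots, so that all the content sits in the multipliable and divisible rays, exactly where déf. \ref{definition quasi-systeme de Tits} puts the exponents $\chi_+$); and the permutation claim follows over $\ZZ[1/2]$ from prop. \ref{proposition existence et conjugaison des quasi systemes de chevalley} once one recognizes $(y_\alpha^{\on{T}})$ as $\on{int}(\varpi_{\gamma}^{\vee}(2))\circ(y_\alpha^{\on{CS}})$, and then extends to $\ZZ$ by density — the same density mechanism the paper itself invokes in the proof of prop. \ref{proposition fibre generique quasi-reductive}. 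Two small points deserve tightening. First, the reduction is not to the irreducible factors of $\Phi_H$ but to the $\Gamma$-orbits of such factors (i.e.\ to restrictions of scalars along $\QQ(t_a)/\QQ(t)$ of an absolutely simple group), since $\Gamma$ may permute the components; this costs nothing for integrality because it only introduces further Galois conjugates $\gamma(u)$, $\gamma(v)$, but it should be said. Second, when you ``absorb the residual source-scaling into the similitude'', note that the operations permitted by \cite[4.1.7, A.4]{BTII} are signs, Galois twists of the variable and torus conjugation, not arbitrary rescalings; the clean way out is the one you in fact use implicitly, namely that $\on{int}(m_a^{\on{T}})\circ x_b^{\on{T}}$ and $x_{s_a(b)}^{\on{T}}$ differ by conjugation by an element of $T^{\on{ad}}(\ZZ[1/2])$ together with such permitted operations, and that the resulting identity of integral morphisms is then checked on the generic fibre. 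With these adjustments the plan is correct and, unlike the paper, actually carries the burden of proof; the genuinely unavoidable computation is, as you say, the multipliable bookkeeping in $\on{SU}_3$ and $\on{SU}_5$, where the choice $(2^{\chi_+(a)}u,4^{\chi_+(a)}v)$ is forced by the plural group law.
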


\begin{proof}
	On laisse le soin au lecteur de vérifier que les facteurs de $2$ concernés s'annulent, puisqu'il serait fastidieux de reproduire tous les calculs, comp. aussi avec l'ex. \ref{exemple modification de tits rang un}.
\end{proof}

Sauf mention contraire, on travaillera toujours dans la suite avec de quasi-systèmes de Tits et pas de Chevalley--Steinberg, donc on se permettra d'omettre l'exposant $\on{T}$, lorsqu'aucune confusion n'est à craindre, cf. \S\ref{subsection groupes de Tits} en particulier.
\subsection{Des extensions centrales} \label{derives et coproduits}

Dans cet article, on travaillera souvent en même temps avec une classe de groupes tordus à groupe adjoint donné et l'une des tâches les plus courantes consistera à ramener le cas général au simplement connexe dès que possible, voir les preuves de la prop. \ref{proposition fibre generique quasi-reductive} et du th. \ref{theoreme normalite des schemas de Schubert}. Pour cette raison, nous décrivons maintenant une manière très sympathique, à la suite de \cite[\S1.4]{CGP}, de modifier un $\ZZ$-groupe déployé et épinglé $H$ en son tore maximal $T_H$.

Soit $\widetilde{T}_H$ un $\ZZ$-tore déployé muni des morphismes $T_H \rightarrow \widetilde{T}_H \rightarrow T_H^{\on{ad}}$, dont le composé est l'application naturelle. Comme $T_H^{\on{ad}}$ s'identifie canoniquement au groupe des automorphismes de $H$ laissant $T_H$ stable point par point, voir par exemple \cite[th. 1.3.9, th. A.4.6]{CGP}, ceci équivaut à se donner une opération de $\widetilde{T}_H$ sur $H$ ainsi qu'un morphisme équivariant $T_H \rightarrow \widetilde{T}_H$. En particulier, l'application anti-diagonale
\begin{equation}\label{equation plongement anti-diagonal}T_H \rightarrow H \rtimes \widetilde{T}_H, 
\end{equation}
donnée par $t \mapsto (t^{-1}, t)$, identifie $T_H$ à un sous-groupe fermé central de $H \rtimes \widetilde{T}_H$.

\begin{propn}\label{proposition representabilite modifie central deploye}
	Le faisceau en groupes quotient $(H \rtimes \widetilde{T}_H)/T_H$ de l'application (\ref{equation plongement anti-diagonal}) est représentable par un $\ZZ$-groupe déployé $\widetilde{H}$ à tore maximal $\widetilde{T}_H$ et à groupe adjoint $H^{\emph{ad}}$.
\end{propn}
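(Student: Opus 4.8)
The plan is to realize $\widetilde H$ as an fppf quotient and then pin down its structure by means of two auxiliary homomorphisms. Write $G'=H\rtimes\widetilde T_H$, a smooth affine $\ZZ$-group with connected fibres, and recall that the anti-diagonal $\iota\colon T_H\hookrightarrow G'$, $t\mapsto(t^{-1},t)$, is central: here one uses that $\widetilde T_H$ acts on $H$ through $\widetilde T_H\to T_H^{\on{ad}}$ and that this action fixes $T_H$ pointwise, so that $\iota(t)$ commutes with every $(h,s)$.

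First I would settle representability. Since $T_H$ is a split torus and is central in $G'$, the right-translation coaction grades the Hopf algebra $\Oh(G')$ by $X^*(T_H)$, and centrality forces the degree-zero part $\Oh(G')_0$ to be a Hopf subalgebra; I set $\widetilde H:=\spec\Oh(G')_0$. The projection $G'\to\widetilde H$ is a $T_H$-torsor, which is Zariski-locally trivial because torsors under a split torus are (Hilbert 90, i.e. line bundles), so $\widetilde H$ is an affine $\ZZ$-scheme representing the fppf quotient sheaf $(H\rtimes\widetilde T_H)/T_H$. As the torsor map is faithfully flat and smooth, $\widetilde H$ is smooth, and its fibres $\widetilde H_{\FF_p}=(G'_{\FF_p})/(T_H)_{\FF_p}$ are connected, being quotients of connected groups. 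One may instead invoke the general representability of quotients by central subgroups of multiplicative type.

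Next I would identify the torus and the adjoint quotient. The second-factor inclusion $\widetilde T_H=\{1\}\times\widetilde T_H\hookrightarrow G'\to\widetilde H$ has trivial kernel, since $(\{1\}\times\widetilde T_H)\cap\iota(T_H)$ forces $t^{-1}=1$, so it is a closed immersion of a subtorus. On the other hand, the assignment $(h,s)\mapsto\bar h\cdot\bar s$, using $H\to H^{\on{ad}}$ and $\widetilde T_H\to T_H^{\on{ad}}\subset H^{\on{ad}}$, is a homomorphism $G'\to H^{\on{ad}}$ (a direct check, using that the action of $s$ is $\on{int}(\bar s)$), and it kills $\iota(T_H)$ because $\overline{t^{-1}}\cdot\bar t=1$; hence it descends to $\pi\colon\widetilde H\to H^{\on{ad}}$. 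Since $H\to H^{\on{ad}}$ is already faithfully flat, $\pi$ is surjective, and reducing modulo $\iota(T_H)$ identifies $\ker\pi$ with the central subtorus $\ker(\widetilde T_H\to T_H^{\on{ad}})\subseteq\widetilde T_H$. Thus $\widetilde H$ is an extension of the split semisimple adjoint group $H^{\on{ad}}$ by a central split torus, hence split reductive with adjoint quotient $H^{\on{ad}}$; the subtorus $\widetilde T_H$ equals the preimage $\pi^{-1}(T_H^{\on{ad}})$ of a maximal torus, so it is a split maximal torus, and the images of the root groups $U_\alpha$ of $H$, on which $\widetilde T_H$ acts through the roots over $\ZZ$, show that $\widetilde H$ is split.

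The only genuinely delicate step is the representability over the arithmetic base $\ZZ$: one must ensure the quotient exists as a scheme and remains well behaved in every residue characteristic, in particular at the small primes $p\le 3$ that play a role elsewhere. This is exactly where the hypothesis that $T_H$ is a \emph{central split torus} is used—torus torsors are Zariski-locally trivial and tori are smooth in all characteristics—so no degeneration occurs and the construction is uniform. Once this is granted, Steps on the torus and the adjoint quotient are purely formal diagram chases, amounting to the central pushout recipe of \cite[\S1.4]{CGP}.
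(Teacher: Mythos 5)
Votre démonstration est correcte, mais elle suit une route sensiblement différente de celle du texte. Le texte expédie la représentabilité en invoquant le résultat général d'Anantharaman sur les quotients plats au-dessus d'une base régulière de dimension $1$, puis vérifie toutes les propriétés restantes fibre à fibre au-dessus des corps algébriquement clos en renvoyant à \cite[prop. 1.4.3]{CGP}. Vous exploitez au contraire le fait que $T_H$ est un tore déployé \emph{central} : la graduation de l'algèbre de Hopf par $X^*(T_H)$ et la trivialité Zariski-locale des torseurs sous un tore déployé (Hilbert 90) donnent une construction directe et élémentaire du quotient affine sur $\ZZ$, sans recourir à Anantharaman ; puis vous identifiez le tore maximal et le quotient adjoint par des homomorphismes explicites $(1,s)\mapsto \overline{(1,s)}$ et $(h,s)\mapsto \bar h\bar s$ définis sur $\ZZ$, plutôt que par réduction aux fibres géométriques. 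Votre approche est plus autonome et rend visible la structure (la suite exacte $1\to\ker(\widetilde T_H\to T_H^{\on{ad}})\to\widetilde H\to H^{\on{ad}}\to 1$), au prix de quelques vérifications de diagrammes que le texte délègue à \cite{CGP} ; l'approche du texte est plus courte et s'applique mot pour mot à des sous-groupes centraux plats plus généraux. Une seule imprécision à corriger chez vous : $\ker(\widetilde T_H\to T_H^{\on{ad}})$ n'est en général qu'un sous-groupe diagonalisable (de type multiplicatif) et non un sous-tore — pensez au cas $\widetilde T_H=T_H$ avec $H$ semi-simple, où ce noyau est le centre fini $Z(H)$ ; cela n'affecte pas la conclusion, car une extension centrale d'un groupe adjoint par un groupe diagonalisable lisse au sens requis reste réductive, mais la phrase « extension par un tore central déployé » doit être affaiblie en conséquence.
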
 

\begin{proof}
	La représentabilité d'un quotient plat sur une base régulière de dimension $1$ est un résultat d'Anantharaman, cf. \cite[\S4]{Anan}. Les propriétés restantes se vérifient au-dessus de chaque corps algébriquement clos, voir \cite[prop. 1.4.3]{CGP}.	
\end{proof}

 En particulier, on en tire une suite exacte courte \begin{equation}\label{equation z-extension canonique deploye}1 \rightarrow T_H^{\on{sc}} \rightarrow H^{\on{sc}}\rtimes T_H \rightarrow H \rightarrow 1, \end{equation}
 car le tore maximal $T_H$ opère sur $H$ en préservant le groupe dérivé, d'où le relèvement de l'opération au revêtement simplement connexe $H^{\on{sc}}$. Vu que $T_H^{\on{sc}}$ est induit et que le groupe dérivé de $H^{\on{sc}}\rtimes T_H$ est simplement connexe, cette extension centrale de $H$ est habituellement appelée une $z$-extension, cf. \cite[prop. 3.1]{MS82}. Puisque sa formation ne dépend que du choix du tore maximal $T_H$, nous parlerons souvent de la $z$-extension canonique du groupe épinglé $H$.

Traitons enfin le cas des formes tordues quasi-déployées comme dans \S\ref{formes tordues quasi-deployees}. Soient alors $l/k$ une extension galoisienne à groupe de Galois $\Gamma$, $H$ un $k$-groupe épinglé muni d'une action de $\Gamma$ par automorphismes de Dynkin (ou bien de Dynkin--Tits en dehors de la caractéristique $2$, cf. \S\ref{facteurs de 2 dans le cas non reduit}) et $G$ le groupe obtenu par torsion comme dans la constr. \ref{construction groupe quasi deploye}. En se donnant de nouveau un $k$-tore déployé $\widetilde{T}_H$ opéré par $\Gamma$ et des $k$-morphismes $T_H \rightarrow \widetilde{T}_H\rightarrow T_H^{\on{ad}}$ invariants par $\Gamma$, on arrive aux applications de tores tordus $T \rightarrow \widetilde{T} \rightarrow T^{\on{ad}}$, ce qu'on utilise pour définir le modifié central
\begin{equation}\label{equation modifie central tordu} \widetilde{G}=(G\rtimes \widetilde{T})/T\end{equation}
de $G$, lequel est réductif connexe à tore maximal égal à $T$, cf. prop. \ref{proposition representabilite modifie central deploye}. D'autre part, l'action de $\Gamma$ sur $H$ se prolonge naturellement en une action sur la modification centrale $\widetilde{H}$.

\begin{lem}
	Le groupe $\widetilde{G}$ s'identifie canoniquement à la forme tordue quasi-déployée associée à $\widetilde{H}$ muni de l'action naturelle de $\Gamma$.
\end{lem}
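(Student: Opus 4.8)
Le plan est de montrer que $\widetilde{G}$ s'identifie à $(\Res_{l/k}\widetilde{H})^{\Gamma}$, lequel n'est autre, d'après la constr.~\ref{construction groupe quasi deploye}, que la forme tordue quasi-déployée associée à $\widetilde{H}$ muni de son action naturelle de $\Gamma$. L'idée directrice est que les deux membres s'obtiennent à partir des mêmes données par les opérations de restriction des scalaires, de produit semi-direct, de quotient central et de prise d'invariants sous $\Gamma$; il suffit donc de vérifier que ces opérations commutent convenablement.

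Premièrement, je vérifierais que la restriction des scalaires commute à la modification centrale. Comme $l/k$ est finie séparable, le foncteur $\Res_{l/k}$ est exact sur les faisceaux de groupes pour la topologie étale, car localement pour celle-ci il s'identifie à un produit fini de copies; il préserve donc la suite exacte courte $1 \rightarrow T_H \rightarrow H \rtimes \widetilde{T}_H \rightarrow \widetilde{H} \rightarrow 1$ définissant $\widetilde{H}$. Comme $\Res_{l/k}$ commute trivialement au produit semi-direct, on obtient un isomorphisme $\Gamma$-équivariant
\begin{equation}
\Res_{l/k}\widetilde{H} \simeq (\Res_{l/k}H \rtimes \Res_{l/k}\widetilde{T}_H)/\Res_{l/k}T_H .
\end{equation}

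Deuxièmement, je prendrais les invariants sous $\Gamma$. Puisque l'action de $\Gamma$ respecte la décomposition en produit semi-direct, on a $(\Res_{l/k}H \rtimes \Res_{l/k}\widetilde{T}_H)^{\Gamma} = G \rtimes \widetilde{T}$, les invariants d'un produit étant le produit des invariants. Reste l'étape cruciale, qui constitue le principal obstacle : montrer que $(\cdot)^{\Gamma}$ commute au quotient par $\Res_{l/k}T_H$. L'obstruction réside dans le terme $H^1(\Gamma, \Res_{l/k}T_H)$ de la suite exacte longue de cohomologie galoisienne, calculée comme faisceau fppf. Or $T_H$ est déployé, de sorte que $\Res_{l/k}T_H$ est un tore induit, étale-localement isomorphe à $\on{Map}(\Gamma, T_H)$, et ce $H^1$ s'annule par le lemme de Shapiro et le théorème 90 de Hilbert. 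La suite $1 \rightarrow T \rightarrow G \rtimes \widetilde{T} \rightarrow (\Res_{l/k}\widetilde{H})^{\Gamma} \rightarrow 1$ est donc exacte, ce qui fournit l'identification canonique
\begin{equation}
(\Res_{l/k}\widetilde{H})^{\Gamma} \simeq (G \rtimes \widetilde{T})/T = \widetilde{G} .
\end{equation}

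Je garderais en réserve une variante plus concrète par descente galoisienne : les deux groupes deviennent isomorphes après extension à $l$, et il suffit alors de vérifier que les données de descente, c'est-à-dire les actions tordues $\gamma \mapsto \widetilde{\tau}(\gamma)\otimes\gamma$, coïncident, ce qui découle du fait que l'action naturelle de $\Gamma$ sur $\widetilde{H}$ est induite par celle sur $H$, $\widetilde{T}_H$ et $T_H$. Enfin, la réductivité, la connexité et l'égalité du tore maximal avec $T$ résultent de la prop.~\ref{proposition representabilite modifie central deploye}, de sorte que l'isomorphisme obtenu respecte la structure quasi-déployée, comme voulu.
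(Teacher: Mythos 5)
Le texte énonce ce lemme sans démonstration aucune (il est considéré comme résultant des constructions précédentes), de sorte qu'il n'y a pas d'argument du papier auquel comparer le vôtre ; votre preuve est néanmoins correcte et comble cette lacune de façon raisonnable. Vos deux premières étapes — exactitude de $\Res_{l/k}$ le long d'une extension finie séparable sur les faisceaux de groupes, compatibilité au produit semi-direct, puis $(\Res_{l/k}H \rtimes \Res_{l/k}\widetilde{T}_H)^{\Gamma} = G \rtimes \widetilde{T}$ — sont sans reproche. Le seul point qui demande du soin est celui que vous identifiez vous-même : la commutation de $(\cdot)^{\Gamma}$ au quotient central. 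Votre invocation de Shapiro et de Hilbert 90 est essentiellement juste, mais gagnerait à être formulée ainsi : l'annulation requise est celle du \emph{faisceau} associé à $R \mapsto H^1(\Gamma, T_H(R\otimes_k l))$ (l'action étant tordue par $\tau$), et elle s'obtient en se plaçant étale-localement là où $l\otimes_k R$ se scinde en $\prod_\Gamma R$, auquel cas le module devient coinduit et Shapiro seul conclut — pour un anneau $R$ général, $H^1(\Gamma,(R\otimes_k l)^\times)$ ne s'annule pas (il s'injecte dans $\on{Pic}(R)$), donc l'argument ne vaut qu'au niveau des faisceaux, comme vous le précisez d'ailleurs. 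Notez enfin que votre « variante en réserve » par descente galoisienne est sans doute la route la plus économique : les deux groupes deviennent $\widetilde{H}_l$ après extension à $l$ et les cocycles $\gamma\mapsto\widetilde{\tau}(\gamma)\otimes\gamma$ coïncident par construction, ce qui évite toute discussion cohomologique ; une troisième option, encore plus élémentaire, serait de constater que le morphisme naturel $\widetilde{G}\rightarrow(\Res_{l/k}\widetilde{H})^{\Gamma}$ est un monomorphisme de groupes lisses connexes de même dimension, donc un isomorphisme.
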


Il résulte de même la $z$-extension canonique, comp. avec (\ref{equation z-extension canonique deploye}) :
\begin{equation}\label{equation z-extension canonique tordu} 1\rightarrow T^{\on{sc}}\rightarrow  G^{\on{sc}}\rtimes T \rightarrow G \rightarrow 1,\end{equation}
dont l'avantage principal n'est pas seulement sa naturalité, mais aussi que plusieurs propriétés convenables de $G$ soient préservées, comme nous verrons plus tard, cf. par exemple l'hyp. \ref{hypothese groupe dans la fibre generique}.

\section{Familles entières de groupes parahoriques}\label{groupes sur Z[t]}

Dans cette partie, on construit au \S\ref{subsection groupes immobiliers modelant tits} les $\AAA^1_\ZZ$-modèles en groupes \og immobiliers \fg{} $\underline{\Gscr_{\Omega}}$ des groupes de Tits $\underline{G}$ sur $\GG_{m,\ZZ}$ introduits au \S\ref{subsection groupes de Tits} et l'on étudie leurs propriétés, démontrant notamment leur affinité, cf. th. \ref{theoreme affinite modeles immobiliers}.

\subsection{Théorie de Bruhat--Tits pour les groupes pseudo-réductifs}\label{subsection theorie de bt pseudo-reductive}

Comme il a été expliqué dans l'introduction, le but principal de cet article nous amène à travailler avec des groupes pseudo-réductifs en tant que spécialisations en caractéristiques petites et/ou mauvaises de certains groupes réductifs, voir la prop. \ref{proposition fibre generique quasi-reductive}.

Commençons par rappeler cette notion pour ceux qui ne la connaissent pas bien :

\begin{defn}
	Soient $k$ un corps et $G$ un $k$-groupe lisse, affine et connexe. On dit que $G$ est pseudo-réductif s'il ne contient aucun sous-$k$-groupe invariant, unipotent, lisse et connexe (autrement dit, si son $k$-radical unipotent $R_{u,k}G$ s'annule).
\end{defn} 

Cette classe des groupes pseudo-réductifs contient nettement celle des groupes réductifs connexes et est stable par extensions des scalaires séparables, voir \cite[prop. 1.1.9]{CGP}. Voici l'exemple prototypique, cf. \cite[prop. 1.1.10]{CGP}, d'un groupe pseudo-réductif qui n'est pas réductif :

\begin{exmp}
	Soient $l/k$ une extension finie non-triviale et purement inséparable de corps et $H$ un $l$-groupe réductif. Alors, $G=\Res_{l/k}H$ est pseudo-réductif, vu que, si l'on note $U$ le $k$-radical unipotent de $G$, alors l'image de $U_l \subset G_l$ dans $H$ est lisse, connexe, distinguée et unipotente, donc $U=1$ se réduit à l'unité lui-même. D'autre part, $G$ n'est pas réductif, car la $p$-torsion du centre de $G(k^{\on{alg}})=H(l\otimes_k k^{\on{alg}})$ est un groupe infini, comp. avec \cite[lem. 1.2.1]{CGP}, d'après un calcul élémentaire appliquant que $l\otimes_k k^{\on{alg}}$ est une $k^{\on{alg}}$-algèbre non réduite.
\end{exmp}

Malgré cet exemple, on pourrait craindre qu'il ne soit typiquement pas possible d'apprivoiser ces groupes pseudo-réductifs et qu'ils n'admettent aucun théorème raisonnable, soit de structure, soit de classification. Au contraire, Borel--Tits avaient déjà découvert dans \cite{BoT2} que leur structure générale ressemble beaucoup à celle des groupes réductifs et connexes : 

\begin{thm}[Borel--Tits]\label{theoreme borel-tits structure}
	Tous les tores déployés maximaux de $G$ sont conjugués par $G(k)$. Étant donné un tore déployé maximal $S$ de $G$, alors l'ensemble des poids $\Phi_G$ de l'action de $S$ sur l'algèbre de Lie $\gf$ de $G$ se prolonge naturellement en une donnée radicielle $(X_*(S),X^*(S),\Phi_G^\vee,\Phi_G)$. Les espaces de poids $\gf_a\oplus \gf_{2a}$ sont sous-jacents à l'un et un seul sous-$k$-groupe lisse, unipotent et connexe $U_a$ de $G$, tel que l'application produit
	\begin{equation}\label{equation immersion ouverte grosse cellule}\prod_{a \in \Phi_G^-} U_{-a} \times Z_G(S) \times \prod_{a \in \Phi_G^+} U_a \rightarrow G,\end{equation}
	soit une immersion ouverte, quel que soit le choix de racines positives et quel que soit l'ordre en lequel le produit est mis. Ces données ne dépendent de $S$ qu'à $G(k)$-conjugaison près.
\end{thm}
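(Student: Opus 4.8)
Le plan est de transposer au cadre pseudo-réductif la théorie relative de Borel--Tits, en suivant le développement de Conrad--Gabber--Prasad, cf.~\cite[ch.~2]{CGP}. La conjugaison des tores déployés maximaux sous $G(k)$ est un fait général concernant tout $k$-groupe lisse, affine et connexe, indépendant de l'hypothèse de pseudo-réductivité, cf.~\cite[th.~C.2.3]{CGP}. On fixe donc un tel tore $S$ et l'on décompose l'algèbre de Lie en espaces de poids sous l'action adjointe,
\begin{equation*}\gf = \on{Lie}(Z_G(S)) \oplus \bigoplus_{a \in \Phi_G} \gf_a,\end{equation*}
où $\Phi_G \subset X^*(S)\setminus\{0\}$ désigne l'ensemble fini des poids non triviaux.

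Pour la grosse cellule (\ref{equation immersion ouverte grosse cellule}), je m'appuierais sur la méthode dynamique, cf.~\cite[\S2.1]{CGP}. À tout cocaractère $\lambda \in X_*(S)$ sont attachés des sous-groupes lisses $Z_G(\lambda)$, $U_G(\lambda)$, avec $U_G(\lambda)$ unipotent connexe, tels que $P_G(\lambda)=Z_G(\lambda)\ltimes U_G(\lambda)$ et que le produit
\begin{equation*}U_G(-\lambda)\times Z_G(\lambda) \times U_G(\lambda) \rightarrow G\end{equation*}
soit une immersion ouverte. En choisissant $\lambda$ régulier, soit $\langle a,\lambda\rangle \neq 0$ pour toute racine $a$, on a $Z_G(\lambda)=Z_G(S)$, et l'on pose $\Phi_G^\pm=\{a : \pm\langle a, \lambda\rangle > 0\}$, de sorte que $U_G(\pm\lambda)$ soit la partie unipotente positive (resp. négative).

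Il reste à raffiner $U_G(\lambda)$ en produit de sous-groupes radiciels et à construire les coracines. Pour $a\in\Phi_G^{\on{nd}}$, soit $S_a$ le plus grand sous-tore de $S$ annulé par $a$, de codimension $1$ ; alors $Z_G(S_a)$ est pseudo-réductif de rang relatif $1$, dont les seules racines relatives sont $\pm a$ et, le cas échéant, $\pm 2a$. En appliquant la méthode dynamique à l'intérieur de $Z_G(S_a)$, on obtient le sous-groupe $U_a:=U_{Z_G(S_a)}(\lambda)$, lisse, unipotent, connexe, normalisé par $Z_G(S)$ et d'algèbre de Lie $\gf_a\oplus\gf_{2a}$ ; cette dernière caractérisation, jointe à la normalisation par le tore, en assure l'unicité, cf.~\cite[\S2.3]{CGP}. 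La factorisation de $U_G(\lambda)$ comme produit des $U_a$, pour $a$ non divisible positive, s'en déduit par récurrence sur $\lvert\Phi_G^+\rvert$, l'indépendance de l'ordre provenant des relations de commutation. Enfin, le groupe de Weyl relatif $N_{Z_G(S_a)}(S)/Z_G(S)$ est d'ordre $2$ ; un représentant $m_a$ agit sur $X^*(S)$ comme la réflexion $s_a(\chi)=\chi-\langle\chi,a^\vee\rangle a$, ce qui définit la coracine $a^\vee\in X_*(S)$ et, de proche en proche, la donnée radicielle $(X_*(S),X^*(S),\Phi_G^\vee,\Phi_G)$. L'indépendance du tout vis-à-vis du choix de $S$, à $G(k)$-conjugaison près, découle alors de la première étape.

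L'obstacle principal est l'analyse de rang $1$ sur laquelle reposent les coracines et l'élément de Weyl $m_a$ : à la différence du cas réductif, le groupe dérivé de $Z_G(S_a)$ n'est pas forcément une forme de $\on{SL}_2$ ou $\on{PGL}_2$ et peut être pseudo-réductif non standard, de sorte qu'il faut invoquer la description des groupes pseudo-réductifs de rang $1$, cf.~\cite[ch.~3]{CGP}, afin de garantir que le groupe de Weyl relatif soit bien d'ordre $2$ et, partant, que la réflexion $s_a$ et la coracine $a^\vee$ soient bien définies.
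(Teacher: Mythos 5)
Votre proposition est correcte et suit essentiellement la même voie que l'article, lequel se borne à renvoyer aux démonstrations complètes de Conrad--Gabber--Prasad, cf. \cite[prop. 2.1.8, th. C.2.3, th. C.2.15, prop. C.2.26]{CGP} ; vous ne faites qu'expliciter l'argument de ces références (méthode dynamique, analyse des centralisateurs de rang $1$, grosse cellule). Rien à redire, si ce n'est que l'article externalise entièrement ce travail.
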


\begin{proof}
	Toutes les affirmations avaient été annoncées en \cite{BoT2}, dont les preuves n'étaient qu'esquissées. Puis Conrad--Gabber--Prasad ont fourni dans leur \oe{}uvre \cite{CGP} des arguments complets, voir \cite[prop. 2.1.8, th. C.2.3, th. C.2.15, prop. C.2.26]{CGP}.
\end{proof}

L'autre question qu'on s'est naturellement posée était de savoir s'il est loisible de classifier tous les groupes pseudo-réductifs. Tits avait déjà remarqué dans l'importance de l'application canonique
\begin{equation}\label{equation application naturelle restriction des scalaires}
i_G: G \rightarrow \Res_{k'/k} G',
\end{equation} où $k'$ est le corps de définition du radical unipotent géométrique $R_u G_{k^{\on{alg}}}$ et $G'$ le $k^{\on{alg}}$-quotient réductif de $G_{k^{\on{alg}}}$. On ne peut pas s'attendre typiquement à ce que $i_G$ soit injectif, soit surjectif, parce que $G$ peut être modifié centralement par la procédure de modification centrale de (\ref{equation modifie central tordu}), et vu que les groupes pseudo-réductifs commutatifs sont habituellement n'importe quoi, cf. \cite[ex. 1.6.3, ex. 1.6.4, \S11.3]{CGP}.
À ces phénomènes de Cartan près, une classification exhaustive a été trouvée par Conrad--Gabber--Prasad, cf. \cite[th. 5.1.1, th. 10.2.1]{CGP}.

\begin{thm}[Conrad--Gabber--Prasad]\label{theoreme classification cgp}
	Soit $k$ un corps de caractéristique $p$ satisfaisant à $[k:k^p]\leq p$ dès que $p$ est positif. Alors tous les groupes pseudo-réductifs s'obtiennent naturellement à partir des groupes simplement connexes, des exotiques et des barcelonais en combinant les opérations produit, restriction des scalaires et modification centrale.
\end{thm}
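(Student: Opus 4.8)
Cette assertion étant un résultat profond de \cite{CGP}, je me bornerais à en esquisser l'architecture. Le plan serait de se ramener d'abord au cas absolument pseudo-simple, puis d'énumérer les types possibles. Grâce au th.~\ref{theoreme borel-tits structure} de Borel--Tits, on dispose d'un système de racines $\Phi_G$ et de sous-groupes radiciels $U_a$, de sorte que les composantes irréductibles de $\Phi_G$ permettent, quitte à une modification centrale, de décomposer $G$ en facteurs pseudo-simples à système de racines irréductible, réduisant ainsi le problème aux blocs de base.

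L'ingrédient central serait ensuite l'application canonique $i_G$ de (\ref{equation application naturelle restriction des scalaires}) vers $\Res_{k'/k} G'$, où $k'$ désigne le corps de définition du radical unipotent géométrique et $G'$ le quotient réductif géométrique. En bonnes caractéristiques, c'est-à-dire lorsque $p \neq 2, 3$, je montrerais qu'à modification centrale près au sens de (\ref{equation modifie central tordu}), le groupe $G$ est standard, donc provient de $\Res_{k'/k} G'$ avec $G'$ semi-simple simplement connexe, cf.~\cite[th. 5.1.1]{CGP}. La clé serait de comparer les sous-groupes radiciels et les algèbres de Lie des deux membres de $i_G$ afin de confiner le noyau et le conoyau dans la partie commutative, à savoir les phénomènes de Cartan déjà signalés.

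Le cœur de l'affaire serait le traitement des caractéristiques $p = 2, 3$, où surgissent les exemples non standards. Les isogénies très spéciales entre groupes à racines de longueurs distinctes — le type $\Grm_2$ en caractéristique $3$, et les types $\Brm_n$, $\Crm_n$, $\Frm_4$ en caractéristique $2$ — produisent les groupes pseudo-réductifs exotiques, tandis que le système de racines non réduit $\mathrm{BC}_n$, propre à la caractéristique $2$, engendre les groupes pseudo-simples non réduits de base, dits barcelonais. L'hypothèse $[k:k^p] \leq p$ interviendrait de façon essentielle pour garantir que ces constructions épuisent tous les cas et que les facteurs de Cartan demeurent maîtrisables, ce qui est précisément l'objet de \cite[th. 10.2.1]{CGP}.

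L'obstacle principal serait incontestablement l'analyse en caractéristique $2$ : la combinatoire des sous-groupes radiciels pluriels et le défaut de lissité des formes naïves, déjà rencontrés au \S\ref{facteurs de 2 dans le cas non reduit}, imposent de passer par la modification de Tits pour obtenir des modèles lisses, et c'est la subtilité du cas non réduit $\mathrm{BC}_n$ qui rend la classification délicate, justifiant par là même le soin apporté aux groupes unitaires de dimension impaire dans le reste de l'article.
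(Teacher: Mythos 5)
Le texte ne démontre pas ce théorème : il se borne à renvoyer à \cite[th. 5.1.1, th. 10.2.1]{CGP}, et votre esquisse reproduit fidèlement l'architecture de la preuve de Conrad--Gabber--Prasad (réduction au cas absolument pseudo-simple à modification centrale près, étude de l'application $i_G$, standardité hors de $p=2,3$, isogénies très spéciales pour les exotiques en types $\Brm_n$, $\Crm_n$, $\Frm_4$, $\Grm_2$, type $\mathrm{BC}_n$ pour les barcelonais), en accord avec les ex. \ref{exemple exotique basique} et \ref{exemple absolument non reduit} du texte. Notez seulement que votre dernier paragraphe mêle à cette classification sur les corps la modification de Tits du \S\ref{facteurs de 2 dans le cas non reduit}, laquelle est un artifice propre aux modèles entiers sur $\ZZ[t]$ de cet article et n'intervient pas dans l'argument de \cite{CGP}.
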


Les groupes exotiques, voir \cite[déf. 7.2.6]{CGP}, et les barcelonais, cf. \cite[déf. 10.1.2]{CGP}, n'existent qu'en caractéristiques petites $p=2$ ou $3$ et sont reliés à certains phénomènes de $p$-divisibilité du système de racines $\Phi_G$. Dans la suite, nous allons éclaircir leur aspect dans le cas quasi-déployé.

\begin{rem}\label{remarque classification cp}
	Le th. \ref{theoreme classification cgp} vaut plus généralement lorsque $p \neq 2$ ou $p=2$ et $[k:k^2]\leq 2$. Le cas restant où $p=2$ et $[k:k^2]>2$ fut traité par Conrad--Prasad dans \cite{CP}, en supposant que $G$ est localement de type minimal, voir \cite[\S1.6, th. de str.]{CP}.
\end{rem}

Penchons-nous d'abord sur le cas exotique :

\begin{exmp}[exotique]\label{exemple exotique basique}
	Soient $k$ un corps imparfait de caractéristique $p$ satisfaisant à $[k:k^p]= p$ et $G$ un groupe exotique basique et quasi-déployé, voir \cite[déf. 7.2.6]{CGP}. Alors, on a $p=2,3$, $\Phi_G$ est irréductible et possède une $p$-arête, donc on peut appliquer \cite[lem. C.2.2]{CP} pour déduire que $G$ est pseudo-déployé. D'après \cite[th. 7.2.3]{CGP}, le $k^{1/p}$-quotient réductif $G'$ de $G$ est déployé et simplement connexe à système de racines $\Phi_G$, tandis que l'application canonique $ i_G :G \rightarrow \Res_{k^{1/p}/k} G'$ de (\ref{equation application naturelle restriction des scalaires}) est une immersion fermée. Soit \begin{equation}\pi': G' \rightarrow \overline{G}'\end{equation} l'isogénie très spéciale, voir \cite[déf. 7.1.3]{CGP}, définie sur $k^{1/p}$, où $\overline{G}'$ est un groupe déployé et simplement connexe de type dual. Alors, l'image de $G$ par la restriction des scalaires \begin{equation} \pi : \Res_{k^{1/p}/k}  G' \rightarrow\Res_{k^{1/p}/k} \overline{G}'\end{equation} de l'isogénie très spéciale $\pi'$ est un $k$-Levi $\overline{G} $ du membre de droite, c'est-à-dire un $k$-descendu de $\overline{G}'$, et $G$ s'identifie à l'image réciproque par $\pi$ de ce Levi $\overline{G}$ dans $\Res_{k^{1/p}/k}  G'$, voir \cite[déf. 7.2.6]{CGP}.
	
	On peut trouver un $k^{1/p}$-système de Chevalley, cf. déf. \ref{definition d'un systeme de Chevalley}, de $G'$ (resp. $k$-système de Chevalley de $\overline{G}$) tels que l'isogénie très spéciale s'identifie à
	\begin{equation}\pi: x'_a(r) \mapsto \begin{cases} \overline{x}_{\overline{a}}(r^p), a \in \Phi_{G}^<\\
	\overline{x}_{\overline{a}}(r), a \in \Phi_G^>
	\end{cases}\end{equation}
	restreinte aux groupes radiciels relativement à $S'$ (resp. $\overline{S}$) et à
	\begin{equation}\pi:a^\vee(r) \mapsto \begin{cases} \overline{a}^\vee(r^p),  a \in \Phi_G^<\\
	\overline{a}^\vee(r), a \in \Phi_G^>
	\end{cases}\end{equation}
	restreinte aux facteurs multiplicatifs associés aux coracines, comp. avec \cite[prop. 7.1.5]{CGP}, où les exposantes $<,>$ notent les racines courtes, resp. longues de $\Phi_G$.
	
	Ainsi, on obtient des isomorphismes
	\begin{equation} x_a:\Res_{k_a/k}\GG_a \rightarrow U_a  \end{equation}\begin{equation}
	a^\vee: \Res_{k_a/k}\GG_a \rightarrow Z^a, 
	\end{equation}
	où $k_a=k^{1/p}$ si $a \in \Phi_G^<$, $k_a=k$ si $a \in \Phi_G^>$ et $Z^a$ désigne l'intersection $Z \cap \langle U_a, U_{-a}\rangle$, cf. \cite[rem. 7.2.8]{CGP}.
\end{exmp}

Le groupe pseudo-réductif suivant est aussi pseudo-déployé, mais son système de racines absolu n'est pas réduit, c'est-à-dire de type $BC_n$.\footnote{Cela justifie que ces groupes soient appelés barcelonais ci-après. La raison pour laquelle nous avons préféré cette terminologie au mot \og non réduit \fg{} utilisé dans \cite{CGP} est que ce dernier entre en conflit avec son sens en géométrie algébrique, ce qui fut très importante afin de motiver la modification de Tits, voir la rem. \ref{remarque limite inseparable pluriel} et la prop. \ref{proposition fibre generique quasi-reductive}.}

\begin{exmp}[barcelonais]\label{exemple absolument non reduit}
	Soient $k$ un corps imparfait de caractéristique $2$ tel que $[k:k^2]= 2$ et $G$ un groupe pseudo-réductif barcelonais basique, cf. \cite[déf. 10.1.2]{CGP}. Alors, on sait déjà que $G$ est pseudo-déployé et que son $k^{1/2}$-quotient réductif $G'$ est déployé et simplement connexe à système de racines $\Phi_G^{\on{nm}}$, cf. \cite[th. 2.3.10]{CGP}. L'application correspondante $i_G: G \rightarrow \Res_{k^{1/2}/k}G'$, voir (\ref{equation application naturelle restriction des scalaires}), induit des isomorphismes
	\begin{equation}Z \rightarrow \Res_{k^{1/2}/k}Z', \end{equation} \begin{equation} U_a \rightarrow \Res_{k^{1/2}/k}U'_a, a \in \Phi_G^{\on{nd,nm}}\end{equation} 
	entre les Cartans et les sous-groupes radiciels associés aux racines ni divisibles ni multipliables, voir \cite[th. 9.4.7]{CGP}. Cependant, d'après \cite[prop. 9.5.2, prop. 9.6.8]{CGP} le morphisme $U_a \rightarrow \Res_{k^{1/2}/k} U'_{2a}$ pour $a \in \Phi_G^m$ peut être identifié avec
	\begin{equation}\GG_{p,k^{1/2}/k} \rightarrow \Res_{k^{1/2}/k}\GG_a\end{equation}
	donné par $(u,v)\mapsto u^2+v$, où l'on note $\GG_{p,k^{1/2}/k}$ le groupe unipotent pluriel modifié selon la définition limite de la rem. \ref{remarque limite inseparable pluriel} (ceci ne dépend pas à isomorphisme près du choix d'un complément $k$-linéaire $k_0= k\subset k^{1/2}$ pour $\alpha \in k^{1/2}\setminus k$). On signale que la construction de \cite[\S\S9.6-9.7]{CGP} utilise les lois birationnelles strictes par réduction au cas de $\Res_{k^{1/2}/k}G'$, tandis que Tits a réalisé $G$ explicitement comme une sorte de groupe unitaire de dimension impaire dégénéré en caractéristique $2$, voir \cite[pp. 218-219]{TitsOber}. Par alternative, voir les props. \ref{proposition fibre generique quasi-reductive} et \ref{proposition plongement restrictions des scalaires groupe de tits}.
\end{exmp}

Motivés par l'étude des groupes algébriques sur les corps valués, Bruhat--Tits ont crée leur théorie des immeubles et des modèles en groupes immobiliers, cf. \cite{BTI} et \cite{BTII}. Les exemples précédents montrent que les points rationnels des groupes pseudo-réductifs ne s'éloignent beaucoup de ceux des groupes réductifs. Solleveld s'y est appuyé dans \cite{Sol} pour parvenir à construire des immeubles et, dans \cite[déf. 3.5, th. 4.9]{LouBT}, nous avons construit les schémas en groupes.

\begin{thm}[Solleveld, L.]\label{theoreme theorie de bruhat-tits pseudo-reductive}
	La théorie de Bruhat--Tits s'étend aux groupes pseudo-réductifs sur les corps complets et résiduellement parfaits.
\end{thm}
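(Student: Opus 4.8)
La stratégie est de ramener l'énoncé à la production d'une \emph{donnée radicielle valuée} $\big(Z_G(S)(k),(U_a(k),\varphi_a)_{a\in \Phi_G}\big)$ au sens de \cite{BTI}, car une telle donnée débouche d'un côté sur la construction de l'immeuble, qui est le contenu de \cite{Sol}, et de l'autre sur celle des modèles en groupes entiers, qui est le contenu de \cite[déf. 3.5, th. 4.9]{LouBT}. Le point de départ est le théorème de structure de Borel--Tits, cf. th. \ref{theoreme borel-tits structure}, qui nous livre un tore déployé maximal $S$, son centralisateur $Z=Z_G(S)$, les sous-groupes radiciels $U_a$ et la grosse cellule. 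On dispose ainsi de tout l'ingrédient combinatoire ; il ne reste qu'à le munir de filtrations compatibles.

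Pour définir les valuations $\varphi_a$, je me ramènerais d'abord au rang relatif un au moyen des homomorphismes $\zeta_a$ de la déf. \ref{definition quasi-systemes de chevalley}, qui identifient $\langle U_a,U_{-a}\rangle$ à l'image d'un groupe quasi-déployé de rang un. Lorsque $a\in\Phi_G^{\on{nd,nm}}$ n'est ni divisible ni multipliable, on a $U_a\simeq \Res_{l_a/k}\GG_a$ et $\varphi_a$ se lit directement sur la valuation du corps $l_a$ ; lorsque $a\in\Phi_G^{\on{m}}$ est multipliable, la structure de groupe unipotent pluriel fournit une filtration à deux crans gouvernée par les valuations de $l_a$ et de $l_{2a}$, selon que l'on regarde la première ou la seconde coordonnée. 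Les axiomes d'une donnée radicielle valuée se vérifieraient ensuite sur les relations de commutation et sur l'action des éléments $m_a$, en exploitant les formules explicites du quasi-système de Chevalley.

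La nouveauté pseudo-réductive, qui constitue à mon sens l'obstacle principal, est que les corps $l_a$ peuvent être purement inséparables sur $k$ dans le cas exotique, cf. ex. \ref{exemple exotique basique}, ou que le groupe pluriel dégénère en caractéristique $2$ dans le cas barcelonais, cf. ex. \ref{exemple absolument non reduit} et rem. \ref{remarque limite inseparable pluriel}. Dans ces situations, la classification de \cite{CGP}, cf. th. \ref{theoreme classification cgp}, permet de décrire explicitement chaque $U_a$ et donc chaque $\varphi_a$, ce qui ramène la vérification des axiomes à des calculs concrets sur les groupes basiques. La perfection du corps résiduel interviendrait de façon essentielle pour contrôler la réduction des tores induits et pour garantir que les quotients réductifs des fibres spéciales se comportent comme attendu.

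Une fois la donnée radicielle valuée établie et l'immeuble construit via \cite{Sol}, j'achèverais la preuve en suivant le schéma de \cite{BTII} repris dans \cite{LouBT}. Pour toute partie bornée non vide $\Omega$ de l'appartement $\Ascr$, on définit des modèles entiers lisses des $U_a$ à partir des filtrations, un modèle entier convenable du Cartan $Z$, puis on les recolle le long de la grosse cellule en appliquant le théorème d'existence de solutions aux lois birationnelles. L'étape délicate qui demeure est de montrer que la fibre spéciale du schéma en groupes $\Gscr_\Omega$ ainsi obtenu est connexe et possède le quotient pseudo-réductif attendu, ce qui éclaire pourquoi il faut travailler avec la modification de Tits du groupe pluriel plutôt qu'avec sa forme brute, non lisse en caractéristique $2$.
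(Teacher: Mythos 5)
Votre stratégie coïncide pour l'essentiel avec celle du texte (qui renvoie à \cite{Sol} pour l'immeuble et à \cite{LouBT} pour les schémas en groupes) : produire une donnée radicielle valuée à partir du théorème de structure de Borel--Tits et d'un quasi-système de Chevalley, puis recoller les modèles de Néron du Cartan et les modèles radiciels par les lois birationnelles. Le seul point que vous laissez implicite est la formule précise de la valuation dans le cas multipliable non standard, à savoir $\varphi_a(x_a(u,v))=\omega(u^2+v)/2$, qui est la véritable nouveauté pseudo-réductive par rapport au cas unitaire $\omega(s)/2$ de \cite{BTII}.
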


\begin{rem}
	Cet énoncé devrait être valable sur les corps discrètement valués et henséliens $k$ et pour les $k$-groupes $\widehat{k}$-pseudo-réductifs et $k^{\on{nr}}$-quasi-déployés. Dans \cite{LouBT}, on suppose encore que $\widehat{k}/k$ est séparable et que le corps résiduel est parfait. Ce dernier a été éliminé dans \cite[IV, \S\S3.1-3.2]{LouDiss} et Conrad m'a convaincu que le premier devrait être superflue.
\end{rem}

Notre construction des immeubles de Bruhat--Tits dans \cite{LouBT} est pareille à celle de \cite{BTII}. Supposons encore que $G$ soit quasi-déployé. À chaque sous-tore déployé maximal $S$ de $G$, on associe un appartement $\Ascr(G,S,k)$, en vérifiant les axiomes de \cite[déf. 6.1.1, déf. 6.2.1]{BTI} pour appliquer le \cite[th. 6.5]{BTI}, cf. aussi \cite[déf. 7.4.2]{BTI}. Le seul point compliqué est donc de trouver une valuation $\varphi$ de la donnée radicielle abstraite $(Z(k),U_a(k))$, pour lequel nous recourons à un quasi-système de Chevalley de $G$, cf. déf. \ref{definition quasi-systemes de chevalley} et \cite[prop. 2.5]{LouBT} pour le cas pseudo-réductif. Alors, on pose
\begin{equation}\label{equation valuation singulier} \varphi_a(x_a^{\on{CS}}(r))=\omega(r) \end{equation}\begin{equation}\label{equation valuation pluriel unitaire}
 \varphi_a(x_a^{\on{CS}}(r,s))=\omega(s)/2, \end{equation}\begin{equation}\label{equation valuation pluriel barcelonais}
 \varphi_a(x_a^{\on{CS}}(u,v))=\omega(u^2+v)/2 
\end{equation}
selon que $a\in \Phi_G^{\on{nd,nm}}$ est une racine ni divisible ni multipliable, ou que $a \in \Phi_G^{\on{m}}$ est multipliable et $G^a$ réductif, ou que $a \in \Phi_G^{\on{m}}$ est multipliable et $G^a$ non standard. Ici, l'on note $\omega$ la valuation additive de $k$ étendue à sa clôture algébrique et satisfaisant à $\omega(k)=\ZZ$.

Disons un mot sur les modèles entiers de $G$ associés aux parties non vides et bornées $\Omega$ de l'appartement $\Ascr(G,S,k)\subset \Iscr(G,k)$, où $G$ est encore supposé quasi-déployé. D'un côté, il faut construire le modèle de Néron connexe $\Zscr$ d'un groupe pseudo-réductif commutatif $Z$, ce qui fut achevé par Bosch--Lütkebohmert--Raynaud dans \cite[th. 10.2.2]{BLR}. De l'autre côté, on a besoin de trouver les modèles entiers $\Uscr_{a,\Omega}$ stabilisant $\Omega$ pour toute racine non divisible $a \in \Phi_G^{\on{nd}}$, ne s'agissant que des calculs simples, voir \cite[\S4.3]{BTII} et \cite[lem. 4.4]{LouBT}. Enfin, on recolle ces données en appliquant la théorie des lois birationnelles, voir \cite[\S4.1, th. 4.2]{LouBT}, ce qu'on reprendra d'ailleurs toute de suite au \S\ref{donnees radicielles schematiques}, cf. ths. \ref{theoreme existence de solutions} et \ref{theoreme d'existence et unicite des groupes attaches aux drs}.
\begin{rem}
	Presque tous les résultats que nous avons mentionnés s'étendent aux groupes quasi-réductifs, dont le $k$-radical unipotent n'est peu-être pas non plus trivial, mais bien ployé, c'est-à-dire sans sous-groupe unipotent additif, voir \cite[1.1.12]{BTII} et \cite[déf. B.2.1]{CGP}. Ceci a été caché dans cette section afin de ne pas effrayer le lecteur sans raison.
\end{rem}
\subsection{Données radicielles schématiques} \label{donnees radicielles schematiques}
Dans ce paragraphe, on discute le concept des données radicielles schématiques dû à Bruhat--Tits, cf. \cite[déf. 3.1.1]{BTII}. Leur but, comme le lecteur bientôt comprendra, est de construire des $A$-modèles en groupes $\Gscr$ de $G$ avec de bonnes propriétés par \og recollement fermé \fg{} de $A$-modèles en groupes correspondants pour chaque terme individuel $Z$ et $U_a$ de la grosse cellule, où $a \in \Phi_G^{\on{nd}}$ décrit l'ensemble des racines non divisibles.

\begin{defn}[Bruhat--Tits, à peu près]\label{definition donnees radicielles schematiques}
Soient $A$ un anneau noethérien et intègre, $K$ son corps de fractions, $G$ un $K$-groupe pseudo-réductif et $S$ un sous-tore déployé maximal. Une donnée radicielle dans $G$ par rapport à $S$ au-dessus de $A$ est la donnée de $A$-modèles en groupes lisses, affines et connexes $\Zscr$ de $Z=Z_G(S)$ (resp. $\Uscr_a$ du sous-groupe radiciel $U_a$ attaché à la racine non divisible $a \in \Phi_G^{\on{nd}}$) tels que :
\begin{itemize}[leftmargin=1.7cm]
\item[(DRS 0)] l'injection de $S$ dans $Z$ se prolonge en un morphisme $\Sscr \rightarrow \Zscr$, où $\Sscr$ désigne le seul groupe diagonalisable à isomorphisme unique près modelant $S$.
\item[(DRS 1)] l'application conjugaison $Z \times U_a \rightarrow U_a$ se prolonge en une application $\Zscr \times \Uscr_a \rightarrow \Uscr_a$.
\item[(DRS 2)] si $b \neq -a$, l'application commutateur $U_a \times U_b \rightarrow \prod_{c\in ] a, b[} U_c$ se prolonge en un morphisme $\Uscr_a \times \Uscr_b \rightarrow \prod_{c\in ] a, b[} \Uscr_c$, où l'on note $] a, b[$ la partie de $\Phi_{G}^{\on{nd}}$ contenue dans $\QQ_{> 0}a+\QQ_{ >0}b$.
\item[(DRS 3)] si on note $W_a$ l'image réciproque par l'immersion fermée $U_a \times U_{-a} \hookrightarrow G$ du fermé local $ C_a:=U_{-a} \times Z \times U_a\subset G$, cf. (\ref{equation immersion ouverte grosse cellule}), alors il existe un ouvert $\Uscr_{\pm a} \subset\Wscr_a\subset \Uscr_a \times \Uscr_{-a}$ modelant $W_a$ tel que l'inclusion $W_a \subset C_a$ se prolonge en une application $\Wscr_a \rightarrow \Cscr_a:=\Uscr_{-a} \times \Zscr \times \Uscr_a$.
\end{itemize}
\end{defn} 

\begin{rem}\label{remarque difference definition drs}
Il y a deux différences entre notre définition et celle de Bruhat--Tits, comp. avec \cite[déf. 3.1.1]{BTII}: on ne demande pas que l'application $\Sscr \rightarrow \Zscr$ soit une immersion fermée, puisque cela est superflue, voir \cite[exp. IX, cor. 2.5, th. 6.8]{SGA3} ; notre voisinage ouvert $\Wscr_a$ contient les fermés $\Uscr_{\pm a}$ et pas seulement la section unité, pour que la loi birationnelle sur $\Cscr:=\Uscr_-\times \Zscr \times \Uscr_+$ soit automatiquement stricte, cf. déf. \ref{definition lois birationnelles} et th. \ref{theoreme d'existence et unicite des groupes attaches aux drs}.
\end{rem}

 Notre démonstration du théorème concernant l'existence du recollé $\Gscr$ fera usage des lois birationnelles de groupes.
 
 \begin{defn}\label{definition lois birationnelles}
 	Soient $A$ un anneau noethérien et $X$ un $A$-schéma séparé de type fini. Alors, une loi birationnelle de groupe sur $X$ est la donnée d'un morphisme $A$-rationnel $m: X \times X \rightarrow X$ tel qu'il soit associatif et que les applications de translation universelles à la gauche $(\on{id},m)$ resp. à la droite $(m, \on{id})$ soient $A$-birationnelles. On dit que la loi birationnelle est stricte si les applications de translation universelles sont $X$-birationnelles relativement aux deux projections $X\times X \rightarrow X$.
 \end{defn}

On suit la terminologie de \cite{BLR} en dénommant un morphisme de présentation finie $A$-rationnel si son lieu ouvert de définition est à $A$-fibres denses. Étant donnée une loi birationnelle de groupe sur $X$, la question qui se pose est de la prolonger birationnellement à un schéma en groupes $\overline{X}$. 

\begin{thm}[Existence de solutions]\label{theoreme existence de solutions}
	Gardons les notations de la déf. \ref{definition lois birationnelles}. Alors, il existe un et un seul $A$-groupe séparé de type fini $\overline{X}$ muni d'un morphisme $A$-birationnel $X \dashrightarrow \overline{X}$ préservant les lois birationnelles de groupe. Si la loi birationnelle de $X$ est stricte ou si $A$ est normal et $X$ lisse, alors l'application rationnelle $X \dashrightarrow \overline{X}$ se prolonge en une immersion ouverte définie partout.
\end{thm}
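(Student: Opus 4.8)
The plan is to follow Weil's classical method of building a group out of a birational group law by gluing translates, adapted to the relative noetherian setting, much as in \cite{BTII} and \cite{BLR}.

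First I would extract from the birational group law $m\colon X\times X \dashrightarrow X$ the universal left and right translations and record their group-theoretic compatibilities at the level of birational maps. Since $A$ is integral and $X$ is of finite type, the loci of definition are $A$-dense in $X\times X$ and in $X\times X\times X$, so associativity of $m$ permits one to compose dominant rational maps unambiguously; this yields, among birational automorphisms, relations of the shape $\lambda_{m(x,x')}=\lambda_{x}\circ\lambda_{x'}$ together with their right-handed analogues. These are exactly the cocycle data that the sought-after multiplication must globalize.

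Next I would construct $\overline{X}$ itself. Fixing a dense open $U\subseteq X$ over which $m$ and its translations are defined and well behaved, I glue finitely many translates of $U$ along the transition maps furnished by the (right) translations; separatedness and finiteness of type are inherited, and the birational map $X\dashrightarrow\overline{X}$ is tautological. The real work is to verify that $m$ prolongs to an everywhere-defined associative multiplication $\overline{X}\times\overline{X}\to\overline{X}$ and that a unit and an inversion exist, so that $\overline{X}$ acquires the structure of an $A$-group; here one uses that the translations act simply transitively on the glued translates, which forces the extended $m$ to be a genuine group law. This gluing-and-verification step carries the bulk of the argument, and it is the main obstacle, precisely because $X$ need be neither smooth nor $A$ normal, so one cannot invoke homogeneity to tidy up the rational map from the outset.

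For uniqueness, if $\overline{X}'$ is a second solution, then $\overline{X}\dashrightarrow\overline{X}'$ is an $A$-birational map of $A$-group schemes respecting multiplication on a dense open; by the rigidity principle that a rational map into a separated group scheme which is multiplicative on a dense open extends to a homomorphism defined everywhere (cf. \cite{BLR}), it prolongs to a homomorphism, and symmetrically so does its inverse, whence a canonical isomorphism. For the last assertion, if the law is strict then the translations are $X$-birational with respect to both projections, so the transition maps used in the gluing are already defined on all of $U$ rather than merely on a dense open, which forces $X\hookrightarrow\overline{X}$ to be an open immersion. If instead $A$ is normal and $X$ is smooth, I would argue that the locus of definition of $X\dashrightarrow\overline{X}$ is stable under the group translations and, by the valuative criterion over the normal base combined with smoothness of $X$, contains every point of codimension one; being translation-stable and dense it must then be all of $X$, and the resulting monomorphism is an open immersion.
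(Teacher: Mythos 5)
Il y a une lacune sérieuse au c\oe ur de votre construction. Votre deuxième étape — \og je recolle un nombre fini de translatés de $U$ le long des applications de transition fournies par les translations \fg{} — est précisément le point où la méthode de Weil cesse de fonctionner sur une base générale : des translatés \emph{par quoi} ? Sur un corps, Weil dispose des points de $X(\bar k)$ pour former les translatés $a\cdot U$, puis descend ; sur un anneau noethérien intègre quelconque $A$, le schéma $X$ n'a en général pas assez de sections (ni même de sections après un recouvrement que l'on contrôle) pour produire ce recouvrement par translatés. C'est exactement l'obstruction qui explique l'histoire du théorème telle que le texte la résume : \cite[exp. XVIII, th. 3.7]{SGA3} (Artin) ne fournit la solution qu'en tant qu'\emph{espace algébrique}, parce que les translations n'existent que localement pour la topologie étale, et il faut tout le chapitre 5 et le \cite[th. 6.6.1]{BLR} pour démontrer que cet espace algébrique est un schéma — en passant d'abord au cas strict via \cite[prop. 5.2.2]{BLR}. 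Vous reconnaissez que cette étape \og porte l'essentiel de l'argument \fg{}, mais vous attribuez la difficulté au défaut de lissité de $X$, alors que la vraie difficulté est l'absence de points rationnels relatifs ; telle quelle, votre construction ne démarre pas.

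La dernière assertion appelle la même réserve : dans le cas \og $A$ normal et $X$ lisse \fg{}, votre argument \og le lieu de définition est stable par translation et contient les points de codimension un, donc est tout \fg{} présuppose de nouveau l'existence de translations globales pour conclure par homogénéité ; la référence correcte est \cite[th. 5.1.5]{BLR}, dont la preuve évite ce raccourci. Notez enfin que le texte ne redémontre pas ce théorème : sa \og preuve \fg{} est un renvoi chronologique à Weil, Artin et Bosch--Lütkebohmert--Raynaud. Si vous voulez une démonstration autonome, il faut soit reproduire la stratégie d'Artin (construction du faisceau quotient et critères de représentabilité en espaces algébriques, puis schématisation à la BLR), soit suivre intégralement \cite[\S\S5--6]{BLR} ; le recollement naïf de translatés n'est praticable que sur un corps.
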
 

\begin{proof}
	On ne fera que résumer la chronologie du degré de généralité dont le théorème d'existence de solutions de lois birationnelles en groupes a été démontré. Alors, au-dessus des corps ce résultat remonte à Weil. Puis il fut amélioré par Artin, voir \cite[exp. XVIII, th. 3.7]{SGA3}, sur des bases assez générales, mais la solution n'était donnée qu'en espaces algébriques. Finalement, Bosch--Lütkebohmert--Raynaud, voir \cite[th. 6.6.1]{BLR}, montrèrent que la solution $\overline{X}$ est un schéma. Observons qu'on s'est toujours ramené au cas où la loi birationnelle est stricte, d'après \cite[prop. 5.2.2]{BLR}. Enfin, si la base est normale et $X$ lisse, alors on n'a pas besoin de remplacer la grosse cellule par un ouvert $A$-schématiquement dense grâce au \cite[th. 5.1.5]{BLR}.
\end{proof}
 
 \begin{thm}[Bruhat--Tits, à peu près]\label{theoreme d'existence et unicite des groupes attaches aux drs}
Reprenons les notations de la déf. \ref{definition donnees radicielles schematiques}. Alors, il existe un et un seul $A$-groupe séparé, lisse et connexe $\Gscr$ modelant $G$ tel que :
\begin{enumerate}
\item les inclusions $Z \rightarrow G$ et $U_a \rightarrow G$ pour tout $a \in \Phi_G^{\on{nd}}$ se prolongent en des isomorphismes de $\Zscr$ et $\Uscr_a$ sur des sous-$A$-groupes fermés de $\Gscr$.
\item pour tout système de racines positives $\Phi_G^+$ et tout ordre mis sur $\Phi_G^+$, le morphisme $\prod_{a \in \Phi_G^{\on{nd},+}} \Uscr_a \rightarrow \Gscr$ soit un isomorphisme sur un sous-$A$-groupe fermé $\Uscr_+$ de $\Gscr$.
\item l'application produit $\Uscr_-\times \Zscr \times \Uscr_+ \rightarrow \Gscr$ définisse un ouvert du membre de droite.
\end{enumerate}
\end{thm}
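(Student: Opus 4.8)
Le plan est de reconstruire $\Gscr$ à partir de sa grosse cellule par la méthode des lois birationnelles. On pose $\Uscr_\pm := \prod_{a\in \Phi_G^{\on{nd},\pm}} \Uscr_a$ pour un ordre fixé sur les racines : grâce à (DRS 2), qui prolonge intégralement les relations de commutation et ne fait intervenir aucune collision du type $\{a,-a\}$, la multiplication s'y prolonge partout et munit $\Uscr_\pm$ d'une structure de $A$-groupe lisse, affine et à fibres connexes. On forme ensuite la grosse cellule $\Cscr := \Uscr_-\times \Zscr \times \Uscr_+$, qui est un $A$-schéma lisse, séparé et à fibres géométriquement connexes, de fibre générique la cellule ouverte dense $C = U_-\times Z\times U_+\subset G$ de (\ref{equation immersion ouverte grosse cellule}).

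L'étape centrale est de vérifier que la loi de groupe de $G$, restreinte à $C$, se prolonge en une loi birationnelle de groupe $A$-rationnelle sur $\Cscr$, au sens de la déf. \ref{definition lois birationnelles}. Concrètement, pour réécrire un produit $u_- z u_+ u'_- z' u'_+$ sous la forme de la grosse cellule, il faut déplacer les divers facteurs : les conjugaisons par $\Zscr$ sont contrôlées par (DRS 1), les réordonnements à l'intérieur de $\Uscr_\pm$ par (DRS 2), et la collision décisive des facteurs opposés $\Uscr_a$ et $\Uscr_{-a}$ par (DRS 3). En combinant ces prolongements on obtient une application $A$-rationnelle $m:\Cscr\times\Cscr\dashrightarrow\Cscr$ définie sur un ouvert à fibres denses, l'associativité et la birationalité des translations universelles étant héritées de la fibre générique $G$. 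De plus, puisque notre variante de (DRS 3) exige que l'ouvert $\Wscr_a$ contienne les fermés $\Uscr_{\pm a}$ et pas seulement la section unité, cf. rem. \ref{remarque difference definition drs}, cette loi birationnelle est automatiquement stricte.

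Il ne reste qu'à appliquer le th. \ref{theoreme existence de solutions} : il fournit un et un seul $A$-groupe séparé de type fini $\Gscr:=\overline{\Cscr}$, de fibre générique $G$, muni d'une application $A$-birationnelle $\Cscr\dashrightarrow\Gscr$ préservant les lois ; la stricte birationalité garantit que celle-ci se prolonge en une immersion ouverte $\Cscr\hookrightarrow\Gscr$, d'où la propriété (3). Le groupe $\Gscr$ est lisse car recouvert par les translatés de l'ouvert lisse $\Cscr$, et ses fibres sont connexes puisque chacune est engendrée par la grosse cellule géométriquement connexe qu'elle contient. Les facteurs $\Zscr$, $\Uscr_a$ et $\Uscr_\pm$ apparaissent comme sous-schémas fermés de $\Cscr$, donc comme sous-$A$-groupes localement fermés de $\Gscr$ ; en prenant leurs adhérences schématiques on vérifie qu'il s'agit de sous-groupes fermés, ce qui établit (1) et (2). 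Enfin, l'unicité de $\Gscr$ découle de celle affirmée dans le th. \ref{theoreme existence de solutions}.

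Le principal obstacle sera la vérification que la multiplication générique se prolonge effectivement en une loi $A$-rationnelle et stricte : c'est là que les axiomes schématiques sont sollicités de façon essentielle, le point le plus subtil étant le contrôle intégral de la collision $\Uscr_a\times\Uscr_{-a}$ via (DRS 3). Une fois ce prolongement établi, toutes les autres assertions résultent formellement du théorème d'existence de solutions.
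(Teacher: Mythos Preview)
Your approach is essentially the paper's: build a strict birational group law on the big cell $\Cscr$ from the (DRS) axioms and invoke th.~\ref{theoreme existence de solutions}. Two steps, however, need more than what you wrote.

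First, the closedness of $\Zscr$, $\Uscr_a$, $\Uscr_\pm$ in $\Gscr$. Saying ``en prenant leurs adhérences schématiques on vérifie qu'il s'agit de sous-groupes fermés'' is not an argument: these subgroups are a priori only locally closed (closed in the open $\Cscr\hookrightarrow\Gscr$), and you must explain why the schematic closure does not add points. The paper's device is to observe that multiplication $\Cscr_{\Phi^-}\times\Cscr_{\Phi^+}\to\Gscr$ is a faithfully flat cover, and that the preimage of $\Zscr\Uscr_+$ therein is cut out by the closed condition $b_1b_2=1$ on the two $\Uscr_-$-coordinates; closedness then follows by fpqc descent.

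Second, the construction and strictness of the birational law deserve more than the heuristic ``déplacer les divers facteurs''. The paper organises this by first building, for any two positive systems $\Phi^+,\Psi^+$, an exchange map $\Cscr_{\Phi^+}\dashrightarrow\Cscr_{\Psi^+}$ defined near the unit, by induction on the number of root-rays in $\Phi^+\cap\Psi^-$; each inductive step invokes (DRS~3) for a single root $a$ and uses that $\Wscr_a\supset\Uscr_{\pm a}$. Strictness is then checked concretely: the multiplication $\Cscr_+\times\Cscr_+\dashrightarrow\Cscr_+$ is defined on the whole closed subset $\Uscr_-\times\Zscr\Uscr_+$ (points of shape $(a,1,1,b)$), hence is an epimorphism since $\Cscr_+$ is geometrically connected. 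Your invocation of rem.~\ref{remarque difference definition drs} points in the right direction but does not by itself substitute for this verification.
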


Avant de initier la preuve en tant que telle, résumons son histoire complexe.

\begin{rem}\label{remarque histoire construction du recolle ferme}
	 Dans \cite[3.1.3]{BTII}, Bruhat--Tits évoquent leur envie d'appliquer les lois birationnelles pour répondre à la question, mais \cite[exp. XVIII, th. 3.7]{SGA3} ne fournissait que des solutions en espaces algébriques et non pas en schémas -- le livre \cite{BLR} n'avait pas encore été écrit, cf. th. \ref{theoreme existence de solutions}. Par conséquent, ils consacrent 30 pages à son argument alternatif avec la théorie des représentations, cf. \cite[\S3.8, 3.9.4]{BTII}. 
	Sur un anneau de valuation discrète, Landvogt donnera une démonstration appliquant les lois birationnelles, voir \cite[\S\S5--6]{LndvCpc}, tout en empruntant des idées superflues de \cite[\S3]{BTII}.
\end{rem}
\begin{proof}
Lorsqu'on se donne un ordre grignotant sur $\Phi^+$, cf. \cite[3.1.2]{BTII}, alors \cite[\S3.3]{BTII} fournit une loi de groupe dans le produit $\Uscr_+=\prod_{a \in \Phi^{+,\on{nd}}}\Uscr_{a}$, en argumentant par récurrence sur la longueur des parties positivement closes $\Psi$ de $\Phi^{+,\on{nd}}$ et en appliquant la règle (DRS 2). Comme les groupes trouvés sont toujours lisses et connexes, le fait que $\Uscr_+$ s'écrit comme produit de ses sous-groupes fermés $\Uscr_{a}$ rangés dans un ordre quelconque résulte du théorème principal de Zariski et de la même indépendance de l'ordre au-dessus des corps, ce qui est déjà classique, voir par exemple \cite[prop. C.2.26]{CGP}. Notons également que $\Zscr$ opère sur $\Uscr_+$ canoniquement d'après la règle (DRS 1), donnant lieu à un $A$-groupe noté parfois $\Zscr\Uscr_+:=\Zscr \ltimes \Uscr$.

Pour construire $\Gscr$ satisfaisant à la troisième propriété, il suffit de construire une loi birationnelle stricte sur $\Cscr_{+}:=\Uscr_-\times \Zscr \times\Uscr_+$ pour un choix quelconque $\Phi^+$ de racines positives, d'après le th. \ref{theoreme d'existence et unicite des groupes attaches aux drs}. Il reste à définir une application birationnelle $\Cscr_{+} \dashrightarrow \Cscr_{-}$ prolongeant l'identité de $G$ dans la fibre générique et contenant la section unité, car cela permettrait de définir des applications rationnelles de produit et d'inversion dans $\Cscr_{+}$, en commutant tous les facteurs et en multipliant ou inversant lorsqu'il le faut. 

Montrons plus généralement la même assertion pour chaque paire de systèmes de racines positives $\Phi^+$ et $\Psi^+$ par récurrence sur le cardinal des rayons de $ \Phi^+ \cap \Psi^- $. Si cette intersection ne contient que le rayon radiciel $\QQ_{>0}a \cap \Phi$ engendré par la racine non divisible $a \in \Phi$, alors l'axiome (DRS 3) en fournit un morphisme rationnel \begin{equation}\label{equation lois birationnelles echange}\Cscr_{\Phi^+}=\Vscr_-\times \Uscr_{-a} \times \Zscr \times \Uscr_a \times \Vscr_+ \dashrightarrow \Vscr_-\times \Uscr_{a} \times \Zscr \times \Uscr_{-a}\times  \Vscr_+=\Cscr_{\Psi^+}\end{equation}
défini dans un voisinage ouvert de la section unité, où $\Vscr^{\pm}:= \Uscr_{\Phi^{\pm,\on{nd}}\setminus\{\pm a\}}$. La construction montre aussi que la loi birationnelle est indépendante du système de racines positives $\Phi^+$ choisi, donc il en sera de même du groupe $\Gscr$ obtenu à la fin. 

Pour que la loi birationnelle esquissée ci-dessus soit stricte, il faut et il suffit que le morphisme multiplication $\Cscr_+ \times \Cscr_+ \dashrightarrow \Cscr_+$ soit un épimorphisme, vu que la grosse cellule $\Cscr_+$ est géométriquement connexe, cf. \cite[rem. 9.6.3]{CGP}. Or, considérant le fermé $\Uscr_- \times \Zh\Uscr_+ \subset \Cscr_+ \times \Cscr_{+}$ dont les points sont de la forme $(a,1,1,b)$, on déduit aisément que ceux-ci appartiennent au lieu de définition du morphisme multiplication, compte tenu de ce que les morphismes d'échange (\ref{equation lois birationnelles echange}) soient définis autour de $\Zscr \Uscr_{\pm}$.

Maintenant, on dispose d'un $A$-groupe lisse, séparé et connexe $\Gscr$ qui modèle $G$ et qui contient les différents $\Cscr_{\Phi^+}$ en tant qu'ouverts, et il faut encore voir que les sous-groupes localement fermés $\Zscr$ et $\Uscr_+$ sont effectivement fermés. Or l'image réciproque de $\Zscr\Uscr_+$ par le revêtement fidèlement plat $\Cscr_{\Phi^-} \times \Cscr_{\Phi^+} \rightarrow \Gscr$ est déterminée par la condition fermée $b_1b_2=1$, où $(a_1,b_1, b_2, a_2) \in \Zscr\Uscr_+ \times (\Uscr_-)^2 \times \Zscr \Uscr_+$, ce qui achève la vérification de l'affirmation par descente fidèlement plate.
\end{proof}

En principe, on voudrait que le groupe $\Gscr$ soit affine, mais cela ne découle ni de la construction avec des lois birationnelles ni de celle de \cite[\S3.8, 3.9.4]{BTII} : en effet, \cite[\S VII.3]{Ray} fournit un exemple d'un groupe quasi-affine, lisse et connexe qui n'est pas affine ; remarquons néanmoins que, par contre, celui de \cite[3.2.15]{BTII} s'avère affine. Toutefois, l'on a :

\begin{propn}[Raynaud, à peu près]\label{proposition enveloppe affine}
Soient $A$ un anneau noethérien et régulier de dimension inférieure ou égale à $2$ et $\Gscr$ un $A$-modèle en groupes lisse, connexe et séparé de $G$. Alors, la $A$-algèbre $\Gamma(\Gscr, \Oh_{\Gscr})$ est munie d'une structure canonique de $A$-algèbre de Hopf plate de type fini. De plus, le $A$-groupe résultant $\Gscr^{\emph{af}}:=\spec \Gamma(\Gscr, \Oh_{\Gscr})$ est lisse, affine et sa composante neutre s'identifie à $\Gscr$ par l'application naturelle.
\end{propn}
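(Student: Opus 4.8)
Le plan est de se ramener à la théorie des faisceaux amples sur les schémas en groupes de Raynaud \cite{Ray}, dont le c\oe{}ur géométrique est la quasi-affinité de $\Gscr$. Remarquons d'abord que, la fibre générique $G$ étant pseudo-réductive donc affine, et $\Gscr$ étant lisse sur le schéma régulier $\spec A$, le schéma $\Gscr$ est régulier, intègre et donc normal. La première étape, qui constitue le véritable apport de Raynaud, consiste à établir que $\Gscr$ est quasi-affine : c'est ici qu'interviennent de manière essentielle l'hypothèse $\dimn A \leq 2$ et la régularité de $A$, via la construction d'un faisceau inversible relativement ample sur $\Gscr$. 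L'exemple de \cite[\S VII.3]{Ray} montre d'ailleurs qu'on ne peut en général pas espérer mieux que la quasi-affinité, puisque $\Gscr$ lui-même peut ne pas être affine.

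La quasi-affinité acquise, le morphisme canonique $\Gscr \rightarrow \Gscr^{\on{af}}:=\spec \Gamma(\Gscr, \Oh_{\Gscr})$ est une immersion ouverte. Pour la platitude et la finitude de $\Gamma(\Gscr, \Oh_{\Gscr})$, j'exploiterais la normalité de $\Gscr$ et le principe de Hartogs : le lieu des points de $\spec A$ de hauteur au moins $2$ étant de codimension $\geq 2$, on a l'égalité $\Gamma(\Gscr, \Oh_{\Gscr})=\bigcap_{\pf} \Gamma(\Gscr_{A_\pf}, \Oh)$ à l'intérieur de l'anneau des coordonnées $\Gamma(G, \Oh_G)$ de la fibre générique, l'intersection portant sur les idéaux premiers $\pf$ de hauteur $1$. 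Or, sur chaque anneau de valuation discrète $A_\pf$, la théorie de dimension $1$ de Raynaud assure que $\Gamma(\Gscr_{A_\pf}, \Oh)$ est une $A_\pf$-algèbre de Hopf plate de type fini dont le spectre contient $\Gscr_{A_\pf}$ comme composante neutre ouverte. Il en découle que $\Gamma(\Gscr, \Oh_{\Gscr})$ est réflexive et plate en codimension $1$ ; étant $S_2$ sur un schéma régulier de dimension $\leq 2$, la platitude s'en déduit, et la finitude de la $A$-algèbre s'obtient en globalisant par réflexivité les bornes uniformes fournies en codimension $1$.

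Les lois de groupe, d'unité et d'inversion de $\Gscr$ induisent alors, par fonctorialité de $\Gamma(-,\Oh)$ et par la formule de Künneth $\Gamma(\Gscr\times_A \Gscr, \Oh)=\Gamma(\Gscr,\Oh_{\Gscr})\otimes_A\Gamma(\Gscr,\Oh_{\Gscr})$, valable une fois la platitude connue, une structure de $A$-algèbre de Hopf dont les axiomes se propagent depuis $\Gscr$ ; ainsi $\Gscr^{\on{af}}$ est un $A$-groupe affine, plat et de type fini contenant $\Gscr$ comme sous-groupe ouvert. Comme ce dernier a des fibres connexes et coïncide avec $\Gscr^{\on{af}}$ en les points de $A$ de hauteur $\leq 1$, il s'identifie à la composante neutre $(\Gscr^{\on{af}})^0$, les composantes supplémentaires de $\Gscr^{\on{af}}$ n'apparaissant que dans les fibres au-dessus des points fermés, de codimension $2$. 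Enfin, pour la lissité, on se place sur une clôture algébrique d'un corps résiduel : la composante neutre $\Gscr_s$ y est lisse, et par translation toutes les composantes de $\Gscr^{\on{af}}_s$ lui sont isomorphes, d'où la lissité des fibres, qui jointe à la platitude entraîne celle de $\Gscr^{\on{af}}$ sur $A$. Le principal obstacle reste l'étape de quasi-affinité en dimension exactement $2$, seul point où la démonstration quitte le terrain de la dimension $1$ et sollicite véritablement la machinerie des faisceaux amples.
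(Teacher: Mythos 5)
Votre plan coïncide avec celui du texte pour la quasi-affinité (via \cite[cor. VII.2.2]{Ray}) et, pour l'essentiel, pour la platitude : la démonstration de l'article écrit précisément $\Gamma(\Gscr,\Oh_\Gscr)$ comme réunion filtrante de sous-$A$-modules finis remplacés par leurs prolongements $\Gamma(U,B\otimes\Oh_S)$ depuis l'ouvert $U$ complémentaire des points de codimension $2$, lesquels sont libres sur la base régulière de dimension $\leq 2$ et restent contenus dans $\Gamma(\Gscr,\Oh_\Gscr)$ grâce au lemme d'extension de Riemann ; votre formule $\Gamma(\Gscr,\Oh_\Gscr)=\bigcap_{\on{ht}\pf\leq 1}\Gamma(\Gscr_{A_\pf},\Oh)$ est correcte (tout point de codimension $1$ de $\Gscr$ s'envoie, par platitude, sur un point de hauteur $\leq 1$ de $\spec A$) et votre argument de réflexivité en est une reformulation légitime, à condition de l'énoncer comme colimite filtrante de sous-modules finis réflexifs, donc libres, plutôt que comme propriété \og $S_2$ \fg{} du module infini lui-même.

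En revanche, il y a une lacune sérieuse dans la finitude de la $A$-algèbre $\Gamma(\Gscr,\Oh_\Gscr)$, que vous expédiez en \og globalisant par réflexivité les bornes uniformes fournies en codimension $1$ \fg{}. La génération finie de l'anneau des sections globales d'un schéma quasi-affine est un problème de type Hilbert 14 : même si $\Gscr_U$ est affine de type fini au-dessus d'un ouvert $U$ à complément $Z$ fini et si $\Gamma(\Gscr,\Oh_\Gscr)=\Gamma(\Gscr_U,\Oh)$ par normalité, le module des sections $\Gamma(U,\Bscr)$ d'une $\Oh_U$-algèbre quasi-cohérente de type fini $\Bscr$ n'a aucune raison formelle d'être une $A$-algèbre de type fini : les groupes de cohomologie locale $H^1_Z$, qui ne sont artiniens que pour les modules cohérents, peuvent contribuer de nouveaux générateurs en chaque degré, et la réflexivité ne contrôle que la structure de module, non celle d'algèbre ; les bornes obtenues sur chaque $A_\pf$ de hauteur $1$ ne voient pas les points de codimension $2$, qui sont exactement le lieu du problème. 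C'est là que la démonstration du texte utilise la structure de groupe de façon essentielle : réduction par descente fidèlement plate au cas local complet à corps résiduel algébriquement clos, lemme de Nakayama nilpotent, puis l'observation que $\Gscr_k\hookrightarrow\Gscr^{\on{af}}_k$ est ouvert et fermé, de sorte que $\pi_0(\Gscr^{\on{af}}_k)$ est un groupe proétale à section unité ouverte, donc fini. Sans un argument de ce type, votre preuve est incomplète ; le reste (structure de Hopf par Künneth, identification de la composante neutre, lissité par translation) est correct une fois la finitude acquise.
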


\begin{proof}
Le groupe $\Gscr$ est tout d'abord quasi-affine grâce au \cite[cor. VII.2.2]{Ray}, ce qu'on peut appliquer car $A$ est normal et $\Gscr$ est lisse, séparé et géométriquement connexe. Raynaud avait montré en \cite[prop. VII.3.1]{Ray}, que, lorsqu'on se trouve dans les circonstances énoncées, $\Gamma(\Gscr, \Oh_{\Gscr})$ était fidèlement plate sur $A$ et, par suite, que $\Gscr^{\on{af}}$ était un $A$-groupe plat admettant $\Gscr$ comme sous-groupe ouvert par le morphisme canonique $\Gscr \rightarrow \Gscr^{\on{af}}$. Pour aider le lecteur, nous allons expliquer brièvement la démonstration de la platitude, comp. avec \cite[lem. VII.3.2]{Ray}. 

Soit $B$ un $A$-module fini contenu dans $\Gamma(\Gscr,\Oh_\Gscr)$. Alors, $B$ induit un fibré vectoriel au-dessus d'un ouvert cofini $U$ de $S=\spec A$, qui s'étend donc en un $A$-module libre et fini $\tilde{B} = \Gamma(U,B\otimes \Oh_S)$, d'après \cite[th. 5.10.5, prop. 5.11.1]{EGAIV}. Mais par changement de base plat, on déduit l'égalité $\Gamma(U,\pi_*\Oh_\Gscr)=\Gamma(S,\pi_*\Oh_\Gscr)$ du deuxième lemme d'extension de Riemann $\Gamma(U,\Oh_S)=A$. Alors, ceci entrâine que $\tilde{B} \subseteq \Gamma(\Gscr,\Oh_\Gscr)$, d'où une écriture de cette dernière $A$-algèbre comme la colimite filtrée de ses sous-$A$-modules plats. Ce que $\Gscr^{\on{af}}$ soit maintenant un $A$-groupe affine et (fidèlement) plat est trivial, cf. \cite[exp. VIB, lem. 11.1]{SGA3}.

Malheureusement, il n'est pas clair que $\Gamma(\Gscr, \Oh_{\Gscr})$ soit une $A$-algèbre de type fini. Néanmoins, d'après \cite[prop. 2.3.1]{Anan} ou \cite[exp. VIB, prop. 12.9]{SGA3}, on peut trouver un ouvert cofini $U$ de $\spec A$ tel que $\Gscr_U$ soit affine sur $U$, d'où en particulier la présentation finie de $\Gscr^{\on{af}}$ au-dessus de $U$. Par descente fidèlement plate et quasi-compacte, on se ramène au cas où $A$ est un anneau local d'idéal maximal $\mf$, $\mf$-adiquement séparé et complet, et à corps résiduel $k$ algébriquement clos.

Nous affirmons d'abord que $\Gscr^{\on{af}}$ est de type fini au-dessus de $A/\mf^n$ pour tout entier positif $n$, ce qui résulte immédiatement du cas $n=1$ grâce au lemme de Nakayama nilpotent. Alors, le monomorphisme $\Gscr_k \rightarrow \Gscr^{\on{af}}_k$ est ouvert par hypothèse et fermé d'après le \cite[exp. VIB, lem. 11.18.1]{SGA3}, identifiant donc le membre de gauche à la composante neutre de $\Gscr^{\on{af}}_k$. Vu que le faisceau quotient $\pi_0(\Gscr_k^{\on{af}}):=\Gscr^{\on{af}}_k/\Gscr_k$ est représentable par un groupe proétale, cf. \cite[III, \S3, 7.7]{DG}, dont la section unité est ouverte, on en tire que $\pi_0(\Gscr_k^{\on{af}})$ est un groupe étale fini et que le $k$-groupe $\Gscr_k^{\on{af}}$ est forcément de type fini.

En sachant que le schéma affine $\Gscr^{\on{af}}$ est lisse au-dessus de chaque quotient artinien de $A$, on peut relever par récurrence tout point de $\Gscr^{\on{af}}$ à valeurs dans $k$ en un $A$-point correspondant (de façon non unique évidemment) et par translation de $\Gscr$ dans $\Gscr^{\on{af}}$ par ces points, on dérive la lissité de ce dernier.
\end{proof}
\subsection{Les groupes de Tits sur $\GG_{m,\ZZ}$}\label{subsection groupes de Tits}
Dans le présent paragraphe, nous allons construire les $\GG_{m,\ZZ}$-groupes de Tits à la suite de \cite[annexe 2]{TitsOber}, mission pour laquelle les notations du \S\ref{rappels et preparatifs} seront reprises. Néanmoins, il faudra que des hypothèses additionnelles soient faites :

\begin{hypt}\label{hypothese groupe dans la fibre generique}
	Sauf mention contraire, $G$ sera un $\QQ(t)$-groupe réductif connexe, quasi-déployé, $\QQ(\zeta_e,t^{1/e})$-déployé pour un certain entier positif $e$. Étant donné un tore déployé maximal $S \subset G$, on fixe un couple de Killing $Z_G(S)=T \subset B$ et un quasi-système de Tits $(x_a)_{a \in \Phi_G}$ de Tits, voir la déf. \ref{definition quasi-systeme de Tits}. On suppose de plus que $T$ est induit et que $G_{\QQ\rpot{t}}$ est résiduellement déployé.
	\end{hypt}

Les notions correspondantes pour la forme déployée $H$ de $G$ seront notées $T_H$, $B_H$, $(y_\alpha)_{\alpha \in \Phi_H}$ et $\Gamma$ désignera le groupe de Galois de $\QQ(\zeta_e,t^{1/e})/\QQ(t)$ opérant sur $H$ par automorphismes de Dynkin--Tits, cf. \S\S\ref{formes tordues quasi-deployees}-\ref{facteurs de 2 dans le cas non reduit}.

Tout d'abord signalons que le déploiement résiduel de $G_{\QQ\rpot{t}}$ dans l'hyp. \ref{hypothese groupe dans la fibre generique} sert à que les extensions de $\QQ(t)$ associées aux racines $a \in \Phi_G$ soient conjuguées de $\QQ(t_a)$, où $t_a=t^{1/e_a}$ et le nombre naturel $e_a$ divise $e$, c'est-à-dire que l'on ait des quasi-épinglages
\begin{equation}\label{equation isomorphisme groupe radiciel singulier}
x_a: U_a \xrightarrow{\sim} \Res_{\QQ(t_a)/\QQ(t)}\GG_a
\end{equation}
si $a \in \Phi^{\on{nd,nm}}_G$ n'est ni divisible ni multipliable ou
\begin{equation}\label{equation isomorphisme groupe radiciel pluriel}
x_a: U_a \xrightarrow{\sim} \Res_{\QQ(t_{2a})/\QQ(t)}\GG_{p,\QQ(t_a)/\QQ(t_{2a})}
\end{equation}
si $a \in \Phi_G^{\on{m}}$ est multipliable. D'autre part, le fait que $T$ soit induit entraîne que le tore maximal s'exprime comme produit de facteurs de la forme \begin{equation}\label{equation tore decompose} T\simeq\prod_{j\in J}\Res_{\QQ(t^{1/d_j})/\QQ(t)} \GG_m \end{equation} avec $d_j$ des diviseurs naturels de $e$. 

Pour conclure, notons que, si $G$ remplit l'hyp. \ref{hypothese groupe dans la fibre generique}, alors il en est de même de son revêtement simplement connexe $G^{\on{sc}}$, de son groupe adjoint $G^{\on{ad}}$, voir \cite[prop. 4.4.16]{BTII}, et de la $z$-extension canonique associée $\widetilde{G}$ au sens de (\ref{equation modifie central tordu}), d'où la richesse véritable de la classe d'exemples ici considérés. 

\begin{rem}Notons que, dans \cite[p. 215]{TitsOber}, le groupe $G$ est supposé simplement connexe et sa forme déployée $H$ simple, ce qui s'approche plus de l'hypothèse régnante de la version précédente de cet article : d'un côté, cela nous permet de supposer $e \leq 3$ et nous ramène à travailler avec un nombre fini de familles de diagrammes de Dynkin irréductibles ; de l'autre côté, presque tous les affirmations valables sous l'hyp. \ref{hypothese groupe dans la fibre generique} seront typiquement déduites de celles-ci.
\end{rem}
Ensuite, on se penche sur la construction d'un certain schéma en groupes $\underline{G}$ sur $\GG_{m, \ZZ}=\spec \ZZ[t^{\pm 1}]$ prolongeant $G$ à la suite de Tits, cf. \cite[p. 217]{TitsOber}. Son observation cruciale consiste à remarquer que le tore maximal $T$ et les sous-groupes radiciels $U_a$ pour $a \in \Phi_G^{\on{nd}}$ admettent des $\ZZ[t^{\pm 1}]$-modèles naturels.

\begin{defn}[Tits]\label{definition drs groupe de tits}
	Soit \begin{equation}
	\underline{T}:=\prod_{j \in J}\Res_{\ZZ[t^{\pm 1/d_j}]/\ZZ[t^{\pm 1}]}\GG_{m,\ZZ[t^{\pm 1/d_j}]}
	\end{equation} le $\ZZ[t^{\pm 1}]$-modèle en groupes de $T$ défini par transport de structure le long de l'isomorphisme (\ref{equation tore decompose}). Pour les racines $a \in \Phi^{\on{nd,nm}}$ aux rayons singuliers, soient 
	\begin{equation}
	\underline{U_a}:=\Res_{\ZZ[t_a^{\pm 1}]/\ZZ[t^{\pm 1}]}\GG_{a,\ZZ[t_a^{\pm 1}]}
	\end{equation} les $\ZZ[t^{\pm 1}]$-modèles en groupes de $U_a$ définis par transport de structure le long de $x_a$, voir (\ref{equation isomorphisme groupe radiciel singulier}). Pour chaque racine multipliable $a \in \Phi_G^{\on{m}}$, soit 
		\begin{equation}
	\underline{U_a}:=\Res_{\ZZ[t_{2a}^{\pm 1}]/\ZZ[t^{\pm 1}]}\GG_{p,\ZZ[t_a^{\pm 1}]/\ZZ[t_{2a}^{\pm 1}]}
	\end{equation} le $\ZZ[t^{\pm 1}]$-modèle en groupes de $U_a$ définis par transport de structure le long de $x_a$, voir (\ref{equation isomorphisme groupe radiciel pluriel}), où l'on note $\GG_{p,\ZZ[t_a^{\pm 1}]/\ZZ[t_{2a}^{\pm 1}]}$ le $\ZZ[t_{a}^{\pm 1}]$-schéma $\Res_{\ZZ[t_a^{\pm 1}]/\ZZ[t_{2a}^{\pm 1}]}\AAA^1_{\ZZ[t_a^{\pm 1}]} \times  t_a\AAA^1_{\ZZ[t_{2a}^{\pm 1}]}$ muni de la loi de groupe induite par (\ref{equation loi de groupe pluriel}).
\end{defn}

Observons que $\underline{T}$ ne dépend pas du choix de l'isomorphisme de (\ref{equation tore decompose}), en vertu de son identification au modèle de Néron connexe de $T$ sur $\ZZ[t^{\pm 1}]$, cf. \cite[4.4.8]{BTII}, \cite[prop. 10.1.4]{BLR} et \cite[prop. 3.8]{LouDiss}. Nous montrons ci-dessous que la donnée $(\underline{T},\underline{U_a})_{a \in \Phi_G^{\on{nd}}}$ est radicielle schématique, cf. \cite[p. 217]{TitsOber}.

\begin{propn}[Tits]\label{proposition drs groupe de tits}
	La donnée $(\underline{T},\underline{U_a})_{a \in \Phi_G^{\on{nd}}}$ satisfait aux axiomes (DRS0-3) de la déf. \ref{definition donnees radicielles schematiques}.
\end{propn}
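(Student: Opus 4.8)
Le plan est de vérifier séparément chacun des axiomes (DRS0--3) de la déf. \ref{definition donnees radicielles schematiques}, en remarquant que chacun d'eux affirme qu'un certain morphisme, défini a priori seulement sur la fibre générique $G\otimes\QQ(t)$, se prolonge aux modèles entiers sur $\ZZ[t^{\pm 1}]$. Comme $\underline{T}$ et les $\underline{U_a}$ sont tous obtenus par transport de structure le long du quasi-système de Tits $(x_a)$, voir (\ref{equation isomorphisme groupe radiciel singulier}) et (\ref{equation isomorphisme groupe radiciel pluriel}), et que la restriction des scalaires est un foncteur envoyant morphismes sur morphismes, il suffira d'exhiber pour chaque application structurale une écriture en coordonnées par des polynômes à coefficients entiers. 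On insiste sur ce qu'on ne peut pas se ramener au cas déployé par changement de base le long de $\ZZ[t^{\pm 1}]\rightarrow \ZZ[t^{\pm 1/e}]$, puisque ce produit tensoriel devient non réduit en caractéristique $e=p$ à cause de la ramification sauvage ; la vérification doit donc être uniforme et intégrale, et reposera pour l'essentiel sur le lem. \ref{lemme coordonnees de la modification de Tits}.

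Commençons par les axiomes les plus aisés. Pour (DRS0), le tore $\underline{T}$ s'identifie au modèle de Néron connexe de $T$, produit des facteurs $\Res_{\ZZ[t^{\pm 1/d_j}]/\ZZ[t^{\pm 1}]}\GG_m$ de (\ref{equation tore decompose}) ; le plus grand sous-tore déployé de chaque facteur possède un modèle entier évident $\GG_m$ muni d'un morphisme diagonal canonique, d'où un morphisme $\underline{S}\rightarrow \underline{T}$ prolongeant l'inclusion générique (il n'est pas besoin de vérifier que c'est une immersion fermée, cf. rem. \ref{remarque difference definition drs}). Pour (DRS1), on calcule que la conjugaison s'écrit $\underline{t}\cdot x_a(u)\cdot \underline{t}^{-1}=x_a(a(\underline{t})\,u)$, où $a(\underline{t})$ est un monôme en les coordonnées toriques, donc une unité ; les facteurs de $2$ introduits par la modification de Tits se simplifient dans cette formule, qui reste ainsi entière, et l'on conclut par fonctorialité de la restriction des scalaires (le cas multipliable, où l'on agit aussi sur la coordonnée pluriel par le poids $2a$, se traite de même).

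L'axiome (DRS2) constitue le cœur calculatoire mais ne présente pas de difficulté conceptuelle : la formule du commutateur de Chevalley a des constantes de structure entières, et le lem. \ref{lemme coordonnees de la modification de Tits} garantit que ces formules restent à coefficients entiers une fois exprimées dans les coordonnées de Tits $y_\alpha^{\on{T}}$, resp. $x_a^{\on{T}}$. En appliquant de nouveau la fonctorialité de la restriction des scalaires, on obtient le prolongement $\underline{U_a}\times \underline{U_b}\rightarrow \prod_{c\in ]a,b[}\underline{U_c}$ ; les termes mixtes faisant intervenir les groupes pluriels $\GG_p$ se traitent pareillement grâce aux formules de Tits.

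L'obstacle principal réside dans (DRS3), qu'on ramène à un calcul de rang $1$ en considérant les sous-groupes $\langle U_a, U_{-a}\rangle$, isomorphes à $\Res_{\QQ(t_a)/\QQ(t)}\on{SL}_2$ dans le cas non multipliable et à $\Res_{\QQ(t_{2a})/\QQ(t)}\on{SU}_{3,\QQ(t_a)/\QQ(t_{2a})}$ dans le cas multipliable. On y utilise les éléments de Weyl explicites $m_a^{\on{T}}(r)$, resp. $m_a^{\on{T}}(u,v)$, de (\ref{equation elements m du normalisateur singulier tits}) et (\ref{equation elements m du normalisateur pluriel tits}) : on définit l'ouvert $\underline{W_a}\subset \underline{U_a}\times\underline{U_{-a}}$ comme le lieu où la quantité pertinente est inversible --- l'entrée non triviale de la matrice dans le cas $\on{SL}_2$, et $s(u,v)=v+u\sigma(u)$ dans le cas pluriel --- et l'on vérifie que l'application d'échange $\underline{W_a}\rightarrow \underline{C_a}=\underline{U_{-a}}\times\underline{T}\times\underline{U_a}$ est donnée par des formules entières, prolongeant l'inclusion $W_a\subset C_a$ de (\ref{equation immersion ouverte grosse cellule}). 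Le point délicat est précisément le cas multipliable en caractéristique résiduelle $2$ : c'est là que la modification de Tits, par opposition à celle de Chevalley--Steinberg qui ferait apparaître des divisions par $2$, devient indispensable pour conserver l'intégralité des formules et la lissité des modèles, comp. avec la rem. \ref{remarque limite inseparable pluriel} et l'ex. \ref{exemple modification de tits rang un}.
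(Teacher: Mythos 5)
Votre stratégie globale coïncide avec celle de l'article : vérification axiome par axiome au moyen de formules explicites à coefficients entiers, réduction au rang $1$, et recours systématique à la modification de Tits pour éliminer les divisions par $2$ ; le traitement de (DRS 0), (DRS 1) et (DRS 2) est correct et conforme à celui du texte.

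Il y a en revanche une erreur concrète dans votre traitement de (DRS 3), que vous identifiez vous-même comme l'obstacle principal. L'ouvert $\Wscr_a$ est par définition l'image réciproque de la grosse cellule $C_a=U_{-a}\times Z\times U_a$ par l'immersion $U_a\times U_{-a}\hookrightarrow G$ ; la fonction qui le découpe est donc une fonction sur le \emph{produit} $\underline{U_a}\times\underline{U_{-a}}$, à savoir la norme de $1-rr'$ dans le cas ni divisible ni multipliable et la norme de $t(u,v,u',v')=1-2\sigma(u)u'+s(u,v)s(u',v')$ dans le cas multipliable, cf. (\ref{equation diviseur sl2}) et (\ref{equation diviseur su3}). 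La quantité $s(u,v)=v+u\sigma(u)$ que vous proposez est une fonction d'un seul facteur qui s'annule sur la section unité : le lieu où elle est inversible ne contient ni cette section ni les fermés $\underline{U_{\pm a}}$, ce qui contredit la forme même de l'axiome (DRS 3) retenue ici, cf. rem. \ref{remarque difference definition drs} --- et c'est précisément cette inclusion qui rend la loi birationnelle stricte dans la preuve du th. \ref{theoreme d'existence et unicite des groupes attaches aux drs}. Vous avez vraisemblablement confondu le dénominateur $s(u,v)$ apparaissant dans les éléments de Weyl $m_a^{\on{T}}(u,v)$ de (\ref{equation elements m du normalisateur pluriel tits}) avec le diviseur de la grosse cellule ; or les $m_a^{\on{T}}$ ne servent pas à établir (DRS 3) : l'application d'échange $\beta_a$ n'est pas une conjugaison, mais la réécriture du produit $x_a(u,v)x_{-a}(u',v')$ sous la forme $x_{-a}(\cdot)\,t\,x_a(\cdot)$, dont il faut vérifier directement, comme en (\ref{equation echange su3}), que les coordonnées sont entières et sans facteur de $2$ au dénominateur. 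Le même flou affecte le cas $\on{SL}_2$, où « l'entrée non triviale de la matrice » doit désigner le coefficient $1-rr'$ du produit et non une entrée d'un seul facteur unipotent.
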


\begin{proof}
	Ceci découle essentiellement du fait que l'application conjugaison \begin{equation}T \times U_a \rightarrow U_a, \end{equation} l'application commutateur \begin{equation} U_a \times U_b \rightarrow \prod_{c\in ]a,b[} U_c, \end{equation} et l'application rationnelle d'échange \begin{equation} U_a \times U_{-a} \dashrightarrow U_{-a} \times T \times U_a\end{equation} préservent leur sens avec $\ZZ[t^{\pm 1}]$-coefficients, c'est-à-dire que se prolongent nettement en des morphismes rationnels entre les produits des modèles $\underline{T}$ et $\underline{U_a}$ pour toute racine $a \in \Phi_G^{\on{nd}}$ non divisible. L'effet de la similitude n'est pas à craindre, restant visible dans la fibre générique.
	
	En effet, pour l'action de $\underline{T}$ sur $\underline{U_a}$, on peut se ramener, par naturalité des modèles de Néron connexes, au cas où $G$ est adjoint de rang relatif $1$. Alors, on voit par construction que $\underline{T}$ s'identifie à $\Res_{\ZZ[t_a^{\pm 1}]/\ZZ[t^{\pm 1}]}\GG_m$ par un certain morphisme $a$, choisi de telle sorte que les formules
	\begin{equation} a(s)\cdot x_a(r)\sim x_a(sr) \end{equation}\begin{equation}
	a(s) \cdot x_a(u,v) \sim x_a(su,s\sigma(s)v)\end{equation}
	décrivent l'opération de $\underline{T}$ sur $\underline{U_a}$ à similitude près, voir aussi \cite[prop. 4.4.19]{BTII}.
	
	La règle (DRS 2) concerne les applications commutateur \begin{equation}\underline{U_a} \times \underline{U_b} \rightarrow \prod_{c\in ]a,b[} \underline{U_c}\end{equation} pour des racines non divisibles $a,b$ engendrant des $\QQ$-droites différentes. Leurs fibres génériques s'écrivent à similitude près comme dans \cite[A.6]{BTII} et on voit aisément que les applications y décrites se prolongent aux coefficients entiers. Pour être exhaustif, nous allons écrire les applications commutateur lorsque l'une des racines $a$, $b$ ou $c$ est multipliable, vu qu'elles ont été affectées par la modification de Tits, voir la déf. \ref{definition quasi-systeme de Tits} :
	\begin{equation}\label{equation commutateur pluriel un}
	[x_a(r), x_b(r')] \sim x_{a/2+b/2}(0,\sigma(rr')-rr') ; \end{equation}\begin{equation}\label{equation commutateur pluriel deux}
	[x_a(u,v), x_b(u',v')] \sim x_{a+b}(2uu') ; \end{equation}\begin{equation}\label{equation commutateur pluriel trois}
	[x_a(u,v), x_b(r)] \sim x_{a+b}(ru,N(r)\sigma(v))x_{2a+b}(s(u,v)r),
	\end{equation}
	où $s(u,v)$ désigne la somme quadratique $N(u)+v$.
	En particulier, les sous-groupes radiciels multipliables commutent les uns avec les autres en caractéristique $2$, ce qui pourrait surprendre le lecteur, mais qui répondre à nos attentes, compte tenu de la classification des groupes pseudo-réductifs, voir l'ex. \ref{exemple absolument non reduit} et le th. \ref{proposition fibre generique quasi-reductive}.
	
	Enfin, pour que l'axiome (DRS 3) soit rempli, il faut se donner une certaine section globale \begin{equation}d_a:\underline{U_a}\times \underline{U_{-a}} \rightarrow \AAA^1_{\ZZ[t^{\pm 1}]}.\end{equation} Posant $\underline{W_a}$ l'ouvert distingué de $\underline{U_a}\times \underline{U_{-a}}$ où $d_a$ ne s'annule pas, qui doit contenir les fermés $\underline{U_{\pm a}}$, on a besoin encore d'une application \begin{equation}\beta_a: \underline{W_a} \rightarrow \underline{U_{-a}}\times \underline{T} \times \underline{U_{a}}.\end{equation} Dans la fibre générique, on trouve l'écriture de ces applications à similitude près dans \cite[4.1.6, 4.1.12]{BTII}. 
	
	Enregistrons toutefois leurs prolongements entiers pour référence future. Si $a \in \Phi_G^{\on{nd,nm}}$ n'est ni divisible ni multipliable, alors
	\begin{equation}\label{equation diviseur sl2} d_a(x_a(r),x_{-a}(r'))=\on{Norme}_{\ZZ[t_a^{\pm 1}]/\ZZ[t^{\pm 1}]}(1-rr')\end{equation}
	et, en posant $t(r,r')=1-rr'$ et $\underline{W_a}$ l'ouvert spécial déterminé par $d_a$, on en tire
	\begin{equation}\label{equation echange sl2} \beta_a(x_a(r),x_{-a}(r'))=(x_{-a}(r'/t(r,r')),a^{\vee}(t(r,r')),x_a(r/t(r,r'))),\end{equation}
	où $a^{\vee}$ désigne l'application composée d'un certain isomorphisme $\Res_{\ZZ[t_a^{\pm 1}]/\ZZ[t^{\pm 1}]}\GG_m \simeq \underline{T}^{\on{sc},a} $ unique à conjugaison près avec le morphisme naturel $\underline{T}^{\on{sc},a} \rightarrow \underline{T}$. Si $a \in \Phi_G^{\on{m}}$ est multipliable, il s'avère par des calculs impliquant la modification de Tits que l'on a
	\begin{equation}\label{equation diviseur su3} d_a(x_a(u,v),x_{-a}(u',v'))=\on{Norme}_{\ZZ[t_{a}^{\pm 1}]/\ZZ[t^{\pm 1}]}t(u,v,u',v'),\end{equation}
	où l'on note $t(u,v,u',v'):=1-2\sigma(u)u'+s(u,v)s(u',v')$, et on en déduit aussi que la première coordonnée de $\beta_a(x_a(u,v),x_{-a}(u',v'))$ s'écrit comme le monstre suivant
	\begin{equation}\label{equation echange su3} x_{-a}\Big(\frac{u'-us(u',v')}{t(u,v,u',v')},\frac{s(u',v')}{t(u,v,u',v')}-N\Big(\frac{u'-us(u',v')}{t(u,v,u',v')}\Big)\Big),
	\end{equation} mais sans aucun facteur de $2$ non souhaitable. On voit de même que sa dernière coordonnée est pareille à la première, quitte à échanger quelques primes et à conjuguer, et que la deuxième est égale à $(2a)^{\vee}(t(u,v,u',v'))$, où l'on note $(2a)^\vee$ l'application\footnote{L'apparition de ce facteur de $2$ est liée au fait que cela induit le copoids $(2a)^\vee$ sur $\underline{S}$ et non pas $a^\vee$.} composée $\Res_{\ZZ[t_a^{\pm 1}]/\ZZ[t^{\pm 1}]}\GG_m \simeq \underline{T}^{\on{sc},a}\rightarrow \underline{T}$.	
\end{proof}

Grâce à cette proposition, nous pouvons appliquer le th. \ref{theoreme d'existence et unicite des groupes attaches aux drs} pour obtenir un $\ZZ[t^{\pm 1}]$-groupe lisse, connexe et séparé $\underline{G}$.

\begin{defn}[Tits]\label{definition groupe de Tits}
	Le groupe de Tits $\underline{G}$ est le $\ZZ[t^{\pm 1}]$-modèle en groupes séparé, lisse et connexe de $G$ qui découle du th. \ref{theoreme d'existence et unicite des groupes attaches aux drs} appliqué à la donnée radicielle schématique $(\underline{T},\underline{U_a})_{a\in \Phi_G^{\on{nd}}}$.
\end{defn}

Remarquons tout d'abord quelques propriétés importantes :

\begin{rem}\label{remarque faits basiques groupe de tits}Tous les faits qui suivent sans démonstration ultérieure résultent d'une comparaison des grosses cellules correspondantes, vu l'unicité des solutions aux lois birationnelles :
	\begin{enumerate} 
		\item Si $G$ est la restriction des scalaires d'un groupe déployé $H'$ le long de $\QQ(t^{1/d})/\QQ(t)$, alors $\underline{G}$ en est de même la restriction des scalaires $\Res_{\ZZ[t^{\pm 1/d}]/\ZZ[t^{\pm 1}]}H'$.
		\item Si l'on inverse le degré de ramification $e$, on obtient un $\ZZ[1/e, t^{\pm 1}]$-groupe réductif $\underline{G}[1/e]$ donné par les $\Gamma$-invariants de $\Res_{\ZZ[1/e, t^{\pm 1/d}]/\ZZ[1/e, t^{\pm 1}]}\underline{H}$, voir \cite[3.c.1]{PZh}.
		\item On verra ci-dessous, cf. prop. \ref{proposition fibre generique quasi-reductive}, que la fibre générique $G_{\eta_p}:=\underline{G}\otimes \FF_p(t)$ de la réduction modulo $p$ est toujours pseudo-réductive et quasi-déployée à système de racines $\Phi_G$. Si $\Phi_H$ est irréductible et $\Gamma$ n'y opère pas trivialement (donc on peut choisir $e=2,3$), alors le th. \ref{theoreme classification cgp} entraîne que $G_{\eta_e}^{\on{sc}}$ est soit exotique basique, soit barcelonais basique. 
		\item Nous verrons plus tard que $\underline{G}$ est même affine et non pas seulement quasi-affine, voir la prop. \ref{proposition enveloppe affine}.
	\end{enumerate}
\end{rem}

Notons $G_{\eta_p}$ la fibre générique $\underline{G}\otimes \FF_p(t)$ de la réduction $G_p:=\underline{G}\otimes \FF_p[t^{\pm 1}]$ modulo $p$ de $\underline{G}$.

\begin{propn}\label{proposition fibre generique quasi-reductive}
	Le $\FF_p(t)$-groupe lisse, affine et connexe $G_{\eta_p}$ est pseudo-réductif, quasi-déployé et son système de racines relatif s'identifie à $\Phi_G$. De plus, les $\eta_p$-fibres des isomorphismes $x_a$ définissent un quasi-système de Tits.
\end{propn}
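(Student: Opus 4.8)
Le plan consiste à transférer à la fibre $\eta_p$ la structure explicite de $\underline{G}$ fournie par sa grosse cellule, puis à identifier le groupe obtenu avec les modèles pseudo-réductifs basiques de Conrad--Gabber--Prasad. D'abord, la lissité et la connexité de $G_{\eta_p}$ sont héritées de celles de $\underline{G}$, tandis que l'affinité résulte de la prop. \ref{proposition enveloppe affine} appliquée au corps (régulier de dimension $0$) $\FF_p(t)$, combinée à la connexité : la composante neutre de $G_{\eta_p}^{\on{af}}$ s'identifie à $G_{\eta_p}$, ouvert et fermé dans un schéma affine, donc affine. L'immersion ouverte de la grosse cellule $\prod_{a \in \Phi_G^{\on{nd},-}} \underline{U_{-a}} \times \underline{T} \times \prod_{a \in \Phi_G^{\on{nd},+}} \underline{U_a} \hookrightarrow \underline{G}$ du th. \ref{theoreme d'existence et unicite des groupes attaches aux drs} reste une immersion ouverte après changement de base à $\FF_p(t)$.

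Ensuite, je calculerais le tore déployé maximal et son centralisateur. Le sous-tore $\underline{S} \subset \underline{T}$ étant déployé, sa fibre $S_{\eta_p}$ demeure un tore déployé de $G_{\eta_p}$. L'action explicite de $\underline{T}$ sur les $\underline{U_a}$ établie dans la preuve de la prop. \ref{proposition drs groupe de tits}, à savoir $a(s)\cdot x_a(r)\sim x_a(sr)$ et $a(s)\cdot x_a(u,v)\sim x_a(su,s\sigma(s)v)$, montre que les poids de $S_{\eta_p}$ sur $\on{Lie}G_{\eta_p}$ sont exactement les racines de $\Phi_G$ (le facteur $v$ des racines multipliables contribuant le poids divisible $2a$). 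Par ailleurs, dans la grosse cellule un élément centralise $S_{\eta_p}$ si et seulement si toutes ses composantes radicielles sont triviales, d'où $Z_{G_{\eta_p}}(S_{\eta_p})=T_{\eta_p}=\prod_j \Res_{\FF_p(t^{1/d_j})/\FF_p(t)}\GG_m$. Ce groupe est commutatif, pseudo-réductif, et admet $S_{\eta_p}$ comme tore déployé maximal ; il s'ensuit que $S_{\eta_p}$ est déployé maximal dans $G_{\eta_p}$, que le th. \ref{theoreme borel-tits structure} de Borel--Tits s'applique avec les sous-groupes radiciels relatifs $U_a$ donnés par les fibres $\eta_p$ des $x_a$, que le système de racines relatif est $\Phi_G$, et que $G_{\eta_p}$ est quasi-déployé puisque le centralisateur de $S_{\eta_p}$ est commutatif.

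Le cœur de la preuve est la pseudo-réductivité. En me servant de la $z$-extension canonique (\ref{equation z-extension canonique tordu}) et de la compatibilité de la formation de $\underline{G}$ aux produits, restrictions des scalaires et modifications centrales (rem. \ref{remarque faits basiques groupe de tits}), je me ramènerais au cas où $H$ est simple et $G$ simplement connexe : en effet, $T_{\eta_p}$ et $T^{\on{sc}}_{\eta_p}$ sont commutatifs pseudo-réductifs, un produit semi-direct de groupes pseudo-réductifs l'est encore (son radical unipotent retombe dans le facteur $G^{\on{sc}}_{\eta_p}$ et y est donc trivial), et le quotient par un sous-groupe central lisse connexe préserve la pseudo-réductivité. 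Si $p\nmid e$, aucune inséparabilité n'apparaît et $G_{\eta_p}$ reste réductif. Le cas $p=e$ avec $\Gamma$ agissant non trivialement sur un diagramme irréductible est l'essentiel : je comparerais alors la grosse cellule de $G_{\eta_p}$ à celles de l'ex. \ref{exemple exotique basique} (cas exotique, lorsque $\Phi_G$ est réduit à $p$-arête) et de l'ex. \ref{exemple absolument non reduit} (cas barcelonais, lorsque $\Phi_G$ est de type $BC_n$ et $p=2$). Concrètement, via le morphisme canonique $i_{G_{\eta_p}}$ de la prop. \ref{proposition plongement restrictions des scalaires groupe de tits} et les formules intégrales des $x_a$ et de leurs réductions (lem. \ref{lemme coordonnees de la modification de Tits}), je vérifierais que les sous-groupes radiciels et les facteurs de Cartan coïncident avec ceux prescrits par le th. \ref{theoreme classification cgp} — en particulier, dans le cas barcelonais, que la modification de Tits fait dégénérer $\GG_{p,\FF_p(t^{1/p})/\FF_p(t)}$ en le groupe lisse de la rem. \ref{remarque limite inseparable pluriel}, avec $U_a \to \Res_{\FF_p(t^{1/p})/\FF_p(t)}U'_{2a}$ donné par $(u,v)\mapsto u^2+v$.

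Enfin, la dernière assertion — que les fibres $\eta_p$ des $x_a$ forment un quasi-système de Tits au sens de la déf. \ref{definition quasi-systeme de Tits} — résulte de ce que l'appartenance des $x_a$ aux bons sous-groupes radiciels et les relations de conjugaison $\on{int}(m_a)\circ \zeta_b\sim \zeta_{s_a(b)}$ sont des identités de morphismes déjà valables sur $\ZZ[t^{\pm 1}]$ (prop. \ref{proposition drs groupe de tits} et lem. \ref{lemme coordonnees de la modification de Tits}), lesquelles se spécialisent donc à $\FF_p(t)$. L'obstacle principal est clairement la comparaison explicite du cas multipliable barcelonais : il faut y contrôler soigneusement les facteurs de $2$ introduits par la modification de Tits pour retrouver exactement la loi $(u,v)\mapsto u^2+v$, ce qui est précisément la raison d'être de cette modification et le point où la rem. \ref{remarque limite inseparable pluriel} sur la limite inséparable joue un rôle décisif.
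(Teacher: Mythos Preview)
Your overall strategy --- reduce via the $z$-extension to the simply connected case and then match the resulting group with the exotic/barcelonais models of Conrad--Gabber--Prasad --- is the right one and is indeed what the paper does. However, three of your steps are either imprecise or take a longer route than necessary.

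First, your reduction stops at ``$G$ simplement connexe avec $H$ simple'', whereas the paper goes one step further, to \emph{rang relatif $1$}. The device for this is the combination of \cite[1.1.11]{BTII} and \cite[th.~C.2.29]{CGP}: once the Cartan $T_{\eta_p}$ and each rank-$1$ subgroup $G_{a,\eta_p}=\langle U_{a,\eta_p},U_{-a,\eta_p}\rangle$ are pseudo-reductive, so is $G_{\eta_p}$. This is a genuine economy: in rank $1$ there are no exotic groups (those need two root lengths), so you only face $\Res\,\on{SL}_2$, $\Res\,\on{SU}_3$ with $p\neq 2$ (both immediate from rem.~\ref{remarque faits basiques groupe de tits}), and $\on{SU}_3$ with $p=2$. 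Your route forces you to re-derive the full exotic classification in arbitrary rank, which the paper only sketches \emph{after} the proposition, as an alternative argument from an earlier draft.

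Second, the sentence ``le quotient par un sous-groupe central lisse connexe préserve la pseudo-réductivité'' is false as a general statement; what is true and what the paper uses is that $G_{\eta_p}$ is a \emph{modifié central} of $G^{\on{sc}}_{\eta_p}$ by the commutative pseudo-reductive group $T_{\eta_p}$ in the precise sense of \cite[\S1.4]{CGP}, and that this specific operation preserves pseudo-reductivity. Relatedly, invoking prop.~\ref{proposition plongement restrictions des scalaires groupe de tits} here is logically premature: its proof uses Bruhat--Tits theory over $\widehat{\ZZ[t]}_{(p)}$, which presupposes the present proposition.

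Third, and most importantly, ``vérifier que les sous-groupes radiciels et les facteurs de Cartan coïncident'' is not enough to conclude that $G_{\eta_p}$ is isomorphic to the target pseudo-reductive group --- matching the terms of two données radicielles schématiques does not match the groups unless the (DRS~3) birational exchange laws also agree. This is exactly the computation the paper carries out in the one remaining hard case $\on{SU}_3$, $p=2$: one checks that modulo $2$ the formulas (\ref{equation diviseur su3})--(\ref{equation echange su3}) collapse to $1+s(u,v)s(u',v')$ and $s(u',v')/(1+s(u,v)s(u',v'))$, i.e.\ to the $\on{SL}_2$-type formulas (\ref{equation diviseur sl2})--(\ref{equation echange sl2}) in the variable $s(u,v)=u^2+v$. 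By uniqueness of solutions to birational laws (th.~\ref{theoreme existence de solutions}) this identifies $G_{\eta_2}$ with the barcelonais group of rank $1$. Your reference to the degeneration of $\GG_p$ via rem.~\ref{remarque limite inseparable pluriel} handles the root group but not this exchange law; that is the step you are missing.
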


La notion de quasi-système de Tits n'a été pas définie pour les groupes pseudo-réductifs exotiques ou barcelonais, la raison étant que pour ces groupes il n'a aucun sens a priori de distinguer entre quasi-systèmes de Tits et de Chevalley, voir \cite[déf. 2.4]{LouBT}. On pourrait alors prendre l'énoncé en petites caractéristiques comme définition même.

\begin{proof}
	Par construction avec les lois birationnelles, l'on a une suite de sous-groupes fermés $\underline{S} \subset \underline{T} \subset \underline{B} \subset \underline{G}$ lisses et connexes. On voit que les fibrés vectoriels $\on{Lie}\underline{T}$ resp. $\on{Lie}\underline{U_a}$ ne sont que les espaces propres à poids nul resp. non nul pour l'action de $\underline{S}$ sur $\on{Lie}\,\underline{G}$. En particulier, cela caractérise uniquement leurs fibres $S_{\eta_p} \subset T_{\eta_p} \subset B_{\eta_p} \subset G_{\eta_p}$ comme sous-groupes fermés de $G_{\eta_p}$ par le th. \ref{theoreme borel-tits structure} -  l'assertion cruciale reste valable plus généralement pour les groupes lisses, affines et connexes.
	
	Il suffit donc de montrer, en vertu de \cite[1.1.11]{BTII} et de \cite[th. C.2.29]{CGP}, que $T_{\eta_p}$ est pseudo-réductif (ce qui est manifestement le cas) et que les sous-groupes $G_{a,\eta_p}$ engendrés par $C_{a,\eta_p}$ sont pseudo-réductifs. Autrement dit, on peut supposer $G$ de rang relatif $1$, et même simplement connexe, en faisant usage des $z$-extensions. En effet, comme $G$ résulte de $G^{\on{sc}}$ par modification centrale, cf. (\ref{equation z-extension canonique tordu}), alors les lois birationnelles fournissent un isomorphisme de $\ZZ[t^{\pm 1}]$-groupes : 
	\begin{equation}\underline{G}:=(\underline{T}\ltimes \underline{G}^{\on{sc}})/\underline{T}^{\on{sc}},\end{equation} 
	d'où le fait que $G_{\eta_p}$ soit un modifié central de $G^{\on{sc}}_{\eta_p}$ par le groupe commutatif pseudo-réductif $T_{\eta_p}$, ramenant l'assertion au cas où $G$ est simplement connexe. 
	
	Supposons que $G=\Res_{\QQ(t_a)/\QQ(t)}\on{SL}_2$ ou que $p \neq 2$ et $G=\Res_{\QQ(t_{2a})/\QQ(t)}\on{SU}_{3,\QQ(t_a)/\QQ(t_{2a})}$. Alors, on sait déjà que $G_{\eta_p}=\Res_{\FF_p(t_a)/\FF_p(t)}\on{SL}_2$ ou que $G_{\eta_p}=\Res_{\FF_p(t_{2a})/\FF_p(t)}\on{SU}_{3,\FF_p(t_{2a})/\FF_p(t_a)}$ sont pseudo-réductifs, voir la rem. \ref{remarque faits basiques groupe de tits}. Si $p=2$ et $G=\Res_{\QQ(t_{2a})/\QQ(t)}\on{SU}_{3,\QQ(t_a)/\QQ(t_{2a})}$, alors (\ref{equation diviseur su3}) et (\ref{equation echange su3}) se simplifient en caractéristique $2$ comme suit :
	\begin{equation}1+s(u,v)s(u',v')\end{equation}
	et
	\begin{equation}\frac{s(u',v')}{1+s(u,v)s(u',v')},\end{equation}
	ce qui coïncide avec (\ref{equation diviseur sl2}) et (\ref{equation echange sl2}) pour le couple $(s(u,v),s(u',v'))$. Par suite, notre loi birationnelle dans la grosse cellule $C_{\eta_2} \subset G_{\eta_2}$ coïncide avec celle du groupe barcelonais de rang relatif $1$ et de dimension $\dim G$, voir ex. \ref{exemple absolument non reduit}.
	
	Enfin, pour que les isomorphismes donnés $x_a$ constituent un quasi-système de Tits, il faut essentiellement remarquer que les $m_b$ permutent les classes de similitude des $x_a$ pour toutes racines non divisibles $a,b \in \Phi_G^{\on{nd}}$. Cela résulte par densité de l'assertion correspondante dans la fibre générique, c'est-à-dire pour $G$, donc remplie grâce au lem. \ref{lemme coordonnees de la modification de Tits}.
\end{proof}

\begin{rem}
	Être pseudo-réductif n'est pas une condition géométrique et donc n'admet aucune formulation relative en familles qui soit pratique. En particulier, on peut montrer facilement que, si $e$ est un choix minimal d'entier tel que l'extension $\QQ(\zeta_e,t^{1/e})/\QQ(t)$ déploie $G$, alors $G_{\eta_p}$ n'est pas réductif pour tout $p$ divisant $e$, et tous les autres fibres du $\FF_p[t^{\pm 1}]$-groupe $G_p$ ont un radical unipotent non trivial sur $\FF_p$. Cependant, nous verrons dans la suite, cf. prop. \ref{lemme fibres grassmannienne globale}, que $G_p$ possède une structure arithmétique assez remarquable, étant un modèle parahorique spécial de sa fibre générique. 
\end{rem}

Parfois il sera plus commode d'avoir une description plus concise et explicite du groupe de Tits $\underline{G}$ en fonction de $\Res_{\ZZ[\zeta_e,t^{\pm 1/e}]/\ZZ[t^{\pm 1}]} H$, où $H$ est vu comme $\ZZ$-groupe épinglé au moyen des épinglages $y_\alpha$ du système de Tits associé.

\begin{propn}\label{proposition plongement restrictions des scalaires groupe de tits}
	On a une immersion fermée naturelle $\underline{G}\rightarrow \Res_{\ZZ[\zeta_e,t^{\pm 1/e}]/\ZZ[t^{\pm 1}]} H$ prolongeant l'application générique. En particulier, $\underline{G}$ est affine sur $\GG_{m,\ZZ}$.
\end{propn}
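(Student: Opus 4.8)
Le plan est de réaliser $\psi$ d'abord sur la grosse cellule au moyen des coordonnées explicites de Tits, de le prolonger en un homomorphisme grâce aux lois birationnelles, puis de conclure par un critère fibral. Écrivons $\underline{H}'=\Res_{\ZZ[\zeta_e,t^{\pm 1/e}]/\ZZ[t^{\pm 1}]} H$, qui est lisse et affine sur $\GG_{m,\ZZ}$. D'après le lem. \ref{lemme coordonnees de la modification de Tits}, les quasi-épinglages $x_a$ de (\ref{equation isomorphisme groupe radiciel singulier}) et (\ref{equation isomorphisme groupe radiciel pluriel}) s'expriment comme des produits des $y_\alpha^{\on{T}}$ dont les coefficients sont des polynômes \emph{entiers} dans les $\gamma(u)$, $\gamma(v)$ ; ils fournissent donc des morphismes $\underline{U_a}\rightarrow \underline{H}'$ de $\ZZ[t^{\pm 1}]$-schémas, et $\underline{T}\rightarrow \underline{H}'$ provient de l'inclusion évidente dans $\Res T_H$. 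En multipliant le long de la grosse cellule, on obtient un morphisme $\psi_0:\underline{C}:=\underline{U_-}\times\underline{T}\times\underline{U_+}\rightarrow \underline{H}'$.

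Comme $\underline{G}$ est lisse sur l'anneau intègre $\ZZ[t^{\pm 1}]$, sa fibre générique $\underline{G}\otimes\QQ(t)=G$ y est schématiquement dense, et $\psi_0$ y coïncide avec la restriction de l'inclusion canonique $G=(\Res H)^{\Gamma}\hookrightarrow \Res_{\QQ(\zeta_e,t^{1/e})/\QQ(t)}H$, cf. rem. \ref{remarque faits basiques groupe de tits}(2). Cette dernière étant un homomorphisme, la compatibilité de $\psi_0$ avec les lois birationnelles de groupe sur $\underline{C}$ et sur $\underline{H}'$ se vérifie par densité schématique sur la fibre générique. Le th. \ref{theoreme existence de solutions} prolonge alors $\psi_0$ de manière unique en un homomorphisme $\psi:\underline{G}\rightarrow \underline{H}'$ de $\ZZ[t^{\pm 1}]$-groupes étendant l'application générique.

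Il reste à voir que $\psi$ est une immersion fermée, ce qui, $\psi$ étant de présentation finie et $\underline{G}$ plat sur $\ZZ[t^{\pm 1}]$, se ramène par le critère fibral \cite[17.9.5]{EGAIV} à montrer que chaque $\psi_s$ en est une. Sur les fibres de caractéristique $p\nmid e$ et en caractéristique nulle, $\psi_s$ s'identifie à l'inclusion des $\Gamma$-invariants, fermée puisque $|\Gamma|=e$ y est inversible. Supposons donc $p\mid e$. La restriction de $\psi_s$ à la grosse cellule $\underline{C}_s$ demeure une immersion localement fermée : pour $a$ ni divisible ni multipliable c'est immédiat, et pour $a$ multipliable c'est exactement l'intérêt de la modification de Tits, les coordonnées de $x_a^{\on{T}}$ restant sans dénominateur en caractéristique $2$, cf. rem. \ref{remarque limite inseparable pluriel}. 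En particulier $d\psi_s$ est injective à l'origine, de sorte que $\ker\psi_s$ est étale ; étant normal dans le groupe connexe $\underline{G}_s$, il est central, donc contenu dans $\underline{T}_s\subseteq\underline{C}_s$ où $\psi_s$ est injectif, et par conséquent trivial. Ainsi $\psi_s$ est un monomorphisme de $\kappa(s)$-groupes de type fini, partant une immersion dont l'image est un sous-groupe fermé, c'est-à-dire une immersion fermée, cf. \cite[exp. VIB]{SGA3}.

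On conclut que $\psi$ est une immersion fermée, et comme $\underline{H}'$ est affine sur $\GG_{m,\ZZ}$, il en va de même de $\underline{G}$. L'obstacle principal réside précisément dans la vérification fibre à fibre aux caractéristiques $p\mid e$, où la fibre générique $G_{\eta_p}$ dégénère en un groupe pseudo-réductif exotique ou barcelonais, cf. prop. \ref{proposition fibre generique quasi-reductive}, et où toute description naïve par invariants galoisiens s'effondre ; c'est la modification de Tits qui sauve à la fois l'intégralité des formules et la fermeture de l'immersion.
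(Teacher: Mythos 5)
La construction de $\psi$ par la grosse cellule et les lois birationnelles, ainsi que votre analyse fibre à fibre reposant sur l'intégralité des coordonnées de la modification de Tits, recoupent la première moitié de la preuve du texte, laquelle obtient d'ailleurs directement une immersion \emph{localement fermée} via \cite[prop. 2.2.10]{BTII} à partir de la fermeture des morphismes $\underline{T}\rightarrow \Res T_H$ et $\underline{U_a}\rightarrow \prod \Res U_{H,\gamma\alpha}$. En revanche, votre dernière étape contient une lacune réelle : il n'existe pas de critère fibral pour les immersions \emph{fermées}. La liste de \cite[17.9.5]{EGAIV} comprend les immersions ouvertes, les monomorphismes et les isomorphismes locaux (sous l'hypothèse de platitude de la source), mais pas les immersions fermées, et pour cause : l'inclusion $\Gscr \hookrightarrow \Gscr^{\on{af}}$ d'un groupe quasi-affine, lisse, connexe et non affine de Raynaud dans son enveloppe affine, cf. \cite[\S VII.3]{Ray} et la prop. \ref{proposition enveloppe affine}, est un homomorphisme de groupes plats et de présentation finie sur une base régulière de dimension $2$ qui induit sur chaque fibre l'immersion ouverte et fermée de la composante neutre, sans être une immersion fermée. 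Votre argument ne livre donc qu'un monomorphisme (voire une immersion), nullement la fermeture de l'image au-dessus de la base bidimensionnelle $\spec \ZZ[t^{\pm 1}]$. Le texte en est bien conscient : pour l'énoncé parallèle $\underline{\Gscr_\Omega}\rightarrow \Res_{\ZZ[\zeta_e,t^{1/e}]/\ZZ[t]}\underline{\Hscr_\Omega}$, dont les fibres sont également des immersions fermées, il avoue explicitement ne pas savoir dépasser le stade localement fermé.

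C'est précisément pour franchir ce pas que la preuve du texte passe par les anneaux de valuation discrète $\widehat{\ZZ[t]}_{(p)}$ : l'image schématique de $\underline{G}\otimes \widehat{\ZZ[t]}_{(p)}$ dans la restriction des scalaires y est un sous-groupe fermé, lisse et affine de même composante neutre, donc égal à $\underline{G}\otimes \widehat{\ZZ[t]}_{(p)}$ par \cite[prop. 4.6.21]{BTII} ; la propriété d'être une immersion fermée vaut alors sur un ouvert cofini de $\spec \ZZ[t^{\pm 1}]$, et l'action de rotation du lem. \ref{lemme action de rotation t unite}, qui relève la puissance $e$-ième de la base, transporte cet ouvert pour recouvrir les points de codimension $2$ restants. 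Pour réparer votre preuve, il faudrait donc remplacer l'appel au critère fibral par un argument de ce type (ou par une vérification du critère valuatif de propreté pour le monomorphisme $\psi$). Accessoirement, dans votre analyse des fibres en caractéristique $p \mid e$, l'implication \og $\ker \psi_s$ central, donc contenu dans $\underline{T}_s$ \fg{} mériterait une justification, les fibres fermées n'y étant pas réductives ; il faut invoquer que le centre est contenu dans le centralisateur du tore déployé maximal, lequel s'identifie à $\underline{T}_s$ par construction de la donnée radicielle schématique.
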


Commentons également un truc qui s'avère très utile dans certaines situations. Si on se contente de construire le schéma en groupes $\underline{G}$ au-dessus de $\ZZ[\zeta_e,t^{\pm 1}]$, alors l'extension de déploiement (générique) $\ZZ[\zeta_e,t^{\pm 1}] \rightarrow \ZZ[\zeta_e,t^{\pm 1/e}]$ devient cyclique, et on peut substituer le groupe cyclique $\Gamma_0$ à son sur-groupe $\Gamma$. Par la même argumentation ci-dessous, on arrive à réaliser $\underline{G}\otimes \ZZ[\zeta_e]$ comme sous-groupe fermé de $\Res_{\ZZ[\zeta_e, t^{\pm 1/e}]/\ZZ[\zeta_e,t^{\pm 1}]} H$ contenu dans les $\Gamma_0$-invariants.

\begin{proof}
	Montrons tout d'abord que les applications restreintes aux tores \begin{equation}\underline{T} \rightarrow \Res_{\ZZ[\zeta_e,t^{\pm 1/e}]/\ZZ[t^{\pm 1}]} T_H,\end{equation}  resp. aux sous-groupes radiciels \begin{equation}\underline{U_a} \rightarrow \prod_{\gamma\alpha}\Res_{\ZZ[\zeta_e,t^{\pm 1/e}]/\ZZ[t^{\pm 1}]}  U_{H,\gamma\alpha}\end{equation} si $a\in \Phi_G$ n'est ni divisible ni multipliable, et \begin{equation}\underline{U_a} \rightarrow  \prod_{\{\gamma\alpha,\gamma\beta\}} \Res_{\ZZ[\zeta_e,t^{\pm 1/e}]/\ZZ[t^{\pm 1}]}U_{H,\gamma\alpha} U_{H,\gamma(\alpha+\beta)}U_{H,\gamma\beta}\end{equation} si $a \in \Phi_G^{\on{m}}$ est multipliable, sont des immersions fermées. Le cas du centralisateur résulte de ce que les modèles de Néron connexes des tores induits sont leurs modèles canoniques, cf. \cite[4.4.6, 4.4.8]{BTII}. À son tour, le cas des sous-groupes radiciels est une conséquence des calculs supprimés du lem. \ref{lemme coordonnees de la modification de Tits}.
	
	Par suite, la \cite[prop. 2.2.10]{BTII} entraîne que l'application naturelle \begin{equation}\underline{G}\rightarrow \Res_{\ZZ[\zeta_e,t^{1/e}]/\ZZ[t]} H\end{equation} est une immersion localement fermée. Soit $\widehat{\ZZ[t]}_{(p)}$ le séparé complété du localisé de $\ZZ[t]$ dans l'idéal premier $(p)$ : il s'agit d'un anneau de valuation discrète à uniformisante $p$ et à corps résiduel $\FF_p(t)$. L'image schématique du groupe parahorique spécial $\underline{G} \otimes \widehat{\ZZ[t]}_{(p)}$ dans la restriction des scalaires correspondante de $ H$ est un sous-groupe fermé, affine et lisse, dont la composante neutre reste égale, d'où l'égalité par \cite[prop. 4.6.21]{BTII}. 
	
	En particulier, la propriété d'être une immersion fermée se propage à un ouvert cofini $U$ de $\spec \ZZ[t^{\pm 1}]$. Pour conclure, il ne reste qu'à appliquer le lemme ci-dessous concernant l'existence d'une opération tordue fonctorielle de $\GG_{m,\ZZ[t^{\pm 1}]}$ sur $\underline{G}$ et sur $\Res_{\ZZ[\zeta_et^{1/e}]/\ZZ[t]} H$. En effet, cette action relève celle de la $e$-ième puissance sur $\spec \ZZ[t^{\pm 1}]$, dont les translatés de $U$ recouvrent tout le spectre.
\end{proof}

Ce lemme a joué un rôle important dans la fin de la démonstration précédente.

\begin{lem}\label{lemme action de rotation t unite}
	L'action $e$-ième puissance de $\GG_{m,\ZZ}$ sur soi-même se relève en une action tordue sur le $\GG_{m,\ZZ}$-groupe $\underline{G}$.
\end{lem}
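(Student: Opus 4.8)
Le plan est de relever la rotation sur la base jusqu'à l'extension de déploiement et d'exploiter à la fois la fonctorialité de la restriction des scalaires et l'unicité des solutions aux lois birationnelles du th. \ref{theoreme d'existence et unicite des groupes attaches aux drs}. Posons $S=\ZZ[\lambda^{\pm 1}]$, l'anneau du $\GG_{m,\ZZ}$ agissant, et notons $\sigma_\lambda$ l'automorphisme $S$-linéaire de $S[t^{\pm 1}]$ envoyant $t$ sur $\lambda^e t$, qui code exactement l'action $e$-ième puissance sur $\spec\ZZ[t^{\pm 1}]$. Se donner l'action tordue cherchée équivaudra à construire, fonctoriellement en $\lambda$, un automorphisme $\Theta_\lambda$ du $S$-schéma en groupes $\underline{G}\otimes_{\ZZ[t^{\pm 1}]}S[t^{\pm 1}]$ relevant $\sigma_\lambda$ et satisfaisant à la règle de cocycle $\Theta_{\lambda_1\lambda_2}=\Theta_{\lambda_1}\circ\Theta_{\lambda_2}$.

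D'abord, je remarquerais que, pour tout diviseur $d$ de $e$, l'assignation $t^{1/d}\mapsto \lambda^{e/d}t^{1/d}$ fournit un automorphisme $S$-linéaire $\sigma_{d,\lambda}$ de $S[t^{\pm 1/d}]$ prolongeant $\sigma_\lambda$, puisque $e/d$ est entier et $(\lambda^{e/d})^d=\lambda^e$. Comme chacun des termes $\underline{T}$ et $\underline{U_a}$ de la donnée radicielle schématique de la déf. \ref{definition drs groupe de tits} est une restriction des scalaires d'un groupe constant ($\GG_m$, $\GG_a$, ou le groupe pluriel) le long d'une telle extension $\ZZ[t^{\pm 1/d}]/\ZZ[t^{\pm 1}]$, la fonctorialité de $\Res$ sous ces automorphismes tordus de la base me donnera des automorphismes $\Theta_{\underline{T}}$ et $\Theta_{\underline{U_a}}$ relevant $\sigma_\lambda$. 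Dans le cas multipliable, je vérifierais que le facteur $t_a\AAA^1$ de la déf. \ref{definition drs groupe de tits} est préservé, car $\lambda^{e/e_a}$ est inversible, et que la loi (\ref{equation loi de groupe pluriel}) l'est aussi, $\sigma_{e_a,\lambda}$ commutant à la conjugaison $\sigma$ de $\ZZ[t_a^{\pm 1}]/\ZZ[t_{2a}^{\pm 1}]$ (les deux étant homogènes pour la graduation en $t_a$).

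Ensuite, il faudrait s'assurer que ces automorphismes sont compatibles avec les applications des axiomes (DRS 0-3) de la déf. \ref{definition donnees radicielles schematiques}, de sorte qu'ils définissent un automorphisme de la donnée radicielle schématique au-dessus de $\sigma_\lambda$, donc de la loi birationnelle sur la grosse cellule. Par l'unicité dans le th. \ref{theoreme d'existence et unicite des groupes attaches aux drs}, cet automorphisme se prolongerait alors de façon unique en $\Theta_\lambda$ sur $\underline{G}\otimes S[t^{\pm 1}]$ ; la même unicité, appliquée au-dessus de $\ZZ[\lambda_1^{\pm 1},\lambda_2^{\pm 1}]$, livrerait la règle de cocycle, la famille $\lambda\mapsto\sigma_{d,\lambda}$ étant multiplicative.

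Le point le plus délicat sera cette vérification de compatibilité, surtout pour les racines multipliables où la modification de Tits a introduit des facteurs de $2$ et où la loi du groupe pluriel est moins transparente. Concrètement, je m'appuierais sur les formules explicites de la preuve de la prop. \ref{proposition drs groupe de tits} — les commutateurs (\ref{equation commutateur pluriel un})--(\ref{equation commutateur pluriel trois}) et les échanges (\ref{equation diviseur sl2})--(\ref{equation echange su3}) — en observant que $\sigma_{d,\lambda}$ agit diagonalement sur les variables $t^{1/d}$ tout en fixant $\lambda$, et que les normes $\on{Norme}_{\ZZ[t_a^{\pm 1}]/\ZZ[t^{\pm 1}]}$, la conjugaison $\sigma$ et la forme $s(u,v)=N(u)+v$ commutent toutes à ce redimensionnement ; il ne devrait alors s'agir que d'un jeu d'homogénéité analogue à celui déjà fait pour la $\ZZ[t^{\pm 1}]$-structure. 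Enfin, je signalerais que la rotation $t^{1/e}\mapsto\lambda t^{1/e}$, qui fixe $\zeta_e$, induit par la même fonctorialité une action tordue compatible sur $\Res_{\ZZ[\zeta_e,t^{\pm 1/e}]/\ZZ[t^{\pm 1}]}H$, rendant équivariante, là où elle est définie, l'immersion fermée de la prop. \ref{proposition plongement restrictions des scalaires groupe de tits} — ce qui est précisément l'usage qu'on en fera.
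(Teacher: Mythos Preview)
Your proposal is correct and follows essentially the same route as the paper: lift the rotation $t\mapsto \lambda^e t$ to $t^{1/d}\mapsto \lambda^{e/d}t^{1/d}$ on each extension, transport it by functoriality of the restriction of scalars to the pieces $\underline{T}$, $\underline{U_a}$ of the donnée radicielle schématique, verify the structure maps of the axioms (DRS 0--3) are intertwined using the explicit formules of the proof of prop.~\ref{proposition drs groupe de tits}, and then invoke the uniqueness part of th.~\ref{theoreme d'existence et unicite des groupes attaches aux drs} (the paper cites the closely related th.~\ref{theoreme existence de solutions}) to extend to $\underline{G}$ and to get the cocycle relation. Your write-up is in fact more explicit than the paper's, which records the lift on $\Res\,\GG_a$ as $(r,t,a)\mapsto(r^e t,a)$ and leaves the remaining checks to the reader; your added remarks on the pluriel case and on equivariance of the embedding of prop.~\ref{proposition plongement restrictions des scalaires groupe de tits} are exactly to the point.
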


\begin{proof}
	Voyons que la $e$-puissance définit une opération $\GG_{m,\ZZ} \times \GG_{m,\ZZ} \rightarrow \GG_{m,\ZZ}$ donnée par $(r,t)\mapsto r^et$. Considérons maintenant le $\GG_{m,\ZZ}$-groupe $\Res_{\ZZ[t^{\pm 1/d}]/\ZZ[t^{\pm 1}]}\GG_{a}$ classifiant des éléments $a \in R[t^{1/d}]$. Alors, la correspondance $(r,t,a) \mapsto (r^et,a)$ en détermine un relèvement de l'opération puissance. La même écriture marche aussi pour les restrictions des scalaires du groupe multiplicatif $\GG_m$ et du groupe unipotent pluriel $\GG_p$. Finalement, nous laissons au lecteur le soin de vérifier que cette action préserve les morphismes des axiomes des données radicielles schématiques -- voir la déf. \ref{definition donnees radicielles schematiques} et les formules de la preuve de la prop. \ref{proposition drs groupe de tits} --  d'où son prolongement à $\underline{G}$ par fonctorialité du th. \ref{theoreme existence de solutions}.
\end{proof}

Dans la version précédente de l'article, nous avons démontré par un calcul presque direct que, au cas où $G$ est absolument simple et simplement connexe à système de racines $\Phi_G$ réduit, la fibre générique $G_{\eta_e}$ était pseudo-réductive exotique basique. Voici le raisonnement :

\begin{exmp}
Supposons que $G$ est absolument simple, simplement connexe et non déployé à système de racines $\Phi_G$ réduit et que $e\leq 3$. Grâce à la prop. \ref{proposition plongement restrictions des scalaires groupe de tits}, on a une immersion fermée \begin{equation}
G_{\eta_e}\hookrightarrow(\Res_{\FF_e(t^{1/e})/\FF_e(t)} H)^{\Gamma_0}.
\end{equation} Vu que la partie galoisienne de l'opération
de $\Gamma_0$ sur cette restriction des scalaires s'évapore tout simplement en caractéristique $e$, le membre de droite s'écrit comme $\Res_{\FF_e(t^{1/e})/\FF_e(t)} H^{\Gamma_0}$, où $\Gamma_0$ opère par $\FF_e$-automorphismes de Dynkin sur le $\FF_e$-groupe épinglé $H$. 

D'après Steinberg, cf. \cite[th. 8.2]{St2} et \cite[prop. 5.1]{Hai}, le groupe algébrique $H^{\Gamma_0}$ est lisse, connexe, semi-simple, absolument presque simple et simplement connexe déployé à système de racines $\Phi_G$ ayant $T^{\Gamma_0}$ comme $\FF_e$-tore maximal. Ce tore se décompose en produit de groupes multiplicatifs $\GG_m$ numérotés par les racines simples $a \in \Delta_G$ et donnés par les formules \begin{equation}a^{\vee}:\GG_m \rightarrow T^{\Gamma_0}, \end{equation} \begin{equation}r \mapsto \prod_{\gamma\alpha} (\gamma\alpha)^{\vee}(r).\end{equation} Ses groupes radiciels numérotés par $a \in \Phi_G$ sont égaux à $(\prod_{\gamma\alpha} U_{\gamma \alpha})^{\Gamma_0}$ et donnés par les épinglages \begin{equation}z_a: \GG_a \rightarrow H^{\Gamma_0},\end{equation} \begin{equation}r \mapsto \prod_{\gamma\alpha} y_{\gamma\alpha}(r),\end{equation} qui forment un $\FF_e$-système de Chevalley, comme on vérifie aisément. Alors, il suffit de remarquer que ces applications sont naturellement compatibles avec celles du quasi-système de Tits de $G$, donc le $\FF_e(t)$-groupe $G_{\eta_e}$ s'identifie au seul $\FF_e(t)$-groupe pseudo-réductif exotique basique à système de racines $\Phi_G$ de l'ex. \ref{exemple exotique basique}, compte tenu de l'unicité des solutions aux lois birationnelles, cf. th. \ref{theoreme existence de solutions}.
\end{exmp}
\subsection{Modèles en groupes immobiliers sur $\AAA_{\ZZ}^1$} \label{subsection groupes immobiliers modelant tits}

Dans ce numéro, nous allons introduire les $\AAA_{\ZZ}^1$-modèles en groupes immobiliers $\underline{\Gscr_{ \Omega}}$, voir la déf. \ref{definition modeles immobiliers du groupe de tits}, des groupes de Tits $\underline{G}$, voir la déf. \ref{definition groupe de Tits}. Au-dessus de $\ZZ[1/e]$, ces groupes avaient été découverts par Pappas--Rapoport dans le cas simplement connexe, voir \cite[7.a, (7.2)]{PR}, puis définis plus généralement par Pappas--Zhu, voir \cite[th. 4.1]{PZh}. Notre gros résultat est celui de l'affinité du modèle immobilier $\underline{\Gscr_{ \Omega}}$, voir le th. \ref{theoreme affinite modeles immobiliers}, lequel résulte d'une approche radicalement différente de celle de Pappas--Rapoport--Zhu, en faisant justement usage des propriétés combinatoires élémentaires, cf. prop. \ref{proposition combinatoire caracteristiques differentes}, et de la lissité de l'enveloppe affine, cf. prop. \ref{proposition enveloppe affine}.

Avant de définir les données radicielles schématiques, on a besoin du lemme suivant sur la combinatoire des $\FF_p\rpot{t}$-groupes déduits de $\underline{G}$ par changement de base, en variant la caractéristique $p$.

\begin{propn}\label{proposition combinatoire caracteristiques differentes}
	Il existent des bijections canoniques entre les différents appartements $\Ascr(G_{\eta_p},S_{\eta_p},\FF_p\rpot{t})$, pour toute caractéristique $p \in \PP \cup \{0\}$, invariantes par l'action des groupes d'Iwahori--Weyl.
\end{propn}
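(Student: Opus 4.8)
Le plan est de réaliser les trois ingrédients d'un appartement muni de l'action d'Iwahori--Weyl — l'espace affine sous-jacent, l'ensemble des racines affines, et le groupe d'Iwahori--Weyl avec son action — comme des données ne dépendant que de la combinatoire de $\underline{G}$ et non de la caractéristique $p$. D'abord, on remarque que $\Ascr(G_{\eta_p},S_{\eta_p},\FF_p\rpot{t})$ est, d'après la construction de \cite{BTII} et son extension pseudo-réductive \cite{LouBT}, cf. th. \ref{theoreme theorie de bruhat-tits pseudo-reductive}, un espace affine sous l'espace vectoriel réel $V:=X_*(\underline{S})_{\RR}$, lequel est manifestement indépendant de $p$. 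De plus, le système de racines relatif est $\Phi_G$ pour tout $p$ grâce à la prop. \ref{proposition fibre generique quasi-reductive}, donc le groupe de Weyl fini $W=W(\Phi_G)$ et son action linéaire sur $V$ sont les mêmes en toute caractéristique.

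Ensuite, on précise la partie translation de $\widetilde{W}_p=N(\FF_p\rpot{t})/T(\FF_p\rpot{t})_1$. Comme $T=\underline{T}\otimes \FF_p(t)$ est induit, $\underline{T}=\prod_{j\in J}\Res_{\ZZ[t^{\pm 1/d_j}]/\ZZ[t^{\pm 1}]}\GG_m$, le groupe $T(\FF_p\rpot{t})=\prod_j L_j^{\times}$ avec $L_j=\FF_p\rpot{t^{1/d_j}}$, et son image par l'application de Kottwitz dans $V$ se calculent facteur par facteur. Le point crucial — et le seul endroit où intervient la ramification sauvage — est que l'extension totalement ramifiée $\FF_p\rpot{t^{1/d_j}}/\FF_p\rpot{t}$ a pour groupe de valeurs $\tfrac{1}{d_j}\ZZ$ qu'elle soit modérément ou sauvagement ramifiée, de sorte que le réseau de translations vaut $\bigoplus_j \tfrac{1}{d_j}\ZZ$ pour tout $p$. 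Le déploiement résiduel fournissant un sommet spécial fixé par $W$, ceci identifie $\widetilde{W}_p\simeq \Lambda \rtimes W$ ainsi que son action sur $V$ de façon uniforme.

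Il reste à apparier les racines affines et à fixer canoniquement la bijection. Les valuations $\varphi_{p,a}$ sont normalisées par le quasi-système de Tits au moyen des formules \eqref{equation valuation singulier}--\eqref{equation valuation pluriel barcelonais}, donc elles déterminent une origine distinguée $o_p$ dans chaque appartement, et la bijection $\Ascr(G_{\eta_p},\dots)\to \Ascr(G_{\eta_{p'}},\dots)$ sera $o_p+v\mapsto o_{p'}+v$. Pour une racine $a$ ni divisible ni multipliable, on a $U_a=\Res_{\FF_p\rpot{t_a}/\FF_p\rpot{t}}\GG_a$ et $\varphi_{p,a}=\omega$, d'où les sauts de la filtration appartiennent à $\tfrac{1}{e_a}\ZZ$ indépendamment de $p$. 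Enfin, on vérifie que les réflexions affines $s_{a,r}$ sont représentées par les éléments $m_a^{\on{T}}$ de formules indépendantes de la caractéristique, cf. \eqref{equation elements m du normalisateur singulier tits}--\eqref{equation elements m du normalisateur pluriel tits}, de sorte que la bijection entrelace les actions d'Iwahori--Weyl et respecte la structure d'extension de $\widetilde{W}_p$.

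L'obstacle principal réside dans le cas multipliable au nombre premier sauvage $p=e=2$. Là, $U_a=\Res\,\GG_{p,\dots}$ est barcelonais plutôt qu'unitaire, et la valuation est donnée par $\varphi_a(x_a(u,v))=\omega(u^2+v)/2$, voir \eqref{equation valuation pluriel barcelonais}, au lieu de $\omega(s)/2$, voir \eqref{equation valuation pluriel unitaire} ; a priori ces formules distinctes pourraient décaler l'échelonnage des racines affines attachées à $a$ et $2a$. La résolution tient précisément à la modification de Tits du \S\ref{facteurs de 2 dans le cas non reduit} : le modèle entier $\underline{U_a}$ a été construit de manière à être lisse et uniforme en toute caractéristique, si bien que les sauts de la filtration — et donc l'ensemble des racines affines ainsi que la position de $o_p$ par rapport à elles — ne dépendent pas de $p$. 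Un calcul direct court avec les formules explicites, parallèle au cas modéré traité dans \cite{PR} et \cite{PZh}, confirmera que la donnée radicielle valuée résultante est du même type combinatoire, ce qui achèvera l'identification.
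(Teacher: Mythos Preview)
Your approach is essentially the same as the paper's: fix an origin via the quasi-système de Tits, identify the translation part through the induced structure of $\underline{T}$ (the $t$-adic valuation being independent of coefficients), realize the finite Weyl part via the integral elements $m_a$, and then check that the affine root échelonnages match --- the paper, like you, only asserts this last point briefly. The one organizational difference is that the paper routes everything through the integral quotient $\underline{N}(\ZZ[t^{\pm 1}])/\underline{\Tscr}(\ZZ[t])$ as a single common model for all the $\widetilde{W}_p$ (using the snake lemma to split into translations and vector part), which makes the canonicity and equivariance of the bijections slightly more transparent than your direct matching of invariants characteristic by characteristic.
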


\begin{proof}
	Identifions d'abord les groupes d'Iwahori--Weyl $\widetilde{W}_p$ de $G_{\eta_p}$ par rapport à $S_{\eta_p}$ en caractéristiques différentes. Soit $\underline{N}$ le normalisateur de $\underline{S}$ dans $\underline{G}$, qui en est un $\ZZ[t^{\pm 1}]$-sous-groupe fermé affine et lisse, comme l'affirme \cite[exp. XI, cor. 5.3 bis]{SGA3}. Posons \begin{equation}\label{equation modele de neron polynomes}\underline{\Tscr}:=\prod_{j\in J}\Res_{\ZZ[t^{1/d_j}]/\ZZ[t]}\GG_{m} \end{equation} le modèle de Néron connexe de $T$ sur $\ZZ[t]$. On montre que l'homomorphisme de groupes naturel \begin{equation}\label{equation isomorphisme groupe d'iwahori-weyl caracteristiques differentes}\underline{N}(\ZZ[t^{\pm 1}])/\underline{\Tscr}(\ZZ[t]) \rightarrow \widetilde{W}_p:=N_{\eta_p}(\FF_p\rpot{t})/\Tscr_p(\FF_p\pot{t})\end{equation} est toujours bijectif, ce qui fournira par zig-zag l'isomorphisme cherché. Le lemme de la serpente nous permet de traiter distinctement les sous-groupes des translations \begin{equation}\label{equation isomorphisme translations d'iwahori-weyl}\underline{T}(\ZZ[t^{\pm 1}])/\underline{\Tscr}(\ZZ[t]) \rightarrow T_{\eta_p}(\FF_p\rpot{t})/\Tscr_p(\FF_p\pot{t})\end{equation} et les quotients des transformations vectorielles \begin{equation}\label{equation isomorphisme parties vectorielles}\underline{N}(\ZZ[t^{\pm 1}])/\underline{T}(\ZZ[t^{\pm 1}])\rightarrow W(G_{\eta_p},S_{\eta_p}).\end{equation} On voit que le cas des translations est trivial, en calculant explicitement avec la décomposition (\ref{equation modele de neron polynomes}), ce qui revient au même de dire que la valuation $t$-adique est indépendante des coefficients. 
	
	Maintenant, on se penche sur la partie des transformations vectorielles. Vu que $\underline{T} \rightarrow \underline{N}$ est une immersion fermée, l'application (\ref{equation isomorphisme parties vectorielles}) est injective en caractéristique nulle $p=0$. D'autre part, les éléments $m_a$ du normalisateur qu'on avait évoqués à (\ref{equation elements m du normalisateur singulier tits}) et (\ref{equation elements m du normalisateur pluriel tits}) sont définis sur $\ZZ[t^{\pm 1}]$, voir la preuve de la prop. \ref{proposition drs groupe de tits}, et y induisent les réflexions $s_a$ du groupe de Weyl de $\Phi_G$. Par suite, les applications de (\ref{equation isomorphisme parties vectorielles}) sont même bijectives pour tout premier $p$, comme il suit d'une comparaison d'ordres. 
	
	 Maintenant, on peut définir une valuation $\varphi_p \in \Ascr(G_{\eta_p}, S_{\eta_p}, \FF_p\rpot{t})$, pour chaque caractéristique $p \in \PP \cup \{0\}$, en termes du quasi-système de Tits naturel de $G_{\eta_p}$, cf. prop. \ref{proposition fibre generique quasi-reductive}, de la manière suivante : 
	\begin{equation} \varphi_{p,a}(x_a(r))=\omega_p(r), \end{equation}\begin{equation}
	\varphi_{p,a}(x_a(u,v))=\omega_p(u^2+v)/2,
	\end{equation} où la valuation additive $t$-adique $\omega_p: \FF_p\rpot{t}^{\on{alg}}\rightarrow \RR$ est déterminée uniquement par la contrainte $\omega_p(t)=1$, comp. avec la valuation de \cite[déf. 3.1]{LouBT} donnée par (\ref{equation valuation singulier}), (\ref{equation valuation pluriel unitaire}) et (\ref{equation valuation pluriel barcelonais}). 
	
	La prochaine étape consiste à identifier les appartements $\Ascr(G_{\eta_p},S_{\eta_p},\FF_p\rpot{t})$ de façon équivariante. En choisissant $\varphi_p$ pour origine de cet appartement, il ne devient autre que le sous-$\RR$-espace vectoriel de $X_*(S_{\eta_p}) \otimes_\ZZ{\RR}$ engendré par les coracines $\Phi_G^\vee$, d'où en particulier des isomorphismes \begin{equation}\label{equation identification des appartements}\Ascr(G_{\eta_p},S_{\eta_p},\FF_p\rpot{t}) \simeq \Ascr(G_{\eta_q},S_{\eta_q},\FF_q\rpot{t})\end{equation} pour toute paire de caractéristiques $p,q \in\PP \cup\{0\}$. Il est aisé de voir que cette identification fait correspondre les racines affines de chaque appartement ainsi que leurs échelonnages, voir \cite[déf. 3.3]{LouBT}.
		
	 Démontrons enfin l'équivariance de l'isomorphisme donné par (\ref{equation identification des appartements}), qu'il suffit de la vérifier pour une partie génératrice. Pour les translations l'assertion devient évidente, vu que $z \in T_{\eta_p}(\FF_p\rpot{t})$ opère par la translation par le vecteur $\nu(z)$ tel que $a(\nu(z))=-\omega_p(a(z)) $, voir \cite[4.2.7]{BTII}, et que la valuation $t$-adique est indépendante de la caractéristique, voir (\ref{equation isomorphisme translations d'iwahori-weyl}). Les classes des $m_a \in \underline{N}(\ZZ[t^{\pm 1}])$ à leur tour opèrent par des réflexions correspondantes $s_a$ du groupe de Weyl fixant l'origine.
	\end{proof}

	\begin{rem} \label{remarque autres changements de base groupe de tits}
		Il y a d'autres changements de base intéressants qu'on avait étudiés plus soigneusement dans la version précédente de cet article.
		\begin{enumerate}
			\item Soient $e=2,3$, $G=G^{\on{sc}}$ absolument simple et simplement connexe à système de racines $\Phi_G$ réduit. Pour tout $a \in \overline{\FF}_e^{\times}$, on obtient d'isomorphismes naturels \begin{equation}\Ascr(G_{\eta_e}, S_{\eta_e}, \overline{\FF}_e\rpot{t-a}) \simeq \Ascr(G_{\eta_e}, S_{\eta_e}, \FF_e\rpot{t})\end{equation} d'appartements, bien que d'autres structures combinatoires comme les groupes d'Iwahori--Weyl. Cela découle du fait que $G_{\eta_e}$ vienne muni d'un quasi-système de Tits global et que la ramification de $\AAA^{1/e}_{\FF_e}\rightarrow \AAA^1_{\FF_e}$ soit constante. Toutefois, si l'on essayait de faire le même jeu avec les groupes barcelonais, on se serait heurt à ce que la valuation de $t^{1/2}$ ne soit pas constante parmi les diverses places de $\AAA^1_{\FF_e}$, d'où le besoin de remplacer l'origine de $\Ascr(G_{\eta_e}, S_{\eta_e}, \FF_e\rpot{t})$ à système de racines résiduel $\Phi_G^{\on{nd}}$ par un point spécial avec $\Phi_G^{\on{nm}}$ pour système de racines résiduel.
			\item Soient $\Oh$ un anneau de valuation discrète de caractéristique mixte $(0,p)$, $\varpi$ une uniformisante de $\Oh$ et $K$ son corps de fraction. Alors, l'on tire aussi un isomorphisme équivariant
			\begin{equation}\Ascr(\underline{G}\otimes_{\ZZ[t]} K, \underline{S}\otimes_{\ZZ[t]} K,K) \simeq \Ascr(G_{\eta_p}, S_{\eta_p}, \FF_p\rpot{t})\end{equation}
			d'appartements, où $\ZZ[t]\rightarrow \Oh$ est le seul homomorphisme qui applique $t$ sur $\varpi$; en particulier, l'hypothèse de ramification modérée dans \cite[4.a.2]{PZh} est inutile. Ce changement de base de type diagonal prend un rôle proéminent dans la théorie des modèles locaux des variétés de Shimura, voir \cite[\S\S8-9]{PZh} et \cite[IV, \S4.1]{LouDiss}.
		\end{enumerate}
	\end{rem}

	Dorénavant, on fera plusieurs fois l'abus de langage d'écrire $\Ascr(\underline{G},\underline{S},\ZZ\rpot{t})$ lorsqu'on parle de tous les appartements $\Ascr(G_{\eta_p},S_{\eta_p},\FF_p\rpot{t})$ pour $p \in \PP\cup \{0\}$ en même temps, identifiés selon (\ref{equation identification des appartements}). L'information combinatoire qu'on vient de rassembler dans la prop. \ref{proposition combinatoire caracteristiques differentes} nous permettra ainsi d'introduire les $\AAA^1_{\ZZ}$-modèles immobiliers $\underline{\Gscr_{ \Omega}}$, voir la déf. \ref{definition modeles immobiliers du groupe de tits}, des $\GG_{m,\ZZ}$-groupes de Tits $\underline{G}$, où $\Omega$ désigne une partie bornée et non vide de $\Ascr(\underline{G},\underline{S},\ZZ\rpot{t})$. 
	
	On commence par rappeler la structure de certains modèles entiers lisses et connexes du groupe additif, à la suite de \cite[1.4.1]{BTII}.
	
	\begin{exmp}\label{exemple modele groupe additif}
		Soit $A$ un anneau noethérien intègre à corps de fraction $K$. A chaque idéal fraccionaire $\af \subset K$, on lui associe le modèle en groupes $\af \otimes \GG_a$ dont les points à valeurs dans $R$ sont donnés par $\af \otimes_A R$. Autrement dit, quitte à localiser pour que $\af=(a)$ soit principal, ceci n'est autre que le $A$-groupe $\GG_a$ regardé en tant que modèle de $\GG_{a,K}$ au moyen de $\GG_{a,K}\xrightarrow{\cdot a}\GG_{a,K}$.
	\end{exmp}

L'étape suivante consiste à définir les données radicielles schématiques, pour lequel il faut introduire un outil combinatoire additionnel. Soient $\Omega\subset \Ascr(\underline{G},\underline{S},\ZZ\rpot{t})$ une partie non vide et bornée et \begin{equation}\label{equation fonction associee a omega} f_{\Omega}: \Phi_G \rightarrow \RR,\end{equation}
\begin{equation}\label{equation valeurs de la fonction f associee a omega}
a \mapsto \on{min} \{ k \in \Gamma_a': a(\Omega)+k\geq 0\}
\end{equation} la fonction quasi-concave et optimale attachée à $\Omega$, voir \cite[4.6.26]{BTII}. Notons que $f_\Omega$ dépend aussi du choix de l'origine $\varphi$ de l'appartement $\Ascr(\underline{G},\underline{S},\ZZ\rpot{t})$ que nous avons introduite à la prop. \ref{proposition combinatoire caracteristiques differentes}, cf. (\ref{equation identification des appartements}).

\begin{defn}\label{definition drs modele entier groupe de tits}
	Soient $G$ un $\QQ(t)$-groupe remplissant l'hyp. \ref{hypothese groupe dans la fibre generique}, $\Omega$ une partie non vide et bornée de l'appartement $\Ascr(\underline{G},\underline{S},\ZZ\rpot{t})$ et $f_\Omega$ la fonction (\ref{equation fonction associee a omega}) définie par (\ref{equation valeurs de la fonction f associee a omega}). Définissons le modèle $\underline{\Tscr}$ de $\underline{T}$ par (\ref{equation modele de neron polynomes}). Quant aux sous-groupes radiciels, définissons-le par transport de structure le long des quasi-épinglage comme suit : \begin{equation}
	\underline{\Uscr_{a, \Omega}}:=\Res_{\ZZ[t_a]/\ZZ[t]} t^{f_\Omega(a)} \GG_{a,\ZZ[t_a]},
	\end{equation} lorsque $a\in \Phi_G^{\on{nd,nm}}$ n'est ni divisible ni multipliable, ou \begin{equation}
	\underline{\Uscr_{a, \Omega}}:=\Res_{\ZZ[t_{2a}]/\ZZ[t]} t^{f_{\Omega}(a),f_{\Omega}(2a)} \GG_{p,\ZZ[t_a]/\ZZ[t_{2a}]},
	\end{equation} où $t^{f_{\Omega}(a),f_{\Omega}(2a)} \GG_{p,\ZZ[t_a]/\ZZ[t_{2a}]}$ signifie le produit $t^{f_\Omega(a)} \GG_{a,\ZZ[t_{a}]} \times t^{f_\Omega(2a)}\GG_{a,\ZZ[t_{2a}]}$.
\end{defn}

Signalons que le $\ZZ[t]$-modèle en groupes $\underline{\Uscr_{a, \Omega}}$ est bien défini, car $t^{f_\Omega(a)}$ est une puissance entière de $t_a$. On a fait un abus de langage pareil dans l'écriture de $\underline{\Uscr_{a, \Omega}}$, lorsque $a\in \Phi_G^{\on{m}}$ est multipliable, vu que $t^{f_\Omega(2a)}$ n'appartient jamais à $\ZZ[t_{2a}]$, mais bien à son complément $t_a\ZZ[t_{2a}]$ dans $\ZZ[t_a]$. Il faudrait d'ailleurs montrer que la loi de groupe de $\GG_{p,\QQ(t_{a})/\QQ(t_{2a})}$ se prolonge à son modèle entier, ce qui sera vérifié implicitement pendant la preuve de la proposition suivante.

\begin{propn}\label{proposition drs modele immobilier groupe de tits}
Les schémas en groupes $(\underline{\Tscr}, \underline{\Uscr_{a, \Omega}})_{a \in \Phi_G^{\on{nd}}}$ forment une donnée radicielle schématique au-dessus de $\ZZ[t]$ prolongeant celle de la prop. \ref{proposition drs groupe de tits}.
\end{propn}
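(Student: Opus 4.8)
Le plan est de vérifier les axiomes (DRS 0)--(DRS 3) de la déf.~\ref{definition donnees radicielles schematiques} en tirant profit de ce que, après inversion de $t$, on retombe exactement sur la donnée radicielle schématique de la prop.~\ref{proposition drs groupe de tits}. En effet, le modèle $t^{f_\Omega(a)}\GG_{a,\ZZ[t_a]}$ de l'ex.~\ref{exemple modele groupe additif} redonne $\GG_{a,\ZZ[t_a^{\pm 1}]}$ après localisation en $t$, et l'on a $\underline{\Tscr}[1/t]=\underline{T}$ par (\ref{equation modele de neron polynomes}), d'où des identifications $\underline{\Uscr_{a,\Omega}}[1/t]=\underline{U_a}$ qui établissent la propriété de prolongement. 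Comme tous les morphismes requis par les axiomes existent déjà sur $\ZZ[t^{\pm 1}]$ d'après la prop.~\ref{proposition drs groupe de tits}, il ne restera qu'à voir qu'ils se prolongent sans inverser $t$, c'est-à-dire au voisinage de la fibre $\{t=0\}$ ; c'est précisément la quasi-concavité de $f_\Omega$, cf.~\cite[4.6.26]{BTII}, qui assurera ce prolongement.

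On commence par noter que les objets en jeu sont lisses, affines et connexes sur $\ZZ[t]$ : cela vaut pour les $t^{f_\Omega(a)}\GG_a$ de l'ex.~\ref{exemple modele groupe additif} et se transporte par restriction des scalaires le long des extensions finies et plates $\ZZ[t_a]/\ZZ[t]$. L'axiome (DRS 0) est immédiat puisque $\underline{\Tscr}$ est le modèle de Néron connexe de $T$. Pour (DRS 1), on reprend les formules $a(s)\cdot x_a(r)\sim x_a(sr)$ et $a(s)\cdot x_a(u,v)\sim x_a(su,s\sigma(s)v)$ de la preuve de la prop.~\ref{proposition drs groupe de tits} ; comme $\underline{\Tscr}$ est un produit de facteurs multiplicatifs dont les sections sont inversibles, la multiplication par $a(s)$ préserve les idéaux fractionnaires $t^{f_\Omega(a)}\ZZ[t_a]$ et $t^{f_\Omega(2a)}\ZZ[t_{2a}]$, d'où le prolongement de l'action.

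Pour (DRS 2), on se sert des formules de commutateur, notamment (\ref{equation commutateur pluriel un})--(\ref{equation commutateur pluriel trois}) dans les cas multipliables. Si l'on prend des arguments de valuation $t$-adique au moins $f_\Omega(a)$ resp.~$f_\Omega(b)$, alors chaque coordonnée du commutateur le long d'une racine $c\in\,]a,b[$ est un produit de valuation au moins $f_\Omega(a)+f_\Omega(b)$, quantité qui majore $f_\Omega(c)$ grâce à la quasi-concavité ; le morphisme se prolonge donc aux produits des $\underline{\Uscr_{\bullet,\Omega}}$. Les facteurs de $2$ introduits par la modification de Tits sont absorbés dans les coefficients entiers, exactement comme au lem.~\ref{lemme coordonnees de la modification de Tits}.

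L'axiome (DRS 3) constituera l'obstacle principal. On part des applications d'échange (\ref{equation echange sl2}) et (\ref{equation echange su3}), dont les diviseurs respectifs (\ref{equation diviseur sl2}) et (\ref{equation diviseur su3}) doivent définir l'ouvert $\underline{\Wscr_a}$ cherché. La quasi-concavité livre $f_\Omega(a)+f_\Omega(-a)\geq 0$ ainsi que $f_\Omega(2a)+f_\Omega(-2a)\geq 0$, si bien que les produits $rr'$ resp.~$2\sigma(u)u'$ et $s(u,v)s(u',v')$ soient entiers ; par conséquent $d_a=\on{Norme}(1-rr')$ resp.~$\on{Norme}\,t(u,v,u',v')$ sont des sections régulières sur $\underline{\Uscr_{a,\Omega}}\times\underline{\Uscr_{-a,\Omega}}$, et leur ouvert de non-annulation $\underline{\Wscr_a}$ contient les fermés $\underline{\Uscr_{\pm a,\Omega}}$, où ces diviseurs valent $1$. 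Le prolongement de $\beta_a$ en résulte, les dénominateurs $t(r,r')$ resp.~$t(u,v,u',v')$ y étant inversibles. Pour le cas multipliable, il faudra de surcroît vérifier au passage que la loi (\ref{equation loi de groupe pluriel}) se prolonge au modèle entier $t^{f_\Omega(a),f_\Omega(2a)}\GG_{p,\ZZ[t_a]/\ZZ[t_{2a}]}$, ce qui découle de l'inégalité $2f_\Omega(a)\geq f_\Omega(2a)$ rendant entier le terme croisé $\sigma(u)u'-u\sigma(u')$. La seule vraie difficulté sera alors de manier avec soin le monstre (\ref{equation echange su3}), dont l'intégralité repose derechef sur ces mêmes inégalités conjuguées à la modification de Tits.
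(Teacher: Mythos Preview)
Your approach is sound and would yield a valid proof, but it takes a genuinely different route from the paper's. You propose to reverify each axiom (DRS 0)--(DRS 3) directly over $\ZZ[t]$ by tracking the explicit formulas from the proof of prop.~\ref{proposition drs groupe de tits} and checking that the quasi-concavity of $f_\Omega$ guarantees the required $t$-adic integrality bounds. This is essentially the classical Bruhat--Tits verification of \cite[th.~4.5.4]{BTII}, transported from a discrete valuation ring to $\ZZ[t]$; it works, but it means redoing the case-by-case commutator and exchange computations, including the delicate multipliable case that you rightly flag as the main difficulty. One small imprecision: in your treatment of (DRS 2), the coordinate along $c=pa+qb\in\,]a,b[$ has valuation at least $pf_\Omega(a)+qf_\Omega(b)$, not merely $f_\Omega(a)+f_\Omega(b)$; quasi-concavity still controls this, so the argument survives.

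The paper instead uses a Beauville--Laszlo descent trick: for any flat $\ZZ[t]$-algebra $A$ one has $A=A[t^{-1}]\cap(A\otimes\QQ\pot{t})$ inside $A\otimes\QQ\rpot{t}$, so constructing the required morphisms over $\ZZ[t]$ reduces to constructing them separately over $\ZZ[t^{\pm1}]$ (already done in prop.~\ref{proposition drs groupe de tits}) and over $\QQ\pot{t}$ (where the base-changed data are the standard immobilier models of Bruhat--Tits, cf.~cor.~\ref{corollaire specialises modeles immobiliers en caracteristique p}, and \cite[th.~4.5.4]{BTII} applies verbatim). This avoids repeating any explicit computation. Your approach has the merit of being self-contained and of making the role of quasi-concavity completely transparent; the paper's approach is considerably shorter and leverages the existing literature rather than reproducing it.
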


\begin{proof}
On peut certes démontrer que nos modèles entiers satisfont aux règles de la déf. \ref{definition donnees radicielles schematiques} par la même procédure de la prop. \ref{proposition drs groupe de tits}. Toutefois, il est plus vite d'observer le truc suivant à la Beauville--Laszlo \cite[th.]{BLDesc} : on a l'égalité \begin{equation}A=A[t^{-1}] \cap A\otimes \QQ\pot{t} \subset A\otimes \QQ\rpot{t}\end{equation} pour toute $\ZZ[t]$-algèbre plate $A$. Cela implique qu'on peut toujours construire des homomorphismes entre de tels anneaux par recollement au-dessus de $\ZZ[t^{\pm 1}]$ et puis $\QQ\pot{t}$. Par conséquent, on s'est ramené, d'un côté, au cas où $t$ est inversible, ce qui a été déjà réalisé dans la prop. \ref{proposition drs groupe de tits}, et d'autre côté, au cas des modèles immobiliers du $\QQ\rpot{t}$-groupe réductif et connexe $G_{\QQ\rpot{t}}$, grâce au cor. \ref{corollaire specialises modeles immobiliers en caracteristique p}.  Mais ce dernier cas est dû à Bruhat--Tits, cf. \cite[th. 4.5.4]{BTII}.
\end{proof}

On peut finalement définir les $\ZZ[t]$-modèles immobiliers $\underline{\Gscr_{ \Omega}}$ du groupe de Tits $\underline{G}$.

\begin{defn}\label{definition modeles immobiliers du groupe de tits}
	Soit $\underline{G}$ un $\GG_{m,\ZZ}$-groupe de Tits, voir la déf. \ref{definition groupe de Tits}, et $\Omega\subset \Ascr( \underline{G},\underline{S},\Omega)$ une partie non vide et bornée. On leur associe son $\AAA^1_\ZZ$-modèle immobilier $\underline{\Gscr_\Omega}$ déduit du th. \ref{theoreme d'existence et unicite des groupes attaches aux drs} appliqué à la donnée radicielle schématique de la prop. \ref{proposition drs modele immobilier groupe de tits}.
\end{defn}

Particularisant les schémas en groupes ci-dessus, on obtient à l'évidence ceux de la théorie de Bruhat--Tits pseudo-réductive qu'on avait mentionnée au th. \ref{theoreme theorie de bruhat-tits pseudo-reductive}.

\begin{cor}\label{corollaire specialises modeles immobiliers en caracteristique p}
Le $\FF_p\pot{t}$-modèle de $G_{\eta_p}$ déduit du $\AAA_{\ZZ}^1$-modèle immobilier $\underline{\Gscr_{ \Omega}}$ de la déf. \ref{definition modeles immobiliers du groupe de tits} par le changement de base $\ZZ[t]\rightarrow \FF_p\pot{t}$ s'identifie au $\FF_p\pot{t}$-modèle immobilier $\Gscr_{\Omega,p}$ de $G_{\eta_p}$ associé à $\Omega\subset \Ascr(G_{\eta_p},S_{\eta_p},\FF_p\rpot{t})$. 
\end{cor}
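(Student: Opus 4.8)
The plan is to exploit that both group schemes to be compared are, by construction, the unique solutions of schematic root data over the base $\FF_p\pot{t}$, so that the whole statement reduces to a term-by-term comparison of these data. On one side, $\Gscr_{\Omega,p}$ is produced by the pseudo-réductive Bruhat--Tits machinery of the th.~\ref{theoreme theorie de bruhat-tits pseudo-reductive} from the connected Néron model of the Cartan $T_{\eta_p}$ and the root-group models attached to the valuation $\varphi_p$ and the quasi-concave function $f_\Omega$; on the other side, $\underline{\Gscr_\Omega}$ is the solution over $\ZZ[t]$ of the datum $(\underline{\Tscr},\underline{\Uscr_{a,\Omega}})$ of the déf.~\ref{definition drs modele entier groupe de tits}, via th.~\ref{theoreme d'existence et unicite des groupes attaches aux drs}, the schematic root datum property being guaranteed by prop.~\ref{proposition drs modele immobilier groupe de tits}.

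First I would check that the formation of the solution commutes with the base change $\ZZ[t]\to \FF_p\pot{t}$. Since $\FF_p\pot{t}$ is a complete discrete valuation ring, hence regular, the base change $\underline{\Gscr_\Omega}\otimes_{\ZZ[t]}\FF_p\pot{t}$ is again smooth, affine and with connected fibres, its generic fibre being $G_{\eta_p}\otimes_{\FF_p(t)}\FF_p\rpot{t}$. The characterizing properties (1)--(3) of th.~\ref{theoreme d'existence et unicite des groupes attaches aux drs} — the open-immersion property of the big cell and the closed-immersion property of its factors — are stable under arbitrary base change, and the base-changed datum remains a schematic root datum over $\FF_p\pot{t}$ by restriction from $\ZZ[t]$; hence $\underline{\Gscr_\Omega}\otimes_{\ZZ[t]}\FF_p\pot{t}$ is the solution of $(\underline{\Tscr}\otimes\FF_p\pot{t},\, \underline{\Uscr_{a,\Omega}}\otimes\FF_p\pot{t})$. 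By the uniqueness clause it then suffices to identify this base-changed datum with the Bruhat--Tits datum defining $\Gscr_{\Omega,p}$.

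For the Cartan one computes $\ZZ[t^{1/d_j}]\otimes_{\ZZ[t]}\FF_p\pot{t}=\FF_p\pot{t^{1/d_j}}$: the left-hand ring is a finite free extension of the complete ring $\FF_p\pot{t}$, hence complete, and it is a domain with uniformiser $t^{1/d_j}$ and residue field $\FF_p$, so it is the complete ring $\FF_p\pot{t^{1/d_j}}$ (uniformly, whether or not $p\mid d_j$, the purely inséparable case included). Inserting this into (\ref{equation modele de neron polynomes}) shows that $\underline{\Tscr}\otimes\FF_p\pot{t}=\prod_{j\in J}\Res_{\FF_p\pot{t^{1/d_j}}/\FF_p\pot{t}}\GG_m$, which is exactly the connected Néron model of the induced torus $T_{\eta_p}$. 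For a non-divisible, non-multipliable root the same computation gives $\underline{\Uscr_{a,\Omega}}\otimes\FF_p\pot{t}=\Res_{\FF_p\pot{t_a}/\FF_p\pot{t}}t^{f_\Omega(a)}\GG_a$; and since prop.~\ref{proposition combinatoire caracteristiques differentes} shows that the valuation $\varphi_{p,a}(x_a(r))=\omega_p(r)$, and hence the quasi-concave function $f_\Omega$, is independent of the characteristic, this is precisely the Bruhat--Tits model $\Uscr_{a,\Omega,p}$.

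The hard part is the multipliable root in residual characteristic $2$, that is, the barcelonais factors of prop.~\ref{proposition fibre generique quasi-reductive}. Here the naive pluri model degenerates (rem.~\ref{remarque limite inseparable pluriel}), and the whole point of the Tits modification of \S\ref{facteurs de 2 dans le cas non reduit} is that the integral model $t^{f_\Omega(a),f_\Omega(2a)}\GG_{p,\ZZ[t_a]/\ZZ[t_{2a}]}$ stays smooth and base-changes to the correct object. I would verify that $\underline{\Uscr_{a,\Omega}}\otimes\FF_p\pot{t}$ is the Bruhat--Tits model of the multipliable root group cut out by the valuation $\varphi_{p,a}(x_a(u,v))=\omega_p(u^2+v)/2$ of (\ref{equation valuation pluriel barcelonais}), at the $(f_\Omega(a),f_\Omega(2a))$-level prescribed by $\Omega$; the fact that the Tits coordinates carry no spurious factors of $2$ (lem.~\ref{lemme coordonnees de la modification de Tits}) is what makes this reduction survive in characteristic $2$. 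Once every term of the two data agrees, the identification $\underline{\Gscr_\Omega}\otimes_{\ZZ[t]}\FF_p\pot{t}\simeq\Gscr_{\Omega,p}$ follows from the uniqueness of solutions of birational laws.
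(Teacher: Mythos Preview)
Your proposal is correct and follows essentially the same approach as the paper: reduce to a term-by-term comparison of the schematic root data, identify $\underline{\Tscr}\otimes\FF_p\pot{t}$ with the connected N\'eron model of $T_{\eta_p}$ and $\underline{\Uscr_{a,\Omega}}\otimes\FF_p\pot{t}$ with the Bruhat--Tits root-group models (citing \cite[4.3.2, 4.3.4]{BTII} and \cite[lem.~4.4]{LouBT}), then conclude by uniqueness in th.~\ref{theoreme d'existence et unicite des groupes attaches aux drs}. The paper's proof is considerably terser and leaves implicit the intermediate step you spell out, namely that the base change of the solution is the solution of the base-changed datum; your explicit verification of the ring identity $\ZZ[t^{1/d}]\otimes_{\ZZ[t]}\FF_p\pot{t}\simeq\FF_p\pot{t^{1/d}}$ and your care in the multipliable case are welcome additions but not a different method.
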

\begin{proof}
	Il ne suffit que de considérer les termes de la donnée radicielle schématique, cf. déf.\ref{definition drs modele entier groupe de tits}. Mais si l'on tue $p$ dans les coefficients entiers et complète pour la topologie $t$-adique, alors on obtient encore le modèle de Néron connexe du centralisateur, voir \cite[prop. 3.3.8]{LouDiss}, et les stabilisateurs radiciels de $\Omega$, voir \cite[4.3.2, 4.3.4]{BTII} et \cite[lem. 4.4]{LouBT}, en identifiant celui à une partie convenable de $\Ascr(G_{\eta_p},S_{\eta_p},\FF_p\rpot{t})$ selon la prop. \ref{proposition combinatoire caracteristiques differentes}.
\end{proof}

Enfin, on va relier les $\AAA^1_\ZZ$-modèles immobiliers $\underline{\Gscr_{ \Omega}}$ de $G$ avec ceux de $H$, tel que l'on avait fait pour leurs groupes de Tits, cf. prop. \ref{proposition plongement restrictions des scalaires groupe de tits}. On remarque que l'isomorphisme $G_{\QQ(\zeta_e,t^{1/e})} \simeq H$ induit un plongement \begin{equation}\label{equation inclusion d'appartements}\Ascr(G,S,\QQ\rpot{t}) \hookrightarrow \Ascr(H,T_H,\QQ(\zeta_e)\rpot{t^{1/e}})\end{equation} d'appartements, donc $\Omega$ peut être regardé comme partie du membre de droite. Le résultat ci-dessous ne sera jamais appliqué dans le reste de l'article.

\begin{propn}
	Le morphisme naturel $\underline{\Gscr_{ \Omega}} \rightarrow \Res_{\ZZ[\zeta_e,t^{1/e}]/\ZZ[t]} \underline{\Hscr_{\Omega}}$ induit par l'inclusion (\ref{equation inclusion d'appartements}) est un plongement localement fermé.
\end{propn}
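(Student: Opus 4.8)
Le plan est de reproduire la stratégie de la preuve de la prop. \ref{proposition plongement restrictions des scalaires groupe de tits}, en l'adaptant aux modèles filtrés. Les deux membres étant des $\ZZ[t]$-groupes lisses, affines et connexes pour lesquels la grosse cellule est un ouvert dense, il suffit, en vertu de \cite[prop. 2.2.10]{BTII}, de vérifier que le morphisme induit est une immersion fermée sur chacun des facteurs de la donnée radicielle schématique de la prop. \ref{proposition drs modele immobilier groupe de tits}, c'est-à-dire sur le centralisateur $\underline{\Tscr}$ et sur les sous-groupes radiciels $\underline{\Uscr_{a,\Omega}}$ pour $a\in\Phi_G^{\on{nd}}$ ; la prop. \ref{proposition plongement restrictions des scalaires groupe de tits} en déduira alors l'assertion d'immersion localement fermée. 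On remarquera qu'on n'exige ici que la locale fermeture, de sorte qu'on se dispensera de l'argument de propagation par l'action de rotation et par changement de base à un anneau de valuation discrète, lequel ne servait dans \ref{proposition plongement restrictions des scalaires groupe de tits} qu'à en tirer la fermeture globale.

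Pour le facteur torique, l'application induite $\underline{\Tscr}\to \Res_{\ZZ[\zeta_e,t^{1/e}]/\ZZ[t]}\underline{\Tscr_H}$, où $\underline{\Tscr_H}$ désigne le facteur torique de $\underline{\Hscr_\Omega}$ (c'est-à-dire le modèle de Néron connexe, canonique car induit, de $T_H$), est une immersion fermée par fonctorialité des modèles de Néron connexes des tores induits, cf. \cite[4.4.6, 4.4.8]{BTII}, exactement comme dans la preuve citée. Pour les facteurs radiciels, on se souvient que le quasi-épinglage de Tits $x_a$ réalise $U_a$ comme produit tordu des groupes radiciels $U_{H,\gamma\alpha}$ de $H$ au-dessus du revêtement ramifié, et que le modèle immobilier est $\underline{\Uscr_{a,\Omega}}=\Res_{\ZZ[t_a]/\ZZ[t]} t^{f_\Omega(a)}\GG_a$ (resp. sa variante plurielle lorsque $a$ est multipliable), voir la déf. \ref{definition drs modele entier groupe de tits}. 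L'inclusion d'appartements (\ref{equation inclusion d'appartements}) ayant précisément été choisie pour que la fonction quasi-concave $f_\Omega$ de (\ref{equation fonction associee a omega}) s'accorde, racine par racine, avec la filtration portée par $\underline{\Hscr_\Omega}$ sur les relèvements $\gamma\alpha$ de $a$, les modèles entiers filtrés se correspondent et l'intégralité des coordonnées, à savoir l'absence de facteurs $1/2$ intempestifs, est garantie par la modification de Tits, cf. lem. \ref{lemme coordonnees de la modification de Tits}. Le morphisme radiciel est donc une immersion fermée.

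Pour traiter uniformément toutes les caractéristiques, on emploie le truc de Beauville--Laszlo \cite[th.]{BLDesc} déjà utilisé à la prop. \ref{proposition drs modele immobilier groupe de tits} : on recolle au-dessus de $\ZZ[t^{\pm 1}]$, où l'on inverse $t$ et où l'assertion se réduit à la prop. \ref{proposition plongement restrictions des scalaires groupe de tits}, et au-dessus de la complétion $t$-adique, où, d'après le cor. \ref{corollaire specialises modeles immobiliers en caracteristique p}, les fibres sont d'authentiques modèles immobiliers de Bruhat--Tits, pour lesquels la fonctorialité des plongements de modèles immobiliers, cf. \cite[th. 4.5.4]{BTII}, vaut au-dessus de $\FF_p\pot{t}$ comme de $\QQ\pot{t}$. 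Le point le plus délicat est précisément cette vérification combinatoire de la compatibilité des fonctions concaves sous (\ref{equation inclusion d'appartements}), c'est-à-dire que $f_\Omega(a)$ est bien l'exposant induit par la filtration de $\underline{\Hscr_\Omega}$ sur les relèvements de $a$ ; le reste se ramène à des calculs de coordonnées analogues à ceux que l'on a passés sous silence au lem. \ref{lemme coordonnees de la modification de Tits}.
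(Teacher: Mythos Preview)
Your approach is essentially the paper's own: reduce via \cite[prop. 2.2.10]{BTII} to checking that the maps on $\underline{\Tscr}$ and on each $\underline{\Uscr_{a,\Omega}}$ are closed immersions, exactly as in the proof of prop.~\ref{proposition plongement restrictions des scalaires groupe de tits}. The paper phrases this as identifying the schematic closures of $T$ and $U_a$ inside the target with the desired filtered models, which is the same thing.

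Your third paragraph is superfluous and slightly misdirected. The Beauville--Laszlo trick of prop.~\ref{proposition drs modele immobilier groupe de tits} serves to \emph{construct} morphisms by gluing, not to \emph{verify} that a given morphism is a closed immersion; the pair $\ZZ[t^{\pm 1}]$, $\QQ\pot{t}$ is not an fpqc cover of $\ZZ[t]$, so surjectivity of ring maps does not descend from it directly. Fortunately you do not need this: the closed immersion on each radiciel factor is a direct linear-algebra check between explicit restrictions of scalars of affine lines, once the filtration exponents are matched up under (\ref{equation inclusion d'appartements}), and that matching is elementary. Also, \cite[th.~4.5.4]{BTII} is not the right pointer for ``fonctorialité des plongements''; it is the existence theorem for the données radicielles schématiques, and what you really want is again just \cite[prop.~2.2.10]{BTII} applied over $\FF_p\pot{t}$ or $\QQ\pot{t}$.
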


\begin{proof}
	On reprendre la preuve de la prop. \ref{proposition plongement restrictions des scalaires groupe de tits} pour voir que l'adhérence schématique de $T$ et des $U_a$ dans le membre de droite s'identifie aux modèles immobiliers souhaités de la déf. \ref{definition drs modele entier groupe de tits}. D'après \cite[prop. 2.2.10]{BTII}, ceci termine la preuve de la propriété énoncée. On s'attend à ce que l'image fermée du membre de gauche soit un sous-$\ZZ[t]$-groupe plat et fermé du membre de droite (dont la composante neutre s'identifierait à $\underline{\Gscr_{ \Omega}}$), mais cela pourrait éventuellement n'être pas le cas, cf. \cite[3.2.15]{BTII}, et il semble que nos méthodes soient insuffisantes pour y parvenir.
\end{proof}

\begin{rem}
	Dans une autre direction, lorsque $G$ est un groupe classique, il serait aussi très intéressant d'interpréter les groupes parahoriques comme classifiant les automorphismes de certains chaînes périodiques de réseaux munis de formes quadratiques comme en \cite[\S5]{PZh}, ou \cite[pp. 218-219]{TitsOber}, ce qui peut être regardé comme une généralisation aux bases bidimensionnelles de \cite[\S3, annexe]{RZ}. Voir aussi \cite{BTIV}, \cite{BTV} et \cite{Kir2} pour des cas classiques non modérément ramifiés sur un anneau de valuation discrète.
\end{rem}

Enfin, on mettre un terme à nos enquêtes sur la théorie des groupes, en démontrant que les $\ZZ[t]$-groupes lisses, séparés et connexes $\underline{\Gscr_\Omega}$ sont même affines, duquel on profitera plus tard pour faire la géométrie des grassmanniennes affines, voir les \S\S\ref{section grassmanniennes affines}-\ref{section grassmanniennes beilinson-drinfeld} et en particulier les props. \ref{proposition representabilite grassmannienne affine locale} et \ref{proposition representabilite grassmannienne affine globale}.

\begin{thm}\label{theoreme affinite modeles immobiliers}
Le schéma en groupes $\underline{\Gscr_{\Omega}}$ est affine sur $\AAA^1_\ZZ$.
\end{thm}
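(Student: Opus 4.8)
The plan is to realise $\underline{\Gscr_\Omega}$ as the fibrewise neutral component of its affine hull and then to rule out any further components. Since $\ZZ[t]$ is noetherian regular of dimension $2$ and $\underline{\Gscr_\Omega}$ is smooth, separated and connected, prop. \ref{proposition enveloppe affine} of Raynaud furnishes a smooth affine $\AAA^1_\ZZ$-group $\underline{\Gscr_\Omega}^{\on{af}}=\spec \Gamma(\underline{\Gscr_\Omega},\Oh_{\underline{\Gscr_\Omega}})$ together with an open immersion $\underline{\Gscr_\Omega}\hookrightarrow \underline{\Gscr_\Omega}^{\on{af}}$ identifying the source with the fibrewise neutral component. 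Thus $\underline{\Gscr_\Omega}$ is affine if and only if the geometric fibres of $\underline{\Gscr_\Omega}^{\on{af}}$ are connected. Over $\GG_{m,\ZZ}$ this is immediate: formation of global sections commutes with the flat localisation $\ZZ[t]\to\ZZ[t^{\pm 1}]$, so $\underline{\Gscr_\Omega}^{\on{af}}|_{\GG_{m,\ZZ}}=\underline{G}^{\on{af}}=\underline{G}$, which is affine with connected fibres by prop. \ref{proposition plongement restrictions des scalaires groupe de tits}. Hence the component group $\underline{\Gscr_\Omega}^{\on{af}}/\underline{\Gscr_\Omega}$ is supported on the closed locus $V(t)\cong\spec\ZZ$, and it remains to prove that the fibre of $\underline{\Gscr_\Omega}^{\on{af}}$ at $t=0$, in every residue characteristic, is connected. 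One cannot merely invoke openness of the image of an étale morphism, because for smooth group schemes the fibrewise $\pi_0$ may jump upward along $V(t)$, exactly as for Néron models; the whole content is to bound this jump.

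To control it I would localise in the $t$-direction. Fix $p\in\PP\cup\{0\}$, write $\kappa$ for $\FF_p$ or $\QQ$, set $\Oh:=\kappa\pot{t}$, and base change along $\ZZ[t]\to\Oh$ to obtain the smooth affine group $\Hh_p:=\underline{\Gscr_\Omega}^{\on{af}}\otimes_{\ZZ[t]}\Oh$ over this discrete valuation ring. Its generic fibre is $\underline{G}\otimes\kappa\rpot{t}$, hence geometrically connected, while its special fibre is precisely the fibre whose connectedness we seek. By cor. \ref{corollaire specialises modeles immobiliers en caracteristique p} the open subgroup $\underline{\Gscr_\Omega}\otimes\Oh$ is the Bruhat--Tits parahoric $\Gscr_{\Omega,p}$ of $G_{\eta_p}$, which is affine with connected special fibre by th. \ref{theoreme theorie de bruhat-tits pseudo-reductive}. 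Smoothness of $\Hh_p$ and Hensel's lemma then identify the component group of its special fibre with the quotient $\Hh_p(\Oh^{\on{sh}})/\Gscr_{\Omega,p}(\Oh^{\on{sh}})$, the two reduction maps having the same kernel; this is a bounded subgroup of $G_{\eta_p}(\on{Frac}\Oh^{\on{sh}})$ containing the parahoric, so it embeds into the subquotient of the Iwahori--Weyl group $\widetilde W_p$ measuring the stabiliser of the facet $\Omega$ modulo the parahoric.

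The final step, and the heart of the matter, is to see that this subquotient is trivial. Here I would invoke prop. \ref{proposition combinatoire caracteristiques differentes}: the Iwahori--Weyl groups $\widetilde W_p$, their action on $\Ascr(\underline{G},\underline{S},\ZZ\rpot{t})$, and the facet $\Omega$, are all canonically independent of $p$, so the finite group $\Hh_p(\Oh^{\on{sh}})/\Gscr_{\Omega,p}(\Oh^{\on{sh}})$ does not depend on the characteristic. In residue characteristic $0$ the generic fibre $G_{\eta_0}=G$ is reductive and $\Gscr_{\Omega,0}$ is a classical Bruhat--Tits parahoric à la Pappas--Rapoport--Zhu, already affine, so this component group vanishes; transporting the equality across characteristics forces it to vanish for every $p$, including the wild primes dividing $e$. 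All geometric fibres of $\underline{\Gscr_\Omega}^{\on{af}}$ are therefore connected, whence $\underline{\Gscr_\Omega}^{\on{af}}=\underline{\Gscr_\Omega}$ and $\underline{\Gscr_\Omega}$ is affine. I expect the delicate point to be this last transport: one must verify that the combinatorial identification of prop. \ref{proposition combinatoire caracteristiques differentes} genuinely matches the bounded subgroup $\Hh_p(\Oh^{\on{sh}})$ cut out by the affine hull across characteristics, so that the vanishing established in good characteristic is faithfully propagated to the wildly ramified case.
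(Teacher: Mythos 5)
Your skeleton coincides with the paper's: Raynaud's prop.~\ref{proposition enveloppe affine} produces the smooth affine hull with $\underline{\Gscr_\Omega}$ as fibrewise neutral component, the restriction to $\GG_{m,\ZZ}$ is already the affine group $\underline{G}$, and everything reduces to the connectedness of the geometric fibres at the maximal ideals $(p,t)$, to be imported from the known characteristic-zero case via the characteristic-independence of the combinatorics. But the step you yourself flag as delicate is a genuine gap, and it is the entire content of the theorem. Prop.~\ref{proposition combinatoire caracteristiques differentes} identifies the \emph{abstract} Iwahori--Weyl groups and appartements across characteristics; it provides no map whatsoever between the bounded subgroup $\Hh_p(\Oh^{\on{sh}})\subset G_{\eta_p}(\FF_p^{\on{alg}}\rpot{t})$ and its characteristic-zero analogue, since these are points of the same $\ZZ[t]$-scheme with values in rings between which there is no homomorphism. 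Asserting that the finite quotient $\Hh_p(\Oh^{\on{sh}})/\Gscr_{\Omega,p}(\Oh^{\on{sh}})$ ``does not depend on the characteristic'' is therefore exactly the statement to be proved, and your two one-dimensional localisations $\FF_p\pot{t}$ and $\QQ\pot{t}$ cannot communicate. Note also that your upper bound --- the stabiliser of $\Omega$ in $\widetilde{W}_p$ modulo the parahoric image --- is a purely combinatorial object which is in general \emph{nonzero}; its vanishing as a component group in characteristic zero is a theorem of Bruhat--Tits, not of combinatorics, so it really is the group-theoretic statement that must be transported.

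The paper supplies the missing bridge by working over the two-dimensional complete regular local ring $\breve{\ZZ}_p\pot{t}$, which surjects onto $\overline{\FF}_p\pot{t}$ and embeds into $\breve{\QQ}_p\pot{t}$. Each connected component of the fibre at $(p,t)$ is represented, by Hensel's lemma joined to cor.~\ref{corollaire specialises modeles immobiliers en caracteristique p} and \cite[cor. 4.6.21]{BTII}, by a point of the normaliser $\underline{\Nscr_{\Omega}^{\on{af}}}$ with values in $\breve{\ZZ}_p\pot{t}$; Beauville--Laszlo descent identifies $\underline{\Nscr_{\Omega}^{\on{af}}}(\breve{\ZZ}_p\pot{t})$ with $\underline{N}(\breve{\ZZ}_p\rpot{t})\cap\underline{\Nscr_{\Omega}}(\breve{\QQ}_p\pot{t})$ inside $\underline{N}(\breve{\QQ}_p\rpot{t})$; and the two specialisation maps on the quotients $\underline{N}/\underline{\Nscr_{\Omega}}$, towards $\overline{\FF}_p$ and towards $\breve{\QQ}_p$, are shown to be bijective by combining (\ref{equation isomorphisme groupe d'iwahori-weyl caracteristiques differentes}) with a counting argument: both residual quotients $\underline{\Nscr_\Omega}/\underline{\Tscr}$ have cardinality $\lvert W(\Phi_\Omega)\rvert$, one map is surjective by Hensel and the other injective because $\underline{\Tscr}$ is closed in $\underline{\Nscr_\Omega}$. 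It is this zig-zag through mixed characteristic, absent from your proposal, that legitimises the transport you postulate. To repair your write-up, replace the base $\kappa\pot{t}$ by $\breve{\ZZ}_p\pot{t}$ and represent the components by points of the normaliser rather than by an arbitrary bounded overgroup of the parahoric.
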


 On va démontrer ce résultat fondamental de manière complètement différente à celle de \cite[th. 4.1]{PZh} dans le cas modérément ramifié. En effet, ce dernier article procède par deux étapes : dans le cas déployé, des techniques de dilatation sont appliquées, voir \cite[\S1]{WW} et \cite[\S3.2]{BLR} pour le cas de la dimension $1$ et \cite[\S3]{RichDil} en dimension supérieure ; dans le cas tordu, un calcul cohomologique en termes de $\Gamma$-invariants est évoqué.
 
 Dans notre cadre généralisé, même si l'on sait déjà que $\underline{G}$ est affine sur $\ZZ[t^{\pm 1}]$ d'après la prop. \ref{proposition plongement restrictions des scalaires groupe de tits}, nous ne disposons pas d'une interprétation galoisienne pure, pour qu'on puisse calculer des groupes de cohomologie. Nos arguments sont donc beaucoup plus abstraits, mais aussi élémentaires en ce qui concerne les outils techniques.
 
 \begin{proof}
 On doit prouver que l'enveloppe affine $\underline{\Gscr_{\Omega}^{\on{af}}}$ est connexe, voir la prop. \ref{proposition enveloppe affine}, et on s'est ramené à traiter les fibres géométriques en les idéaux maximaux $(p,t)$ de $\ZZ[t]$, pour $p>0$ un nombre premier. Soient $\underline{\Nscr_{\Omega}}$ (resp. $\underline{\Nscr_{\Omega}^{\on{af}}}$) le normalisateur de $\underline{\Sscr}$ dans $\underline{\Gscr_{\Omega}}$ (resp. dans l'enveloppe affine $\underline{\Gscr_{\Omega}^{\on{af}}}$), qui sont sous-$\ZZ[t]$-groupes fermés et lisses, cf. \cite[exp. XI, cor. 2.4 bis, cor. 6.11]{SGA3}. Chaque composante connexe de cette fibre géométrique peut être représentée par un point de $\underline{\Nscr_{\Omega}^{\on{af}}}$ à valeurs dans $\breve{\ZZ}_p\pot{t}$, grâce au lemme d'Hensel joint au cor. \ref{corollaire specialises modeles immobiliers en caracteristique p} et au fait correspondant en théorie de Bruhat--Tits, voir \cite[prop. 2.12]{LouBT} et \cite[cor. 4.6.21]{BTII}. Alors, la non connexité de $\underline{\Gscr_{\Omega}^{\on{af}}}$ au-dessus de $\breve{\ZZ}_p\pot{t}$ contredit à l'évidence l'affirmation selon laquelle les flèches \begin{equation}\label{equation bijection quotients d'iwahori-weyl mod p}\underline{N}(\breve{\ZZ}_p\rpot{t})/\underline{\Nscr_{\Omega}}(\breve{\ZZ}_p\pot{t}) \rightarrow  \underline{N}(\overline{\FF}_p\rpot{t})/\underline{\Nscr_{\Omega}}(\overline{\FF}_p\pot{t}) \end{equation} \begin{equation}\label{equation bijection quotients d'iwahori-weyl caracteristique nulle}
 \underline{N}(\breve{\ZZ}_p\rpot{t})/\underline{\Nscr_{\Omega}}(\breve{\ZZ}_p\pot{t}) \rightarrow  \underline{N}(\breve{\QQ}_p\rpot{t})/\underline{\Nscr_{\Omega}}(\breve{\QQ}_p\pot{t})
 \end{equation} sont bijectives, puisque la descente de Beauville--Laszlo, voir \cite[th.]{BLDesc}, en fournit l'égalité \begin{equation}\underline{\Nscr_{\Omega}^{\on{af}}}(\breve{\ZZ}_p\pot{t})=\underline{N}(\breve{\ZZ}_p\rpot{t}) \cap \underline{\Nscr_{\Omega}}(\breve{\QQ}_p\pot{t}) \end{equation} comme sous-groupes de $\underline{N}(\breve{\QQ}_p\rpot{t})$. 
 
 Maintenant, nous montrons cette assertion, c'est-à-dire que les applications (\ref{equation bijection quotients d'iwahori-weyl mod p}) et (\ref{equation bijection quotients d'iwahori-weyl caracteristique nulle}) sont bijectives. En effet, nous avons déjà vu pendant la preuve de la prop. \ref{proposition combinatoire caracteristiques differentes} que les diverses variantes des groupes d'Iwahori-Weyl sont toutes canoniquement isomorphes, quels que soient les coefficients utilisés, cf. (\ref{equation isomorphisme groupe d'iwahori-weyl caracteristiques differentes}). Par suite, il reste à avérer que les applications d'ensembles finis \begin{equation}\label{equation comparaison groupe de weyl immobilier mod p}\underline{\Nscr_{\Omega}}(\breve{\ZZ}_p\pot{t})/\underline{\Tscr}(\breve{\ZZ}_p\pot{t}) \rightarrow  \underline{\Nscr_\Omega}(\overline{\FF}_p\pot{t})/\underline{\Tscr}(\overline{\FF}_p\pot{t}), \end{equation}\begin{equation}\label{equation comparaison groupe de weyl immobilier p unite}
 \underline{\Nscr_\Omega}(\breve{\ZZ}_p\pot{t})/\underline{\Tscr}(\breve{\ZZ}_p\pot{t}) \rightarrow  \underline{\Nscr_\Omega}(\breve{\QQ}_p\pot{t})/\underline{\Tscr}(\breve{\QQ}_p\pot{t})
 \end{equation} sont bijectives. D'une part, le cardinal du membre de droite de ces applications ne dépend pas de la caractéristique des coefficients, puisqu'il s'identifie en tout cas au groupe de Weyl résiduel $W(\Phi_{\Omega})$ du sous-système de racines $\Omega$-résiduel
 \begin{equation}\label{equation sous-systeme residuel des modeles immobiliers}
  \Phi_\Omega:= \{ a \in \Phi_G : f_\Omega(a)+f_\Omega(-a)=0\},
 \end{equation} voir \cite[cor. 4.6.12]{BTII} et \cite[lem. 4.6]{LouBT} pour une preuve de cette affirmation. D'autre part, la première flèche (\ref{equation comparaison groupe de weyl immobilier mod p}) est surjective par le lemme d'Hensel et la deuxième (\ref{equation comparaison groupe de weyl immobilier p unite}) est injective, parce que $\underline{\Tscr}$ est fermé dans $\underline{\Nscr_{\Omega}}$.
\end{proof}
 
\begin{rem}
	Rappelons succinctement la stratégie de preuve de la version précédente du texte dans le cas parahorique : la première étape consistait à exploiter que les stabilisateurs soient automatiquement connexes si $G=G^{\on{sc}}$ pour montrer la connexité de l'enveloppe affine ; la deuxième étape ramenait le problème par modification centrale aux modèles de Néron des tores pas forcément induits, dès que $G^{\on{dér}}=G^{\on{sc}}$ ; la troisième étape explicitait ceux-ci grâce à la classification des tores cycliques lorsque $e \leq 3$. À la fin, cette preuve était encore plus longue que l'actuelle et ne fonctionnait que dans un cadre limitant.
\end{rem}
\section{La géométrie des grassmanniennes affines tordues entières}\label{section grassmanniennes affines}

Cette partie est consacrée à l'étude géométrique des grassmanniennes affines $\Gr_{\underline{\Gscr_\fbf}}$ attachées aux $\AAA_{\ZZ}^1$-modèles parahoriques $\underline{\Gscr_\fbf}$ des groupes de Tits $\underline{G}$ déjà introduits à la déf. \ref{definition modeles immobiliers du groupe de tits}. On démontre notamment que les schémas de Schubert $\Gr_{\underline{\Gscr_\fbf},\leq w}$, cf. déf. \ref{definition schemas de schubert}, sont géométriquement normaux et scindés en caractéristique $p>0$, cf. th. \ref{theoreme normalite des schemas de Schubert}.

\subsection{Rappels et préparatifs} \label{grassmanniennes affines}
Nous allons rappeler au lecteur les notions et résultats principales concernant les grassmanniennes affines, lesquelles seront les objets géométriques centraux de nos investigations. Voici la définition :

\begin{defn}\label{definition grassmannienne affine locale}
	Soient $A$ un anneau et $\Grm$ un schéma en groupes lisse sur $A\pot{t}$. La grassmannienne affine $\on{Gr}_{\Grm}$ est le pré-faisceau en ensembles de la catégorie des $A$-algèbres, tel que $\Gr_{\Grm}(R)$ soit l'ensemble des classes d'isomorphisme de $\Grm$-fibrés $\Eh$ sur $R\pot{t}$ munis d'une trivialisation au-dessus de $R\rpot{t}$.
\end{defn}

 Il s'agit d'un faisceau en ensembles pour la topologie plate. Le groupe de lacets $L\Grm:R \mapsto \Grm(R\rpot{t})$ de $G$, qui est représentable par un ind-schéma affine de type fini, cf. \cite[1.a]{PR}, y opère par changement de trivialisation. Notant $L^+\Grm:R \mapsto G(R\pot{t})$ le groupe d'arcs de $\Grm$, qui est naturellement contenu dans $L\Grm$ et représentable par un schéma pro-lisse, cf. \cite[prop. 1.3.2]{ZhuIntro}, on peut réaliser la grassmannienne affine comme le faisceau quotient $L\Grm/L^+\Grm$ pour la topologie étale, cf. \cite[prop. 1.3.6]{ZhuIntro}, de manière compatible à l'action à la gauche de $L\Grm$.
 
  La question de savoir quand la grassmannienne affine d'un groupe est représentable est encore subtile, mais nous avons le résultat suivant :
  
  \begin{propn}[Pappas--Zhu]\label{proposition representabilite grassmannienne affine locale}
  	Soient $A$ un anneau de Dedekind et $\Grm$ un $A\pot{t}$-groupe lisse, affine et connexe. Alors, $\Gr_{\Grm}$ est représentable par un ind-schéma quasi-projectif de type fini.
  \end{propn}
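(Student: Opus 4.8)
Le plan est de se ramener au groupe linéaire général $\on{GL}_n$ et d'exploiter l'ind-projectivité de sa grassmannienne affine. D'abord, je traiterais le cas $\Grm=\on{GL}_{n,A\pot{t}}$ au moyen de la description par réseaux : pour toute $A$-algèbre $R$, un point de $\Gr_{\on{GL}_n}(R)$ s'identifie à un $R\pot{t}$-réseau $M$, c'est-à-dire à un sous-$R\pot{t}$-module projectif de $R\rpot{t}^n$ qui l'engendre sur $R\rpot{t}$. En imposant la borne $t^N R\pot{t}^n \subseteq M \subseteq t^{-N} R\pot{t}^n$, on obtient un sous-foncteur fermé $\Gr_{\on{GL}_n}^{(N)}$ dont les $R$-points sont les facteurs directs $t$-stables du $R$-module libre $t^{-N}R\pot{t}^n/t^N R\pot{t}^n$ de rang $2Nn$ ; c'est donc un sous-schéma fermé d'une grassmannienne ordinaire, projectif et de type fini sur $A$, et $\Gr_{\on{GL}_n}=\varinjlim_N \Gr_{\on{GL}_n}^{(N)}$ est ind-projectif. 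C'est précisément ici qu'intervient l'hypothèse que $A$ soit de Dedekind, laquelle assure la $t$-liberté et la platitude requises pour la représentabilité du foncteur des réseaux, cf. \cite[1.a]{PR} et \cite{ZhuIntro}.

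Ensuite, pour un $\Grm$ lisse, affine et connexe quelconque, je choisirais une immersion fermée $\rho\colon \Grm \hookrightarrow \on{GL}_{n,A\pot{t}}$ telle que le faisceau quotient fppf $X:=\on{GL}_n/\Grm$ soit quasi-affine sur $A\pot{t}$. Une telle immersion existe parce que $\Grm$ est plat, affine et de présentation finie sur l'anneau noethérien $A\pot{t}$ : à la Chevalley, on réalise $\Grm$ comme le stabilisateur schématique d'un vecteur $v$ d'une représentation $V$ de $\on{GL}_n$, de sorte que le morphisme d'orbite $\on{GL}_n\rightarrow V$, $g\mapsto g\cdot v$, induise une immersion localement fermée $X \hookrightarrow V$, d'où la quasi-affinité cherchée.

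Le cœur de la preuve sera d'établir que le morphisme induit $\Gr_\rho\colon\Gr_\Grm \rightarrow \Gr_{\on{GL}_n}$ est représentable par une immersion. Pour cela, je fixerais une $A$-algèbre $R$ et un point de $\Gr_{\on{GL}_n}(R)$, correspondant à un $\on{GL}_n$-torseur $\Eh$ sur $R\pot{t}$ trivialisé sur $R\rpot{t}$, puis décrirais le produit fibré $\Gr_\Grm \times_{\Gr_{\on{GL}_n}} \spec R$ comme le foncteur des réductions de structure de $\Eh$ au sous-groupe $\Grm$ coïncidant avec la réduction triviale sur le disque épointé. Or de telles réductions sont exactement les sections du fibré associé $\Eh\times^{\on{GL}_n} X$ au-dessus de $R\pot{t}$ prolongeant une section prescrite au-dessus de $R\rpot{t}$. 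Comme $X$ est quasi-affine, ce fibré l'est aussi au-dessus du disque, et le foncteur de telles sections à restriction imposée sur le disque épointé est représentable par une immersion, la quasi-affinité transformant la condition de prolongement en une condition localement fermée (qui deviendrait fermée si $X$ était affine), le recollement disque/disque épointé étant régi par la descente à la Beauville--Laszlo \cite{BLDesc}.

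Enfin, $\Gr_{\on{GL}_n}$ étant ind-projectif, son sous-ind-schéma localement fermé $\Gr_\Grm$ est ind-quasi-projectif de type fini : les intersections $\Gr_\Grm \cap \Gr_{\on{GL}_n}^{(N)}$ fournissent une filtration croissante par des sous-schémas quasi-projectifs et de type fini sur $A$, et la lissité et la connexité de $\Grm$ garantissent que $\Gr_\Grm$ en est bien la réunion filtrée. La \emph{difficulté principale} sera l'étape 3, à savoir la représentabilité de $\Gr_\rho$ par une immersion : elle repose d'une part sur l'existence d'un plongement $\rho$ à quotient quasi-affine au-dessus de la base bidimensionnelle $A\pot{t}$ — et non pas seulement au-dessus d'un corps — et d'autre part sur le contrôle fin des sections d'un fibré quasi-affine sur le disque à restriction imposée sur le disque épointé.
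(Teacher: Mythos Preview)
Your overall strategy coincides with the paper's: reduce to $\on{GL}_n$ via its lattice description, choose a closed immersion $\Grm\hookrightarrow\on{GL}_n$ with quasi-affine fppf quotient, and deduce that $\Gr_{\Grm}\to\Gr_{\on{GL}_n}$ is a quasi-compact immersion. The paper's sketch cites \cite[th.~1.1.3]{ZhuIntro} for step~1, \cite[cor.~11.8]{PZh} (resting on Thomason \cite{Thom}) for step~2, and \cite[prop.~1.2.6]{ZhuIntro} for step~3.

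There is, however, a genuine gap in your justification of step~2. You write that ``à la Chevalley'' one realises $\Grm$ as the schematic stabiliser of a vector $v$ in a representation $V$, and that the orbit map then gives a locally closed immersion $\on{GL}_n/\Grm\hookrightarrow V$. Over a field this is classical, but over the two-dimensional regular base $A\pot{t}$ neither the representability of the fppf quotient $\on{GL}_n/\Grm$ by a scheme nor the quasi-affinity of that scheme follows from the stabiliser description alone; the orbit monomorphism need not be an immersion in general. This is precisely the point where the paper invokes \cite[cor.~11.8]{PZh}, which in turn depends on Thomason's equivariant completion results \cite{Thom} to produce such an embedding with quasi-affine quotient over a regular noetherian base of dimension $\leq 2$. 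You correctly flag this step as the \emph{difficulté principale} at the end, but the argument you actually give does not discharge it.

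A secondary point: you locate the Dedekind hypothesis in the $\on{GL}_n$ case, but the lattice description and ind-projectivity of $\Gr_{\on{GL}_n}$ hold over an arbitrary base (cf.\ \cite[th.~1.1.3]{ZhuIntro}). The Dedekind assumption on $A$ enters only in step~2, to ensure that $A\pot{t}$ is regular of dimension $\leq 2$ so that the Pappas--Zhu/Thomason input applies. Finally, the exhaustiveness of the filtration $\Gr_{\Grm}\cap\Gr_{\on{GL}_n}^{(N)}$ is automatic from the immersion and does not require the smoothness or connectedness of $\Grm$.
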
 

\begin{proof}[Esquisse de la preuve]
Lorsque $\Grm=\on{GL}_n$, la grassmannienne affine admet une interprétation comme espace classifiant de $R\pot{t}$-réseaux dans $R\rpot{t}$, d'où son écriture comme colimite de fermés de grassmanniennes finies, cf. \cite[th. 1.1.3]{ZhuIntro}. Sous les contraintes énoncées, alors \cite[cor. 11.8]{PZh} affirme l'existence d'une immersion fermée $\Grm \rightarrow \on{GL}_n$ à quotient représentable et quasi-affine pour la topologie plate - cela repose essentiellement sur les travaux de Thomason, voir \cite{Thom}. Mais dans ce cas, on parvient à prouver que le morphisme induit $\Gr_{\Grm} \rightarrow \Gr_{\on{GL}_n}$ est représentable par une immersion quasi-compacte, cf. \cite[prop. 1.2.6]{ZhuIntro}.
\end{proof}

L'autre question qui se pose concerne la projectivité de $\Gr_{\Grm}$. Pour obtenir une classification complète, on se borne au cas où $A$ est un corps parfait.

\begin{propn}[Pappas--Rapoport, Richarz, L.]\label{proposition projectivite grassmannienne affine locale}
Soient $k$ un corps parfait, $\Grm$ un $k\pot{t}$-schéma en groupes lisse, affine et connexe. Pour que $\Gr_{\Grm}$ soit un ind-schéma projectif, il faut et il suffit que $\Grm$ soit parahorique au sens de \cite{LouBT}.
\end{propn}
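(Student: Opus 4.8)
Le plan est de ramener l'énoncé à un critère valuatif de propreté. Comme la prop. \ref{proposition representabilite grassmannienne affine locale} garantit déjà que $\Gr_{\Grm}$ est ind-quasi-projectif de type fini, il suffira de montrer que la parahoricité de $\Grm$ équivaut à son ind-propreté, ces deux propriétés réunies donnant l'ind-projectivité. Je vérifierais celle-ci au moyen du critère valuatif : pour tout anneau de valuation $\Oh$ sur $k$ de corps de fractions $F$, tout $F$-point de $\Gr_{\Grm}$ factorisant par un fermé de type fini -- c'est-à-dire une classe $[g]$ d'un élément $g\in\Grm(F\rpot{t})$ modulo $\Grm(F\pot{t})$ -- devrait se prolonger de façon unique en un $\Oh$-point, soit en un élément de $L\Grm(\Oh)/L^+\Grm(\Oh)$.

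Pour la suffisance, je supposerais $\Grm$ parahorique. Grâce à la $z$-extension canonique (\ref{equation z-extension canonique tordu}) et à la naturalité des modèles parahoriques, je me ramènerais au cas où la fibre générique est le produit d'un tore induit, dont la grassmannienne est un réseau de copoids ind-fini, et d'un groupe semi-simple simplement connexe. Dans le cas réductif, l'ind-propreté découlerait de la décomposition de Cartan et de la projectivité des variétés de Schubert, démontrée par Richarz \cite{Rich} (comp. aussi \cite{PR}), les orbites étant de dimension finie et d'adhérence propre. Pour le cas exotique (ex. \ref{exemple exotique basique}), j'exploiterais l'immersion fermée $i_G:G\hookrightarrow\Res_{k'/k}G'$ de (\ref{equation application naturelle restriction des scalaires}), qui se prolonge en les modèles parahoriques et induit une immersion fermée de $\Gr_{\Grm}$ dans la grassmannienne d'une restriction des scalaires d'un parahorique réductif $\Gscr'$, laquelle s'identifie à $\Gr_{\Gscr'}$ et est donc ind-propre par le cas précédent.

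Pour la nécessité, je supposerais $\Gr_{\Grm}$ ind-propre. Si $G$ n'était pas pseudo-réductif, son radical unipotent -- le corps $k$ étant parfait -- fournirait un sous-groupe isomorphe à $\GG_a$, dont la grassmannienne affine est l'espace affine de dimension infinie $\on{colim}_n\AAA^n$, d'où un sous-ind-schéma fermé non propre de $\Gr_{\Grm}$ : absurde. Donc $G$ serait pseudo-réductif quasi-déployé, et la théorie de \cite{LouBT} lui attacherait un modèle parahorique $\Grm'$ contenant $\Grm$ à fibre générique identique. Une inclusion stricte $\Grm\subsetneq\Grm'$ rendrait le morphisme $\Gr_{\Grm}\rightarrow\Gr_{\Grm'}$ surjectif à fibres isomorphes au quotient de congruence $L^+\Grm'/L^+\Grm$, qui est un espace affine non trivial donc non propre ; la propreté du total et de la base $\Gr_{\Grm'}$ (par la suffisance) forcerait alors celle des fibres, contradiction. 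Ainsi $\Grm=\Grm'$ serait parahorique.

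Le point le plus délicat sera la suffisance pour les groupes pseudo-réductifs barcelonais (ex. \ref{exemple absolument non reduit}), où $i_G$ n'est plus une immersion fermée à cause des facteurs quadratiques attachés aux racines multipliables et des phénomènes non réduits qui en résultent, de sorte que la réduction commode au cas réductif échoue. C'est précisément ce cas qui a nécessité l'extension de la théorie de Richarz au cadre pseudo-réductif accomplie dans \cite{LouBT}, où la décomposition de Cartan et la projectivité des cycles de Schubert sont établies directement, sans recourir à une réalisation réductive.
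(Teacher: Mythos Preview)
Your approach diverges from the paper's and contains a genuine error in the necessity direction. You argue that if $G$ is not pseudo-reductive, its unipotent radical contains a copy of $\GG_a$ because \og le corps $k$ est parfait\fg{}; but $G$ is the generic fibre over $k\rpot{t}$, which is imperfect when $\on{car}(k)>0$, so the $k\rpot{t}$-unipotent radical may well be wound and contain no $\GG_a$. The paper's argument is different and more robust: one observes only that $G$ has no \emph{central} $\GG_a$, since such a subgroup would yield a closed sub-ind-scheme $\Gr_{\GG_a}\simeq\AAA_k^\infty$ of $\Gr_\Grm$, and then invokes the argument of \cite[th.~1.19]{Rich}, resting on the characterisation of maximal bounded subgroups \cite[th.~3.3.1]{BTI} and the correspondence with parabolics of the special fibre \cite[th.~4.6.33]{BTII}, together with the pro-unipotence of congruence subgroups of $L^+\Grm$. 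Your second step is also shaky: the existence of a parahoric model $\Grm'$ with a scheme-theoretic inclusion $\Grm\subset\Grm'$ (not merely on $\Oh$-points), and the identification of the fibres of $\Gr_\Grm\to\Gr_{\Grm'}$ with an affine space, both require more care than you indicate.

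For sufficiency, the paper does not proceed by your case analysis. Instead one reduces to the Iwahori case $\Grm=\Irm$, uses the Bruhat decomposition \cite[prop.~3.7]{LouBT} to index the $L^+\Irm$-orbits by $W^{\on{af}}$, and shows that each Schubert variety is dominated by a Demazure variety $\on{Dem}_\wf$, an iterated $\PP^1$-bundle which is projective; this works uniformly for all parahorics, exotic and barcelonais included. Your reduction via $i_G$ succeeds for exotic groups (where $i_G$ is a closed immersion), but as you yourself note it fails for barcelonais groups, and your final paragraph essentially defers the missing case back to \cite{LouBT} rather than proving it---which is circular, since \cite[th.~5.2]{LouBT} establishes the projectivity precisely via the Demazure argument above.
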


\begin{proof}[Esquisse de la preuve] On reprend les arguments de \cite[th. 5.2]{LouBT}. Pour la suffisance, on peut supposer sans perdre de généralité que $\Grm=\Irm$ est d'Iwahori et que $\Gr_{\Irm}$ est connexe. En particulier, les $\Irm$-orbites fermées de $\Gr_{\Irm}$ sont numérotées par $w \in W^{\on{af}}$, où ce dernier symbole désigne le groupe de Weyl affine, d'après la décomposition de Bruhat de \cite[prop. 3.7]{LouBT}. Cela veut dire que $w$ peut s'exprimer dans la forme d'un produit de réflexions fondamentales ; soit $\wf=[s_1: \dots :s_n]$ le mot réduit correspondant. Considérons la variété de Demazure
	\begin{equation}\on{Dem}_\wf =L^+\Prm_{s_1} \times^{L^+\Irm}\dots \times^{L^+\Irm} L^+\Prm_{s_n}/L^+\Irm,\end{equation}  
	où l'on note $\Prm_s$ le groupe parahorique minimal associé à $s$. On peut montrer que ceci est une fibration en droites projectives localement triviale pour la topologie ouverte, donc représentable par une variété projective lisse, voir \cite[prop. 8.7, prop. 8.8]{PR} et aussi cor. \ref{corollaire projection triviale ouvert}. Son image dans $\Gr_{\Irm}$ par le morphisme multiplication est la variété de Schubert $\Gr_{\Irm,\leq w}$, qui est donc projective.
	
	Quant à la nécessité, on voit facilement que la fibre générique de $\Grm$ est sans $\GG_a$ central, en appliquant ce que $\Gr_{\GG_a}$ est l'ind-droite affine $\AAA_k^\infty$ infinie, cf. \cite[th. 5.2]{LouBT}. Enfin, le fait que $\Grm$ soit parahorique résulte d'après la preuve de \cite[th. 1.19]{Rich} d'un théorème sur les sous-groupes bornés maximaux de $\Grm(k\rpot{t})$, cf. \cite[th. 3.3.1]{BTI}, et du lien avec les sous-groupes paraboliques de la fibre spéciale, cf. \cite[th. 4.6.33]{BTII} et \cite[th. 4.7]{LouBT}, couplés au fait que les sous-groupes de congruence de $L^+\Grm$ sont toujours pro-unipotents, voir le début de \S\ref{subsection cordes et fibres en droites}, particulièrement (\ref{equation suite decroissante groupe d'arcs}).
\end{proof}

Reprenons maintenant les notations des \S\S\ref{subsection groupes de Tits}-\ref{subsection groupes immobiliers modelant tits}, c'est-à-dire : $G$ est un $\QQ(t)$-groupe quasi-déployé, $\QQ(\zeta_e,t^{1/e})$-déployé, $\QQ\rpot{t}$-résiduellement déployé et à tore maximal induit, muni d'un couple de Killing relatif $S \subset T \subset B \subset G$ et d'un quasi-système de Tits, voir l'hyp. \ref{hypothese groupe dans la fibre generique} ; $\underline{G}$ est le $\GG_{m,\ZZ}$-groupe de Tits sur $\ZZ[t^{\pm 1}]$ qui lui est associé par la déf. \ref{definition groupe de Tits} ; et les $\underline{\Gscr_\fbf}$, où $\fbf\subset \Ascr(\underline{G},\underline{S},\ZZ\rpot{t})$ est une facette, sont les $\AAA^1_\ZZ$-modèles parahoriques que nous avons construits à la déf. \ref{definition modeles immobiliers du groupe de tits} et qui sont lisses, affines et connexes sur $\ZZ[t]$, grâce au th. \ref{theoreme affinite modeles immobiliers}. 

Alors, la grassmannienne affine $\Gr_{\underline{\Gscr_\fbf}}$ sur $\spec \ZZ$ est, en quelque sorte\footnote{Ce genre d'affirmation sera précisée à l'annexe \ref{comparaison avec les varietes de demazure de kac-moody}, voir le th. \ref{theoreme comparaison kac-moody schubert}.}, une variété de drapeaux affine entière du groupe tordu $\underline{G}$. On s'intéresse aux sous-schémas intègres et invariants par $L^+\underline{\Gscr_\abf}$ de $\Gr_{\underline{\Gscr_\fbf}}$, où $\abf$ est une alcôve dont l'adhérence contient $\abf$, qu'on appelle les schémas de Schubert :

\begin{defn}\label{definition schemas de schubert}
	Soient $w \in \widetilde{W}/W_\fbf$ et $n_w \in \underline{N}(\ZZ[t^{\pm 1}])$ un représentant de $w$. Alors, le schéma de Schubert $\Gr_{\underline{\Gscr_\fbf},\leq w}$ est l'image schématique de l'application orbite $L^+\underline{\Gscr_\abf} \rightarrow \Gr_{\underline{\Gscr_\fbf}}$ donnée par $g \mapsto g\cdot n_w$.
\end{defn}

Ces schémas sont automatiquement intègres, plats et projectifs et indépendantes du choix d'un représentant $n_w$ de $w$. Les relations d'inclusion entre eux sont induites par l'ordre de Bruhat dans le double quotient $\widetilde{W}/W_\fbf$, voir \cite[prop. 2.8]{RichDipl}. Notre théorème principal concerne leur géométrie :

\begin{thm}[Faltings, Pappas--Rapoport, L.]\label{theoreme normalite des schemas de Schubert}
	Supposons que $G^{\on{dér}}=G^{\on{sc}}$. Alors, les schémas de Schubert $\Gr_{\underline{\Gscr_\fbf},\leq w}$ sont géométriquement normaux sur $\ZZ$, de Cohen--Macaulay et leurs fibres modulo $p>0$ sont scindées.
\end{thm}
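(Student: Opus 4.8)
Le plan est de suivre la stratégie de Faltings \cite{Fal} et de Pappas--Rapoport \cite{PR}, en l'adaptant au cadre entier et pseudo-réductif. Grâce à l'hypothèse $G^{\on{dér}}=G^{\on{sc}}$, je commencerais par me ramener au cas où $G$ est simplement connexe, en exploitant la $z$-extension canonique (\ref{equation z-extension canonique tordu}) qui réalise $\underline{G}$ comme modifié central de $\underline{G}^{\on{sc}}$, ainsi que la comparaison des grassmanniennes affines et de leurs schémas de Schubert qui en découle. Pour un mot réduit $\wf$ représentant $w$, je disposerais alors de la résolution de Demazure $\on{Dem}_\wf \rightarrow \Gr_{\underline{\Gscr_\fbf},\leq w}$, lisse et projective sur $\ZZ$ comme tour itérée de fibrations en droites projectives, birationnelle et surjective sur son image.

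La première étape substantielle serait l'analyse structurale des groupes d'arcs $L^+\underline{\Gscr_\fbf}$ et de cordes $L^-\underline{\Gscr_\fbf}$ par le biais de l'action de rotation, à partir de laquelle je construirais les cycles de Schubert opposés $\Gr_{\underline{\Gscr_\fbf}}^{\geq w}$, dont les quotients locaux sont de type fini, de codimension $l(w)$, géométriquement normaux et de Cohen--Macaulay d'après Kashiwara--Shimozono \cite{KS09}. J'en déduirais la description complète du groupe de Picard et l'existence de tous les fibrés inversibles amples de la grassmannienne affine -- point particulièrement délicat en caractéristique $2$. Armé de ces fibrés amples et de la résolution de Demazure, je prouverais que les variétés de Schubert normalisées en caractéristique $p>0$ sont scindées, compatiblement à leurs diviseurs de bord, ce qui entraînerait aussitôt la normalité, la propriété de Cohen--Macaulay et le scindage des schémas de Schubert normalisés $\widetilde{\Gr}_{\underline{\Gscr_\fbf},\leq w}$, la normalité en caractéristique nulle étant par ailleurs due à Kumar \cite{KumDem} dans le cadre de Kac--Moody.

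Il resterait enfin à démontrer que l'application de normalisation $\widetilde{\Gr}_{\underline{\Gscr_\fbf},\leq w}\rightarrow \Gr_{\underline{\Gscr_\fbf},\leq w}$ est un isomorphisme. Comme il s'agit d'un morphisme fini, birationnel et bijectif entre schémas plats sur $\ZZ$, coïncidant avec l'identité en caractéristique nulle, il suffirait de vérifier que l'application induite sur les espaces tangents est surjective en chaque point. Par le jeu des orbites, cela se ramène à engendrer topologiquement l'espace tangent à la grassmannienne affine par les contributions des sous-groupes radiciels. C'est ici que se situe le principal obstacle, qui constitue la véritable nouveauté par rapport à \cite{Fal} et \cite{PR} : dès que $\Phi_G$ est non réduit, l'algèbre de Lie de $L\underline{G}$ sur $\ZZ$ n'est pas topologiquement engendrée par celles des sous-groupes radiciels, en raison de facteurs quadratiques méchants provenant des groupes pluriels. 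Pour surmonter cette difficulté, je substituerais aux algèbres de Lie les espaces de distributions, c'est-à-dire les opérateurs différentiels supérieurs, dont l'engendrement topologique subsiste et assure la surjectivité voulue sur les espaces tangents, donc que la normalisation est un isomorphisme.
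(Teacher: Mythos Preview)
Votre plan est correct et suit essentiellement la même démarche que l'article : réduction au cas simplement connexe, construction des cycles de Schubert opposés et du groupe de Picard via les groupes de cordes, scindage des normalisées en caractéristique positive, normalité en caractéristique nulle à la Kumar, puis le passage crucial des algèbres de Lie aux distributions pour contourner l'obstruction dans le cas non réduit. La seule nuance est que l'étape finale dans l'article est un peu plus fine que \og surjectivité sur les espaces tangents en chaque point \fg{} : on montre que les algèbres de distributions des voisinages formels de l'origine sont isomorphes, on en déduit l'isomorphisme des algèbres de Lie (en observant que $\on{Lie}$ est saturé dans $\on{Dist}$), puis on construit une section de la normalisation au niveau des complétés locaux par un argument de déformation inductif le long d'une filtration à quotients plats --- mais c'est bien l'esprit de ce que vous décrivez.
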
 

Faisons quelques remarques sur l'énoncé : 

\begin{rem}
	\begin{enumerate}
		\item Au-dessus de $\ZZ[1/e]$, le théorème est dû à Pappas--Rapoport, voir \cite[th. 8.4, 9.g]{PR}, dont la preuve remonte à Faltings, voir \cite[th. 8]{Fal}, pour les groupes déployés.
		\item L'hypothèse que le groupe dérivé soit simplement connexe ne peut être pas soulevée, comme nous l'avons démontré très récemment avec Haines--Richarz, voir \cite[th. 1.1]{HLR} et cor. \ref{corollaire non normalite varietes de schubert}. En effet, les schémas $\Gr_{\underline{\Gscr_\fbf},\leq w}$ ne seront typiquement pas à fibres réduites sans cette hypothèse, cf. \cite[th. 1.5, prop. 3.4]{HLR}.
	\end{enumerate}
\end{rem}
 
 La démonstration du th. \ref{theoreme normalite des schemas de Schubert} sera faite en diverses étapes pendant tout les numéros restants du \S\ref{section grassmanniennes affines}. Outre le traitement complet de la théorie de la grosse cellule et des diviseurs/cycles de Schubert au \S\ref{subsection cordes et fibres en droites}, notre preuve au \S\ref{normalite des varietes de schubert} contient une petite sophistication technique (ou devrait-on dire complication ?), en ce que le lem. \ref{lemme engendrement algebres de lie} de Faltings--Pappas--Rapoport est faux en caractéristique résiduelle $2$ si $\Phi_G$ n'est pas réduit et doit être remplacé par son analogue différentiel supérieur, cf. lem. \ref{lemme engendrement distributions}.

\subsection{Groupes de cordes et cycles de Schubert} \label{subsection cordes et fibres en droites}

Ce numéro est consacré à mieux comprendre la structure du groupe de lacets $L\underline{\Gscr_{\fbf}}$, de son sous-groupe d'arcs $L^+\underline{\Gscr_{\fbf}}$ et de ce qu'on appellera son sous-groupe de cordes $L^-\underline{\Gscr_{\fbf}}$, voir la déf. \ref{definition groupe de cordes}. On s'en sert pour définir quelques cycles de codimension finie dans la grassmannienne affine $\Gr_{\underline{\Gscr_{\fbf}}}$, voir le th. \ref{theoreme cycles de schubert topologie et normalite}, dont les diviseurs parmi eux engendrent le groupe de Picard, cf. cor. \ref{corollaire groupe de picard}, et jouent un rôle essentiel dans la preuve du th. \ref{theoreme normalite des schemas de Schubert}, voir en particulier la prop. \ref{proposition varietes de schubert proprietes} et le lem. \ref{lemme normalite en caracteristique nulle}.

 Le groupe d'arcs $L^+\underline{\Gscr_{\fbf}}$ possède une filtration canonique exhaustive et séparée
 \begin{equation}\label{equation suite decroissante groupe d'arcs} 1 \subset \dots \subset L^{(n+1)}\underline{\Gscr_{\fbf}}\subset L^{(n)}\underline{\Gscr_{\fbf}}\subset L^{(n-1)}\underline{\Gscr_{\fbf}} \subset \dots \subset L^{(0)}\underline{\Gscr_{\fbf}}=L^+\underline{\Gscr_{\fbf}}, \end{equation} donnée par les $n$-ièmes sous-groupes de congruence, c'est-à-dire les noyaux des applications $L^+\underline{\Gscr_{\fbf}} \rightarrow L^n\underline{\Gscr_{\fbf}}:=\Res_{\ZZ[t]/(t^n)/\ZZ}\underline{\Gscr_{\fbf}}$ de réduction modulo $t^n$. Notons que les sous-quotients successifs $L^{(n)}\underline{\Gscr_{\fbf}}/ L^{(n+1)}\underline{\Gscr_{\fbf}}$ de la filtration de congruence sont isomorphes à la fibre spéciale $\underline{\Gscr_{\fbf,s}}:=\underline{\Gscr_{\fbf}}\otimes_{\ZZ[t], t\mapsto 0} \ZZ$ si $n=0$, et au fibré vectoriel de Lie de $\underline{\Gscr_{\fbf,s}}$ comme groupe additif sinon, voir soit \cite[tome I, ch. II, \S4, nº3, prop. 3.2]{DG}, soit \cite[prop. A.9]{RS20}. Par suite, $L^{(1)} \underline{\Gscr_{\textbf{f}}}$ est un schéma en groupes pro-lisse, pro-unipotent et connexe. Il ne reste qu'à entendre la fibre spéciale entière $\underline{\Gscr_{\fbf,s}}$ comme extension d'un certain quotient réductif (qui n'existe pas forcément sur les entiers a priori) et d'un radical unipotent; on s'en occupera dans la suite, voir la prop. \ref{proposition decomposition de levi}.
 
 Avant de se concentrer sur cette décomposition de Levi, il faut discuter le rôle de l'action de rotation de $\GG_m$ sur le groupe de lacets $L\underline{G}$ déduite de l'action tordue du lem. \ref{lemme action de rotation t unite}. Cela fournit par linéarisation une opération $\ZZ$-linéaire de $\GG_m$ sur $L\underline{G}$ et on vérifie facilement que l'opération induite sur son algèbre de Lie est donnée par $t^{1/e} \mapsto at^{1/e}$. Il est facile de voir que les groupes d'arcs parahoriques $L^+\underline{\Gscr_\fbf}$ sont stables par cette opération, soit en généralisant la preuve du lem. \ref{lemme action de rotation t unite} à l'anneau de polynômes $\ZZ[t]$, soit en utilisant l'espace d'arcs de la grosse cellule. Tout cela nos permet d'introduire la terminologie suivante :
 
 \begin{defn}\label{definition racines affines}
 	Les racines affines de $\underline{G}$ sont l'ensemble $\Phi^{\on{af}}_G$ des poids non nuls de $\underline{S}\times \GG_{m,\ZZ}$ dans $L\underline{G}$, où le facteur multiplicatif y opère par rotation. 
 \end{defn} 

Bien qu'il y ait un lien entre cette notion de racines affines et celle utilisée dans \cite[1.3.3, 6.2.6]{BTI}, on avertit le lecteur qu'il y a des nouvelles racines \og imaginaires \fg{}, ainsi appelées dans la théorie des algèbres de Kac--Moody, cf. \cite[\S5.2]{Kac}, provenant de l'algèbre de Lie de $L\underline{T}$ et qui restent invisibles pour le groupe d'Iwahori--Weyl $\widetilde{W}$. 

\begin{lem}\label{lemme racines affines et sous-groupes radiciels affines}
	Le système des racines affines $\Phi^{\on{af}}_G$ se décompose en une $W^{\on{af}}$-orbite simplement transitive $\Phi^{\on{ré}}_G$ formées des $a +n$ pour $a \in \Phi_G$ et $n \in \Gamma_a'$ et une partie dénombrable $\Phi^{\on{im}}_G$ de $\widetilde{W}$-points fixes $n \in \ZZ_{\neq 0}$. Pour chaque racine réelle $\alpha \in \Phi^{\on{ré}}_G$, il y a un et un seul sous-groupe fermé additif $\underline{V_\alpha}$ de $L\underline{G}$ tel que son sous-algèbre de Lie soit l'espace de poids de dimension associé à $\alpha$.
\end{lem}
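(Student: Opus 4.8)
Le plan est d'analyser $\on{Lie}\,L\underline{G}$ comme représentation graduée du tore $\underline{S}\times\GG_{m,\ZZ}$, le second facteur opérant par l'action de rotation du lem.~\ref{lemme action de rotation t unite}, puis d'exploiter cette graduation pour relever chaque espace de poids réel en un sous-groupe additif. D'abord, je décomposerais $\on{Lie}\,L\underline{G}$ selon les caractères de $\underline{S}$ au moyen de l'immersion ouverte grosse cellule du th.~\ref{theoreme d'existence et unicite des groupes attaches aux drs}, ce qui fournit $\on{Lie}\,L\underline{G}=\on{Lie}\,L\underline{T}\oplus\bigoplus_{a\in\Phi_G}\on{Lie}\,L\underline{U_a}$, la partie de $\underline{S}$-poids nul se réduisant à $\on{Lie}\,L\underline{T}$ puisque $Z_G(S)=T$.

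Ensuite, je raffinerais chaque facteur par le poids de rotation. Comme $\underline{T}=\prod_{j}\Res_{\ZZ[t^{\pm 1/d_j}]/\ZZ[t^{\pm 1}]}\GG_m$, l'espace $\on{Lie}\,L\underline{T}$ se grade en monômes $t^{m/d_j}$ : le degré nul redonne le Cartan, que l'on exclut, et les degrés non nuls fournissent la partie dénombrable $\Phi^{\on{im}}_G\subset\ZZ_{\neq 0}$. Ces racines imaginaires sont fixées par $\widetilde{W}$, étant de $\underline{S}$-poids nul — donc fixées par les réflexions, qui agissent à travers $W$ sur $X_*(S)$ en préservant la direction de rotation — tandis que le réseau des translations, provenant de $\underline{T}$ abélien, opère trivialement sur la graduation de rotation. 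Pour une racine ni divisible ni multipliable, l'identification $L\underline{U_a}(R)=R\rpot{t_a}$ grade $\on{Lie}\,L\underline{U_a}$ en droites $R\cdot t_a^{m}$, d'où les racines réelles $a+n$ avec $n$ parcourant l'ensemble de valeurs $\Gamma_a'$ de la valuation $\varphi_a$ ; pour une racine multipliable, la structure $\GG_{p,\ZZ[t_a^{\pm 1}]/\ZZ[t_{2a}^{\pm 1}]}$ sépare la coordonnée $u$, de $\underline{S}$-poids $a$, de la coordonnée $v$, de poids $2a$, et le décalage par $t_a$ de la composante $v$ engendre exactement l'échelonnage de \cite[4.2.21]{BTII} reliant $\Gamma_a'$ et $\Gamma_{2a}'$, compatible aux valuations (\ref{equation valuation pluriel unitaire}) et (\ref{equation valuation pluriel barcelonais}). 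On obtient ainsi $\Phi^{\on{ré}}_G=\{a+n:a\in\Phi_G,\ n\in\Gamma_a'\}$, stable sous $W^{\on{af}}$ via les réflexions $s_\alpha$ ; la prop.~\ref{proposition combinatoire caracteristiques differentes} l'identifie au système de racines affines standard de l'appartement $\Ascr(\underline{G},\underline{S},\ZZ\rpot{t})$, dont $W^{\on{af}}$ permute les alcôves simplement transitivement.

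Pour la seconde assertion, je construirais $\underline{V_\alpha}$ comme morceau homogène de degré fixé. Si $\alpha=a+n$ avec $a$ ni divisible ni multipliable, on pose $\underline{V_\alpha}(R)=\{x_a(c\,t_a^{m}):c\in R\}\cong\GG_{a,\ZZ}$, dont l'algèbre de Lie est la droite de poids $\alpha$. Le point névralgique est le cas multipliable, où la loi $(u,v)\cdot(u',v')=(u+u',v+v'+\sigma(u)u'-u\sigma(u'))$ interdit a priori que le lieu $\{v=0\}$ soit un sous-groupe. Mais pour deux éléments $u=c\,t_a^{m}$ et $u'=c'\,t_a^{m}$ de même degré de rotation, comme l'involution $\sigma$ de $\ZZ[t_a]/\ZZ[t_{2a}]$ vérifie $\sigma(t_a)=-t_a$, on calcule $\sigma(u)u'-u\sigma(u')=(-1)^m cc'\,t_a^{2m}-(-1)^m cc'\,t_a^{2m}=0$, de sorte que $\underline{V_\alpha}(R)=\{x_a(c\,t_a^{m},0):c\in R\}$ est un authentique sous-groupe additif d'algèbre de Lie la droite de poids $\alpha$ (les racines $2a+m$ provenant, elles, de la coordonnée centrale $v$). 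C'est précisément ici que l'analyse par l'action de rotation, en lieu et place des calculs directs, rend le cas non réduit transparent.

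Enfin, l'unicité se ramène à la caractéristique nulle : tout sous-groupe fermé additif $\underline{V}$ d'algèbre de Lie $\mathfrak{v}_\alpha$ est plat sur $\ZZ$, donc adhérence schématique de sa fibre générique, et il suffit de conclure sur $\QQ$, où $G_{\QQ(t)}$ étant réductif, le sous-groupe radiciel affine réel d'algèbre de Lie donnée est classiquement unique ; le th.~\ref{theoreme borel-tits structure} garantit de surcroît, fibre à fibre, que $\mathfrak{v}_\alpha$ — de $\underline{S}$-poids $a\neq 0$ — est de dimension $1$ et détermine un unique tel sous-groupe. Je m'attends à ce que l'obstacle principal soit le cas multipliable : d'une part l'identification correcte de l'indexation par $\Gamma_a'$ via l'échelonnage, d'autre part la vérification que les morceaux homogènes forment de vrais sous-groupes malgré les termes quadratiques, cette dernière difficulté se dissolvant grâce à l'annulation ci-dessus propre aux éléments de même degré.
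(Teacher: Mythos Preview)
Your proposal is correct and follows the same approach as the paper: compute the weights via the big-cell decomposition of $\on{Lie}\,L\underline{G}$, construct $\underline{V_\alpha}$ explicitly as $x_a(t^n\GG_{a,\ZZ})$ (or its plural analogue $x_a(t^n\GG_{a,\ZZ},0)$, resp.\ $x_{a/2}(0,t^n\GG_{a,\ZZ})$), and obtain uniqueness by reduction to characteristic zero where an additive subgroup is determined by its Lie algebra. The only notable differences are that you make explicit the vanishing of $\sigma(u)u'-u\sigma(u')$ for homogeneous elements in the multipliable case (which the paper leaves tacit), whereas for uniqueness the paper argues more directly by applying \cite[IV, \S2, cor.~4.5]{DG} to the intersection of two candidates, rather than invoking th.~\ref{theoreme borel-tits structure}, which strictly speaking concerns finite-dimensional groups rather than loop groups.
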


On ne tentera pas de parler d'une donnée radicielle infinie et on renvoie aux sources de la théorie de Kac--Moody pour ce genre de notions, voir \cite{Kac} pour plus de références. Puisque les racines imaginaires forment deux rayons radiciels de cardinal infini, on a du mal à entendre leurs sous-groupes radiciels.

\begin{proof}
	La détermination du système de racines affines est élémentaire, en appliquant ce que $L\underline{C}\rightarrow L\underline{G}$ est formellement étale, où $\underline{C}=\underline{U_-}\times \underline{T}\times \underline{U_+}$ désigne la grosse cellule, voir \cite[lem. 7.5]{HLR} pour un énoncé du même genre, donc les poids non nuls peuvent être calculés au moyen des quasi-épinglages des sous-groupes radiciels et du tore maximal.
	
	Quant à l'existence des sous-groupes radiciels réels, il suffit de poser $x_a(t^n\GG_{a,\ZZ}) \subset L\underline{G}$ pour voir que cela remplit les contraintes énoncées. Notons que quand $a$ engendre un rayon radiciel pluriel, on entend par cela $x_a(t^n\GG_{a,\ZZ},0)$, si $a \in \Phi_G^{\on{m}}$ est multipliable, et $x_{a/2}(0,t^n\GG_{a,\ZZ})$ si $a\in \Phi_G^{\on{d}}$ est divisible. Pour l'unicité, on prend l'intersection affine et unipotente de deux choix possibles. Vu que l'algèbre de Lie de cette intersection est non triviale, la fibre générique n'est autre que $V_{\alpha,0}$, voir \cite[IV, \S2, cor. 4.5]{DG}, d'où l'égalité par considération de l'adhérence.
	
	On dit enfin un mot sur l'opération du groupe d'Iwahori--Weyl $\widetilde{W}$ sur $\Phi^{\on{af}}_G$. Vu que $L\underline{N}$ opère sur $L\underline{G}$ par conjugaison en préservant $\underline{S} \subset L\underline{S}$ de façon compatible à l'action de rotation, ces automorphismes préservent les espaces propres de $\underline{S}\times \GG_{m,\ZZ}$. Elle est évidemment triviale dès que $\alpha \in \Phi_{G}^{\on{im}}$ est imaginaire, et coïncide avec l'action habituelle de $\widetilde{W}$ sur les racines réelles au-sens de Bruhat--Tits, en les identifiant aux racines affines de $\Ascr(\underline{G},\underline{S}, \ZZ\rpot{t})$.
\end{proof}

 Les parties parahoriques de $\Phi_G^{\on{af}}$ au sens de \cite[1.5.1]{BTI} sont aussi bien entendues :
 
 \begin{propn}\label{proposition parties parahoriques et facettes}
 	Les parties parahoriques de $\Phi_G^{\on{af}}$ sont mis en bijection au signe près avec les facettes $\fbf \subset \Ascr(\underline{G},\underline{S},\ZZ\rpot{t})$, au moyen de l'application qui fait correspondre à $\fbf$ l'ensemble $\Phi^{\on{af}}_{\fbf,\geq 0}$ des poids non nuls de $\underline{S}\times \GG_{m,\ZZ}$ dans $L^+\underline{\Gscr_{\fbf}}$.
 \end{propn}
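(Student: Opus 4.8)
Le plan est de calculer explicitement l'ensemble $\Phi^{\on{af}}_{\fbf,\geq 0}$, puis de le reconnaître comme une partie parahorique et d'en déduire la bijection. D'abord, je décomposerais les poids non nuls de $\underline{S}\times\GG_{m,\ZZ}$ dans $L^+\underline{\Gscr_{\fbf}}$ suivant les deux sources qui les produisent, à savoir les sous-groupes radiciels $\underline{\Uscr_{a,\fbf}}$ et le tore $\underline{\Tscr}$, en exploitant la filtration de congruence (\ref{equation suite decroissante groupe d'arcs}) dont les sous-quotients successifs s'identifient à l'algèbre de Lie de $\underline{\Gscr_{\fbf,s}}$. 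Du côté des racines réelles, la déf. \ref{definition drs modele entier groupe de tits} donne $\underline{\Uscr_{a,\fbf}}=\Res_{\ZZ[t_a]/\ZZ[t]}t^{f_{\fbf}(a)}\GG_a$ (et l'analogue pluriel pour les rayons multipliables), de sorte que les poids réels $\alpha=a+n$ figurant dans $L^+\underline{\Gscr_{\fbf}}$ soient exactement ceux avec $n\in\Gamma_a'$ et $n\geq f_{\fbf}(a)$ ; or, d'après la définition (\ref{equation valeurs de la fonction f associee a omega}) de $f_{\fbf}$, la condition $n\geq f_{\fbf}(a)$ équivaut à $a(x)+n\geq 0$ pour tout $x\in\fbf$, c'est-à-dire à la positivité de la racine affine $\alpha$ sur $\fbf$. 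Du côté imaginaire, comme le tore $\underline{\Tscr}$ ne dépend pas de $\fbf$ et que $\on{Lie}(L^+\underline{\Tscr})$ contribue précisément les poids de rotation strictement positifs, la partie imaginaire de $\Phi^{\on{af}}_{\fbf,\geq 0}$ est le rayon positif, indépendamment de $\fbf$. On aboutit ainsi à
\begin{equation}
\Phi^{\on{af}}_{\fbf,\geq 0}=\{\alpha\in\Phi^{\on{ré}}_G:\ \alpha|_{\fbf}\geq 0\}\cup\{n\in\Phi^{\on{im}}_G:\ n>0\}.
\end{equation}

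Ensuite, je vérifierais que ce sous-ensemble est une partie parahorique au sens de \cite[1.5.1]{BTI}. La clôture sous l'addition résulte de ce que $L^+\underline{\Gscr_{\fbf}}$ est un groupe : si $\alpha,\beta$ sont des poids et $\alpha+\beta$ une racine affine, les relations de commutation, déjà contenues dans les axiomes (DRS 1) et (DRS 2) de la donnée radicielle schématique, montrent que $\alpha+\beta$ est encore un poids. La propriété $\Phi^{\on{af}}_{\fbf,\geq 0}\cup(-\Phi^{\on{af}}_{\fbf,\geq 0})=\Phi^{\on{af}}_G$ est claire pour les racines réelles, dont le signe est constant sur la facette $\fbf$, et pour les racines imaginaires, puisque $n$ ou $-n$ est positif. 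Quant à la partie symétrique $\Phi^{\on{af}}_{\fbf,\geq 0}\cap(-\Phi^{\on{af}}_{\fbf,\geq 0})$, elle se réduit du côté réel aux racines s'annulant sur $\fbf$, soit le système résiduel attaché aux murs passant par $\fbf$ comme en (\ref{equation sous-systeme residuel des modeles immobiliers}), et est vide du côté imaginaire. Le seul point qui demande un peu de soin est que le formalisme classique de \cite[1.5.1]{BTI} ignore les racines imaginaires ; mais celles-ci étant fixées par $\widetilde{W}$ et provenant du tore, elles forment un rayon \og central \fg{} qui n'altère pas les axiomes de manière essentielle.

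Enfin, je montrerais la bijectivité au signe près. Pour la surjectivité, toute partie parahorique $P$ a, quitte à la remplacer par $-P$ via l'involution globale $\alpha\mapsto-\alpha$ (qui échange les rôles de $L^+$ et du groupe de cordes $L^-$), une partie imaginaire égale au rayon positif ; sa partie réelle $P^{\on{ré}}$ est alors une partie parahorique du système affine réduit classique $\Phi^{\on{ré}}_G$, laquelle correspond à une facette $\fbf$ d'après la bijection usuelle de Bruhat--Tits, cf. \cite[1.5.1]{BTI} et \cite[4.6.26]{BTII}. Pour l'injectivité, on récupère $\fbf$ à partir de $P$ comme l'intérieur relatif de l'intersection des demi-espaces $\{x:\ \alpha(x)\geq 0\}$ pour $\alpha$ parcourant $P^{\on{ré}}$, ce qui sépare deux facettes distinctes. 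La difficulté principale ne sera donc pas de nature combinatoire mais résidera dans l'étape de calcul : identifier correctement, au moyen de la construction par lois birationnelles et de la filtration de congruence, les poids de $L^+\underline{\Gscr_{\fbf}}$ avec les racines affines positives sur $\fbf$, et contrôler soigneusement le comportement des racines imaginaires qui échappent au cadre classique. L'indépendance de la caractéristique établie à la prop. \ref{proposition combinatoire caracteristiques differentes} permet de mener tout ce raisonnement sur un unique système de racines affines, valable pour tout $p\in\PP\cup\{0\}$.
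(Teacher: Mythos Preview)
Your proof is correct and proceeds by a more direct route than the paper's. The paper argues inductively: it first identifies $\Phi^{\on{af}}_{\abf,\geq 0}$ for the dominant alcove $\abf$ adherent to the origin as an explicit symmetric partition of $\Phi_G^{\on{af}}$; then it handles the minimal parahoric cases by examining, for each wall $\fbf_i$ of $\abf$, the unique extra real root appearing in $L^+\underline{\Gscr_{\fbf_i}}$ but not in $L^+\underline{\Gscr_{\abf}}$; and finally it invokes the fact that both parahoric subgroups and parahoric parts are generated in a suitable sense by the minimal ones they contain. You instead compute $\Phi^{\on{af}}_{\fbf,\geq 0}$ directly for an arbitrary facet $\fbf$ via the explicit models $\underline{\Uscr_{a,\fbf}}$ of the déf.~\ref{definition drs modele entier groupe de tits} and the function $f_{\fbf}$, and then reduce bijectivity to the classical correspondence of \cite{BTI} between parahoric parts of the real affine root system and facets. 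Your argument is more self-contained and makes the dependence on $\fbf$ transparent from the outset; the paper's is terser and leans on the simplicial structure of the apartment. Your caution about the imaginary roots falling outside the literal scope of \cite[1.5.1]{BTI} is well placed: the paper's own proof does not dwell on this either, and both approaches implicitly rely on the observation that the imaginary ray is forced (up to global sign) by closure under addition and plays no role in separating facets.
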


La partie parahorique opposée de $\Phi^{\on{af}}_{\fbf,\geq 0}$ sera notée $\Phi^{\on{af}}_{\fbf,\leq 0}$ et l'on voit que leur intersection $\Phi^{\on{af}}_{\fbf,0}$ s'identifie naturellement, par restriction au sous-tore $\underline{S}$, au sous-système de racines résiduel $\Phi_\fbf \subset \Phi_G$ associé à $\fbf$ par (\ref{equation sous-systeme residuel des modeles immobiliers}).

\begin{proof}
	On peut considérer la partition symétrique de $\Phi_G^{\on{af}}$ dont les racines positives sont celles de la forme $n \in \ZZ_{>0}$, ou bien $a+n$ avec $a \in \Phi_G^+$ et $n \in \Gamma_{a}\cap \RR_{\geq 0}$ ou $a \in \Phi_G^-$ et $n \in \Gamma_{a}\cap \RR_{>0}$. Mais cela n'est autre que l'ensemble des poids non nuls de $L^+\underline{\Gscr_{\abf}}$, où $\abf$ désigne l'alcôve dominante adhérente à l'origine. Ceci étant, on s'en sert aussi pour récupérer les racines négatives simples de $\Phi_G^{\on{ré}}$ : pour chaque cloison $\fbf_i$ dans l'adhérence de $\abf$, on considère la racine réelle qui est poids de $L^+\underline{\Gscr_{\fbf_i}}$ mais pas de $L^+\underline{\Gscr_{\abf}}$, ce qui montre l'assertion dans le cas parahorique minimal. En général, les sous-groupes ainsi que les parties parahoriques sont engendrées à un sens convenable par les minimaux qui y sont contenus et la proposition est achevée. 
\end{proof}

Nos sous-groupes radiciels affines nous permettent de choisir canoniquement les facteurs de Levi promis au début de ce numéro.

  \begin{propn}\label{proposition decomposition de levi}
 	Le groupe d'arcs $L^+\underline{\Gscr_{\fbf}}$ admet un et un seul sous-$\ZZ$-groupe fermé et distingué, pro-unipotent, pro-lisse et connexe maximal $L^{++}\underline{\Gscr_{\fbf}}$, ainsi qu'un unique sous-$\ZZ$-groupe réductif $\underline{M_\fbf}$ contenant $\underline{S}$ et invariant par l'action de rotation tel que $\underline{M_\fbf} \ltimes L^{++}\underline{\Gscr_{\fbf}} \rightarrow L^{+}\underline{\Gscr_{\fbf}}$ soit un isomorphisme.
 \end{propn}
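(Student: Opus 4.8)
Le plan est de ramener l'énoncé à une décomposition de Levi \emph{entière} de la fibre spéciale $\underline{\Gscr_{\fbf,s}}:=\underline{\Gscr_{\fbf}}\otimes_{\ZZ[t],\,t\mapsto 0}\ZZ$, puis de la relever à travers la filtration de congruence (\ref{equation suite decroissante groupe d'arcs}). D'abord, je construirais le facteur réductif $\underline{M_\fbf}$ comme le $\ZZ$-groupe attaché à une donnée radicielle schématique de niveau nul : je considère le tore déployé $\underline{S}$ et, pour chaque racine résiduelle $a \in \Phi_\fbf$ (c'est-à-dire $f_\fbf(a)+f_\fbf(-a)=0$, cf. (\ref{equation sous-systeme residuel des modeles immobiliers})), le sous-groupe radiciel affine additif $\underline{V_a}$ de poids de rotation nul fourni par le lem. \ref{lemme racines affines et sous-groupes radiciels affines}, obtenu comme terme constant en $t_a$ du modèle radiciel $\underline{\Uscr_{a,\fbf}}$, cf. déf. \ref{definition drs modele entier groupe de tits}. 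Les formules d'intégralité du lem. \ref{lemme coordonnees de la modification de Tits} montrent que $(\underline{S},\underline{V_a})_{a\in\Phi_\fbf}$ vérifie les axiomes (DRS 0--3) de la déf. \ref{definition donnees radicielles schematiques}, de sorte que le th. \ref{theoreme d'existence et unicite des groupes attaches aux drs} en produit un $\ZZ$-groupe lisse, affine et connexe $\underline{M_\fbf}$, muni d'une immersion fermée canonique $\underline{M_\fbf}\rightarrow L^+\underline{\Gscr_{\fbf}}$ compatible aux grosses cellules. Comme le système résiduel réduit $\Phi_\fbf$ est fini et comme $\underline{S}$ est un tore maximal déployé de $\underline{M_\fbf}$ dans chaque fibre, ce groupe est fibre à fibre réductif, donc réductif ; son invariance par rotation est immédiate puisque les $\underline{V_a}$ sont de poids de rotation nul.

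Ensuite, je poserais $L^{++}\underline{\Gscr_{\fbf}}$ comme l'image réciproque, par la réduction $L^+\underline{\Gscr_{\fbf}}\twoheadrightarrow \underline{\Gscr_{\fbf,s}}$ modulo $t$, du radical unipotent $\underline{U_\fbf}$ complémentaire de $\underline{M_\fbf}$ dans $\underline{\Gscr_{\fbf,s}}$. Concrètement, $\underline{U_\fbf}$ est engendré par les parties divisibles par $t_a$ des $\underline{\Uscr_{a,\fbf}}$ pour $a\in\Phi_\fbf$ et par les $\underline{\Uscr_{a,\fbf}}$ tout entiers lorsque $f_\fbf(a)>0$, toutes unipotentes par construction. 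Le groupe $L^{++}\underline{\Gscr_{\fbf}}$ est alors une extension de $\underline{U_\fbf}$ par le noyau de congruence $L^{(1)}\underline{\Gscr_{\fbf}}$, lequel est pro-unipotent, pro-lisse et connexe d'après (\ref{equation suite decroissante groupe d'arcs}) ; il est donc lui-même pro-unipotent, pro-lisse et connexe. L'isomorphisme $\underline{M_\fbf}\ltimes L^{++}\underline{\Gscr_{\fbf}}\xrightarrow{\sim}L^+\underline{\Gscr_{\fbf}}$ résultera de la structure de grosse cellule : $\underline{M_\fbf}$ se projette isomorphiquement sur le quotient réductif de $\underline{\Gscr_{\fbf,s}}$ et rencontre trivialement $L^{(1)}\underline{\Gscr_{\fbf}}$.

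Pour la maximalité et l'unicité, j'observerais que tout sous-groupe fermé distingué pro-unipotent connexe de $L^+\underline{\Gscr_{\fbf}}$ contenant strictement $L^{++}\underline{\Gscr_{\fbf}}$ s'envoie sur un sous-groupe unipotent distingué non trivial de $\underline{M_\fbf}$, ce qui contredit la réductivité ; d'où la maximalité et l'unicité de $L^{++}\underline{\Gscr_{\fbf}}$. L'unicité de $\underline{M_\fbf}$ parmi les complémentaires réductifs invariants par rotation et contenant $\underline{S}$ découlera de la conjugaison des facteurs de Levi dans chaque fibre, l'invariance par rotation ôtant l'ambiguïté restante.

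L'obstacle principal sera de vérifier que le groupe $\underline{M_\fbf}$ ainsi construit reste réductif de système de racines $\Phi_\fbf$ dans \emph{toutes} les caractéristiques, y compris les mauvais premiers $p\mid e$ où la fibre générique $G_{\eta_p}$ n'est que pseudo-réductive, exotique ou barcelonaise — autrement dit, que le radical $L^{++}\underline{\Gscr_{\fbf}}$ est $\ZZ$-plat à fibres connexes de dimension constante, sans effondrement des $\underline{V_a}$ ni des tores résiduels en petite caractéristique. C'est ici qu'interviendront de façon cruciale la description entière explicite des sous-groupes radiciels issue du quasi-système de Tits, cf. lem. \ref{lemme coordonnees de la modification de Tits}, ainsi que l'indépendance du système de racines résiduel $\Phi_\fbf$ vis-à-vis de $p$, cf. prop. \ref{proposition combinatoire caracteristiques differentes}.
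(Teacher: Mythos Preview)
Your strategy for $\underline{M_\fbf}$ via a schematic root datum $(\underline{S},\underline{V_a})_{a\in\Phi_\fbf}$ is the paper's approach, but your justification is incomplete. The cited lem.~\ref{lemme coordonnees de la modification de Tits} concerns the integrality of the $x_a^{\on{T}}$ in terms of the $y_\alpha^{\on{T}}$; it does not establish the DRS axioms for the residual datum. The computational heart of the paper's proof is precisely the explicit verification that the exchange and commutator maps of prop.~\ref{proposition drs groupe de tits}, when restricted to the pieces $\underline{V_\alpha}=x_a(t^{f_\fbf(a)}\GG_a)$ for $a\in\Phi_\fbf$, reproduce the birational group law of a split pinned $\ZZ$-group with Cartan $\underline{S}$ and root system $\Phi_\fbf$. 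For instance, when $a,b\in\Phi_\fbf\cap\Phi_G^{<}$ are short with $a+b$ long, one must check that the commutator yields the Chevalley constant $2$ or $0$ according as $a+b\in\Phi_\fbf$ or not, using that the trace of $t^{1/2}\ZZ[t]$ vanishes; the non-reduced cases require the modified formulas (\ref{equation commutateur pluriel un})--(\ref{equation commutateur pluriel trois}). Without this case-by-case check you have not identified the birational law, so th.~\ref{theoreme d'existence et unicite des groupes attaches aux drs} does not yet produce a reductive $\underline{M_\fbf}$. (Incidentally, the $\underline{V_\alpha}$ have rotation weight $f_\fbf(a)$, not zero; they are rotation-\emph{stable}, which is all that is needed.)

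Your construction of $L^{++}\underline{\Gscr_\fbf}$ has two defects. First, the explicit description of $\underline{U_\fbf}$ mentions only radicial subgroups, but when $G$ is genuinely twisted the special fibre $\underline{\Tscr_s}=\prod_j\Res_{\ZZ[x]/(x^{d_j})/\ZZ}\GG_m$ is not a torus; its unipotent part must also lie in $\underline{U_\fbf}$, since your $\underline{M_\fbf}$ contains only $\underline{S}$ and not the full Cartan. Second, and more seriously, defining $\underline{U_\fbf}$ as ``the complement of $\underline{M_\fbf}$ in $\underline{\Gscr_{\fbf,s}}$'' presupposes an integral Levi decomposition of the special fibre, which is essentially the statement you are proving. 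The paper sidesteps this circularity by building $L^{++}\underline{\Gscr_\fbf}$ directly as the closed subscheme $L^+\underline{\Uscr_{-,\Omega}}\times L^{++}\underline{\Tscr}\times L^+\underline{\Uscr_{+,\Omega}}$, where $\Omega=\on{tour}(\fbf)$ is the entourage of $\fbf$ (so that $L^+\underline{\Gscr_\Omega}$ is pro-soluble with the correct radicial content), and $L^{++}\underline{\Tscr}$ is constructed by hand as the first congruence subgroup with respect to the variables $t^{1/d_j}$ rather than $t$; one then checks that the DRS maps preserve this product.
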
 

Par suite, nous dénommons $L^{++}\underline{\Gscr_{\fbf}}$ le radical pro-unipotent, $\underline{M_\fbf}$ le facteur de Levi et l'écriture suivant
\begin{equation}\label{equation decomposition de levi arcs}\underline{M_\fbf} \ltimes L^{++}\underline{\Gscr_{\fbf}} \rightarrow L^{+}\underline{\Gscr_{\fbf}}\end{equation}
la décomposition de Levi de $L^{+}\underline{\Gscr_{\fbf}}$.
\begin{proof}
Si l'on géométrise \cite[prop. 4.6.10]{BTII} et/ou \cite[lem. 4.6]{LouBT}, on arrive à la conclusion qu'il y a une partie non vide et bornée $\Omega:=\on{tour}(\fbf)$, appelée l'entourage de $\fbf$, voir aussi \cite[IV, lem. 3.5]{LouDiss}, telle que le sous-groupe $L^+\underline{\Gscr_{\Omega}} \subset L^+\underline{\Gscr_\fbf}$ est pro-soluble et dont les sous-groupes radiciels sont ceux de ce qui s'avérera le radical pro-unipotent $L^{++}\underline{\Gscr_\fbf}$ ; autrement dit, la seule différence entre $L^+\underline{\Gscr_{\Omega}}$ et le groupe $L^{++}\underline{\Gscr_\fbf}$ est le sous-tore déployé maximal du premier. Notons que la pro-solubilité de ce groupe entraîne qu'on ait un isomorphisme de schémas $L^+\underline{\Cscr_{\Omega}}\simeq L^+\underline{\Gscr_{\Omega}}$.

En particulier, on s'est presque ramené a construire le radical pro-unipotent $L^{++}\underline{\Tscr}$ au cas où $G=T$ est un tore. Dans cet endroit, on utilise la décomposition (\ref{equation modele de neron polynomes}) comme produit de restrictions des scalaires de groupes multiplicatifs et l'on observe que
\begin{equation}L^+_t\Res_{\ZZ[t^{1/d}]/\ZZ[t]} \GG_{m,,\ZZ[t^{1/d}]} =L^+_{t^{1/d}}\GG_{m,\ZZ[t^{1/d}]} ;\end{equation}
il s'ensuit que le premier groupe de congruence par rapport à la variable $t^{1/d}$ en fournit le radical pro-unipotent cherché. 

En général, on pose 
\begin{equation}L^{++}\underline{\Gscr_{\fbf}}:= L^+\underline{\Uscr_{-,\Omega}}\times L^{++}\underline{\Tscr} \times L^+\underline{\Uscr_{+,\Omega}} \end{equation}
et l'on vérifie que ce sous-schéma fermé de $L^+\underline{\Gscr_{ \fbf}}$ possède une structure naturelle de groupe en vérifiant que les applications conjugaison, commutateur et échange des axiomes de la déf. \ref{definition donnees radicielles schematiques} y sont bien définies. Son unicité est claire, vu que les spécialisés $L^{++}\Gscr_{\fbf,p}$ en caractéristique $p \in \PP \cup \{0\}$ de ce sous-schéma en groupes redonnent le radical pro-unipotent habituel selon \cite[prop. 4.6.10]{BTII} et \cite[lem. 4.6]{LouBT}.

Enfin, on veut construire le facteur de Levi $\underline{M_\fbf}$. Son unicité découle de ce que son algèbre de Lie est le seul complément $\ZZ$-linéaire invariant par $\on{Lie}(\underline{S}\times \GG_{m,\ZZ})$ du radical pro-nilpotent de $\on{Lie}L^+\underline{\Gscr_\fbf}$, donc ses sous-groupes radiciels $\underline{V_\alpha}$ relativement à $S$ sont forcément de la forme $x_a(t^{f_\fbf(a)}\GG_a)$ pour toute racine résiduelle $a \in \Phi_\fbf$, cf. (\ref{equation sous-systeme residuel des modeles immobiliers}) et prop. \ref{proposition parties parahoriques et facettes}. Montrons d'abord que la grosse cellule du sous-groupe $\underline{M_\fbf} \subset L^+\underline{\Gscr_\fbf}$ que nous voulons construire, constituée par le produit des sous-groupes radiciels $\underline{V_\alpha}$ et de $\underline{S}$, en hérite une loi birationnelle. En effet, il s'avérera par les calculs explicites des applications des axiomes (DRS 1-3) que cette loi birationelle coïncide avec celle du groupe épinglé à tore maximal $\underline{S}$ et système de racines $\Phi_\fbf$ au moyen d'un système de Chevalley convenable.

Étudions d'abord les morphismes d'échange des facteurs. On peut voir d'après (\ref{equation diviseur sl2}) et (\ref{equation diviseur su3}) que le lieu ouvert distingué de définition de l'application échange est donné par $1-rs$. Puis (\ref{equation echange sl2}) et (\ref{equation echange su3}) entraînent que $\beta_a$ s'écrive comme
\begin{equation}\beta_a(x_a(t^{f_\fbf(a)}r),x_{-a}(t^{-f_\fbf(a)}s))=(x_{-a}\Big(\frac{t^{-f_\fbf(a)}s}{1-rs}\Big),a^{\vee}(1-rs),x_a\Big(\frac{t^{f_\fbf(a)}r}{1-rs}\Big)),\end{equation}
ce qui coïncide avec l'application voulue. Cela est légèrement plus compliqué lorsque $a \in \Phi_\fbf$ est une racine multipliable ou divisible dans $\Phi_G$, mais les écritures se simplifient nettement et il faut juste observer que $(2a)^\vee=a^\vee/2$.

Finalement, nous traitons l'application commutateur, voir la règle (DR2) de la déf. \ref{definition donnees radicielles schematiques}. Pour cela on peut supposer $G^{\on{ad}}$ absolument simple et non déployé, les morphismes les plus intéressants parmi ceux écrits dans \cite[A.6]{BTII} étant ceux dont la longueur des racines impliquées n'est pas constante. On suppose premièrement que le système de racines $\Phi_G$ est réduit et qu'il possède une arête double. L'éventualité la plus intéressante est lorsque $a,b \in \Phi_{\fbf}\cap \Phi_G^<$ sont racines courtes dont la somme est longue, cf. \cite[A.6 b)]{BTII}. Alors, on obtient
\begin{equation}[x_a(t^{f_\fbf(a)}r),x_b(t^{f_\fbf(b)}s)]\sim\begin{cases}
x_{a+b}(2t^{f_\fbf(a+b)}rs) \text{, si } a+b\in\Phi_\fbf\\ 0 \text{, sinon,}\end{cases} \end{equation}
car l'ensemble des valeurs de la valuation $\varphi_{a+b}$ est égal à $\ZZ$ et la trace de $t^{1/2}\ZZ[t]$ s'annule.
Alors, l'application commutateur est vraiment la cherchée (au signe près), d'après la règle de Chevalley, voir \cite[exp. XXIII, cor. 6.5]{SGA3}. Si nous supposons ensuite que $\Phi_G$ est réduit et qu'il possède une arête triple, alors nous nous concentrons sur le cas où $a,b \in \Phi_{\fbf}\cap \Phi_G^<$ sont courtes, $a+b \in \Phi_G^<$ est courte (automatiquement dans $\Phi_\fbf$) et $2a+b, a+2b \in \Phi_G^>$ sont longues, cf. \cite[A.6 d) et e)]{BTII}. On en déduit les formules : 
\begin{equation}[x_a(t^{f_\fbf(a)}r),x_b(t^{f_\fbf(b)}s)]\sim\begin{cases}
 \prod_{p,q} x_{c}( (p+q)t^{f_\fbf(c)}rs)\text{, si } 2a+b\in\Phi_\fbf\\x_{a+b}( t^{f_\fbf(a+b)}rs)  \text{, sinon,}\end{cases} \end{equation}
où $c=pa+qb$ décrit l'ensemble $]a,b[$. Le premier cas correspond à l'un sous-système de racines résiduel $\Phi_\fbf$ de type $G_2$ et le deuxième à l'un de type $A_2$. 

Finalement, si $\Phi_G$ est non réduit, on peut procéder de la même manière. Par exemple, si $a,b\in \Phi_\fbf \cap \Phi_G^{\on{nd,nm}}$ et $a+b \in \Phi_G^{\on{d}}$ est divisible, alors le commutateur s'écrit, cf. (\ref{equation commutateur pluriel un}), comme l'anti-trace de $rs$, ce qui s'annule si et seulement si $a+b \notin \Phi_\fbf$, et sinon a constante de Chevalley égale à $2$, comme prédit par la règle de Chevalley, cf. \cite[exp. XXIII, cor. 6.5]{SGA3} ; si $a,b\in \Phi_\fbf \cap \Phi_G^{\on{m}}$ et $a+b \in \Phi_G^{\on{nd,nm}}$, alors la constante $2$ qui est parue dans l'écriture due à Tits, cf. (\ref{equation commutateur pluriel deux}), est aussi correcte, grâce à la règle de Chevalley. En ce qui concerne le cas où $a \in \Phi_\fbf\cap \Phi_G^{\on{m}}$, $b \in \Phi_\fbf \cap \Phi_G^{\on{nd,nm}} $ et $a+b,2a+2b, 2a+b \in \Phi_G$, cf. (\ref{equation commutateur pluriel trois}), on en laisse le soin au lecteur, vu que les écritures sont assez simples.

Les paragraphes ci-dessus fournissent par fonctorialité des lois birationnelles une application $\underline{M_{\fbf}} \rightarrow  L^{+}\underline{\Gscr_\fbf}$ et ils montrent aussi que le faisceau quotient $L^+\underline{\Gscr_\fbf}/L^{++}\underline{\Gscr_\fbf}$ pour la topologie plate, lequel est représentable par un $\ZZ$-groupe lisse, affine et connexe d'après un théorème d'Anantharaman, cf. \cite[th. 4.A]{Anan}, s'identifie à $\underline{M_{\fbf}}$ au moyen du morphisme composé. Ceci n'est plus que la décomposition de Levi (\ref{equation decomposition de levi arcs}) qu'on avait cherchée.
\end{proof}

Voici une conséquence pertinente de la prop. précédente :
 
\begin{cor}\label{corollaire projection triviale ouvert}
	Soit $\abf$ une alcôve adhérente à $\fbf$. Alors, $L^+\underline{\Gscr_\fbf}/L^+\underline{\Gscr_\abf}$ est représentable par un schéma projectif lisse et la projection $L^+\underline{\Gscr_\fbf}\rightarrow L^+\underline{\Gscr_\fbf}/L^+\underline{\Gscr_\abf}$ est un $L^+\underline{\Gscr_\abf}$-torseur localement trivial pour la topologie ouverte.
\end{cor}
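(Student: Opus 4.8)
Le plan est de ramener ce quotient à une variété de drapeaux du facteur de Levi $\underline{M_\fbf}$ fourni par la décomposition de Levi (\ref{equation decomposition de levi arcs}) de la prop. \ref{proposition decomposition de levi}. D'abord, on observe que l'hypothèse $\fbf \subset \overline{\abf}$ entraîne l'inclusion $L^+\underline{\Gscr_\abf} \subseteq L^+\underline{\Gscr_\fbf}$ des groupes d'arcs, ainsi que celle du radical pro-unipotent du plus grand parahorique dans le sous-groupe d'Iwahori, soit $L^{++}\underline{\Gscr_\fbf} \subseteq L^+\underline{\Gscr_\abf}$. Ceci se lit sur les racines affines : la partie parahorique $\Phi^{\on{af}}_{\fbf,\geq 0}$ est contenue dans $\Phi^{\on{af}}_{\abf,\geq 0}$, cf. prop. \ref{proposition parties parahoriques et facettes}, et les racines réelles du radical pro-unipotent $L^{++}\underline{\Gscr_\fbf}$ sont précisément celles de $\Phi^{\on{af}}_{\fbf,\geq 0}$ dont la restriction à $\underline{S}$ n'appartient pas au système résiduel $\Phi_\fbf$, lesquelles demeurent dans $\Phi^{\on{af}}_{\abf,\geq 0}$.

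Ensuite, puisque $L^{++}\underline{\Gscr_\fbf}$ est distingué dans $L^+\underline{\Gscr_\fbf}$ et contenu dans $L^+\underline{\Gscr_\abf}$, on passe au quotient par ce radical. La décomposition de Levi identifie $L^+\underline{\Gscr_\fbf}/L^{++}\underline{\Gscr_\fbf}$ à $\underline{M_\fbf}$ ; quant à l'image de $L^+\underline{\Gscr_\abf}$, on vérifie par unicité de l'écriture $g=m\cdot u$ qu'elle coïncide avec $\underline{B_\fbf}:=L^+\underline{\Gscr_\abf}\cap \underline{M_\fbf}$, que l'on reconnaît comme le sous-groupe de Borel de $\underline{M_\fbf}$ associé au système de racines positives que l'alcôve $\abf$ découpe dans le système résiduel $\Phi_\fbf$. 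Il en résulte un isomorphisme
\begin{equation}
L^+\underline{\Gscr_\fbf}/L^+\underline{\Gscr_\abf} \simeq \underline{M_\fbf}/\underline{B_\fbf},
\end{equation}
c'est-à-dire une variété de drapeaux du $\ZZ$-groupe réductif $\underline{M_\fbf}$, d'où la représentabilité par un schéma projectif et lisse sur $\ZZ$.

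Enfin, pour la trivialité locale pour la topologie ouverte, on exploite la décomposition de Bruhat de $\underline{M_\fbf}$ : en notant $\underline{U^-_\fbf}$ le radical unipotent du Borel opposé, la grosse cellule $\underline{U^-_\fbf}\cdot \underline{B_\fbf}/\underline{B_\fbf}$ est un ouvert au-dessus duquel le $\underline{B_\fbf}$-torseur $\underline{M_\fbf}\rightarrow \underline{M_\fbf}/\underline{B_\fbf}$ se trivialise, et ses translatés par le groupe de Weyl de $\Phi_\fbf$ recouvrent toute la base. Au-dessus d'un tel ouvert $V$, une section $V\rightarrow \underline{M_\fbf}$ se relève, via le scindage $\underline{M_\fbf}\hookrightarrow L^+\underline{\Gscr_\fbf}$ de (\ref{equation decomposition de levi arcs}), en une section de $L^+\underline{\Gscr_\fbf}\rightarrow L^+\underline{\Gscr_\fbf}/L^+\underline{\Gscr_\abf}$ ; comme le morphisme $L^+\underline{\Gscr_\fbf}\rightarrow \underline{M_\fbf}$ est lui-même un $L^{++}\underline{\Gscr_\fbf}$-torseur trivial, on en déduit la trivialité locale du $L^+\underline{\Gscr_\abf}$-torseur annoncé.

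La difficulté principale résidera dans l'étape intermédiaire, où il faut s'assurer que l'image de l'Iwahori dans le facteur de Levi est exactement un Borel --- et non un parabolique strictement plus grand --- ce qui repose sur l'identification combinatoire précise, via la prop. \ref{proposition parties parahoriques et facettes}, entre les parties parahoriques adhérentes et les systèmes de racines positives de $\Phi_\fbf$ ; la représentabilité du quotient de groupes pro-lisses par un schéma de dimension finie s'obtient alors de l'isomorphisme ci-dessus, en rappelant que $\underline{M_\fbf}$ provient du théorème d'Anantharaman invoqué dans la preuve de la prop. \ref{proposition decomposition de levi}.
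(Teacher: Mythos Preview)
Your proof is correct and follows essentially the same approach as the paper's: both identify $L^+\underline{\Gscr_\fbf}/L^+\underline{\Gscr_\abf}$ with the flag variety $\underline{M_\fbf}/\underline{B_{\abf,\fbf}}$ via the Levi decomposition of prop.~\ref{proposition decomposition de levi}, and then deduce Zariski-local triviality from that of $\underline{M_\fbf}\to \underline{M_\fbf}/\underline{B_{\abf,\fbf}}$ together with the splitting $\underline{M_\fbf}\hookrightarrow L^+\underline{\Gscr_\fbf}$. The only difference is packaging: the paper cites \cite[th.~4.6.33]{BTII} and \cite[th.~4.7]{LouBT} for the identification of $L^+\underline{\Gscr_\abf}$ as the preimage of a Borel, and \cite[exp.~XXVI, cor.~5.9]{SGA3} for Zariski-local sections of $\underline{M_\fbf}\to \underline{M_\fbf}/\underline{B_{\abf,\fbf}}$, whereas you unpack these via the root combinatorics of prop.~\ref{proposition parties parahoriques et facettes} and the big-cell argument, respectively.
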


\begin{proof}
La première assertion résulte de \cite[th. 4.6.33]{BTII} et \cite[th. 4.7]{LouBT}, lequel fournit un sous-groupe borélien $\underline{B_{\abf,\fbf}}$ de $\underline{M_\fbf}$ contenant le tore maximal fixé $\underline{S}$ tel que l'image réciproque de $\underline{B_{\abf,\fbf}}$ dans $L^+\underline{\Gscr_\fbf}$ coïncide avec $L^+\underline{\Gscr_\abf}$. En particulier, le faisceau quotient $L^+\underline{\Gscr_\fbf}/L^+\underline{\Gscr_{\abf}}$ pour la topologie étale est représentable par le $\ZZ$-schéma projectif et lisse $\underline{M_{\fbf}}/\underline{B_{\abf,\fbf}}$ (égal à $\PP^1_\ZZ$ si $\fbf$ est un cloison) et le morphisme quotient $L^+\underline{\Gscr_{\fbf}} \rightarrow L^+\underline{\Gscr_{\fbf}}/L^+\underline{\Gscr_{\abf}}$ possède des sections localement pour la topologie ouverte, car il en est de même de la projection vers $\underline{M_{\fbf}}$ et de l'application $\underline{M_{\fbf}}\rightarrow \underline{M_{\fbf}}/\underline{B_{\abf,\fbf}}$, cf. \cite[exp. XXVI, cor. 5.9]{SGA3}. 
\end{proof}

Dans la prop. \ref{proposition decomposition de levi}, on a montré l'existence du radical pro-unipotent $ L^{++}\underline{\Gscr_{\fbf}}$ et maintenant on va voir qu'il s'identifie au produit direct de ses sous-groupes radiciels $\underline{V_{\alpha}}$, cf. lem. \ref{lemme racines affines et sous-groupes radiciels affines}, pris correctement dans la catégorie des pro-groupes.

\begin{propn}\label{proposition engendrement produit direct sous-groupes radiciels}
	Soit $I$ une partie finie de racines réelles dans $\Phi_{\fbf,>0}^{\on{af}}$ à complément saturé au sens de \cite[déf. C.2.25]{CGP}. Alors, le radical pro-unipotent $L^{++}\underline{\Gscr_\fbf}$ admet un unique sous-groupe fermé et pro-lisse $L^{++}_I\underline{\Gscr_\fbf}$ stable sous $\underline{S}\times \GG_{m,\ZZ}$ tel que les applications produit
	\begin{equation}\label{equation decomposition produit racines affines groupe d'arcs}\prod_{\alpha \in I} \underline{V_\alpha} \times L^{++}_I\underline{\Gscr_\fbf}   \rightarrow L^{++}\underline{\Gscr_\fbf}\end{equation}
	soient des isomorphismes, quel que soit l'ordre mis sur $I$. En particulier, si l'on pose $I_w=\Phi_{\abf,>0}^{\on{ré}} \cap w\Phi_{\fbf,\leq 0}^{\on{ré}}$, alors $L^{++}_{w}\underline{\Gscr_\fbf} =L^{++}\underline{\Gscr_\abf} \cap n_wL^{++}\underline{\Gscr_\fbf}n_w^{-1}$ pour tout représentant $n_w\in LN(\ZZ)$ de $w$.
\end{propn}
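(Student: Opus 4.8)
The plan is to mirror, at the pro-unipotent level, the inductive construction of $\Uscr_+$ carried out in the proof of Theorem \ref{theoreme d'existence et unicite des groupes attaches aux drs}, replacing the finite positive system $\Phi_G^{\on{nd},+}$ by the positive affine real roots $\Phi^{\on{ré}}_{\fbf,>0}:=\Phi^{\on{ré}}_G\cap\Phi^{\on{af}}_{\fbf,\geq 0}$ and the role of (DRS 2) by the affine commutator relations among the root subgroups $\underline{V_\alpha}$ of Lemma \ref{lemme racines affines et sous-groupes radiciels affines}. Set $J:=\Phi^{\on{af}}_{\fbf,>0}\setminus I$, which is saturated by hypothesis. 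First I would record the weight decomposition of $\on{Lie}L^{++}\underline{\Gscr_\fbf}$ under $\underline{S}\times\GG_{m,\ZZ}$: it is the direct sum of the one-parameter real root spaces attached to $\alpha\in\Phi^{\on{ré}}_{\fbf,>0}$ together with the imaginary spaces coming from $\on{Lie}L^{++}\underline{\Tscr}$. Since real and imaginary weights are separated by their $\underline{S}$-component, and distinct affine roots by the pair consisting of their $\underline{S}$-weight and rotation weight, the requirement that $L^{++}_I$ be $\underline{S}\times\GG_{m,\ZZ}$-stable and complementary to $\bigoplus_{\alpha\in I}\on{Lie}\underline{V_\alpha}$ forces $\on{Lie}L^{++}_I$ to be the sum of all remaining weight spaces; as a pro-unipotent, pro-smooth, connected, torus-stable subgroup is determined by its Lie algebra through the congruence filtration (\ref{equation suite decroissante groupe d'arcs}), this yields uniqueness at once.

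For existence, I would define $L^{++}_I\underline{\Gscr_\fbf}$ as the subgroup generated by $L^{++}\underline{\Tscr}$ and the $\underline{V_\beta}$ for $\beta\in J$. The saturation of $J$ in the sense of \cite[déf. C.2.25]{CGP} — closedness under root addition and under the divisibility and multipliability operations that intervene for the $\GG_p$-type root groups — guarantees that the affine analogues of the commutator and conjugation maps from the proof of Proposition \ref{proposition drs groupe de tits} preserve the factors, so that, running the induction on the length of positively closed subsets of $J$ exactly as in Theorem \ref{theoreme d'existence et unicite des groupes attaches aux drs}, the ordered product $\prod_{\beta\in J}\underline{V_\beta}\times L^{++}\underline{\Tscr}$ carries a group law independent of the chosen order and is a closed pro-smooth subgroup stable under $\underline{S}\times\GG_{m,\ZZ}$. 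The product morphism $\prod_{\alpha\in I}\underline{V_\alpha}\times L^{++}_I\underline{\Gscr_\fbf}\to L^{++}\underline{\Gscr_\fbf}$ is then a morphism of $\ZZ$-schemes (the commutator relations let one reorder factors), it induces an isomorphism on Lie algebras, and both sides are pro-unipotent, pro-smooth and connected; I would conclude that it is an isomorphism, and that the statement is independent of the order on $I$, by the same combination of Zariski's main theorem with the order-independence over fields already invoked in Theorem \ref{theoreme d'existence et unicite des groupes attaches aux drs}, the field case itself being \cite[prop. C.2.26]{CGP} applied fibrewise to the pseudo-reductive (resp. reductive) specialisations $G_{\eta_p}$.

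For the final assertion I would first check that $I_w$ has saturated complement: the set $\Phi^{\on{ré}}_{\abf,>0}\cap w\Phi^{\on{ré}}_{\fbf,\leq 0}$ is the inversion-type set of the class $w\in\widetilde{W}/W_\fbf$ and is biconvex, so both it and its complement within the positive affine roots are closed. Applying the uniqueness already established, it then suffices to match root-subgroup content: $n_wL^{++}\underline{\Gscr_\fbf}n_w^{-1}$ has affine root subgroups indexed by $w\Phi^{\on{af}}_{\fbf,>0}$ and $L^{++}\underline{\Gscr_\abf}$ those indexed by $\Phi^{\on{af}}_{\abf,>0}$, whence their intersection is governed exactly by $\Phi^{\on{af}}_{\abf,>0}\cap w\Phi^{\on{af}}_{\fbf,>0}$, which is the complement of $I_w$; as the torus factor $L^{++}\underline{\Tscr}$ lies in both groups, the intersection coincides with the subgroup $L^{++}_{I_w}$ produced by the first part.

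The main obstacle I anticipate is the bookkeeping of the commutator and conjugation relations in the presence of the imaginary directions $L^{++}\underline{\Tscr}$: conjugation by the pro-unipotent part of the torus raises the $t$-adic valuation and hence shifts a real affine root subgroup into ones attached to higher affine roots, so mere closedness of $J$ under addition does not suffice and one must genuinely use the full strength of saturation, in particular its divisibility clause, which is exactly what the non-reduced, multipliable case with the $\GG_p$-groups demands. Controlling this interaction uniformly over $\spec\ZZ$, including the residual characteristics $2$ and $3$ where $G_{\eta_p}$ becomes pseudo-réductif of exotic or barcelonais type, is where the argument requires the most care; everything else reduces, via the filtration (\ref{equation suite decroissante groupe d'arcs}) and fibrewise verification, to the field statements already at our disposal.
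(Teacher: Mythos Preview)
Your overall strategy is sound and ends up close to the paper's, but the execution as written has a genuine gap, and the paper's route is both simpler and avoids the obstacle you yourself flag.

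The problem is your existence argument. You propose to define $L^{++}_I\underline{\Gscr_\fbf}$ as the subgroup \emph{generated} by $L^{++}\underline{\Tscr}$ and the $\underline{V_\beta}$ for $\beta\in J$, and to show it is a group by ``running the induction on the length of positively closed subsets of $J$ exactly as in Theorem~\ref{theoreme d'existence et unicite des groupes attaches aux drs}''. But $J$ is infinite (it contains all positive imaginary roots and all but finitely many positive real ones), so this induction does not terminate, and the notion of ``subgroup generated by'' an infinite family inside a pro-group is itself delicate: you would need to take a closure and then argue that the result is pro-smooth with the expected Lie algebra, which is precisely the point at issue. Your final paragraph correctly identifies the interaction with the imaginary directions as the hard part, but the commutator bookkeeping you propose is both more laborious and unnecessary.

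The paper bypasses all of this by working at finite level. One first shows, for each $n$, that the product map
\[
\Big[\prod_{\alpha\in J'}\underline{V_\alpha}\Big]\times L^{++}\underline{\Tscr}/L^{(n)}\underline{\Tscr}\ \longrightarrow\ L^{++}\underline{\Gscr_\fbf}/L^{(n)}\underline{\Gscr_\fbf}
\]
(with $J'$ the finite set of roots of the right-hand side) is an isomorphism: over each $\FF_p$ this is \cite[prop.~C.2.26]{CGP} for a smooth unipotent group with torus action, and then over $\ZZ$ it is a bijective birational map of smooth $\ZZ$-schemes, hence an isomorphism by Zariski's main theorem. Choosing $n$ large enough that $\underline{V_\alpha}\cap L^{(n)}\underline{\Gscr_\fbf}=1$ for every $\alpha\in I$, one \emph{defines} $L^{++}_I\underline{\Gscr_\fbf}$ as the preimage in $L^{++}\underline{\Gscr_\fbf}$ of the complement of $\prod_{\alpha\in I}\underline{V_\alpha}$ in the quotient. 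This produces the subgroup directly, with no infinite generation or commutator chase. Uniqueness is then argued via the Lie-functor equivalence for pro-unipotent groups in characteristic $0$ (cf.\ \cite[th.~4.4.19]{Kum}) and taking closures, which is essentially your argument.

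For the $I_w$ assertion, your plan to invoke uniqueness after ``matching root-subgroup content'' is not quite enough: you need to know that the intersection $L^{++}\underline{\Gscr_\abf}\cap n_wL^{++}\underline{\Gscr_\fbf}n_w^{-1}$ actually has the expected root subgroups and no more, which does not follow formally from knowing the roots of each factor. The paper shows the easy inclusion directly, and for the reverse verifies that $\prod_{\beta\in w^{-1}I_w}\underline{V_\beta}\cap L^{++}\underline{\Gscr_\fbf}=1$ by ordering the $\beta$ so that the product sits inside $L\underline{C}$ and computing there, using that each $\beta\in\Phi^{\on{ré}}_{\fbf,\leq 0}$.
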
 

\begin{proof}
 Montrons d'abord que le morphisme suivant
 \begin{equation}\label{equation decomposition produit racines affines quotients congruence}
 \big[\prod_{\alpha\in J}\underline{V_\alpha} \big] \times L^{++}\underline{\Tscr}/L^{(n)}\underline{\Tscr} \rightarrow L^{++}\underline{\Gscr_\fbf}/L^{(n)}\underline{\Gscr_\fbf}
 \end{equation} est un isomorphisme, où $\alpha$ décrit l'ensemble $J$ des racines du membre de droite par rapport à $\underline{S}\times \GG_{m, \ZZ}$ et les sous-groupes radiciels $\underline{V_\alpha}$ sont mis dans un ordre quelconque. Après les changements de base $\ZZ \rightarrow \FF_p$ à tout corps premier, ceci n'est plus que la décomposition habituelle d'un groupe unipotent lisse opéré par un tore de \cite[1.1.7]{BTII} et/ou \cite[prop. C.2.26]{CGP}. En particulier, le $\ZZ$-morphisme donné par (\ref{equation decomposition produit racines affines quotients congruence}) de $\ZZ$-schémas lisses est bijectif et birationnel, donc un isomorphisme d'après le théorème principal de Zariski. 
 
 Choisissons $n$ assez grand tel que $\underline{V_\alpha} \cap L^{(n)}\underline{\Gscr_\fbf}=1$ pour toute racine réelle $\alpha \in I$. Si l'on note $L^{++}_I \underline{\Gscr_{\fbf}}$ l'image réciproque dans $L^{++}\underline{\Gscr_{\fbf}}$ du sous-groupe de $L^{++}\underline{\Gscr_\fbf}/L^{(n)}\underline{\Gscr_\fbf}$ complémentaire de $\prod_{\alpha \in I}\underline{V_\alpha}$, cf. (\ref{equation decomposition produit racines affines quotients congruence}), on obtient ce qu'on avait souhaité. L'unicité est une conséquence de la détermination de l'algèbre de Lie et l'équivalence du foncteur Lie pour les groupes pro-unipotents en caractéristique nulle, voir \cite[th. 4.4.19]{Kum}, ce qui caractérise le modèle entier en prenant l'adhérence.

Tournons-nous vers le cas qui s'avérera fondamental dans la suite, celui de la partie $I_w= \Phi_{\abf,>0}^{\on{ré}}\cap w\Phi_{\fbf,\leq 0}^{\on{ré}}$ de cardinal $l(w)$ et de la décomposition (\ref{equation decomposition produit racines affines groupe d'arcs}) pour $I=I_w$. L'inclusion $L^{++}_w\underline{\Gscr_{\fbf}} \subset L^{++}\underline{\Gscr_{\abf}} \cap n_wL^{++}\underline{\Gscr_{\fbf}}n_w^{-1}$ est presque évidente et se démontre aisément, dont le soin est laissé au lecteur. Il reste à observer que l'intersection de $\prod_{\beta\in w^{-1}I_w} \underline{V_{\beta}}$ avec $L^{++}\underline{\Gscr_{\fbf}}$ est triviale, où l'on peut ordonner les racines réelles à goût. En effet, les racines peuvent être supposées avoir été ordonnées de telle façon que le produit soit contenu dans l'espace de lacets $L\underline{C}$ de la grosse cellule de $\underline{G}$, ainsi que le radical pro-unipotent $L^{++}\underline{\Gscr_\fbf}$, voir la prop. \ref{proposition decomposition de levi}. L'affirmation cherchée en résulte par des calculs élémentaires et grâce à ce que $\beta=w^{-1}\alpha \in \Phi_{\fbf,\leq 0}^{\on{ré}}$ pour tout $\alpha \in I_w$.
\end{proof}

Ensuite, on va mettre en valeur le fait qu'il y ait une certaine symétrie entre les parties parahoriques $\Phi_{\fbf,\geq 0}^{\on{af}}$ et leurs opposées $\Phi_{\fbf,\leq 0}^{\on{af}}$. 

\begin{lem}\label{lemme appartements jumeles}
	Il existe un et un seul isomorphisme de complexes simpliciaux \begin{equation}\label{equation opposition appartements inverser variable}\on{op}:\Ascr(\underline{G},\underline{S},\ZZ\rpot{t}) \simeq \Ascr(\underline{G},\underline{S},\ZZ\rpot{t^{-1}}),\end{equation}
	qui fait correspondre à une facette $\fbf $ du membre de gauche la seule facette $\on{op}(\fbf)$ du membre de droite telle que $\Phi_{\on{op}(\fbf),\geq 0}^{\on{af}}=\Phi_{\fbf,\leq 0}^{\on{af}}$ comme parties du groupe de caractères de $\underline{S}\times \GG_{m,\ZZ}$. En particulier, $\on{op}$ est équivariant sous l'action de $\widetilde{W}=\underline{N}(\ZZ[t^{\pm 1}])/\underline{S}(\ZZ)$.
\end{lem}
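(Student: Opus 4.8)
Le plan est de réaliser l'opposition $\on{op}$ au moyen de l'automorphisme d'inversion $\iota: t \mapsto t^{-1}$ de la base $\ZZ[t^{\pm 1}]$, relevé en $t^{1/d}\mapsto t^{-1/d}$. D'abord, je vérifierais que $\iota$ induit un isomorphisme $\iota^*\underline{G}\simeq \underline{G}$ : en effet, chaque terme $\underline{T}$ et $\underline{U_a}$ de la donnée radicielle schématique de la prop. \ref{proposition drs groupe de tits} est une restriction des scalaires (ou un groupe pluriel) manifestement invariante sous $\iota$, et les applications des axiomes (DRS 0-3), données par des normes et des polynômes symétriques sous $t\mapsto t^{-1}$, sont $\iota$-équivariantes ; l'unicité des solutions aux lois birationnelles, cf. th. \ref{theoreme d'existence et unicite des groupes attaches aux drs}, fournit alors l'isomorphisme voulu. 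On peut aussi le lire directement à partir du plongement fermé de la prop. \ref{proposition plongement restrictions des scalaires groupe de tits}, l'inversion préservant les conditions galoisiennes et de similitude découpant $\underline{G}$ dans $\Res_{\ZZ[\zeta_e,t^{\pm 1/e}]/\ZZ[t^{\pm 1}]}H$.

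Par suite, $\iota$ définit un automorphisme du groupe de lacets $L\underline{G}$ échangeant le groupe d'arcs associé à $t$ et celui associé à $t^{-1}$, c'est-à-dire le groupe de cordes. Caractéristique par caractéristique, $\iota$ s'identifie à l'isomorphisme de corps $\FF_p\rpot{t}\simeq\FF_p\rpot{t^{-1}}$ compatible à $G_{\eta_p}$, d'où un isomorphisme d'immeubles de Bruhat--Tits $\Iscr(G_{\eta_p},\FF_p\rpot{t})\simeq\Iscr(G_{\eta_p},\FF_p\rpot{t^{-1}})$ et, par restriction, des appartements ; la compatibilité aux caractéristiques de la prop. \ref{proposition combinatoire caracteristiques differentes} les recolle en l'isomorphisme cherché $\on{op}$. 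Un calcul immédiat montre que, sur $L\underline{G}$, l'automorphisme $\iota$ conjugue $\underline{V_{a+n}}=x_a(t^n\GG_a)$ en $x_a(t^{-n}\GG_a)=\underline{V_{a-n}}$ : autrement dit, il envoie la racine affine réelle $a+n$ sur $a-n$. Comme le poids du facteur de rotation $\GG_{m,\ZZ}$ change de signe lorsqu'on passe de la variable $t$ à $t^{-1}$, ceci entraîne d'emblée l'égalité $\Phi^{\on{af}}_{\on{op}(\fbf),\geq 0}=\Phi^{\on{af}}_{\fbf,\leq 0}$ comme parties du groupe des caractères de $\underline{S}\times\GG_{m,\ZZ}$, qui est la propriété définissante. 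En particulier, sur l'espace vectoriel sous-jacent $X_*(\underline{S})\otimes\RR$, l'application $\on{op}$ agit par $-\on{id}$ relativement à l'origine spéciale $\varphi$ de la prop. \ref{proposition combinatoire caracteristiques differentes}, puisque $\iota$ envoie la translation $\lambda(t)$ sur $\lambda(t^{-1})=\lambda(t)^{-1}$.

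L'unicité devient alors formelle : une facette est déterminée par l'ensemble des racines affines réelles qui y sont positives, donc la condition $\Phi^{\on{af}}_{\on{op}(\fbf),\geq 0}=\Phi^{\on{af}}_{\fbf,\leq 0}$ fixe $\on{op}(\fbf)$ sans ambiguïté. Pour l'équivariance, je remarquerais que $\iota$ normalise $\underline{N}(\ZZ[t^{\pm 1}])$ en fixant les réflexions $m_a$ (définies à coefficients constants, cf. la preuve de la prop. \ref{proposition drs groupe de tits}) et en envoyant la translation $\lambda(t)$ sur $\lambda(t^{-1})=\lambda(t)^{-1}$ ; l'automorphisme induit sur $\widetilde{W}=\underline{N}(\ZZ[t^{\pm 1}])/\underline{S}(\ZZ)$ inverse donc la partie de translation, et un calcul direct avec $\on{op}=-\on{id}$ donne $\on{op}(w\cdot x)=\iota(w)\cdot\on{op}(x)$, soit l'équivariance annoncée. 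Le point le plus délicat me semble être la justification propre de ce que $\iota$ préserve effectivement le système des racines affines réelles avec les bons échelonnages, en particulier pour les rayons multipliables, où les ensembles $\Gamma_a'$ ne sont pas a priori symétriques ; c'est précisément l'existence de l'automorphisme de groupes $\iota$, et non un argument combinatoire séparé, qui contourne cette difficulté en garantissant automatiquement la symétrie requise.
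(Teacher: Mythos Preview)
Votre construction via $\iota:t\mapsto t^{-1}$ ne donne pas $\on{op}$, mais précisément l'application \og d'oubli des uniformisantes\fg{} que le papier signale comme distincte de $\on{op}$ à la fin de sa démonstration. Votre propre calcul le montre : $\iota$ envoie $\underline{V_{a+n}}=x_a(t^n\GG_a)$ sur $x_a(t^{-n}\GG_a)$, donc si $(a,n)\in\Phi^{\on{af}}_{\fbf,\geq 0}$, le poids correspondant dans $L^+_{t^{-1}}(\iota^*\underline{\Gscr_\fbf})$ pour la rotation en $t$ est $(a,-n)$ et non $(-a,-n)$. On obtient $\Phi^{\on{af}}_{\iota(\fbf),\geq 0}=\{(a,-n):(a,n)\in\Phi^{\on{af}}_{\fbf,\geq 0}\}$, qui diffère de $\Phi^{\on{af}}_{\fbf,\leq 0}=-\Phi^{\on{af}}_{\fbf,\geq 0}$ dès qu'il existe des racines réelles. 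Pour $\on{SL}_2$ et l'alcôve dominante $(0,1)$, votre $\iota$ l'envoie sur l'alcôve dominante $(0,1)$ du côté $t^{-1}$, tandis que $\on{op}$ doit l'envoyer sur l'alcôve anti-dominante $(-1,0)$.

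L'erreur précise se trouve dans votre déduction \og $\iota$ envoie $\lambda(t)$ sur $\lambda(t)^{-1}$, donc $\on{op}=-\on{id}$\fg{} : l'élément $\lambda(t^{-1})$, interprété dans le groupe d'Iwahori--Weyl du côté $t^{-1}$, y agit par translation par $+\lambda$, si bien que $\iota$ est en fait $+\on{id}$ relativement aux origines. La démonstration du papier évite ce piège en restant purement combinatoire : la prop.~\ref{proposition parties parahoriques et facettes}, appliquée à chacun des deux côtés, identifie les deux complexes de facettes aux parties parahoriques de $\Phi_G^{\on{af}}$ au signe près, et $\on{op}$ est alors la bijection qui compose l'une de ces identifications avec $\Psi\mapsto -\Psi$ puis l'inverse de l'autre. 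Si vous voulez réparer votre approche géométrique, il faudrait composer $\iota$ avec une involution de type Chevalley de $\underline{G}$ échangeant $\underline{U_a}$ et $\underline{U_{-a}}$.
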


\begin{proof}
On remarque que cela est raisonnable, puisque la $\GG_m$-opération fut définie en termes de la variable $t$, d'où l'inversion de signe lorsqu'on veut la remplacer par $t^{-1}$. L'existence et unicité d'une application $\on{op}$ telle que dans (\ref{equation opposition appartements inverser variable}) avec les propriétés énoncées est un exercice élémentaire en ensembles simpliciaux et leurs réalisations géométriques, d'après la prop. \ref{proposition parties parahoriques et facettes}, ce qui donne aussi l'équivariance par le groupe d'Iwahori--Weyl. Signalons toutefois que (\ref{equation opposition appartements inverser variable}) ne coïncide pas avec l'application d'oubli des uniformisantes, car les alcôves dominantes du membre de gauche sont appliquées sur les alcôves anti-dominantes du membre de droite.
\end{proof}

 Désormais on notera la facette opposée tout simplement $\fbf$ par abus de langage. La méthode du numéro \S\ref{subsection groupes immobiliers modelant tits} peut donc être appliquée pour construire le $\ZZ[t^{-1}]$-modèle parahorique $\underline{\Gscr_\fbf}$ associé à la facette $\fbf$, cf. déf. \ref{definition modeles immobiliers du groupe de tits}. Ceci donne par recollement au-dessus de $\ZZ[t^{\pm 1}]$ le long du groupe de Tits $\underline{G}$ un $\PP_\ZZ^1$-schéma en groupes affine, lisse et connexe noté encore $\underline{\Gscr_\fbf}$, voir th. \ref{theoreme affinite modeles immobiliers}.

\begin{defn}\label{definition groupe de cordes}
	Le groupe de cordes\footnote{Cela n'a aucun rapport avec la théorie des cordes en physique et ne vise qu'éclaircir le fait que la droite affine soit beaucoup plus fine que son voisinage formel.} de $\underline{\Gscr_\fbf}$ est le faisceau en ensembles $L^-\underline{\Gscr_\fbf}$ sur les $\ZZ$-algèbres donné par la correspondance \begin{equation}R \mapsto \underline{\Gscr_\fbf}(R[t^{-1}]).\end{equation} Le faisceau en ensembles $L^{\on{gr}}\underline{G}$ qui fait correspondre \begin{equation} R \mapsto \underline{G}(R[t^{\pm1}])\end{equation} est ce qu'on appelle le groupe d'élastiques de $\underline{G}$.
\end{defn}

Par le lemme de descente de Beauville--Laszlo, voir \cite[th.]{BLDesc}, on a une égalité de faisceaux $L^-\underline{\Gscr_\fbf}=L_{t^{-1}}^+\underline{\Gscr_\fbf} \cap L^{\on{gr}}\underline{G}$, où l'intersection est prise dedans $L_{t^{-1}}\underline{G}$. On pose aussi \begin{equation}L^{--}\underline{\Gscr_\fbf}=L_{t^{-1}}^{++}\underline{\Gscr_\fbf} \cap L^{\on{gr}}\underline{G},\end{equation} qu'on regarde comme le radical ind-unipotent du groupe de cordes, dénomination laquelle n'a aucun sens précis pour l'instant, mais voir cependant (\ref{equation decomposition de levi cordes}). On note aussi $L^{--}_I\underline{\Gscr_\fbf}$ et $L^{--}_w\underline{\Gscr_\fbf}$ les sous-groupes évidentes de $L^{--}\underline{\Gscr_\fbf}$, voir la prop. \ref{proposition engendrement produit direct sous-groupes radiciels}, déduits du recollement à la Beauville--Laszlo. Cela nous permet ainsi de déduire sans peine plusieurs propriétés concernant $L^-\underline{\Gscr_\fbf}$ :

\begin{thm}\label{theoreme proprietes groupe de cordes}
	On a une égalité de sous-faisceaux
	\begin{equation}\label{equation intersection d'arcs et de cordes}\underline{M_\fbf}=L^+\underline{\Gscr_\fbf}\cap L^-\underline{\Gscr_\fbf},\end{equation}
	du groupe de lacets $L\underline{G}$, ainsi qu'une décomposition de Levi opposée
	\begin{equation}\label{equation decomposition de levi cordes}L^-\underline{\Gscr_\fbf}=\underline{M_\fbf}\ltimes L^{--}\underline{\Gscr_\fbf}\end{equation} et une décomposition radicielle pour toute partie finie $I\subset \Phi^{\on{af}}_{\fbf,< 0}$ à complément saturé
	\begin{equation}\label{equation decomposition produit racines affines groupe de cordes}L^{--}\underline{\Gscr_\fbf}=\prod_{\alpha \in I} \underline{V_\alpha} \times L^{--}_I\underline{\Gscr_\fbf}. \end{equation}
	En particulier, il en résulte, pour tout $w \in \widetilde{W}/W_\fbf$ et tout représentant $n_w \in L^{\on{gr}}N(\ZZ)$, la décomposition suivante :
	\begin{equation}\label{equation decomposition cordes conjugaison par iwahory-weyl}L^{--}\underline{\Gscr_\fbf}=(L^{--}\underline{\Gscr_\abf} \cap n_wL^{+}\underline{\Gscr_\fbf}n_w^{-1})\times L_w^{--}\underline{\Gscr_\fbf}.\end{equation}
\end{thm}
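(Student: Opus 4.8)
Le plan est de transporter au groupe de cordes toutes les propriétés structurelles déjà démontrées pour le groupe d'arcs, en me servant de deux ingrédients. D'abord, la descente de Beauville--Laszlo déjà invoquée ci-dessus, cf. \cite[th.]{BLDesc}, fournit les égalités $L^-\underline{\Gscr_\fbf}=L_{t^{-1}}^+\underline{\Gscr_\fbf}\cap L^{\on{gr}}\underline{G}$ et $L^{--}\underline{\Gscr_\fbf}=L_{t^{-1}}^{++}\underline{\Gscr_\fbf}\cap L^{\on{gr}}\underline{G}$ à l'intérieur de $L_{t^{-1}}\underline{G}$. Ensuite, l'involution d'opposition du lem. \ref{lemme appartements jumeles} identifie le $\ZZ[t^{-1}]$-modèle $\underline{\Gscr_\fbf}$ au voisinage de $\infty$ à un modèle parahorique de même type, de sorte que la prop. \ref{proposition decomposition de levi} et la prop. \ref{proposition engendrement produit direct sous-groupes radiciels} s'appliquent mot pour mot du côté de la variable $t^{-1}$. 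Je noterais enfin que le facteur de Levi $\underline{M_\fbf}$ ainsi que les sous-groupes radiciels réels $\underline{V_\alpha}$, cf. lem. \ref{lemme racines affines et sous-groupes radiciels affines}, sont formés de polynômes de Laurent, donc contenus dans le groupe d'élastiques $L^{\on{gr}}\underline{G}$ et communs aux deux côtés.

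Je commencerais par l'égalité (\ref{equation intersection d'arcs et de cordes}). Au moyen de l'immersion fermée de la prop. \ref{proposition plongement restrictions des scalaires groupe de tits}, un $R$-point de l'intersection a ses coordonnées à la fois dans $R\pot{t}$ et dans $R[t^{-1}]$. En décomposant selon les sous-groupes radiciels affines, un facteur $x_a(p)$ devrait satisfaire simultanément à $\omega_t(p)\geq f_\fbf(a)$ (appartenance à $L^+\underline{\Gscr_\fbf}$) et, via l'involution d'opposition, à $\deg_t(p)\leq -f_\fbf(-a)$ (appartenance à $L^-\underline{\Gscr_\fbf}$). La quasi-concavité $f_\fbf(a)+f_\fbf(-a)\geq 0$ force alors l'égalité $f_\fbf(a)+f_\fbf(-a)=0$, c'est-à-dire $a\in \Phi_\fbf$ résiduelle, cf. (\ref{equation sous-systeme residuel des modeles immobiliers}), et $p=c\,t^{f_\fbf(a)}$ constant, ce qui redonne exactement les sous-groupes radiciels $x_a(t^{f_\fbf(a)}\GG_a)$ du facteur de Levi. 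De façon équivalente, comme $R\pot{t}\cap R[t^{-1}]=R$, l'intersection serait le faisceau des sections de $\underline{\Gscr_\fbf}$ au-dessus de $\PP^1_R$ tout entier, lesquelles sont constantes et invariantes par rotation puisque $\underline{\Gscr_\fbf}$ est affine sur le $\PP^1_\ZZ$ propre ; cela les identifie à $\underline{M_\fbf}$.

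Pour les décompositions (\ref{equation decomposition de levi cordes}) et (\ref{equation decomposition produit racines affines groupe de cordes}), j'intersecterais avec $L^{\on{gr}}\underline{G}$ les décompositions correspondantes du groupe d'arcs prises du côté $t^{-1}$, à savoir $L_{t^{-1}}^+\underline{\Gscr_\fbf}=\underline{M_\fbf}\ltimes L_{t^{-1}}^{++}\underline{\Gscr_\fbf}$ d'après la prop. \ref{proposition decomposition de levi} et $L_{t^{-1}}^{++}\underline{\Gscr_\fbf}=\prod_{\alpha\in I}\underline{V_\alpha}\times L_{I,t^{-1}}^{++}\underline{\Gscr_\fbf}$ d'après la prop. \ref{proposition engendrement produit direct sous-groupes radiciels}, toutes deux appliquées à $\on{op}(\fbf)$. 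Puisque $\underline{M_\fbf}$ et les $\underline{V_\alpha}$ sont déjà contenus dans $L^{\on{gr}}\underline{G}$ et que les facteurs résiduels restent stables par le tore $\underline{S}\times \GG_{m,\ZZ}$, l'intersection avec $L^{\on{gr}}\underline{G}$ se distribuerait à travers le produit semi-direct resp. direct, compte tenu de la graduation par les espaces de poids, donnant les décompositions cherchées avec $L^{--}_I\underline{\Gscr_\fbf}=L_{I,t^{-1}}^{++}\underline{\Gscr_\fbf}\cap L^{\on{gr}}\underline{G}$.

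Enfin, la décomposition (\ref{equation decomposition cordes conjugaison par iwahory-weyl}) serait la spécialisation de (\ref{equation decomposition produit racines affines groupe de cordes}) à la partie finie et à complément saturé $I=\Phi^{\on{ré}}_{\abf,<0}\cap w\Phi^{\on{ré}}_{\fbf,\geq 0}$ des racines réelles négatives déplacées par $w$, exactement comme dans la clause finale de la prop. \ref{proposition engendrement produit direct sous-groupes radiciels} mais du côté opposé ; il resterait à identifier $\prod_{\alpha\in I}\underline{V_\alpha}$ à $L^{--}\underline{\Gscr_\abf}\cap n_wL^{+}\underline{\Gscr_\fbf}n_w^{-1}$ en conjuguant par un représentant $n_w\in L^{\on{gr}}\underline{N}(\ZZ)$, qui est bien un polynôme de Laurent, ce qui ramènerait l'assertion à l'identité combinatoire sur les racines affines fournie par l'équivariance de $\on{op}$ sous $\widetilde{W}$. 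Le point le plus délicat de cette preuve sera d'assurer que l'intersection avec $L^{\on{gr}}\underline{G}$ commute avec les décompositions en produits, ce qui repose précisément sur la compatibilité aux espaces de poids de $\underline{S}\times \GG_{m,\ZZ}$ et sur l'appartenance de $\underline{M_\fbf}$ et des $\underline{V_\alpha}$ à $L^{\on{gr}}\underline{G}$.
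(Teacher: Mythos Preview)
Your treatment of (\ref{equation decomposition de levi cordes}), (\ref{equation decomposition produit racines affines groupe de cordes}) and (\ref{equation decomposition cordes conjugaison par iwahory-weyl}) is essentially what the paper does: intersect the $t^{-1}$-side decompositions of props.~\ref{proposition decomposition de levi} and~\ref{proposition engendrement produit direct sous-groupes radiciels} with $L^{\on{gr}}\underline{G}$. This works for the elementary reason that the known factors $\underline{M_\fbf}$ and $\underline{V_\alpha}$ already lie in $L^{\on{gr}}\underline{G}$, so the residual factor is forced there too; no genuine appeal to weight gradings is needed.

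The real gap is in (\ref{equation intersection d'arcs et de cordes}). Your first approach decomposes an element of the intersection into factors $x_a(p)$ and then constrains each $p$ by two inequalities. But membership in $L^+\underline{\Gscr_\fbf}$ gives one big-cell factorisation (relative to $t$), and membership in $L^-\underline{\Gscr_\fbf}$ gives another (relative to $t^{-1}$); nothing forces these two factorisations to agree term by term, which is precisely the content of the equality you want. Your second approach correctly identifies the intersection with the functor $R\mapsto \underline{\Gscr_\fbf}(\PP^1_R)$, but then asserts that these sections are ``constant and rotation-invariant because $\underline{\Gscr_\fbf}$ is affine over the proper $\PP^1_\ZZ$''. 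This is a non sequitur: an affine morphism to $\PP^1$ can certainly have non-constant global sections (think of $\Oh(n)$).

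What the paper does instead is the following. It first checks that $R\mapsto \underline{\Gscr_\fbf}(\PP^1_R)$ is represented by a \emph{smooth} affine $\ZZ$-group, the smoothness coming from the vanishing $H^1(\PP^1,\on{Lie}\,\underline{\Gscr_\fbf})=0$, itself a consequence of the explicit description of this Lie algebra as a sum of pushforwards of $\Oh$ and $\Oh(-1)$ along finite covers $\PP^1\to\PP^1$. Granting the Levi decomposition (\ref{equation decomposition de levi cordes}), the reverse inclusion of (\ref{equation intersection d'arcs et de cordes}) reduces to showing $L^{++}\underline{\Gscr_\fbf}\cap L^-\underline{\Gscr_\fbf}=1$. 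This intersection is smooth, unipotent (sitting inside the pro-unipotent radical) and has vanishing Lie algebra, hence is étale; but a unipotent étale group in characteristic $0$ is trivial, and one concludes over $\ZZ$ by flat closure. This unipotent--étale trick is the missing idea in your argument.
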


\begin{proof}
Évidemment l'application $\underline{M_\fbf} \rightarrow L\underline{G}$ se factorise à travers du fermé $L^-\underline{\Gscr_\fbf}$ du but, puisque cela est valable au-dessus d'un ouvert schématiquement dense de la source. La décomposition de Levi (\ref{equation decomposition de levi cordes}), telle que la radicielle (\ref{equation decomposition produit racines affines groupe de cordes}), découlent alors facilement de ce qu'on avait prouvé pour les groupes d'arcs (par rapport à la variable $t^{-1}$), cf. (\ref{equation decomposition de levi arcs}). On en tire au moins l'inclusion
 	\begin{equation} \underline{M_\fbf} \subset L^+\underline{\Gscr_\fbf}\cap L^-\underline{\Gscr_\fbf}\end{equation}
 la plus facile de l'égalité (\ref{equation intersection d'arcs et de cordes}).
 
 D'autre part, l'intersection contemplée dans le membre de droite de (\ref{equation intersection d'arcs et de cordes}) coïncide avec le foncteur \begin{equation}R \mapsto \underline{\Gscr_\fbf}(\PP^1_R),\end{equation} lequel est représentable par un $\ZZ$-schéma en groupes affine et lisse. En effet, l'affinité et la présentation finie se ramènent au cas du même foncteur associé aux espaces affines. D'ailleurs, pour montrer que le critère de lissité formelle soit remplit, il faut vérifier l'annulation de certaines classes d'obstruction
 \begin{equation}\label{equation annulation premier groupe de cohomologie fibre vectoriel parahorique}
 H^{1}(R,\on{Lie}\underline{\Gscr_\fbf})=1
 \end{equation} dans le premier groupe de cohomologie de l'algèbre de Lie de $\underline{\Gscr_\fbf}$, cf. \cite[exp. III, cor. 5.4]{SGA1}. Mais celle-ci admet une écriture en tant que somme d'images directes de $\Oh_{\PP_\ZZ^1}$ ou $\Oh_{\PP_\ZZ^1}(-1)$ le long de morphismes finis $\PP_\ZZ^1\rightarrow \PP_\ZZ^1$.
 
 Il suffit de montrer que le $\ZZ$-groupe affine et lisse $L^{++}\underline{\Gscr_\fbf}\cap L^-\underline{\Gscr_\fbf}$ est trivial. Mais il est aussi unipotent, puisque contenu dans le radical pro-unipotent, et étale, grâce à l'annulation de son fibré vectoriel de Lie. En caractéristique $0$, les groupes unipotents et étales ne sont autres que les triviaux, cf. \cite[IV, \S2, cor. 4.5]{DG}, donc par adhérence plate $L^{++}\underline{\Gscr_\fbf}\cap L^-\underline{\Gscr_\fbf}$ est trivial aussi.
 
 Quant à la dernière assertion, cf. (\ref{equation decomposition cordes conjugaison par iwahory-weyl}), nous l'avons déjà vu à (\ref{equation decomposition produit racines affines groupe d'arcs}), quitte à faire la descente de Beauville--Laszlo et à constater que $L^{--}\underline{\Gscr_\abf} \cap n_wL^+\underline{\Gscr_\fbf}n_w^{-1}$ est égal au produit des sous-groupes radiciels qu'il contienne. Néanmoins, si ce n'était pas le cas, alors son intersection avec le sous-groupe $L^{--}_w\underline{\Gscr_\fbf}$ ne serait pas triviale, ce qui est un absurde.
\end{proof}

\begin{rem}
	La définition du groupe de cordes n'est jamais parue dans la littérature que dans des cas particuliers. de Cataldo--Haines--Li ont introduit en \cite[\S3]{dHL18} le radical ind-unipotent pour les groupes déployés, en prenant l'intersection de ceux associés aux alcôves entourantes. Par là, on ne retrouve autre que nos groupes, en vertu de \cite[prop. 3.6.4]{dHL18}. En \cite{HLR}, nous avons étendu avec Haines--Richarz ce même radical ind-unipotent au cas tordu et modérément ramifié en prenant des $\Gamma$-invariants, voir \cite[éq. (3.12)]{HLR}, ce qui est évidemment encore compatible avec la définition actuelle. 
\end{rem}

\begin{cor}\label{corollaire immersion ouverte}
	L'application multiplication \begin{equation}\label{equation immersion ouverte multiplication arcs et cordes} L^{--}\underline{\Gscr_\fbf}\times L^+\underline{\Gscr_\fbf}\rightarrow L\underline{G}\end{equation}
	est une immersion ouverte et affine.
\end{cor}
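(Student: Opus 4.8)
Le plan est de ramener l'énoncé à l'étude de l'application orbite associée au point-base. Notons $\pi\colon L\underline{G}\rightarrow \Gr_{\underline{\Gscr_\fbf}}$ le morphisme quotient, qui est un $L^+\underline{\Gscr_\fbf}$-torseur pour la topologie étale, et donc affine, puisque $L^+\underline{\Gscr_\fbf}$ l'est. Soit $o\colon L^{--}\underline{\Gscr_\fbf}\rightarrow \Gr_{\underline{\Gscr_\fbf}}$ l'application $g\mapsto g\cdot e_0$, où $e_0$ désigne le point-base. Comme $\pi(gh)=g\cdot e_0$ pour tout $h\in L^+\underline{\Gscr_\fbf}$, le carré formé de $m$, de la projection $L^{--}\underline{\Gscr_\fbf}\times L^+\underline{\Gscr_\fbf}\rightarrow L^{--}\underline{\Gscr_\fbf}$, de $\pi$ et de $o$ est cartésien, à l'automorphisme d'inversion près du facteur $L^+\underline{\Gscr_\fbf}$. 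Les immersions ouvertes et les morphismes affines étant stables par changement de base, il suffira de démontrer que $o$ est une immersion ouverte et affine.

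D'abord, $o$ est un monomorphisme, c'est-à-dire $L^{--}\underline{\Gscr_\fbf}\cap L^+\underline{\Gscr_\fbf}=1$ : en effet, l'égalité \eqref{equation intersection d'arcs et de cordes} du th.~\ref{theoreme proprietes groupe de cordes} donne $L^+\underline{\Gscr_\fbf}\cap L^-\underline{\Gscr_\fbf}=\underline{M_\fbf}$, et la décomposition de Levi opposée \eqref{equation decomposition de levi cordes} entraîne $\underline{M_\fbf}\cap L^{--}\underline{\Gscr_\fbf}=1$. Ensuite, je montrerais que la différentielle de $o$ est partout un isomorphisme. Au point-base, elle s'identifie au composé $\on{Lie}L^{--}\underline{\Gscr_\fbf}\hookrightarrow \on{Lie}L\underline{G}\twoheadrightarrow \on{Lie}L\underline{G}/\on{Lie}L^+\underline{\Gscr_\fbf}=T_{e_0}\Gr_{\underline{\Gscr_\fbf}}$, qui est bijectif grâce à la décomposition radicielle de $\on{Lie}L\underline{G}$, cf. lem.~\ref{lemme racines affines et sous-groupes radiciels affines} et prop.~\ref{proposition decomposition de levi}, en $\on{Lie}L^{--}\underline{\Gscr_\fbf}\oplus \on{Lie}\underline{M_\fbf}\oplus \on{Lie}L^{++}\underline{\Gscr_\fbf}$, les racines imaginaires se répartissant suivant le signe de la puissance de $t$ et $\on{Lie}L^+\underline{\Gscr_\fbf}$ valant $\on{Lie}\underline{M_\fbf}\oplus \on{Lie}L^{++}\underline{\Gscr_\fbf}$. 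Par $L^{--}\underline{\Gscr_\fbf}$-équivariance de $o$, la différentielle est bijective en tout point ; joint au fait que $o$ soit un monomorphisme, cela entraîne que $o$ est étale, donc une immersion ouverte, ce qu'on vérifie niveau par niveau sur l'ind-schéma.

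Le point délicat sera l'affinité. Comme les résultats sur les fibrés inversibles amples de $\Gr_{\underline{\Gscr_\fbf}}$ ne sont pas encore disponibles à ce stade, je procéderais par coordonnées explicites : via l'immersion fermée de la prop.~\ref{proposition plongement restrictions des scalaires groupe de tits} et le plongement standard dans une grassmannienne de type $\on{GL}$, l'image de $o$ se décrit comme le lieu de non-annulation d'une section de type déterminant, donc comme un ouvert affine principal, sur lequel la factorisation $x=g\cdot h$ fournit l'inverse de $m$ par des morphismes réguliers. De façon équivalente, en intersectant avec les schémas de Schubert $\Gr_{\underline{\Gscr_\fbf},\leq w}$, projectifs d'après la prop.~\ref{proposition projectivite grassmannienne affine locale}, on réalise la trace de l'image comme le complémentaire d'un diviseur de bord effectif supporté par les sous-schémas de Schubert de codimension un ; c'est l'amplitude de ce bord qui constitue l'obstacle principal, que la décomposition \eqref{equation decomposition cordes conjugaison par iwahory-weyl} permet toutefois de contourner en identifiant explicitement cette trace à un produit de sous-groupes radiciels affines.
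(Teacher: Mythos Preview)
Your reduction to the orbit map $o\colon L^{--}\underline{\Gscr_\fbf}\rightarrow \Gr_{\underline{\Gscr_\fbf}}$ and your argument for the open immersion (monomorphism plus isomorphism on tangent spaces, hence formally étale, hence an open immersion once finite presentation is in hand) are essentially the paper's argument, stated from the side of $o$ rather than of $m$; this part is fine.

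The gap is in your treatment of affinity. You reduce affinity of $m$ to that of $o$ and then try to prove the latter by ad hoc methods---explicit determinant sections after embedding into a $\on{GL}$-Grassmannian, or divisorial complements in Schubert varieties---none of which you carry through, and the second of which you yourself note is circular at this point of the paper. The paper's observation is that the implication should go the other way: affinity of $m$ is \emph{immediate}, because $m$ factors as
\[
L^{--}\underline{\Gscr_\fbf}\times L^+\underline{\Gscr_\fbf}\ \hookrightarrow\ L\underline{G}\times L^+\underline{\Gscr_\fbf}\ \xrightarrow{\ (x,h)\mapsto xh\ }\ L\underline{G},
\]
where the first arrow is a closed immersion (since $L^{--}\underline{\Gscr_\fbf}\hookrightarrow L\underline{G}$ is closed) and the second is isomorphic to the projection onto the first factor with affine fibre $L^+\underline{\Gscr_\fbf}$. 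Hence $m$ is affine, and by fpqc descent along $\pi$ the orbit map $o$ is affine too. In other words, you should use the affinity of $m$ to obtain that of $o$, not the reverse; this removes the need for any explicit coordinate construction.
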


\begin{proof}
	On prouve ce corollaire à la suite de \cite[lem. 3.6]{HLR}, observant que le morphisme (\ref{equation immersion ouverte multiplication arcs et cordes}) satisfait aux propriétés énoncées si et seulement s'il en est de même du morphisme quotient
	\begin{equation}\label{equation immersion ouverte grassmannienne affine}
	L^{--}\underline{\Gscr_\fbf} \rightarrow \Gr_{\underline{\Gscr_\fbf}},
	\end{equation} d'après la descente plate.
	La première application, cf. (\ref{equation immersion ouverte multiplication arcs et cordes}), est représentable en schémas affines, parce que $L^{--}\underline{\Gscr_\fbf}\rightarrow L\underline{G}$ est une immersion fermée et que $L^+\underline{\Gscr_\fbf}$ est représentable par un schéma affine. La deuxième, cf. (\ref{equation immersion ouverte grassmannienne affine}), est aussi de présentation finie, car ses deux membres admettent des présentations inductives par de $\ZZ$-schémas de type fini. 
	
	Mais on a déjà vu au th. \ref{theoreme proprietes groupe de cordes} que (\ref{equation immersion ouverte multiplication arcs et cordes}) détermine un monomorphisme, cf. (\ref{equation intersection d'arcs et de cordes}) et (\ref{equation decomposition de levi cordes}), donc on n'a besoin que de vérifier le critère de lissité formelle. Comme tous les foncteurs en groupes sont formellement lisses, cela se ramène à un calcul de fibrés vectoriels de Lie, lequel donne un isomorphisme.
\end{proof}

Le lemme de descente de Beauville--Laszlo, cf. \cite[th.]{BLDesc}, fournit un morphisme de $\Gr_{\underline{\Gscr_{\fbf}}}$ vers le champ algébrique $\on{Fib}_{\underline{\Gscr_{\fbf}}, \PP_\ZZ^1}$ lisse et localement de type fini qui paramètre les $\underline{\Gscr_{\fbf}}$-fibrés sur la droite projective.

\begin{cor}\label{corollaire tirant en arrière les fibres triviaux}
	La cellule ouverte $L^{--}\underline{\Gscr_{\fbf}}$ de $\Gr_{\underline{\Gscr_\fbf}}$ coïncide avec l'image réciproque du sous-champ ouvert de $\on{Fib}_{\underline{\Gscr_\fbf}, \PP_\ZZ^1}$ formé des fibrés triviaux.
\end{cor}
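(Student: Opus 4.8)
The plan is to exhibit both sides as open sub-ind-schemes of $\Gr_{\underline{\Gscr_\fbf}}$ and to identify them through the Beauville--Laszlo morphism $\pi\colon \Gr_{\underline{\Gscr_\fbf}} \to \on{Fib}_{\underline{\Gscr_\fbf},\PP_\ZZ^1}$, which sends a torsor on the disk together with its trivialization over the punctured disk to the torsor on $\PP_\ZZ^1$ obtained by gluing it with the trivial torsor over $\PP_\ZZ^1\setminus\{0\}=\spec R[t^{-1}]$. First I would record that the open substack of trivial bundles is the classifying stack of the automorphism group of the trivial torsor, namely $R\mapsto \underline{\Gscr_\fbf}(\PP^1_R)$, which by (\ref{equation intersection d'arcs et de cordes}) and the computation in the proof of the th. \ref{theoreme proprietes groupe de cordes} is exactly $\underline{M_\fbf}=L^+\underline{\Gscr_\fbf}\cap L^-\underline{\Gscr_\fbf}$; in particular this substack is $B\underline{M_\fbf}$. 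Being open, its preimage $U:=\pi^{-1}(B\underline{M_\fbf})$ is an open sub-ind-scheme of $\Gr_{\underline{\Gscr_\fbf}}$, and the goal becomes the equality $U=L^{--}\underline{\Gscr_\fbf}$ of open subfunctors.

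The inclusion $L^{--}\underline{\Gscr_\fbf}\subseteq U$ is direct: a point of $L^{--}\underline{\Gscr_\fbf}(R)\subset \underline{\Gscr_\fbf}(R[t^{-1}])$ represents the trivial torsor on the disk with a trivialization that already extends over $\PP^1_R\setminus\{0\}$, so the glued torsor is globally trivial. For the reverse inclusion I would take $x\in U(R)$ and, after an étale localization on $R$ over which $\pi(x)$ admits a trivialization $\tau$, restrict $\tau$ to the chart $\spec R[t^{-1}]$: since $\pi(x)$ was built by gluing with the trivial torsor there, $\tau$ becomes an element of $\underline{\Gscr_\fbf}(R[t^{-1}])=L^-\underline{\Gscr_\fbf}(R)$, and reading off the transition function shows $x$ lies in the $L^-\underline{\Gscr_\fbf}$-orbit of the base point $e\in\Gr_{\underline{\Gscr_\fbf}}$. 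The Levi decomposition (\ref{equation decomposition de levi cordes}) writes $L^-\underline{\Gscr_\fbf}=\underline{M_\fbf}\ltimes L^{--}\underline{\Gscr_\fbf}$ with $\underline{M_\fbf}\subset L^+\underline{\Gscr_\fbf}$ fixing $e$, and the triviality of $L^{--}\underline{\Gscr_\fbf}\cap L^+\underline{\Gscr_\fbf}$ established inside the proof of the th. \ref{theoreme proprietes groupe de cordes} shows the orbit map $L^{--}\underline{\Gscr_\fbf}\to\Gr_{\underline{\Gscr_\fbf}}$ is injective with image the open cell of the cor. \ref{corollaire immersion ouverte}. Hence $x\in L^{--}\underline{\Gscr_\fbf}$ after localization, and since membership in an open subscheme is fppf-local on the base, $x\in L^{--}\underline{\Gscr_\fbf}(R)$.

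Conceptually this is the Beauville--Laszlo uniformization $\on{Fib}_{\underline{\Gscr_\fbf},\PP_\ZZ^1}\simeq[L^-\underline{\Gscr_\fbf}\backslash\Gr_{\underline{\Gscr_\fbf}}]$ combined with the fact that the trivial-bundle point corresponds to the $L^-\underline{\Gscr_\fbf}$-orbit of $e$, which by the previous paragraph coincides with the open cell $L^{--}\underline{\Gscr_\fbf}$; this also re-proves that the trivial locus is open, matching the openness of the cell from the cor. \ref{corollaire immersion ouverte}. The main obstacle is precisely this reverse inclusion, i.e. controlling the preimage of the trivial locus: one must produce the trivialization étale-locally and carry out the gluing and descent carefully over the two-dimensional base $\spec\ZZ[t]$, where the passage between $R\pot{t}$, $R[t^{-1}]$ and $\PP^1_R$ must go through the Beauville--Laszlo descent underlying the def. \ref{definition groupe de cordes} rather than a naïve formal-disk argument; the finite-presentation and openness bookkeeping already done for the cor. \ref{corollaire immersion ouverte} is what makes the final descent routine.
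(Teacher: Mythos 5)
Your reduction of the $L^-\underline{\Gscr_\fbf}$-orbit of the base point to the $L^{--}\underline{\Gscr_\fbf}$-orbit via the Levi decomposition (\ref{equation decomposition de levi cordes}), the identification of the automorphism group of the trivial torsor with $\underline{M_\fbf}$ via (\ref{equation intersection d'arcs et de cordes}), and the final descent through the open immersion of cor. \ref{corollaire immersion ouverte} are all correct --- but they constitute the easy half of the statement. What is missing is precisely the point the paper singles out as delicate: that the trivial bundles form an \emph{open} substack of $\on{Fib}_{\underline{\Gscr_\fbf},\PP^1_\ZZ}$, equivalently that a $\underline{\Gscr_\fbf}$-torsor on $\PP^1_R$ all of whose geometric fibres are trivial is trivial étale-locally on $\spec R$. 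Your phrase \og Being open, its preimage \fg{} takes this for granted, and your step \og after an étale localization on $R$ over which $\pi(x)$ admits a trivialization \fg{} silently upgrades fibrewise triviality to triviality in families. Neither is automatic for a smooth affine group scheme over $\PP^1$: the trivial bundle is an open point of the (smooth, locally finitely presented) stack of bundles exactly when it is cohomologically rigid, i.e. when the obstruction group $H^1(R,\on{Lie}\underline{\Gscr_\fbf})$ of (\ref{equation annulation premier groupe de cohomologie fibre vectoriel parahorique}) vanishes. This vanishing --- obtained in the proof of the th. \ref{theoreme proprietes groupe de cordes} by writing $\on{Lie}\underline{\Gscr_\fbf}$ as a sum of direct images of $\Oh_{\PP^1_\ZZ}$ and $\Oh_{\PP^1_\ZZ}(-1)$ along finite covers of $\PP^1_\ZZ$ --- is the content of \cite[lem. 3.1]{HRi} that the paper's proof invokes, and it appears nowhere in your argument.

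Your closing claim that the equality \og also re-proves that the trivial locus is open \fg{} is circular: to push openness of the cell down to openness of the trivial locus you would need the projection $\Gr_{\underline{\Gscr_\fbf}}\to\on{Fib}_{\underline{\Gscr_\fbf},\PP^1_\ZZ}$ to be open or surjective in a suitable topology, which is the uniformization theorem proven only afterwards and itself resting on the structure theory of this section. To repair the proposal, insert the deformation-theoretic step: the automorphism sheaf of the trivial torsor is the smooth group $\underline{M_\fbf}$, and the obstruction to lifting a trivialization across a square-zero thickening lies in $H^1(R,\on{Lie}\underline{\Gscr_\fbf})=0$; together with local finite presentation this yields both the openness of the trivial locus and the étale-local trivializations your second paragraph requires. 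With that input, the rest of your argument goes through and agrees with the paper's.
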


\begin{proof}
	L'affirmation se démontre à la suite de \cite[lem. 3.1]{HRi}, dont le point plus délicat était de constater la $H^1$-trivialité de $\on{Lie}\underline{\Gscr_\fbf}$, voir (\ref{equation annulation premier groupe de cohomologie fibre vectoriel parahorique}), ce qu'on a fait pendant la démonstration du th. \ref{theoreme proprietes groupe de cordes}.
\end{proof}

Voici une description dynamique des groupes d'arcs $L^+\underline{\Gscr_\fbf}$ et de cordes $L^-\underline{\Gscr_\fbf}$ à composantes connexes près.

\begin{propn}\label{proposition dynamique groupes d'arcs et de cordes}
	Soit $\mu: \GG_{m,\ZZ} \rightarrow \underline{S} \times \GG_{m,\ZZ}$ un copoids tel que l'ensemble des racines affines $\alpha \in \Phi_{G}^{\on{af}}$ appliquées sur la demi droite non négative soit égal à $\Phi_{\fbf,\geq 0}$. Alors, le groupe d'arcs $L^+\underline{\Gscr_\fbf}$ resp. le groupe de cordes $L^-\underline{\Gscr_\fbf}$ s'identifie à la composante neutre de l'attracteur resp. du répulseur de $L\underline{G}$ pour l'action de $\GG_{m,\ZZ}$ induite par $\mu$. 
\end{propn}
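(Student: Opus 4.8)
Le plan est de déduire cet énoncé de la théorie des actions de $\GG_{m,\ZZ}$ sur les ind-schémas, en l'articulant avec les décompositions radicielles déjà établies. Pour l'opération de $\GG_{m,\ZZ}$ sur $L\underline{G}$ induite par $\mu$, je considérerais les foncteurs attracteur $(L\underline{G})^+$, répulseur $(L\underline{G})^-$ et point fixe $(L\underline{G})^0$, représentables par des sous-groupes (ind-)schémas de $L\underline{G}$ et vérifiant $(L\underline{G})^0=(L\underline{G})^+\cap(L\underline{G})^-$, à la suite de \cite[\S3]{dHL18} et \cite[\S3]{HLR} ; concrètement, un $R$-point de $(L\underline{G})^+$ est un $R$-point de $L\underline{G}$ dont le morphisme orbite $\GG_{m,R}\rightarrow L\underline{G}$ se prolonge à $\AAA^1_R$, et symétriquement pour le répulseur en renversant l'action. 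L'observation de départ est que, par le choix même de $\mu$ et la prop. \ref{proposition parties parahoriques et facettes}, l'ensemble des racines affines $\alpha$ telles que $\langle\mu,\alpha\rangle\geq 0$ est exactement $\Phi^{\on{af}}_{\fbf,\geq 0}$, c'est-à-dire l'ensemble des poids de $\on{Lie}L^+\underline{\Gscr_\fbf}$.

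Dans un premier temps, j'établirais les inclusions $L^+\underline{\Gscr_\fbf}\subseteq(L\underline{G})^+$ et $L^-\underline{\Gscr_\fbf}\subseteq(L\underline{G})^-$ en contrôlant chaque facteur des décompositions de Levi (\ref{equation decomposition de levi arcs}) et radicielle (\ref{equation decomposition produit racines affines groupe d'arcs}) : tout sous-groupe radiciel $\underline{V_\alpha}\simeq\GG_a$ de poids strictement positif se contracte sur la section unité lorsque $t\to 0$ ; le radical pro-unipotent imaginaire $L^{++}\underline{\Tscr}$ également, grâce à la positivité du poids de rotation de $\mu$ sur $\on{Lie}L\underline{T}$ ; et le facteur de Levi $\underline{M_\fbf}$, dont les poids résiduels appartiennent à $\Phi^{\on{af}}_{\fbf,0}$, est fixé. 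Le même raisonnement, mené en la variable $t^{-1}$ et avec la décomposition opposée (\ref{equation decomposition de levi cordes}), donnerait la factorisation de $L^-\underline{\Gscr_\fbf}$ par le répulseur.

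Dans un second temps, je montrerais que ces immersions sont ouvertes, en comparant les algèbres de Lie en la section unité. L'espace tangent de $(L\underline{G})^+$ à l'identité étant formé des $v\in\on{Lie}L\underline{G}$ tels que $\lim_{t\to 0}\mu(t)_{*}v$ existe, soit la partie de poids $\mu$-non négatif, il coïncide avec $\on{Lie}L^+\underline{\Gscr_\fbf}$ d'après l'observation ci-dessus. Comme un monomorphisme fermé de groupes lisses induisant un isomorphisme sur les algèbres de Lie est une immersion ouverte, et comme $L^+\underline{\Gscr_\fbf}$ est connexe, il s'identifierait ainsi à la composante neutre de $(L\underline{G})^+$ ; de même $L^-\underline{\Gscr_\fbf}$ à celle de $(L\underline{G})^-$, la partie de poids $\mu$-non positif de $\on{Lie}L\underline{G}$ étant $\on{Lie}L^-\underline{\Gscr_\fbf}$.

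Le point délicat sera de rendre rigoureuse cette théorie des attracteurs pour l'ind-schéma $L\underline{G}$, qui n'est pas de type fini, en particulier le traitement des racines imaginaires et du radical pro-unipotent $L^{++}\underline{\Tscr}$, où la positivité du poids de rotation de $\mu$ est indispensable. Je prévois de m'y ramener en passant aux quotients de congruence $L^+\underline{\Gscr_\fbf}/L^{(n)}\underline{\Gscr_\fbf}$ de type fini, cf. (\ref{equation suite decroissante groupe d'arcs}), sur lesquels la décomposition de Bialynicki--Birula classique s'applique, puis en prenant la limite projective.
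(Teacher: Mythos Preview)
Your second step has a gap. The principle \og a closed monomorphism of smooth groups inducing an isomorphism on Lie algebras is an open immersion \fg{} requires the target $(L\underline{G})^+$ to be smooth, or at least formally smooth, and you do not establish this. It is not automatic: the ind-presentation of $L\underline{G}$ is by closed subschemes that are neither smooth nor groups, so the classical Bialynicki--Birula theory does not apply termwise. Your proposed remedy via the congruence quotients $L^+\underline{\Gscr_\fbf}/L^{(n)}\underline{\Gscr_\fbf}$ is aimed at the wrong object: these are quotients of the arc group, not finite-type approximations of $L\underline{G}$ or of its attractor, and so carry no information about $(L\underline{G})^+$ beyond what you already put in.

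The paper's proof is global rather than infinitesimal. After checking directly that the attractor of the loop space of the big cell $L\underline{C}$ is $L^+\underline{\Cscr_\fbf}$ (a product computation with the quasi-pinning) and that $L^-\underline{\Gscr_\fbf}$ is its own repeller, it works on the Grassmannian: the translated opens $n_wL^{--}\underline{\Gscr_\fbf}\cdot e$ of (\ref{equation ouverts translates des cordes grassmannienne affine}) cover $\Gr_{\underline{\Gscr_\fbf}}$, and the decomposition (\ref{equation decomposition cordes conjugaison par iwahory-weyl}) shows that the attractor of each is exactly the Schubert cell $\Gr_{\underline{\Gscr_\fbf},w}$. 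Lifting along the $L^+\underline{\Gscr_\fbf}$-torsor $L\underline{G}\to\Gr_{\underline{\Gscr_\fbf}}$ then exhibits $(L\underline{G})^+$ as a disjoint union of closed double cosets $L^+\underline{\Gscr_\abf}\cdot n_w\cdot L^+\underline{\Gscr_\fbf}$ for length-zero $w$, whose neutral component is visibly $L^+\underline{\Gscr_\fbf}$; the repeller is treated symmetrically via the thick Grassmannian $L\underline{G}/L^-\underline{\Gscr_\fbf}$. This covering argument both avoids any smoothness hypothesis on the attractor and yields its full component description, which is used later in the proof of lem.~\ref{lemme decomposition de birkhoff} and th.~\ref{theoreme cycles de schubert topologie et normalite}.
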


\begin{proof}
	On remarque qu'un tel copoids $\mu$ avec les propriétés désirées existe toujours, grâce à la prop. \ref{proposition parties parahoriques et facettes}. Pour plus de détails sur les $\GG_m$-opérations et leurs généralités, le lecteur est renvoyé à \cite[\S2.1]{CGP} et \cite[\S1]{Ri19a}. Ce dernier montre au \cite[th. 1.8]{Ri19a} que les actions qui stabilisent un recouvrement ouvert (ou plus généralement étale) par des affines admettent des attracteurs et des répulseurs représentables et ceci s'étend facilement au cadre des ind-schémas. 
	
	On montre tout d'abord que l'attracteur de l'espace de lacets de la grosse cellule $L\underline{C}$ est le bon. Supposons que $a$ est une racine ni multipliable ni divisible. Alors, l'action de $\GG_m$ sur $L\underline{U_a}$ induite par $\mu$ est donnée par
	\begin{equation}r \cdot \sum_{i\in \Gamma_a} a_i t^i= \sum_{i\in \Gamma_a} r^{n+im}a_i t^i\end{equation}
	en termes du quasi-système de Tits choisi. Alors, les points à valeurs dans $R$ de l'attracteur s'écrivent comme le sous-groupe additif des séries formelles dans $R\rpot{t}$ tronquées par $i=-n/m$. On voit ainsi que cela coïncide avec $L^+\underline{\Uscr_a}$, comme on le voulait. Quant au cas des racines multipliables et du tore, on en laisse le soin au lecteur. 
	
	Maintenant, on démontre que le groupe de cordes $L^-\underline{\Gscr_\fbf}$, qui est stable par $\mu$ et ind-affine, est son propre répulseur. En fait, le répulseur est représentable par un sous-$\ZZ$-ind-groupe fermé, et l'on sait qu'il contient l'espace de cordes de la grosse cellule $L^-\underline{\Cscr_\fbf}$. Par suite, l'immersion fermée du répulseur vers $L^-\underline{\Gscr_\fbf}$ est formellement étale et de présentation finie, donc un isomorphisme sur un ouvert. Néanmoins, le groupe de cordes est connexe, grâce au cor. \ref{corollaire immersion ouverte} et au lem. \ref{lemme decomposition de birkhoff}, donc l'inclusion voulue est achevée.
	
	Considérons maintenant l'attracteur de l'ouvert translaté, voir le cor. \ref{corollaire immersion ouverte},
	\begin{equation}\label{equation ouverts translates des cordes grassmannienne affine}n_wL^{--}\underline{\Gscr_\fbf}\cdot e=L^{--}_w\underline{\Gscr_\fbf} \times \Gr_{\underline{\Gscr_\fbf}, w}, \end{equation} pour chaque $w\in \widetilde{W}/W_\fbf$, où cette égalité suit d'appliquer la décomposition (\ref{equation decomposition cordes conjugaison par iwahory-weyl}).
	Son attracteur peut être calculé sans grande peine, s'avérant égal à la cellule de Schubert $\Gr_{\underline{\Gscr_\fbf},w}$. En particulier, l'attracteur de $L\underline{G}$ est un sous-groupe fermé réunion disjointe de certaines cellules de Schubert $L^+\Gscr_\abf \cdot n_w \cdot L^+\Gscr_\fbf$ fermées pour une partie $W_\mu \subset \widetilde{W}/ W_\fbf$ constitué de quelques éléments de longueur $0$, ce qui termine la démonstration.

    Le cas du groupes de cordes est traité de façon similaire, mais on a besoin de substituer la grassmannienne épaisse $L\underline{G}/L^-\underline{\Gscr_\fbf}$ à la grassmannienne affine, laquelle est représentable par un schéma de type infini. Comme la détermination précise du répulseur ne sera jamais utilisée dans la suite, mais juste le fait que nos groupes d'arcs/de cordes soient attractifs/répulsifs, voir la démonstration du th. \ref{theoreme cycles de schubert topologie et normalite}, on omettre les détails qui seront laissés au lecteur.
\end{proof}

Avant de initier l'étude géométrique des $L^-\underline{\Gscr_\fbf}$-orbites fermées de $\Gr_{\underline{\Gscr_\fbf}}$, voir le th. \ref{theoreme cycles de schubert topologie et normalite}, il va falloir déterminer quelques doubles quotients pour les numéroter. 

\begin{lem}\label{lemme decomposition de birkhoff}
	On a une bijection naturelle de type Birkhoff
	\begin{equation}\label{equation decomposition de birkhoff}\widetilde{W}/W_\fbf \simeq L^-\underline{\Gscr_\abf}(k)\backslash L\underline{G}(k)/L^+\underline{\Gscr_\fbf}(k) \end{equation}
	pour un corps quelconque $k$.
\end{lem}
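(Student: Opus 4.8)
The plan is to read the double quotient as the set of $L^-\underline{\Gscr_\abf}(k)$-orbits on the $k$-points of the affine flag variety $\Gr_{\underline{\Gscr_\fbf}}$ and to match these orbits with the torus-fixed points via a contracting $\GG_{m}$-flow. First I would dispose of the parahoric on the right. Using the Levi decomposition of the proposition \ref{proposition decomposition de levi} together with the corollary \ref{corollaire projection triviale ouvert} (which identifies $L^+\underline{\Gscr_\abf}$ with the preimage of a Borel $\underline{B_{\abf,\fbf}}\subset\underline{M_\fbf}$), the double cosets $L^+\underline{\Gscr_\abf}(k)\backslash L^+\underline{\Gscr_\fbf}(k)/L^+\underline{\Gscr_\abf}(k)$ are governed by the Bruhat decomposition of the reductive quotient $\underline{M_\fbf}(k)$ relative to $\underline{B_{\abf,\fbf}}(k)$, hence are in bijection with $W_\fbf$. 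Granting the Iwahori statement $\widetilde{W}\simeq L^-\underline{\Gscr_\abf}(k)\backslash L\underline{G}(k)/L^+\underline{\Gscr_\abf}(k)$, standard $BN$-pair bookkeeping then yields $\widetilde{W}/W_\fbf$ upon enlarging the right-hand group from the Iwahori to the parahoric. It therefore suffices to treat the case $\fbf=\abf$, i.e. to describe the $L^-\underline{\Gscr_\abf}(k)$-orbits on $\Gr_{\underline{\Gscr_\abf}}(k)$.

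Next I would fix, thanks to the proposition \ref{proposition parties parahoriques et facettes}, a regular cocharacter $\mu\colon\GG_{m,\ZZ}\to\underline{S}\times\GG_{m,\ZZ}$ whose non-negative affine roots are exactly $\Phi^{\on{af}}_{\abf,\geq 0}$, and consider the induced action on the loop group and on $\Gr_{\underline{\Gscr_\abf}}$. Its fixed points on $\Gr_{\underline{\Gscr_\abf}}(k)$ are precisely the $n_w\cdot e$ for $w\in\widetilde{W}$: each such point is fixed because $\underline{N}$ normalizes $\underline{S}$ and $\underline{S}\subset L^+\underline{\Gscr_\abf}$ fixes the base point $e$, whereas the regularity of $\mu$ forbids any further fixed point inside a Schubert cell. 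For surjectivity I would run the contracting flow: every $k$-point $x$ lies in a projective Schubert scheme $\Gr_{\underline{\Gscr_\abf},\leq v}$, which is $\mu$-stable, so that by properness the limit $\lim_{s\to\infty}\mu(s)\cdot x$ exists and equals a fixed point $n_w\cdot e$. The repeller cell through $n_w\cdot e$ coincides with the orbit $L^-\underline{\Gscr_\abf}(k)\cdot n_w\cdot e$: indeed the big cell $L^{--}\underline{\Gscr_\abf}\cdot e$ is open by the corollary \ref{corollaire immersion ouverte}, is cut out by triviality of the bundle on $\PP^1_\ZZ$ by the corollary \ref{corollaire tirant en arrière les fibres triviaux}, and its $n_w$-translate, combined with the decomposition (\ref{equation decomposition cordes conjugaison par iwahory-weyl}) and the fact that $\underline{S}$ fixes $n_w\cdot e$, exhausts the fibre of the retraction over $n_w\cdot e$. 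Hence $x$ belongs to the $L^-\underline{\Gscr_\abf}(k)$-orbit of $n_w\cdot e$.

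Finally, injectivity is immediate from the same flow. Since $\mu$ normalizes $L^-\underline{\Gscr_\abf}$ and contracts it to the identity as $s\to\infty$, the retraction $x\mapsto\lim_{s\to\infty}\mu(s)\cdot x$ is constant on each $L^-\underline{\Gscr_\abf}(k)$-orbit; as the fixed points $n_w\cdot e$ are pairwise distinct for distinct classes $w\in\widetilde{W}$, distinct classes give disjoint orbits, and $w\mapsto L^-\underline{\Gscr_\abf}(k)\cdot n_w\cdot e$ is a bijection. This also shows the result is insensitive to the choice of representatives $n_w$, so the asserted bijection (\ref{equation decomposition de birkhoff}) follows.

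I expect the delicate point to be the identification, in the surjectivity step, of the abstract repeller cell of $n_w\cdot e$ with the concrete group orbit $L^-\underline{\Gscr_\abf}(k)\cdot n_w\cdot e$; this is exactly where the open big cell of the corollary \ref{corollaire immersion ouverte} is indispensable. One must take care to argue through that corollary and the decomposition (\ref{equation decomposition cordes conjugaison par iwahory-weyl}) rather than through the proposition \ref{proposition dynamique groupes d'arcs et de cordes}, whose own proof invokes the present lemma, so as to avoid circularity. A minor subtlety is that the argument is a priori only about $k$-points and that ind-properness must be applied Schubert scheme by Schubert scheme over the possibly imperfect field $k$, using that the $\Gr_{\underline{\Gscr_\abf},\leq v}$ are projective already over $\ZZ$; but no genuine new obstacle arises there.
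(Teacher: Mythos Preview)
Your flow argument is correct and takes a genuinely different route from the paper. The paper does not argue dynamically: after noting that the orbits $\Gr_{\underline{\Gscr_\fbf}}^w=L^-\underline{\Gscr_\abf}\cdot n_w$ are locally closed and isomorphic to $L_w^{--}\underline{\Gscr_\fbf}$ via (\ref{equation decomposition cordes conjugaison par iwahory-weyl}) and (\ref{equation ouverts translates des cordes grassmannienne affine}), it reduces to $k$ algebraically closed, then follows \cite[prop.~1.1]{HNY} to pass to $G$ simply connected and absolutely almost simple, and finally, for the exotic and Barcelona types, transports the question to a split group through the structure maps of \cite[props.~7.3.3, 9.9.2]{CGP} and \cite[prop.~3.6]{LouBT}, where the Birkhoff decomposition is classical. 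Your Bialynicki--Birula argument is more self-contained: it uses only the big-cell package (th.~\ref{theoreme proprietes groupe de cordes}, cor.~\ref{corollaire immersion ouverte}) together with projectivity of Schubert schemes, works uniformly over any $k$, and in effect anticipates the attractor computation of prop.~\ref{proposition dynamique groupes d'arcs et de cordes} while carefully avoiding the circularity you flag (the paper does invoke the present lemma there, for connectedness of $L^-\underline{\Gscr_\fbf}$). The paper's route is shorter once the external references are in hand and showcases how the pseudo-reductive classification controls such statements; yours is more elementary and geometric. One small imprecision: $\mu$ contracts only $L^{--}\underline{\Gscr_\abf}$, not the Levi $\underline{M_\abf}=\underline{S}$, but since $\underline{S}$ already fixes $n_w\cdot e$ this is harmless for your retraction argument.
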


\begin{proof}
L'orbite \begin{equation}\label{equation orbite groupe de cordes}\Gr_{\underline{\Gscr_\fbf}}^w:=L^-\underline{\Gscr_\abf} \cdot n_w= L^-\underline{\Gscr_\abf}/(L^-\underline{\Gscr_\abf} \cap n_wL^-\underline{\Gscr_\abf}n_w^{-1})\end{equation} est un fermé local bien défini de $\Gr_{\underline{\Gscr_\fbf}}$ et isomorphe à $L_w^{--}\underline{\Gscr_\fbf}$ en tant que ind-schéma, d'après (\ref{equation decomposition cordes conjugaison par iwahory-weyl}) et (\ref{equation ouverts translates des cordes grassmannienne affine}). L'assertion (\ref{equation decomposition de birkhoff}) revient alors au même que dire que ces orbites soient disjointes et recouvrent l'espace topologique sous-jacent à $\Gr_{\underline{\Gscr_\fbf}}$.

Par suite, le corps $k$ peut être supposé algébriquement clos, sans perdre de généralité. Afin d'établer la bijection (\ref{equation decomposition de birkhoff}) dans ce cas, on peut procéder verbatim comme dans la preuve de la \cite[prop. 1.1]{HNY}, ce qui nous ramène au cas où $G$ est simplement connexe et absolument presque simple. Évidemment on ne doit pas oublier les bizarres groupes exotiques et barcelonais, auquel cas on applique les \cite[prop. 7.3.3, prop. 9.9.2]{CGP} et \cite[prop. 3.6]{LouBT} pour ramener la question dont il s'agit à la décomposition de Birkhoff d'un groupe déployé.
\end{proof}

\begin{rem}
	Cette décomposition de Birkhoff est très bien connue aussi en théorie des groupes de Kac--Moody, voir les articles et exposés suivants \cite{TitsUniq}, \cite{TitsBour} et \cite{TitsTwin} de Tits, ainsi que le survol \cite{Mar} de Marquis. Dans son interprétation, les points à valeurs dans $k$ de nos groupes d'arcs et de cordes sont devenus des sous-groupes paraboliques opposés d'un groupe de Kac--Moody, voir l'annexe \ref{comparaison avec les varietes de demazure de kac-moody}, et forment partie d'une BN-paire raffinée, cf. \cite[prop. 3.16]{Rou16}.
\end{rem}

Comme nous avons mentionné au lemme précédent, cf. (\ref{equation orbite groupe de cordes}), l'orbite $\Gr_{\underline{\Gscr_\fbf}}^w$ associée à $w \in \widetilde{W}/W_\fbf$ admet une immersion affine dans $\Gr_{\underline{\Gscr_\fbf}}$. Ci-dessous, on parvient à saisir leurs adhérences schématiques $\Gr_{\underline{\Gscr_\fbf}}^{\geq w}$ qu'on appelle les cycles de Schubert.

\begin{thm}[Faltings, Kashiwara--Shimozono, L.]\label{theoreme cycles de schubert topologie et normalite}
 Le cycle de Schubert $\Gr_{\underline{\Gscr_\fbf}}^{\geq w}$ est un fermé plat de codimension $l(w)$, stratifié dans la réunion des $\Gr_{\underline{\Gscr_\fbf}}^{v}$, pour $v\geq w$, et à quotients locaux géométriquement normaux de Cohen--Macaulay.
\end{thm}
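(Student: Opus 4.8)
The plan is to follow the strategy of Faltings \cite{Fal}, reducing the geometric assertions to finite-type quotients in which the normality and Cohen--Macaulayness results of Kashiwara--Shimozono \cite{KS09} for Schubert varieties in the thick affine flag variety become available. The topological statements (stratification, codimension, flatness) I would establish directly from the structure theory of the string group developed above, while the local normality and Cohen--Macaulay properties are exactly the fibral input supplied by \cite{KS09}.

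First I would settle the stratification. By the Birkhoff decomposition of Lemme \ref{lemme decomposition de birkhoff}, the orbits $\Gr_{\underline{\Gscr_\fbf}}^w = L^-\underline{\Gscr_\abf}\cdot n_w$ are pairwise disjoint, locally closed, and cover the underlying space of $\Gr_{\underline{\Gscr_\fbf}}$; by the identification (\ref{equation orbite groupe de cordes}) together with (\ref{equation decomposition cordes conjugaison par iwahory-weyl}), each orbit is isomorphic as an ind-scheme to $L^{--}_w\underline{\Gscr_\fbf}$, and it is complementary inside the open cell to the product of the $l(w)=\lvert I_w\rvert$ real root subgroups indexed by $I_w$ (cf. Prop.~\ref{proposition engendrement produit direct sous-groupes radiciels}), whence codimension $l(w)$. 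The closure relation $\Gr_{\underline{\Gscr_\fbf}}^{\geq w}=\bigcup_{v\geq w}\Gr_{\underline{\Gscr_\fbf}}^v$ on underlying spaces I would deduce from the attractor/repeller description of Prop.~\ref{proposition dynamique groupes d'arcs et de cordes}: the cocharacter $\mu$ specializes each orbit into the union of strata of larger length, and the resulting partial order matches the Bruhat order by the same argument that applies fibrewise over each field.

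Next comes the local analysis. The translates $n_vL^{--}\underline{\Gscr_\fbf}\cdot e$, for $v\geq w$, form an open affine cover of $\Gr_{\underline{\Gscr_\fbf}}^{\geq w}$, by Cor.~\ref{corollaire immersion ouverte} and the decomposition (\ref{equation ouverts translates des cordes grassmannienne affine}). Inside each such cell the cycle is cut out of the opposite big cell, which is isomorphic to $L^{--}\underline{\Gscr_\fbf}$, and I would pass to the quotient by a deep congruence subgroup, using the product decomposition (\ref{equation decomposition produit racines affines groupe de cordes}) to peel off a finite collection of root groups: for the truncation level large enough, the quotient is a scheme of finite type over $\ZZ$, and over each field it identifies with a finite-codimensional Schubert variety of Kac--Moody type. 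These are precisely the local quotients named in the statement.

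At this stage the fibral geometry is governed by Kashiwara--Shimozono \cite{KS09}, who prove that these Schubert varieties are normal and Cohen--Macaulay over any field (and Frobenius split in positive characteristic); geometric normality of the local quotients follows. Flatness over $\ZZ$ I would then obtain from the fibre-dimension criterion over a Dedekind base: the schematic closure is $\ZZ$-flat once its fibres are equidimensional of the same codimension $l(w)$, and this equidimensionality holds because the indexing combinatorics — the sets $I_w$ and the Bruhat order — are independent of the characteristic, by Prop.~\ref{proposition combinatoire caracteristiques differentes}. The main obstacle will be the integral compatibility of these identifications: matching our integral $L^-$-orbit closures with the Kac--Moody Schubert varieties of \cite{KS09} so that the congruence quotient is genuinely of finite type and has reduced fibres in every characteristic, and so that the normality and Cohen--Macaulayness transported back are indeed those of $\Gr_{\underline{\Gscr_\fbf}}^{\geq w}$ rather than of its normalization. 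This is precisely the point where one leaves the field-theoretic setting of \cite{KS09} and must exploit the characteristic-independence of the combinatorics established earlier.
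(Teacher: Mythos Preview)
Your overall architecture is right, and it matches the paper's: establish the stratification, pass to finite-type quotients by congruence subgroups of the string group, and then argue fibrewise à la Kashiwara--Shimozono. But there are two genuine gaps.

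\textbf{The normality step is not a black box.} You write that the local quotients, over each field, ``identify with a finite-codimensional Schubert variety of Kac--Moody type'' and then invoke \cite{KS09} to conclude. That identification with Kac--Moody Schubert varieties is not available at this point of the paper: the comparison with Mathieu's $\Fh_{\gf/\pf,\ZZ}$ is only established in the appendix (th.~\ref{theoreme comparaison kac-moody schubert}), and its proof uses the normality of Schubert schemes (th.~\ref{theoreme normalite des schemas de Schubert}), which in turn relies on the present theorem via the Picard group (cor.~\ref{corollaire groupe de picard}). So the route you sketch is circular. What the paper actually does is \emph{reprove} the inductive argument of \cite[prop.~3.2]{KS09} directly in the present setting: the base case $w=1$ is smoothness of the ambient Grassmannian, and the induction step uses the $\PP^1$-fibration $\pi:\Gr_{\Gscr_{\abf,p}}\to\Gr_{\Gscr_{\fbf,p}}$ for a simple reflection $s$, together with $\pi_*\Oh=\pi_*\widetilde{\Oh}$ and the vanishing of $R^1\pi_*\Oh$, to force the normal locus to contain a minimal $v>w$. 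This inductive scheme needs only the structure already at hand (props.~\ref{proposition decomposition de levi}, \ref{proposition engendrement produit direct sous-groupes radiciels}, cor.~\ref{corollaire projection triviale ouvert}), no external comparison.

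\textbf{The closure relation and passage to $\ZZ$.} Your appeal to attractor/repeller dynamics gives one containment, but the paper's argument for $\lvert\Gr^{\geq w}\rvert=\bigcup_{v\geq w}\lvert\Gr^v\rvert$ is more concrete: it analyses the Richardson intersections $\Gr_{\leq v}^{\geq w}$ and shows non-emptiness iff $w\leq v$, using Demazure resolutions for sufficiency and the $\GG_m$-contraction of prop.~\ref{proposition dynamique groupes d'arcs et de cordes} for necessity. Finally, your ``fibre-dimension criterion over a Dedekind base'' does not establish flatness of a closed subscheme; the paper instead argues that the normalisé over $\ZZ$ has a smooth open with dense fibres, hence geometrically reduced fibres by Serre's criterion, and concludes by Nakayama. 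That is the step that promotes fibrewise normality to geometric normality and flatness over $\ZZ$.
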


\begin{proof}
Le jeu à la fois combinatoire et géométrique de Faltings, cf. \cite[p. 48]{Fal}, permet de déduire que \begin{equation}\label{equation stratification de bruhat cycles de schubert}\lvert \Gr_{\underline{\Gscr_\fbf}}^{\geq w} \rvert=\bigcup_{v \geq w} \lvert \Gr_{\underline{\Gscr_\fbf}}^{v}\rvert\end{equation} et que la réunion \begin{equation}\label{equation ouverts invariants par cordes de la grassmannienne affine}\Gr_{\underline{\Gscr_\fbf}}^{u\geq}:=\bigcup_{u \geq w \in \widetilde{W}/W_\fbf} n_wL^{--}\underline{\Gscr_\fbf}\end{equation} est un ouvert stable par $L^{-}\underline{\Gscr_\abf}$. Décrivons ces raisonnements en passant pour aider le lecteur. Considérant le schéma de Richardson \begin{equation}\label{equation definition schema de richardson}\Gr_{\underline{\Gscr_\fbf}, \leq v}^{\geq w}:=\Gr_{\underline{\Gscr_\fbf}}^{\geq w} \cap \Gr_{\underline{\Gscr_\fbf},\leq v}\end{equation} donné par l'intersection d'un cycle avec un schéma de Schubert, et dont la géométrie sera étudiée plus tard au cor. \ref{corollaire schemas de richardson}, on affirme que ceci est non vide si et seulement si $w \leq v$ ; ce critère implique d'immédiat les affirmations précédentes, voir (\ref{equation stratification de bruhat cycles de schubert}) et (\ref{equation ouverts translates des cordes grassmannienne affine}). 

La suffisance de ce critère n'est plus qu'une simple application de la résolution de Demazure, qui permet de produire une droite projective dont la droite affine est contenue dans $\Gr_{\underline{\Gscr_\fbf}}^{w}$ et dont la section infinie est égale à $n_u \cdot e$, où $w < u \leq v$, d'où la conclusion par récurrence. Pour la nécessité, on peut remplacer le schéma de Richardson par une demi-cellule \begin{equation}\Gr_{\underline{\Gscr_\fbf}, v}^{\geq w}:= \Gr_{\underline{\Gscr_\fbf}}^{\geq w} \cap \Gr_{\underline{\Gscr_\fbf}, v}\end{equation} non vide sans perdre de généralité. Mais nous avons vu, cf. (\ref{equation ouverts translates des cordes grassmannienne affine}), que la cellule de Schubert $\Gr_{\underline{\Gscr_\fbf}, v}$ se plonge dans l'ouvert $n_vL^{--}\underline{\Gscr_\fbf}$. Par conséquent, celui-ci doit couper l'orbite schématiquement dense $\Gr_{\underline{\Gscr_\fbf}}^{w}$. Multipliant par un élément convenable de $L^{--}\underline{\Gscr_\abf}$ à la gauche, on revient à ce que la cellule de Richardson \begin{equation}\Gr_{\underline{\Gscr_\fbf}, v}^{w}:= \Gr_{\underline{\Gscr_\fbf}}^{w} \cap \Gr_{\underline{\Gscr_\fbf}, v}\end{equation} est non vide, voir (\ref{equation ouverts translates des cordes grassmannienne affine}). Mais on a toujours une $\GG_m$-opération pour un certain choix convenable d'un copoids dans $\underline{S} \times \GG_{m,\ZZ}$ qui fait contracter les $L^-\underline{\Gscr_\abf}$-orbites en $n_w \cdot e$, voir prop. \ref{proposition dynamique groupes d'arcs et de cordes}, et qui préserve les schémas de Schubert, d'où l'inégalité cherchée.

Soit $L^{-}\underline{\Gscr_\abf}(n)$ le sous-groupe fermé de $L^-\underline{\Gscr_\abf}$ engendré par le $n$-ième sous-groupe de congruence (par rapport à $t^{-1}$) et par $L^{--}\underline{\Tscr}$. Si l'on choisit l'entier $n$ assez grand, alors on peut supposer que $L^{--}\underline{\Gscr_\abf}(n)$ soit contenu dans $L^{--}_w\underline{\Gscr_\fbf}$ pour tout $w \leq v$. Ceci entraîne que le faisceau quotient $L^{--}\underline{\Gscr_\abf}(n)\backslash \Gr_{\underline{\Gscr_\fbf}}^{v\geq}$ pour la topologie plate est représentable par un schéma lisse, appliquant la décomposition radicielle (\ref{equation decomposition produit racines affines groupe de cordes}) joint à (\ref{equation ouverts translates des cordes grassmannienne affine}) comme d'habitude, comp. aussi avec \cite[lem. 6]{Fal}, et que la projection
\begin{equation}\label{equation projection vers quotients locaux de tf grassmannienne affine}
 \Gr_{\underline{\Gscr_\fbf}}^{v\geq} \rightarrow L^{--}\underline{\Gscr_\abf}(n)\backslash \Gr_{\underline{\Gscr_\fbf}}^{v\geq}
\end{equation} a des sections localement pour la topologie ouverte. Puis, vu que le cycle de Schubert restreint $\Gr_{\underline{\Gscr_\fbf}}^{v\geq w}$ est $L^-\underline{\Gscr_\abf}$-invariant, il se descend en un fermé représentable \begin{equation}\label{equation quotients locaux des cycles de schubert} L^{--}\underline{\Gscr_\abf}(n)\backslash \Gr_{\underline{\Gscr_\fbf}}^{v\geq w}\subset L^{--}\underline{\Gscr_\abf}(n)\backslash \Gr_{\underline{\Gscr_\fbf}}^{v},\end{equation} compte tenu de la platitude de $L^-\underline{\Gscr_\abf}(n)$, cf. lem. \ref{lemme platitude des grassmanniennes affines}, cor. \ref{corollaire immersion ouverte} et (\ref{equation decomposition produit racines affines groupe de cordes}). En grandissant $v$ arbitrairement pour recouvrir la grassmannienne toute entière, on obtient une immersion fermée $\Gr_{\underline{\Gscr_\fbf}}^{\geq w}\hookrightarrow \Gr_{\underline{\Gscr_\fbf}}$, comp. avec \cite[th. 7]{Fal}. 

Ceci nous permet d'introduire des propriétés géométriques locales pour la topologie lisse pour ces quotients des cycles de Schubert. On démontre leur normalité par récurrence sur la longueur de $w$ à la suite de Kashiwara--Shimozono, cf. \cite[prop. 3.2]{KS09}. Pour cela, on peut supposer $\fbf=\abf$ et $w \in W^{\on{af}}$. On va faire l'abus de langage de travailler avec les cycles de Schubert en lieu de leurs quotients locaux.

On commence par leurs fibres réduites sur les corps premiers $\FF_p$, $p \in \PP\cup \{0\}$. On note que le cas fondamental $w=1$ découle de ce que $\Gr_{\Gscr_{\abf,p}}$ est lisse. Par équivariance sous $L^-\Gscr_{\abf,p}$, on voit que lieu normal de $\Gr_{\Gscr_{\abf,p}}^{\geq w}$, c'est-à-dire le support du faisceau $\widetilde{\Oh}/\Oh$, est une réunion ouverte et non vide d'orbites et on choisit $v > w$ minimal tel que la restriction de $\Gr_{\Gscr_{\abf,p}}^{v}$ n'y appartienne. Soit aussi $s \in W^{\on{af}}$ une réflexion simple telle que $vs<v$ et considérons la $\PP^1_\ZZ$-fibration \begin{equation}\pi:\Gr_{\Gscr_{\abf,p}}\rightarrow \Gr_{\Gscr_{\fbf,p}},\end{equation} où $\fbf$ désigne le cloison de $\abf$ associé à $s$. Le cas $ws>w$ s'exclut aisément grâce à la minimalité de $v$, donc on tire $ws<w$ et l'on sait que $\Gr_{\Gscr_{\abf,p}}^{\geq ws} $ et $\Gr_{\Gscr_{\fbf,p}}^{\geq w} $ sont normaux par récurrence. Tirant en arrière la normalisation du faisceau structural le long de $\pi$, on en déduit que $\pi_*\Oh=\pi_*\widetilde{\Oh}$, donc leur conoyau est trivial sur $\Gr_{\Gscr_{\abf,p}}^{v} $, compte tenu de l'annulation de $R^1\pi_*\Oh$, voir \cite[lem. 3.1]{KS09}, et de ce que $\pi$ restreinte à l'orbite $\Gr_{\Gscr_{\abf,p}}^{v}$ devient un isomorphisme. 

Pour finir la preuve, il faut noter que le normalisé de $\Gr_{\underline{\Gscr_\abf}}^{\geq w}$ sur $\ZZ$ admet un ouvert lisse et à fibres denses, donc leurs fibres sont géométriquement réduites par le critère de Serre. Par suite, une application du lemme de Nakayama fournit l'affirmation cherchée. Pour montrer que ces cycles sont de Cohen--Macaulay, on renvoie le lecteur à \cite[prop. 3.4]{KS09}.
\end{proof}

\begin{rem}
	La clé de cette preuve de normalité quotiente est l'étude de l'ouverte induit par le groupe de cordes et des décompositions du th. \ref{theoreme proprietes groupe de cordes}. Ces trucs ne fonctionnent pas pour les schémas de Schubert, parce qu'on ne dispose d'aucun élément maximal, pour qu'on puisse raisonner par récurrence descendante sur la longueur. En effet, d'après les découvertes que nous avons faites avec Haines--Richarz dans \cite{HLR}, sans l'hypothèse \og le groupe dérivé est simplement connexe \fg{}, alors les variétés de Schubert en caractéristique mauvaises ne sont typiquement pas normales, voir \cite[th. 2.5]{HLR} et le cor. \ref{corollaire non normalite varietes de schubert}, et la grassmannienne affine $\Gr_{\underline{\Gscr_\fbf}}$ n'est en général pas géométriquement réduite, voir \cite[prop. 7.10]{HLR} et le cor. \ref{corollaire grassmannienne affine reduite}.
\end{rem}

En guise d'application, on obtient le théorème d'uniformisation des $\underline{\Gscr_\fbf}$-fibrés sur la droite projective, dû à Beuaville--Laszlo pour $\on{SL}_n$, cf. \cite[prop. 3.4]{BLConfBlocks}, à Drinfeld--Simpson pour les groupes déployés, cf. \cite[th. 3]{DS}, et à Heinloth pour les groupes tordus modérément ramifiés, voir \cite[th. 4, th. 5]{Hein}.
\begin{cor}
Le champ classifiant $\on{Fib}_{\underline{\Gscr_\fbf}, \PP^1}$ des $\Gscr_\fbf$-fibrés sur la droite projective s'identifie au champ quotient $[L^-\underline{\Gscr_\fbf}\backslash \Gr_{\underline{\Gscr_\fbf}}]$ pour la topologie ouverte.	
\end{cor}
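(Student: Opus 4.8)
The plan is to show that the Beauville--Laszlo morphism $u : \Gr_{\underline{\Gscr_\fbf}} \to \on{Fib}_{\underline{\Gscr_\fbf}, \PP^1}$ exhibits $\Gr_{\underline{\Gscr_\fbf}}$ as an $L^-\underline{\Gscr_\fbf}$-torsor for the open topology, from which the quotient identification follows formally. Recall that $L^-\underline{\Gscr_\fbf}(R) = \underline{\Gscr_\fbf}(R[t^{-1}])$ is exactly the automorphism group of the trivial bundle on the chart $\PP^1_R \setminus \{0\} = \spec R[t^{-1}]$; it acts on $\Gr_{\underline{\Gscr_\fbf}}$ by altering the trivialization used in the gluing, and $u$ is manifestly invariant under this action, so it factors through a morphism $\bar u : [L^-\underline{\Gscr_\fbf}\backslash \Gr_{\underline{\Gscr_\fbf}}] \to \on{Fib}_{\underline{\Gscr_\fbf}, \PP^1}$. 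By the usual torsor criterion, I would reduce the theorem to two assertions: (1) the action map $L^-\underline{\Gscr_\fbf}\times \Gr_{\underline{\Gscr_\fbf}} \to \Gr_{\underline{\Gscr_\fbf}}\times_{\on{Fib}} \Gr_{\underline{\Gscr_\fbf}}$ is an isomorphism, and (2) $u$ admits sections locally for the open topology.

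For (1) I would argue straight from the gluing equivalence \cite[th.]{BLDesc}: a point of the fibre product over $R$ amounts to two $\underline{\Gscr_\fbf}$-bundles on the disk, each trivialized over the punctured disk, together with an isomorphism of the two bundles obtained on $\PP^1_R$ by gluing the trivial bundle along $\PP^1_R\setminus\{0\}$. Restricting such an isomorphism to $\PP^1_R\setminus\{0\}$ yields an automorphism of the trivial bundle there, i.e. a unique element of $\underline{\Gscr_\fbf}(R[t^{-1}]) = L^-\underline{\Gscr_\fbf}(R)$ carrying the first point to the second, and conversely this element recovers the gluing datum; this is precisely the statement that $\bar u$ is a monomorphism. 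As a consistency check, over the open substack $\on{Fib}^\circ$ of trivial bundles one recovers $\on{Fib}^\circ \simeq B\underline{\Gscr_\fbf}(\PP^1) = B\underline{M_\fbf}$ via (\ref{equation intersection d'arcs et de cordes}), matching $[L^-\underline{\Gscr_\fbf}\backslash(L^-\underline{\Gscr_\fbf}/\underline{M_\fbf})]$, since by (\ref{equation decomposition de levi cordes}) and (\ref{equation intersection d'arcs et de cordes}) the orbit $L^-\underline{\Gscr_\fbf}\cdot e$ is the open cell $L^{--}\underline{\Gscr_\fbf}$ of cors. \ref{corollaire immersion ouverte} and \ref{corollaire tirant en arrière les fibres triviaux}.

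The hard part is (2): showing that every $\underline{\Gscr_\fbf}$-bundle on $\PP^1_R$ is, Zariski-locally on $\spec R$, trivial on the affine chart $\PP^1\setminus\{0\} = \spec R[t^{-1}]$. Over such a locus a chosen trivialization on $\PP^1\setminus\{0\}$ restricts to the punctured disk and produces a point of $\Gr_{\underline{\Gscr_\fbf}}$ lying over the given bundle, supplying the required open-local section of $u$; thus (2) is equivalent to the surjectivity of $u$ for the open topology. Since both $u$ and $\on{Fib}_{\underline{\Gscr_\fbf}, \PP^1}$ are smooth, I would reduce this surjectivity to geometric points, where it becomes the triviality on $\AAA^1_{\bar k}$ of torsors under the smooth affine connected group $\underline{\Gscr_\fbf}\otimes \bar k$ — precisely the uniformization and local-triviality results of Drinfeld--Simpson \cite[th. 3]{DS} and Heinloth \cite[th. 4, th. 5]{Hein}, whose extension to the present wildly ramified, integral, parahoric setting is the one genuinely delicate input. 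The open cover of $\Gr_{\underline{\Gscr_\fbf}}$ by the translates $n_w L^{--}\underline{\Gscr_\fbf}$ of the open cell, furnished by the Birkhoff decomposition \ref{lemme decomposition de birkhoff} together with (\ref{equation ouverts translates des cordes grassmannienne affine}) and prop. \ref{proposition dynamique groupes d'arcs et de cordes}, then organizes these local sections. Granting local triviality, $u$ is a Zariski-locally trivial $L^-\underline{\Gscr_\fbf}$-torsor, and combined with (1) the morphism $\bar u$ is an isomorphism for the open topology, which is the assertion.
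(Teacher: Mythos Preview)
Your approach coincides with the paper's in the large: monomorphism via Beauville--Laszlo gluing, and essential surjectivity on geometric points via Steinberg's theorem as in \cite[th.~4]{Hein}, extended to the pseudo-reductive fibres using \cite[prop.~7.3.3, prop.~9.9.2]{CGP} and \cite[prop.~2.2, cor.~2.3]{LouBT}. Where the paper differs is in how it secures the \emph{open} (Zariski) topology claim. Rather than asserting that $u$ is smooth and invoking the Birkhoff cover of $\Gr_{\underline{\Gscr_\fbf}}$, the paper uses the $L^-\underline{\Gscr_\abf}$-invariant opens $\Gr_{\underline{\Gscr_\fbf}}^{v\geq}$ and their finite-type quotients $L^{--}\underline{\Gscr_\abf}(n)\backslash\Gr_{\underline{\Gscr_\fbf}}^{v\geq}$ constructed in the proof of th.~\ref{theoreme cycles de schubert topologie et normalite} (see (\ref{equation projection vers quotients locaux de tf grassmannienne affine}) and (\ref{equation quotients locaux des cycles de schubert})), which were already shown there to admit Zariski-local sections; this presents $[L^-\underline{\Gscr_\fbf}\backslash\Gr_{\underline{\Gscr_\fbf}}]$ directly as a smooth Artin stack, and the comparison with $\on{Fib}_{\underline{\Gscr_\fbf},\PP^1}$ is then between two smooth Artin stacks via a formally smooth monomorphism.

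Your route has a soft spot at exactly this step: a smooth surjection only admits \'etale-local sections in general, and an open cover of the \emph{source} $\Gr_{\underline{\Gscr_\fbf}}$ by the translates $n_w L^{--}\underline{\Gscr_\fbf}$ does not by itself produce Zariski-local sections over the \emph{target} $\on{Fib}_{\underline{\Gscr_\fbf},\PP^1}$, since these translates are not $L^-\underline{\Gscr_\fbf}$-invariant and hence do not descend. You would need either to pass to the $L^-$-invariant opens $\Gr_{\underline{\Gscr_\fbf}}^{v\geq}$ as the paper does, or to argue separately that $\underline{\Gscr_\fbf}$-torsors on $\AAA^1_R$ are Zariski-locally (on $\spec R$) trivial, which is a strengthening of Heinloth's statement. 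The paper also remarks that its route deliberately avoids the tangent-space computations of \cite[lem.~6, lem.~7]{Hein}, which would fail here because lem.~\ref{lemme engendrement algebres de lie} is insufficient when $\Phi_G$ is non-reduced and the residue characteristic equals~$2$.
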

\begin{proof}
La détermination dans la preuve du th. \ref{theoreme cycles de schubert topologie et normalite} des quotients de type fini des ouverts $\Gr_{\underline{\Gscr}}^{v\geq}$, cf. (\ref{equation projection vers quotients locaux de tf grassmannienne affine}) et (\ref{equation quotients locaux des cycles de schubert}), montre que le champ quotient $[L^-\underline{\Gscr_\fbf}\backslash \Gr_{\underline{\Gscr_\fbf}}]$ possède des sections localement pour la topologie ouverte et qu'il est un champ d'Artin lisse. Vu que $\on{Fib}_{\underline{\Gscr_\fbf}, \PP_\ZZ^1}$ est aussi un champ d'Artin lisse d'après \cite[prop. 1]{Hein} et que l'uniformisation \begin{equation}
[L^-\underline{\Gscr_\fbf}\backslash \Gr_{\underline{\Gscr_\fbf}}]\rightarrow\on{Fib}_{\underline{\Gscr_\fbf}, \PP_\ZZ^1}
\end{equation} est un monomorphisme formellement lisse, il suffit de montrer sa surjectivité essentielle. Mais cela résulte d'une application soigneuse du théorème de Steinberg, voir \cite[th. 4]{Hein}, qui reste valable pour les groupes pseudo-réductifs, cf. \cite[prop. 7.3.3., prop. 9.9.2]{CGP} et \cite[prop. 2.2, cor. 2.3]{LouBT}.  
\end{proof}

\begin{rem}
	Signalons que nous avons eu besoin d'éviter le calcul d'espaces tangents à la manière de \cite[lem. 6, lem. 7]{Hein}, parce que le lem. \ref{lemme engendrement algebres de lie} est insuffisant lorsque $\Phi_G$ est non réduit et la caractéristique résiduelle égale à $2$.
	Cependant, nous ne savons pas démontrer le théorème de réduction au Borel, cf. \cite[th. 2]{DS} et \cite[prop. 25]{Hein}, car le quotient $\underline{\Gscr_\fbf}/\underline{\Bscr_\fbf}$ cesse d'être projectif même sur $\GG_{m,\ZZ}$.
\end{rem}

\subsection{Normalité des schémas de Schubert} \label{normalite des varietes de schubert}

Le but de ce numéro est de finalement démontrer le théorème de normalité concernant les schémas de Schubert $\Gr_{\underline{\Gscr_\fbf}}$, cf. th. \ref{theoreme normalite des schemas de Schubert}. Signalons que le résultat dual de normalité pour les quotients locaux des cycles de Schubert du th. \ref{theoreme cycles de schubert topologie et normalite} ne nous aidera pas, vu que les schémas de Schubert ne sont pas invariants sous $L^-\underline{\Gscr_\fbf}$. En lieu de cette notion, il va falloir travailler avec la colimite filtrée des normalisations, dont la première étape consiste à montrer que cela est raisonnable, voir prop. \ref{proposition varietes de schubert proprietes} et cor. \ref{corollaire normalises des schemas de schubert}. Puis on aura besoin de relever au sens formel la cellule ouverte $L^{--}\underline{\Gscr_\fbf}$ de la grassmannienne affine à sa normalisée, voir lem. \ref{lemme engendrement distributions} et (\ref{equation isomorphisme espaces tangents normalise}). Désormais, on fixe une alcôve $\abf$ contenant $\fbf$ dans son adhérence.

\begin{propn}\label{proposition varietes de schubert proprietes}
	Les morphismes de normalisation
	\begin{equation}\label{equation normalisation varietes de schubert}
\widetilde{\Gr}_{\Gscr_{\fbf,p},\leq w}\rightarrow\Gr_{\Gscr_{\fbf,p},\leq w}
	\end{equation} sont des homéomorphismes universels, les applications de transition
	\begin{equation}\label{equation immersions fermees varietes de schubert}
	\widetilde{\Gr}_{\Gscr_{\fbf,p},\leq v}\hookrightarrow\widetilde{\Gr}_{\Gscr_{\fbf,p},\leq w}
	\end{equation}
	 entre les normalisées sont des immersions fermées et la désingularisation de Demazure se factorise à travers les normalisées
	\begin{equation}\label{equation factorisation de stein resolution de demazure}
	\on{Dem}_\wf \rightarrow \widetilde{\Gr}_{\Gscr_{\fbf,p},\leq w}\rightarrow\Gr_{\Gscr_{\fbf,p},\leq w},
	\end{equation} de sorte que la première flèche soit une désingularisation rationnelle.
	
	En caractéristique positive $p>0$, les normalisées $\widetilde{\Gr}_{\Gscr_{\fbf,p},\leq w}$ sont compatiblement scindées les unes avec les autres.
\end{propn}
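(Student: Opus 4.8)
The plan is to deduce all four assertions from the Demazure resolution $\pi_w\colon\on{Dem}_\wf\to\Gr_{\Gscr_{\fbf,p},\leq w}$, arguing by induction on the length $l(w)$. First I would recall that $\on{Dem}_\wf$ is a smooth projective $\FF_p$-variety, an iterated $\PP^1$-fibration in the sense of cor. \ref{corollaire projection triviale ouvert}, and that $\pi_w$ is proper and birational onto the integral target. Since $\on{Dem}_\wf$ is normal, its Stein factorization $\on{Dem}_\wf\to\spec(\pi_{w,*}\Oh_{\on{Dem}_\wf})\to\Gr_{\Gscr_{\fbf,p},\leq w}$ identifies the middle term with the normalization $\widetilde{\Gr}_{\Gscr_{\fbf,p},\leq w}$, so that $\pi_w$ factors as in (\ref{equation factorisation de stein resolution de demazure}) and $\pi_{w,*}\Oh_{\on{Dem}_\wf}=\Oh_{\widetilde{\Gr}_{\Gscr_{\fbf,p},\leq w}}$. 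The number of geometric branches of $\Gr_{\Gscr_{\fbf,p},\leq w}$ at a point $x$ then equals the number of connected components of $\pi_w^{-1}(x)$; hence the universal homeomorphism statement (\ref{equation normalisation varietes de schubert}) reduces to the geometric connectedness of the fibres of $\pi_w$. This last point I would extract from the $\PP^1$-tower structure of $\on{Dem}_\wf$ together with the Bruhat combinatorics, the fibre being itself swept out by smaller Bott--Samelson varieties, as in \cite[p. 48]{Fal}; over the perfect field $\overline{\FF}_p$ bijectivity on points gives radicialness, whence the universal homeomorphism.

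Next I would prove that the first arrow of (\ref{equation factorisation de stein resolution de demazure}) is a rational resolution, i.e. $R^i\pi_{w,*}\Oh_{\on{Dem}_\wf}=0$ for $i>0$, the equality $\pi_{w,*}\Oh=\Oh_{\widetilde{\Gr}}$ being already in hand. This is the classical $\PP^1$-by-$\PP^1$ computation: writing $\on{pr}\colon\on{Dem}_\wf\to\on{Dem}_{\wf'}$ for the projection forgetting the last factor, whose fibres are projective lines, one has $R^i\on{pr}_*\Oh=0$ for $i>0$ and $\on{pr}_*\Oh=\Oh$, so a Leray spectral sequence and the inductive hypothesis on $\wf'$ yield the vanishing, cf. \cite[lem. 3.1]{KS09}. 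The same fibrewise analysis is what underlies the connectedness invoked above.

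The remaining assertions, namely that the transition maps (\ref{equation immersions fermees varietes de schubert}) are closed immersions and that the $\widetilde{\Gr}_{\Gscr_{\fbf,p},\leq w}$ are compatibly Frobenius split, I would establish simultaneously by induction on $l(w)$. The input is the canonical Frobenius splitting of the smooth variety $\on{Dem}_\wf$ compatibly splitting its boundary divisor, the union of the codimension-one sub-Demazure varieties; such a splitting arises from a section of $\omega_{\on{Dem}_\wf}^{1-p}$ vanishing to the correct order along the boundary, exactly as in Mehta--Ramanathan and Mathieu, using that $\on{Pic}$ is generated by the Schubert line bundles, cf. cor. \ref{corollaire groupe de picard}. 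Pushing the splitting forward along $\pi_w$ and invoking $\pi_{w,*}\Oh=\Oh_{\widetilde{\Gr}}$ from the rational resolution, I obtain a splitting of $\widetilde{\Gr}_{\Gscr_{\fbf,p},\leq w}$ compatible with the scheme-theoretic images of the boundary divisors. By the covering relations of the Bruhat order these images are the normalizations $\widetilde{\Gr}_{\Gscr_{\fbf,p},\leq v}$ for $v\lessdot w$; compatibility forces them to be reduced closed subschemes, and induction over each interval $[v,w]$ upgrades this to the closed immersions (\ref{equation immersions fermees varietes de schubert}) and to the compatibility of all splittings.

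The hardest step will be the last one: producing the splitting of $\on{Dem}_\wf$ compatibly with its full boundary and, above all, verifying that its pushforward splits $\widetilde{\Gr}_{\Gscr_{\fbf,p},\leq w}$ compatibly with the \emph{normalizations} of the smaller Schubert varieties rather than merely with set-theoretic images. Because in the absence of $G^{\on{dér}}=G^{\on{sc}}$ the Schubert schemes need not be reduced (cf. \cite{HLR}), one must work on the normalizations throughout and check that the scheme-theoretic image of each boundary sub-Demazure variety is exactly $\widetilde{\Gr}_{\Gscr_{\fbf,p},\leq v}$; this is precisely where the rational-resolution property $\pi_{w,*}\Oh=\Oh_{\widetilde{\Gr}}$ and the compatible-splitting formalism are indispensable. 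In characteristic zero the splitting assertion is vacuous and normality is Kumar's theorem \cite{KumDem}, so the inductive machinery above is genuinely needed only for $p>0$.
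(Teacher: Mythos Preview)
Your proposal is correct and tracks the paper's proof closely: Stein factorization of the Demazure resolution, connectedness of the fibres via the iterated $\PP^1$-structure, vanishing of higher direct images by the same tower argument, and the Mehta--Ramanathan splitting of $\on{Dem}_\wf$ pushed forward to the normalizations using the degree-one ample line bundles furnished by the Schubert-cycle construction (th.~\ref{theoreme cycles de schubert topologie et normalite}); the paper singles out this last input as the delicate point and notes that it fills a gap in \cite[prop.~9.6]{PR}. The only substantive differences are that the paper obtains the closed-immersion statement (\ref{equation immersions fermees varietes de schubert}) in one stroke from the fact that Frobenius-split schemes are weakly normal \cite[prop.~1.2.5]{BK} rather than by your inductive argument, and that for $p=0$ it argues by upper semicontinuity from the positive-characteristic case \cite[\S1.6]{BK} rather than appealing to Kumar---the characteristic-zero normality being established in this paper only afterwards, as lem.~\ref{lemme normalite en caracteristique nulle}.
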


Rappelons qu'un schéma $X$ de caractéristique positive $p>0$ est scindé si l'homomorphisme de $\Oh_X$-modules $\Oh_X \rightarrow F_*\Oh_X$ correspondant au frobenius absolu possède une section. On renvoie à \cite[\S1]{BK} pour toutes les propriétés basiques de cette notion fondamental pendant la démonstration ci-dessous.

\begin{proof}
	On peut suivre exactement les démonstrations de \cite[lem. 9]{Fal} et/ou de \cite[prop. 9.7]{PR}. La résolution de Demazure \begin{equation}\on{Dem}_{\wf} \rightarrow \Gr_{\Gscr_{\fbf,p}, \leq w} \end{equation}  pour $w\in W^{\on{af}}/W_\fbf $ étant birationnelle et projective, les fibres géométriques réduites s'écrivent comme fibrations itérées en droites projectives, d'où leur connexité. Autrement dit, la variété de Schubert normalisée $\widetilde{\Gr}_{\Gscr_{\fbf,p},\leq w}$ est le membre du milieu de la factorisation de Stein de la résolution de Demazure, cf. (\ref{equation factorisation de stein resolution de demazure}), et les applications de normalisation (\ref{equation normalisation varietes de schubert}) sont des homéomorphismes universels. Il s'ensuit du même raisonnement que les images directes supérieures du faisceau structural de $\on{Dem}_{\wf}$ sont triviales. 
	
	Ensuite, un scindage du frobenius $\Oh \rightarrow F_*\Oh$ de $\on{Dem}_{\wf}$ est construit, en appliquant le critère de Mehta--Ramanathan, voir \cite[prop. 1.3.11]{BK}. Sous-jacent à tout cela, se trouve le point le plus délicat, à savoir, l'existence d'un faisceau inversible ample $\Lh$ sur la grassmannienne affine $\Gr_{\Gscr_{\fbf,p}}$ à degré $1$ sur tous les droites de Schubert, comp. avec \cite[prop. 3.3.11]{GoertzI} et \cite[prop. 2.2.2]{BK}, pour qu'on puisse calculer le fibré canonique de $\on{Dem}_{\wf}$. Mais cela découle manifestement de notre construction des cycles de Schubert, voir th. \ref{theoreme cycles de schubert topologie et normalite} ainsi que le cor. \ref{corollaire groupe de picard}. Ceci comble une lacune de \cite[prop. 9.6]{PR}, dont les auteurs ont négligé que les arguments de Görtz avec des réseaux, voir \cite[\S3.3.6]{GoertzI} sont insuffisantes au cas général. Voir aussi l'erreur de \cite[10.a.1]{PR} signalée par \cite[rem. 2.1]{ZhuCoh}.
	
	Ce scindage est par construction évidemment compatible aux sous-variétés de Demazure \begin{equation}\on{Dem}_{\uf} \subset \on{Dem}_{\wf}\end{equation} et se descend donc en un scindage des variétés de Schubert normalisées. Par suite, les applications de transition (\ref{equation immersions fermees varietes de schubert}) déduites des factorisations de Stein (\ref{equation factorisation de stein resolution de demazure}) sont des immersion fermées, compte tenu de ce que les schémas scindés sont toujours faiblement normaux, cf. \cite[prop. 1.2.5]{BK}. Le cas de la caractéristique nulle découle de la semi-continuité supérieure, cf. \cite[\S1.6]{BK}.
\end{proof}

Revenons aux coefficients entiers et considérons les schémas de Schubert $\Gr_{\underline{\Gscr_\fbf}, \leq w}$ définis sur $\ZZ$. Le premier obstacle qu'on rencontre consiste à que les fibres $\Gr_{\underline{\Gscr_\fbf}, \leq w}\otimes \FF_p$ ne soient pas réduites en caractéristique positive, ce qui sera typiquement le cas quand $p$ divise le cardinal de $\pi_1(G^{\on{dér}})$, voir \cite[th. 2.5, prop. 3.4]{HLR} et le cor. \ref{corollaire non normalite varietes de schubert}. Cependant, pour les normalisés on n'a rien à craindre :

\begin{cor}\label{corollaire normalises des schemas de schubert}
	Considérons la factorisation de Stein sur les entiers 
	\begin{equation}
	\on{Dem}_\wf \rightarrow \widetilde{\Gr}_{\underline{\Gscr_\fbf}, \leq w} \rightarrow \Gr_{\underline{\Gscr_\fbf}, \leq w}
	\end{equation}
	des schémas de Schubert. Alors la deuxième flèche est le morphisme de normalisation, le membre du milieu est géométriquement normal et les morphismes de transition
	\begin{equation}
	\widetilde{\Gr}_{\underline{\Gscr_\fbf}, \leq v} \rightarrow \widetilde{\Gr}_{\underline{\Gscr_\fbf}, \leq w}
	\end{equation} sont des immersions fermées. 	
	En particulier,	leur colimite 
	\begin{equation}
	\widetilde{\Gr}_{\underline{\Gscr_\fbf}}:=\on{colim}_{w \in \widetilde{W}/W_\fbf}\widetilde{\Gr}_{\underline{\Gscr_\fbf},\leq w}
	\end{equation}
	forme un ind-schéma plat opéré par $L^+\underline{\Gscr_\fbf}$.
\end{cor}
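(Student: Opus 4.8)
Le plan est de faire descendre à $\ZZ$ les énoncés fibre par fibre déjà établis à la prop. \ref{proposition varietes de schubert proprietes}, le c\oe{}ur de l'affaire étant un argument de changement de base pour la factorisation de Stein. D'abord, je remarque que $\on{Dem}_\wf$ et $\Gr_{\underline{\Gscr_\fbf},\leq w}$ sont tous deux plats et projectifs sur $\ZZ$, de sorte que la résolution de Demazure $\pi:\on{Dem}_\wf\rightarrow\Gr_{\underline{\Gscr_\fbf},\leq w}$ est propre et sa factorisation de Stein $\widetilde{\Gr}_{\underline{\Gscr_\fbf},\leq w}:=\underline{\spec}_{\Gr}(\pi_*\Oh_{\on{Dem}_\wf})$ est bien définie. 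Le lemme technique central affirmera que $R^i\pi_*\Oh_{\on{Dem}_\wf}=0$ pour $i>0$, que $\pi_*\Oh_{\on{Dem}_\wf}$ est $\ZZ$-plat et que sa formation commute au changement de base aux fibres $\FF_p$ et $\QQ$.

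L'ingrédient-clé sera la platitude de $\on{Dem}_\wf$ sur $\ZZ$ : puisque $\Oh_{\on{Dem}_\wf}$ est sans $p$-torsion, le carré cartésien définissant $\on{Dem}_\wf\otimes\FF_p\rightarrow\Gr\otimes\FF_p$ est Tor-indépendant au-dessus de $\Gr$, d'où l'isomorphisme $Li_p^*R\pi_*\Oh\simeq R\pi_{p,*}\Oh_{\on{Dem}_\wf\otimes\FF_p}$ dans la catégorie dérivée. Or le membre de droite vaut $\Oh_{\widetilde{\Gr}_{\Gscr_{\fbf,p},\leq w}}$ concentré en degré $0$, grâce à la prop. \ref{proposition varietes de schubert proprietes} (et à Kumar en caractéristique nulle, le changement de base étant alors plat). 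Un argument de Nakayama appliqué au faisceau cohérent de degré cohomologique maximal $R^N\pi_*\Oh$, qui est nul modulo chaque $p$ et génériquement, forcera $R^{>0}\pi_*\Oh=0$ ; tandis que l'annulation de $H^{-1}(Li_p^*R\pi_*\Oh)$ donnera l'absence de $p$-torsion, donc la $\ZZ$-platitude de $\pi_*\Oh$ et la compatibilité au changement de base.

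J'en déduirais alors les trois assertions. Pour la normalité géométrique, le schéma $\widetilde{\Gr}_{\underline{\Gscr_\fbf},\leq w}$ est fini sur $\Gr_{\leq w}$, $\ZZ$-plat, et ses fibres sont les normalisées $\widetilde{\Gr}_{\Gscr_{\fbf,p},\leq w}$ (resp. $\Gr_{\leq w}\otimes\QQ$), toutes géométriquement normales ; comme $\spec\ZZ$ est régulier, le critère fibral \cite[6.5.4]{EGAIV} entraîne que $\widetilde{\Gr}_{\underline{\Gscr_\fbf},\leq w}$ est normal. Ce dernier étant intègre, fini sur $\Gr_{\leq w}$ intègre et isomorphe au-dessus du point générique (où $\Gr_{\leq w}\otimes\QQ$ est déjà normal), la deuxième flèche est bien le morphisme de normalisation. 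Quant aux applications de transition, l'immersion fermée de sous-mots $\on{Dem}_\uf\hookrightarrow\on{Dem}_\wf$ induit un morphisme de $\Oh_{\Gr_{\leq w}}$-algèbres $\Oh_{\widetilde{\Gr}_{\leq w}}\rightarrow\iota_*\Oh_{\widetilde{\Gr}_{\leq v}}$ dont la surjectivité se vérifie fibre par fibre -- c'est la prop. \ref{proposition varietes de schubert proprietes} jointe à la compatibilité au changement de base du lemme -- puis se propage à $\ZZ$ par Nakayama ; son $\underline{\spec}$ est l'immersion fermée cherchée.

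Enfin, le poset $\widetilde{W}/W_\fbf$ étant filtrant pour l'ordre de Bruhat et les applications de transition étant des immersions fermées, la colimite $\widetilde{\Gr}_{\underline{\Gscr_\fbf}}$ est un ind-schéma strict, plat sur $\ZZ$ puisque chacune de ses pièces bornées l'est. L'action de $L^+\underline{\Gscr_\fbf}$ se relèverait à la normalisée par fonctorialité : le morphisme d'action et la projection $L^+\underline{\Gscr_\fbf}\times\Gr_{\underline{\Gscr_\fbf}}\rightrightarrows\Gr_{\underline{\Gscr_\fbf}}$ étant pro-lisses, la normalisation commute à ce changement de base et fournit l'opération voulue sur l'ind-schéma -- les pièces bornées n'étant que $L^+\underline{\Gscr_\abf}$-stables, mais la colimite étant la grassmannienne toute entière. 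La principale difficulté réside dans le lemme de changement de base intégral : contrôler la factorisation de Stein sur $\ZZ$ à partir de la seule information fibrale, via la Tor-indépendance issue de la platitude de $\on{Dem}_\wf$ et l'argument de Nakayama.
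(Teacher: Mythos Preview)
Ton approche est essentiellement celle du papier : annulation fibrale de $R^{>0}\pi_*\Oh$ issue de la prop. \ref{proposition varietes de schubert proprietes}, puis changement de base dérivé et Nakayama pour la compatibilité de la factorisation de Stein sur $\ZZ$, immersions de transition par réduction aux fibres, et relèvement de l'action via la pro-lissité de $L^+\underline{\Gscr_\fbf}$. Une seule précaution : ton appel à Kumar en caractéristique nulle est superflu et potentiellement circulaire (le lem.~\ref{lemme normalite en caracteristique nulle} vient après et utilise la colimite $\widetilde{\Gr}_{\underline{\Gscr_\fbf}}$) --- la prop.~\ref{proposition varietes de schubert proprietes} couvre déjà $p=0$ et fournit tout ce dont tu as besoin, à savoir la rationalité de la désingularisation $\on{Dem}_\wf\to\widetilde{\Gr}_{\Gscr_{\fbf,0},\leq w}$.
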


\begin{proof}
	On doit observer que l'image directe des sections globales le long de la résolution de Demazure commute aux changements de base quelconques, ce que l'on déduit de l'annulation des images directes supérieures en caractéristique positive, voir la prop. \ref{proposition varietes de schubert proprietes}, \cite[p. 52]{Fal} et \cite[prop. 3.13]{GoertzII}. Les morphismes de transition \begin{equation}\widetilde{\Gr}_{\underline{\Gscr_\fbf}, \leq v}\rightarrow \widetilde{\Gr}_{\underline{\Gscr_\fbf}, \leq w}\end{equation} entre les schémas de Schubert normalisés sont des immersions fermées, car on le sait déjà pour les fibres spéciales. Le mot plat est appliqué pour les ind-schémas au sens de \cite[déf. 8.1]{HLR}. 
	
	Enfin, pour voir que l'opération du groupe d'arcs se relève en la colimite $\widetilde{\Gr}_{\underline{\Gscr_\fbf}}$, il suffit de remarquer que $L^+\underline{\Gscr_\fbf}$ est pro-lisse et que la normalisation faible absolue est fonctorielle, cf. \cite[0EUS]{StaProj}, et commute aux produits géométriquement réduits, cf. \cite[th. III.1, th. V.2]{Man80}.
\end{proof}

Le lemme suivant est aussi fondamental et provient de notre collaboration \cite{HLR} avec Haines--Richarz :

\begin{lem}[Haines--L.--Richarz]\label{lemme platitude des grassmanniennes affines}
La grassmannienne affine $\Gr_{\underline{\Gscr_\fbf}}$ est plate sur $\ZZ$.	
\end{lem}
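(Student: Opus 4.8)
Le plan est de démontrer la platitude localement sur la base $\spec \ZZ$, qui est un anneau de Dedekind, de sorte qu'être plat revient à être sans $\ZZ$-torsion ; il suffira donc de recouvrir $\Gr_{\underline{\Gscr_\fbf}}$ par des ouverts plats sur $\ZZ$. Pour cela, je me servirais de la cellule ouverte fournie par le groupe de cordes : d'après le cor. \ref{corollaire immersion ouverte}, l'application $L^{--}\underline{\Gscr_\fbf} \rightarrow \Gr_{\underline{\Gscr_\fbf}}$, $g \mapsto g\cdot e$, est une immersion ouverte affine, et en la translatant par les représentants $n_w \in L^{\on{gr}}\underline{N}(\ZZ)$ des classes $w \in \widetilde{W}/W_\fbf$, qui agissent sur $\Gr_{\underline{\Gscr_\fbf}}$ par des $\ZZ$-automorphismes, on obtient une famille d'ouverts $n_w L^{--}\underline{\Gscr_\fbf}\cdot e$, chacun $\ZZ$-isomorphe à $L^{--}\underline{\Gscr_\fbf}$.

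D'abord, je vérifierais que ces ouverts recouvrent la grassmannienne affine. En effet, l'égalité (\ref{equation ouverts translates des cordes grassmannienne affine}) montre que $n_w L^{--}\underline{\Gscr_\fbf}\cdot e$ contient la cellule de Schubert $\Gr_{\underline{\Gscr_\fbf},w}$, tandis que la décomposition de Bruhat -- duale de celle de Birkhoff du lem. \ref{lemme decomposition de birkhoff} -- assure que les cellules $\Gr_{\underline{\Gscr_\fbf},w}$, pour $w\in \widetilde{W}/W_\fbf$, partitionnent l'espace topologique sous-jacent à $\Gr_{\underline{\Gscr_\fbf}}$. On en tire $\bigcup_w n_w L^{--}\underline{\Gscr_\fbf}\cdot e = \Gr_{\underline{\Gscr_\fbf}}$. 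Ensuite, je montrerais que le groupe de cordes radical $L^{--}\underline{\Gscr_\fbf}$ est plat sur $\ZZ$ : la décomposition radicielle (\ref{equation decomposition produit racines affines groupe de cordes}) du th. \ref{theoreme proprietes groupe de cordes}, itérée jusqu'à épuiser $\Phi_{\fbf,<0}^{\on{af}}$, l'identifie, en tant que ind-schéma, à la colimite filtrée des produits finis $\prod_{\alpha} \underline{V_\alpha}$ de sous-groupes radiciels affines, chacun isomorphe à $\GG_{a,\ZZ}$ d'après le lem. \ref{lemme racines affines et sous-groupes radiciels affines} ; ces produits sont donc des espaces affines $\AAA^N_\ZZ$, manifestement sans $\ZZ$-torsion, et les applications de transition sont des immersions fermées, d'où la platitude au sens des ind-schémas de \cite[déf. 8.1]{HLR}.

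Il ne resterait qu'à recoller. En intersectant le recouvrement ouvert précédent avec chaque schéma de Schubert $\Gr_{\underline{\Gscr_\fbf},\leq v}$, qui est quasi-compact et donc recouvert par un nombre fini de ces ouverts (ceux indexés par $w\leq v$), on voit que $\Gr_{\underline{\Gscr_\fbf},\leq v}$ est recouvert par des ouverts plats sur $\ZZ$ -- car un ouvert d'un schéma $\ZZ$-plat reste $\ZZ$-plat -- donc plat lui-même ; la platitude de la colimite $\Gr_{\underline{\Gscr_\fbf}}$ s'ensuit aussitôt. La difficulté principale réside à mon sens dans l'assertion de recouvrement, c'est-à-dire dans la compatibilité entre la décomposition cellulaire issue du groupe de cordes et la décomposition de Bruhat/Birkhoff ; mais celle-ci est précisément codée par (\ref{equation ouverts translates des cordes grassmannienne affine}) et le lem. \ref{lemme decomposition de birkhoff}, de sorte que tout l'argument devient formel une fois acquis le th. \ref{theoreme proprietes groupe de cordes}.

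Par alternative, on pourrait éviter le recouvrement et raisonner directement sur les schémas de Schubert : chaque $\Gr_{\underline{\Gscr_\fbf},\leq w}$ étant l'image schématique de l'application orbite issue du $\ZZ$-schéma lisse (donc plat) $L^n\underline{\Gscr_\abf}$ pour $n$ assez grand, son faisceau structural s'injecte dans un faisceau sans $\ZZ$-torsion, d'où à nouveau la platitude. Cette variante a l'avantage de rendre transparent que les fibres spéciales puissent être non réduites sans contredire la platitude, phénomène attendu lorsque $p$ divise le cardinal de $\pi_1(G^{\on{dér}})$, cf. le cor. \ref{corollaire grassmannienne affine reduite}.
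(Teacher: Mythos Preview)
Both of your arguments share the same underlying gap: they only reach the reduced sub-ind-scheme of $\Gr_{\underline{\Gscr_\fbf}}$, not $\Gr_{\underline{\Gscr_\fbf}}$ itself. In the main approach, your assertion that iterating (\ref{equation decomposition produit racines affines groupe de cordes}) exhibits $L^{--}\underline{\Gscr_\fbf}$, \emph{en tant qu'ind-schéma}, as the filtered colimit of the $\prod_{\alpha\in I}\underline{V_\alpha}$ is incorrect. That formula is a product decomposition, not an ind-presentation: the complementary factor $L^{--}_I\underline{\Gscr_\fbf}$ never becomes trivial as $I$ grows (it always contains the torus contribution from the imaginary roots, for which there is no $\underline{V_\alpha}$, cf.~lem.~\ref{lemme racines affines et sous-groupes radiciels affines}), and in any case the genuine ind-structure on $L^{--}\underline{\Gscr_\fbf}$ --- the one inherited from the open immersion into $\Gr_{\underline{\Gscr_\fbf}}$ --- is not this filtration by affine spaces. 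The decisive obstruction is that your identification would force $L^{--}\underline{\Gscr_\fbf}$, and therefore $\Gr_{\underline{\Gscr_\fbf}}$, to be reduced; yet already over $\QQ$ and for $G$ not semisimple this fails, cf.~\cite[th.~6.1, prop.~6.5]{PR}. Your alternative argument and your recollement step suffer identically: the Schubert schemes $\Gr_{\underline{\Gscr_\fbf},\leq w}$ are indeed integral hence $\ZZ$-flat, but they are cofinal only in the reduced sub-ind-scheme, so the implicit identity $\Gr_{\underline{\Gscr_\fbf}}=\on{colim}_w\Gr_{\underline{\Gscr_\fbf},\leq w}$ that you invoke is exactly what is at stake.

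The paper's proof bypasses this by working infinitesimally. The formal neighbourhood $\spf\Oh$ of the identity is flat via a direct computation through the big cell $\underline{\Cscr_\fbf}$, cf.~\cite[lem.~8.4]{HLR}, where explicit coordinates by additive and multiplicative groups are available. The passage from this formal statement to global flatness is the subtle step --- the flat closure of an ind-scheme need not be representable --- and is handled by a trick of Faltings, cf.~\cite[preuve du cor.~11]{Fal} and \cite[lem.~8.6]{HLR}: it suffices to test on points with values in local Artinian rings, and any such point can be translated into $\spf\Oh$ via the Bruhat decomposition over its residue field.
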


\begin{proof}
	 La preuve du lemme à la suite de \cite[prop. 8.8]{HLR} se fait en deux étapes. On voit d'abord que le voisinage formel $\spf \Oh$ de la section identité est plate, ce qui peut être ramené à un calcul n'impliquant que la grosse cellule $\underline{\Cscr_\fbf}$, cf. \cite[lem. 8.4]{HLR}. 
	 
	 À ce stade, on pourrait imaginer que tout serait fini, cf. la preuve du cor. \ref{corollaire immersion ouverte}. Néanmoins, l'adhérence plate n'est pas forcément représentable, donc il faut évoquer une technique légèrement plus sophistiquée due à Faltings, cf. \cite[preuve du cor. 11]{Fal} et \cite[lem. 8.6]{HLR}. Celle-ci affirme que, pour que notre plongement ind-fermé soit un isomorphisme, il suffit d'étudier les points à valeurs dans les anneaux artiniens locaux. Comme ceux-ci peuvent être translatés vers le voisinage formel de l'unité, grâce à la décomposition de Bruhat sur les corps, voir \cite[th. 6.5.1]{BTI} et \cite[prop. 3.7]{LouBT}, on en obtient la platitude de la grassmannienne affine.
\end{proof}

Il sera essentiel pour nous de savoir en avance que les variétés de Schubert sont normales en caractéristique nulle, voir la démonstration ci-dessous du th. \ref{theoreme normalite des schemas de Schubert}. L'argument suivant est dû à Kumar, voir \cite[th. 2.16]{KumDem}, dans le cadre de Kac--Moody et a été adapté par Faltings, voir \cite[p. 52]{Fal}, aux grassmanniennes affines de groupes déployés et puis répété dans le cas tordu par Pappas--Rapoport, voir \cite[9.f]{PR}.

\begin{lem}[Kumar, Faltings, Pappas--Rapoport]\label{lemme normalite en caracteristique nulle}
Les variétés de Schubert $\Gr_{\Gscr_{\fbf,0},\leq w}$ sont normales en caractéristique nulle.	
\end{lem}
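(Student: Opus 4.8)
La stratégie consiste à suivre l'argument de Kumar en caractéristique nulle, après deux réductions. D'abord, on se ramène au cas d'une alcôve $\fbf=\abf$. Pour $\fbf$ quelconque et $\bar w\in\widetilde{W}/W_\fbf$ de représentant maximal $w$, la projection $\pi:\Gr_{\Gscr_{\abf,0}}\rightarrow\Gr_{\Gscr_{\fbf,0}}$ du cor.~\ref{corollaire projection triviale ouvert} identifie $\Gr_{\Gscr_{\abf,0},\leq w}$ à l'image réciproque $\pi^{-1}(\Gr_{\Gscr_{\fbf,0},\leq\bar w})$, et y induit une fibration localement triviale de fibre lisse $\underline{M_\fbf}/\underline{B_{\abf,\fbf}}$. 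La normalité descendant le long des morphismes fidèlement plats à fibres géométriquement normales, il suffira de traiter $X_w:=\Gr_{\Gscr_{\abf,0},\leq w}$, muni de sa structure réduite de fermé de la grassmannienne.

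Ensuite, on traduit la normalité en une assertion cohomologique sur la résolution de Demazure $\pi_\wf:\on{Dem}_\wf\rightarrow X_w$. D'après la prop.~\ref{proposition varietes de schubert proprietes}, celle-ci se factorise par la normalisée, $\on{Dem}_\wf\xrightarrow{g}\widetilde{\Gr}_{\Gscr_{\abf,0},\leq w}\xrightarrow{\nu}X_w$, avec $g_*\Oh=\Oh_{\widetilde{\Gr}}$ ; on en tire $(\pi_\wf)_*\Oh_{\on{Dem}_\wf}=\nu_*\Oh_{\widetilde{\Gr}}$, de sorte que $X_w$ est normale si et seulement si $\nu$ est un isomorphisme, c'est-à-dire si et seulement si $(\pi_\wf)_*\Oh_{\on{Dem}_\wf}=\Oh_{X_w}$. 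En caractéristique nulle, $\on{Dem}_\wf$ étant lisse et $\widetilde{\Gr}$ normale, cette même proposition fournit de surcroît que $\pi_\wf$ est une désingularisation rationnelle, d'où $R^{i}(\pi_\wf)_*\Oh_{\on{Dem}_\wf}=0$ pour $i>0$. Tout revient alors à démontrer l'égalité $(\pi_\wf)_*\Oh_{\on{Dem}_\wf}=\Oh_{X_w}$.

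Pour établir cette égalité, je procéderais par récurrence sur $l(w)$, à la suite de Kumar. Soit $\Lh$ le faisceau inversible ample de la grassmannienne affine, de degré $1$ sur les droites de Schubert, donné par le cor.~\ref{corollaire groupe de picard}. Au moyen de la tour de Bott--Samelson $\on{Dem}_\wf=L^+\Prm_{s_1}\times^{L^+\Irm}\on{Dem}_{\wf'}$, le théorème de Borel--Weil--Bott en caractéristique nulle et l'annulation relative le long des fibrations en $\PP^1$ entraînent $H^{i}(\on{Dem}_\wf,\pi_\wf^*\Lh^{\otimes n})=0$ pour $i>0$ et identifient $H^{0}(\on{Dem}_\wf,\pi_\wf^*\Lh^{\otimes n})$ au module de Demazure attaché à $w$. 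L'hypothèse de récurrence servirait à montrer que la restriction $H^{0}(X_w,\Lh^{\otimes n})\rightarrow H^{0}(\on{Dem}_\wf,\pi_\wf^*\Lh^{\otimes n})$, injective parce que $\pi_\wf$ est birationnel, est en outre surjective ; l'anneau multigradué $\bigoplus_{n\geq0}H^{0}(X_w,\Lh^{\otimes n})$ s'identifierait ainsi à l'algèbre de Demazure, dont la normalité est connue. Comme $\Lh$ est ample, le cône épointé au-dessus de $X_w$ serait alors normal, d'où la normalité de $X_w$ et, de façon équivalente, l'égalité voulue.

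Le point véritablement délicat — et la raison pour laquelle l'hypothèse de caractéristique nulle est cruciale — est la surjectivité de la restriction des sections : aucune fonction ne doit être perdue en passant de $X_w$ à sa résolution, autrement dit $\pi_\wf$ calcule bien $\Oh_{X_w}$ et non seulement $\Oh_{\widetilde{\Gr}}$. C'est exactement le contenu du théorème de Kumar, qui repose sur la bonne filtration des modules de Demazure et l'annulation de Borel--Weil--Bott, outils indisponibles en petites caractéristiques. En caractéristique $p$ divisant le cardinal de $\pi_1(G^{\on{dér}})$, cette surjectivité tombe en défaut et les fibres des schémas de Schubert cessent d'être réduites, cf.~\cite[th. 2.5]{HLR} ; c'est pourquoi le présent énoncé ne saurait résulter d'un simple scindage de Frobenius et doit être isolé comme propre à la caractéristique nulle.
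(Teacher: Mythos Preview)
Les deux premiers paragraphes --- réduction au cas de l'alcôve via le cor.~\ref{corollaire projection triviale ouvert} et reformulation comme $(\pi_\wf)_*\Oh_{\on{Dem}_\wf}=\Oh_{X_w}$ --- sont corrects. En revanche, le troisième comporte une lacune réelle : vous affirmez que l'hypothèse de récurrence \og servirait à montrer \fg{} la surjectivité de $H^0(X_w,\Lh^{\otimes n})\to H^0(\on{Dem}_\wf,\pi_\wf^*\Lh^{\otimes n})$ sans expliquer comment la normalité des $X_{w'}$ pour $l(w')<l(w)$ produit cette surjectivité au cran $w$. Or c'est précisément là que réside toute la difficulté, puisque cette surjectivité équivaut à $(\pi_\wf)_*\Oh=\Oh_{X_w}$, donc à la normalité qu'on veut établir. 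L'appel subséquent à la \og normalité de l'algèbre de Demazure \fg{} n'aide pas : cette algèbre n'est autre que $\bigoplus_n H^0(\widetilde{X_w},\Lh^{\otimes n})$ en vertu de la factorisation de Stein, et son $\on{Proj}$ est tautologiquement la normalisée $\widetilde{X_w}$ ; cela ne dit rien sur l'égalité $X_w=\widetilde{X_w}$ tant que la surjectivité n'est pas acquise.

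L'article évite cette circularité par un argument global de théorie des représentations, sans récurrence sur la longueur. Après s'être ramené à $G$ simplement connexe pour que $\Gr_{\Gscr_{\fbf,0}}$ soit réduite, on fait agir l'extension centrale universelle $\widehat{LG}$ sur tout fibré ample $\Lh$, puis l'on considère la surjection $\Gamma(\widetilde{\Gr}_{\Gscr_{\fbf,0}},\Lh)^\vee \to \Gamma(\Gr_{\Gscr_{\fbf,0}},\Lh)^\vee$ de $\on{Lie}\widehat{LG}$-modules intégrables. Le premier est le module irréductible de plus grand poids au sens de Kac--Moody --- c'est là le contenu véritable de \cite{KumDem} auquel vous faites allusion sans l'employer --- et le second en est un quotient non nul, donc ils coïncident. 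La bonne filtration des modules de Demazure et Borel--Weil--Bott servent bien à identifier $\Gamma(\widetilde{\Gr},\Lh)^\vee$ au module de plus grand poids, mais c'est l'irréductibilité de ce dernier, et non une récurrence sur $l(w)$, qui fournit l'égalité manquante avec $\Gamma(\Gr,\Lh)^\vee$.
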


\begin{proof}
	Soit $G$ simplement connexe sans perdre de généralité, d'où le fait que la grassmannienne affine soit réduite, voir \cite[prop. 9.9]{PR} ainsi que le cor. \ref{corollaire grassmannienne affine reduite}. Soit $\Lh$ un faisceau inversible effectif sur $\Gr_{\Gscr_{\fbf,0}}$ induit par le th. \ref{theoreme cycles de schubert topologie et normalite}, cf. aussi cor. \ref{corollaire groupe de picard}. Alors, $\Lh$ est par construction sans points de base en un ouvert non vide et même partout par translation, cf. \cite[th. 7]{Fal}. En passant à la normalisée $\widetilde{\Gr}_{\Gscr_{\fbf,0}}$, alors le groupe de Picard devient engendré par ces faisceaux effectifs, voir le cor. \ref{corollaire groupe de picard}, ce qui permet de déduire l'ampleur de ces derniers sur la grassmannienne $\Gr_{\Gscr_{\fbf,0}}$ elle-même. Finalement, on remarque que l'opération naturelle de $L^-\underline{\Gscr_\abf}$ sur $\Lh$ se prolonge et se relève en une action de l'extension centrale universelle
	\begin{equation}\label{equation extension centrale rationnels}1 \rightarrow \GG_{m,\QQ} \rightarrow \widehat{LG} \rightarrow LG \rightarrow 1,\end{equation}
	par des raisonnements généraux, voir aussi le cor. \ref{corollaire charge centrale}.
	
	Considérons maintenant l'application \begin{equation}\label{equation espaces vectoriels duaux sections globales}\Gamma(\widetilde{\Gr}_{\Gscr_{\fbf,0}},\Lh)^\vee \rightarrow \Gamma(\Gr_{\Gscr_{\fbf,0}},\Lh)^\vee \end{equation} de $\QQ$-espaces vectoriels non nuls déduite de la colimite des mêmes morphismes pour les variétés de Schubert. On en déduit une action de l'algèbre de Lie de l'extension centrale $\widehat{LG}$ de (\ref{equation extension centrale rationnels}) sur les $\QQ$-espaces vectoriels non nuls de (\ref{equation espaces vectoriels duaux sections globales}). On peut même montrer que les deux $\on{Lie}\widehat{LG}$-modules sont irréductibles au même plus grand poids au sens de la théorie de Kac--Moody, donc isomorphes, cf. \cite[9.f]{PR}. Comme $\Lh$ était arbitraire, on achève facilement la conclusion voulue que les variétés de Schubert sont normales, cf. \cite[p. 52]{Fal}. Pour une preuve plus élémentaire mais moins éclairante, voir aussi \cite[annexe]{KumGr}.
\end{proof}

Récapitulons les faits qui ont été établis ci-dessus : on a un morphisme $L^+\underline{\Gscr_\fbf}$-équivariant de normalisation \begin{equation}\widetilde{\Gr}_{\underline{\Gscr_\fbf}} \rightarrow \Gr_{\underline{\Gscr_\fbf}}\end{equation} entre ind-schémas plats, voir le cor. \ref{corollaire normalises des schemas de schubert} et le lem. \ref{lemme platitude des grassmanniennes affines}, qui est un isomorphisme sur le réduit du membre de droite en caractéristique nulle, voir le lem. \ref{lemme normalite en caracteristique nulle}. Typiquement, la grassmannienne n'est pas réduite, voir \cite[th. 6.1, prop. 6.5]{PR} ainsi que le cor. \ref{corollaire grassmannienne affine reduite}, si $G$ n'est pas semi-simple, et il y a aussi des problèmes de non normalité en caractéristiques mauvaises selon notre collaboration avec Haines--Richarz, voir \cite[th. 2.5]{HLR} et le cor. \ref{corollaire non normalite varietes de schubert}. Ainsi, on suppose dorénavant que \begin{equation}G =G^{\on{sc}}\end{equation} est simplement connexe, ce qui est évidemment suffisant afin de montrer le th. \ref{theoreme normalite des schemas de Schubert}.

Comme nous avons dit auparavant, notre but est de construire l'application réciproque du morphisme naturel \begin{equation}\label{equation morphisme voisinage formel de la normalisation}\spf \widetilde{\Oh} \rightarrow \spf \Oh \end{equation} entre les voisinages formels de l'originel, voir la démonstration ci-dessous du th. \ref{theoreme normalite des schemas de Schubert}. Tous les deux membres sont des schéma en groupes formels affines de type infini, voir le cor. \ref{corollaire immersion ouverte} et appliquer le foncteur de normalisation faible absolue, cf. \cite[th. III.1, th. V.2]{Man80}. Ce raisonnement entraîne, de plus, que le morphisme (\ref{equation morphisme voisinage formel de la normalisation}) soit aussi opéré par tout groupe d'arcs parahorique $L^+\underline{\Gscr_\gbf}$ associé aux facettes $\gbf \subset \Ascr(\underline{G},\underline{S},\ZZ\rpot{t})$, et en particulier, par les groupes de lacets radiciels $L\underline{U_a}$. Ce fait sera exploité décisivement pour déduire que les algèbres de Lie à coefficients dans $\ZZ$ de ces deux groupes infinis sont isomorphes, voir les lems. \ref{lemme engendrement algebres de lie}, \ref{lemme engendrement distributions} et (\ref{equation isomorphisme espaces tangents normalise}).

Afin de motiver la discussion à venir, voici une adaptation du lemme-clef de Pappas--Rapoport, voir \cite[prop. 9.3]{PR}, que nous avons déjà mentionné ailleurs.

\begin{lem}\label{lemme engendrement algebres de lie}
	Supposons que $G=G^{\on{sc}}$ est simplement connexe et soit $R$ n'importe quel anneau. Alors, la $R$-algèbre de Lie de $L^{\on{gr}}\underline{G}$ est engendrée par celles de $L^{\on{gr}}\underline{U_\pm}$, si et seulement si $\Phi_G$ est réduit ou si $2 \in R^\times$ est inversible.
\end{lem}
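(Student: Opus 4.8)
The plan is to reduce everything to the Cartan part of the generated subalgebra and to extract the factor of $2$ coming from the Tits modification at a multipliable root. First I would use the weight decomposition
\begin{equation}
\on{Lie}(L^{\on{gr}}\underline{G})=\on{Lie}\underline{T}\oplus\bigoplus_{a\in\Phi_G^{\on{nd}}}\on{Lie}\underline{U_a},
\end{equation}
tensored with $R[t^{\pm 1}]$ over $\ZZ[t^{\pm 1}]$, observing that for a multipliable root the model $\underline{U_a}=\Res\,\GG_p$ already carries the weight-$2a$ line, so that $\on{Lie}(L^{\on{gr}}\underline{U_\pm})$ accounts for every real affine root space of the lem. \ref{lemme racines affines et sous-groupes radiciels affines}. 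Hence the $R$-Lie subalgebra they generate contains all root spaces, and the statement becomes the question whether its Cartan part $\sum_{a\in\Phi_G^{\on{nd}}}[\on{Lie}\underline{U_a},\on{Lie}\underline{U_{-a}}]\otimes R[t^{\pm 1}]$ fills up $\on{Lie}\underline{T}\otimes R[t^{\pm 1}]$. Since both sides and the bracket are compatible with the decomposition of $G=G^{\on{sc}}$ into Weil restrictions of absolutely almost simple factors, I would reduce to the absolutely almost simple case, where $\Phi_G$ is non-reduced exactly for the odd unitary groups.

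Next I would compute $[\on{Lie}\underline{U_a},\on{Lie}\underline{U_{-a}}]$ from the explicit exchange maps in the proof of la prop. \ref{proposition drs groupe de tits}. For $a\in\Phi_G^{\on{nd,nm}}$ the formulas (\ref{equation diviseur sl2})--(\ref{equation echange sl2}) are unaffected by the Tits modification and yield the coroot $a^\vee$ with coefficient $1$; as $G=G^{\on{sc}}$, such coroots generate $X_*(T)$ integrally and, with their loop translates, exhaust $\on{Lie}\underline{T}$ over $\ZZ[t^{\pm 1}]$. This reproduces the reduced computation of \cite[prop. 9.3]{PR}, which only involves structure constants $\pm 1$ and therefore holds over an arbitrary $R$. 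The multipliable case is genuinely different: by la déf. \ref{definition quasi-systeme de Tits} the normalization is asymmetric in the sign of the root, $x_a^{\on{T}}=x_a^{\on{CS}}(2u,4v)$ for $a\in\Phi_G^{\on{m}}$ positive while $x_{-a}^{\on{T}}=x_{-a}^{\on{CS}}$ (cf. ex. \ref{exemple modification de tits rang un}). Carrying this rescaling through a direct bracket computation inside $\Res\,\on{SU}_3$, using (\ref{equation diviseur su3})--(\ref{equation echange su3}), gives
\begin{equation}
[\on{Lie}\underline{U_a},\on{Lie}\underline{U_{-a}}]=2\cdot(\on{Lie}\underline{T})_a,
\end{equation}
where $(\on{Lie}\underline{T})_a$ is the saturated submodule of $\on{Lie}\underline{T}$ carrying $(2a)^\vee$ together with its Galois and loop translates. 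This is exactly the factor of $2$ anticipated by the footnote to that proof, distinguishing $(2a)^\vee$ from $a^\vee$.

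Putting the two cases together, the cokernel of the inclusion $\bigoplus_{a\in\Phi_G^{\on{nd}}}[\on{Lie}\underline{U_a},\on{Lie}\underline{U_{-a}}]\hookrightarrow\on{Lie}\underline{T}$, formed over $\ZZ[t^{\pm 1}]$, vanishes when $\Phi_G$ is reduced and is otherwise a nonzero $2$-torsion module supported on the multipliable coroot directions. Because the bracket is $\ZZ[t^{\pm 1}]$-bilinear, forming this cokernel commutes with $-\otimes_{\ZZ[t^{\pm 1}]}R[t^{\pm 1}]$, and the base-changed cokernel is zero if and only if $2R=R$, that is $2\in R^\times$. This yields both implications at once: the generated $R$-Lie algebra is all of $\on{Lie}(L^{\on{gr}}\underline{G})$ precisely when $\Phi_G$ is reduced or $2$ is invertible in $R$.

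The hard part will be the honest verification of the displayed identity in the multipliable case, namely that the factor of $2$ is intrinsic and cannot be removed by any alternative integral normalization of the root vectors. Concretely one must track the Tits rescaling simultaneously through the weight-$a$ part (factor $2$) and the weight-$2a$ part (factor $4$, cf. ex. \ref{exemple modification de tits rang un}) and check that every resulting bracket lands in $2\,\on{Lie}\underline{T}$, so that the primitive generator $(2a)^\vee$ is genuinely missed modulo $2$; the half-integral $t$-gradings produced by the induced torus $\underline{T}$ have to be bookkept carefully, but no idea beyond the rank-one $\on{SU}_3$ computation is needed.
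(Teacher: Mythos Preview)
Your overall architecture is exactly that of the paper: reduce the question to whether the Cartan part is filled up, pass to rank~$1$, and compute brackets. The gap lies in your displayed identity
\[
[\on{Lie}\,\underline{U_a},\on{Lie}\,\underline{U_{-a}}]=2\cdot(\on{Lie}\,\underline{T})_a
\]
for multipliable $a$, and in the ensuing claim that ``every resulting bracket lands in $2\,\on{Lie}\,\underline{T}$''. This is false. In the Tits $\ZZ$-form of $\on{SU}_3$ one finds, as the paper computes from (\ref{equation diviseur su3})--(\ref{equation echange su3}),
\[
[t^n e_a,\,t^m e_{-a}]=\pm 2\,t^{n+m}h_a,\qquad [t^n e_{2a},\,t^m e_{-2a}]=\pm\,t^{n+m}h_a,
\]
so the weight-$2a$ bracket carries coefficient $\pm 1$, not $\pm 2$. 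Taking for instance $n=\tfrac12$, $m=-\tfrac12$ in the second formula already produces the primitive $h_a$, which does not lie in $2\,\on{Lie}\,\underline{T}$. Hence your bracket identity cannot hold, and your cokernel would be too large.

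The actual mechanism is a $t$-gradation argument that you gesture at in your last sentence but do not carry out. In the model $\underline{U_a}=\GG_{p,\ZZ[t^{\pm 1/2}]/\ZZ[t^{\pm 1}]}$ the weight-$2a$ line is the $v$-factor $t^{1/2}\ZZ[t^{\pm 1}]$, so the exponents $n,m$ in $[t^n e_{2a},t^m e_{-2a}]$ run over $\tfrac12+\ZZ$ and $n+m\in\ZZ$: these brackets only reach the sublattice $\ZZ[t^{\pm 1}]\,h_a$ of $\on{Lie}\,\underline{T}=\ZZ[t^{\pm 1/2}]\,h_a$. The complementary piece $t^{1/2}\ZZ[t^{\pm 1}]\,h_a$ is accessible only through the weight-$a$ brackets, and \emph{those} do carry the factor $2$. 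The cokernel is therefore $t^{1/2}\FF_2[t^{\pm 1}]\,h_a$, supported precisely on the half-integral $t$-powers --- this is the content of the paper's last sentence, that elements $t^n h_a$ with $n\in\tfrac12\ZZ\setminus\ZZ$ are missed modulo~$2$ --- and not all of $(\on{Lie}\,\underline{T})/2$ as your formula would predict. Once you replace your displayed identity by this two-line computation, the rest of your argument goes through and matches the paper's proof.
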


\begin{proof}
	On s'est ramené au cas où $G$ est de rang relatif $1$ et absolument simple, c'est-à-dire soit $\on{SL}_2$, soit $\on{SU}_{3}$. Dans le cas non pluriel, on a $[t^ne_a,t^me_{-a}]=\pm t^{n+m}h_a$, où les $e_a$ engendrent l'algèbre de Lie de $\underline{U_a}$ et les $h_a$ de $\underline{T}$, comme d'habitude. Dans le cas pluriel, on peut choisir les générateurs $e_{\pm a}$ et $e_{\pm 2a}$ des espaces de poids de $\on{Lie}\underline{G}$, de telle manière que $[t^ne_a,t^me_{-a}]=\pm 2t^{n+m}h_a$ et $[t^ne_{2a},t^me_{-2a}]=\pm t^{n+m}h_a$, cf. (\ref{equation diviseur su3}) et (\ref{equation echange su3}). En particulier, si $2 \notin R^\times$, on voit que la sous-algèbre de Lie engendré par les espaces de poids non nuls ne contient pas les éléments de la forme $t^{n}h_a$, où $n\in \frac{1}{2}\ZZ \setminus \ZZ$ décrit l'ensemble des demi entiers non entiers. 
\end{proof}

En conclusion, le crochet de Lie ne suffit pas pour qu'on retrouve l'algèbre de Lie de $\spf \Oh$ toute entière, en n'opérant qu'avec les sous-groupes radiciels, ne soit qu'on exclut les systèmes de racines non réduits (c'était le choix de la version préliminaire de cet article) ou que la caractéristique paire soit à éviter (moins souhaitable pour nous). La solution consiste à substituer des opérateurs différentiels supérieurs aux linéaires. 

Autrement dit, on fera usage de l'algèbre des distributions $\on{Dist}\Oh$ constituée des opérateurs linéaires continus $\delta:\Oh \rightarrow \ZZ$ qui s'annulent sur une puissance de l'idéal $\Ih$ définissant la section origine. Pour la notion habituelle en géométrie algébrique et ses propriétés, on renvoie le lecteur à \cite[\S1.3]{BTII} et à \cite[I, \S7]{Jan} ; pour les ind-schémas au-dessus des corps, la discussion du concept au début de \cite[\S7.1]{HLR} est aussi recommandée.

\begin{lem}\label{lemme engendrement distributions}
	Gardons les notations du lem. \ref{lemme engendrement algebres de lie}. Alors, la $R$-algèbre de distributions de $L^{\on{gr}}\underline{G}$ est engendrée par celles de $L^{\on{gr}}\underline{U_\pm}$.
\end{lem}

\begin{proof}
	Alors, l'algèbre de distributions $\on{Dist}L^{\on{gr}}\underline{G}$ ne dépend que de l'espace d'élastiques $L^{\on{gr}}\underline{C}$ de la grosse cellule $\underline{C} \subset \underline{G}$. Comme les distributions transforment les produits directs en tensoriels, dès que les voisinages infinitésimaux de l'origine soient des $R$-modules projectifs, voir \cite[1.3.5]{BTII}, on en tire que l'algèbre $\on{Dist}L^{\on{gr}}\underline{G}$ est engendrée par celles de $L^{\on{gr}}\underline{T}$ et de $L^{\on{gr}}\underline{U_a}$, où $a$ décrit la partie $\Phi_G^{\on{nd}}$ des racines non divisibles. 
	
	Comme d'habitude, l'affirmation se réduit aux cas absolument simples de rang relatif $1$ et on commence par déduire du morphisme échange de l'axiome (DRS 3), voir les formules explicites \ref{equation echange sl2} et \ref{equation echange su3}, l'existence d'un morphisme rationnel :
	\begin{equation}\underline{U_a}\times \underline{U_{-a}}\dashrightarrow \underline{U_{-a}}\times \underline{U_a}\times \underline{U_{-a}}\times \underline{U_a} \rightarrow \underline{G},\end{equation}
	où la deuxième flèche est la multiplication et la composée se factorise à travers $\underline{T}$. Ensuite, nous prenons $L^{\on{gr}}$ et complétons autour de la section unité pour obtenir une application définie partout au sens formel ; nous allons montrer que le morphisme induit des distributions est surjectif. 
	
	Les équations qui décrivent l'échange pour le groupe $\on{SL}_2$ son si simples que le soin des calculs pertinents sera laissé au lecteur. Dans le cas de $\on{SU}_3$, on pose soit \begin{equation}\label{equation valeurs choisis calcul des distributions quadratique}u=v'=0, u'=at^n,v=bt^m,\end{equation} soit \begin{equation}\label{equation valeurs choisis calcul des distributions lineaire}u=u'=0, v=at^n,v'=bt^m.\end{equation} Si l'on identifie le sous-groupe des éléments $1-rt^n$ dans le complété de $L^{\on{gr}}\underline{T}$ à la droite affine formelle, alors on obtient un morphisme plat \begin{equation}\widehat{\AAA^2_\ZZ} \rightarrow \widehat{\AAA^1_\ZZ}\end{equation} donné lorsqu'on se trouve dans le cas (\ref{equation valeurs choisis calcul des distributions quadratique}) par \begin{equation}(a,b)\mapsto a^2b,\end{equation} soit par \begin{equation}(a,b)\mapsto ab\end{equation} lorsqu'on se trouve dans le cas (\ref{equation valeurs choisis calcul des distributions lineaire}). Les applications des modules de distributions qui en résultent sont en tout cas surjectives.	
\end{proof}

Ce lemme combiné avec l'équivariance de la normalisation entraîne que \begin{equation}\label{equation surjectif distributions normalisation}\on{Dist}\spf\widetilde{\Oh}\twoheadrightarrow \on{Dist}\spf\Oh\end{equation} est même surjectif. D'autre part, les algèbres de distributions sont toujours sans torsion et on sait déjà que le $\QQ$-localisé de l'homomorphisme (\ref{equation surjectif distributions normalisation}) est bijectif, cf. lem. \ref{lemme normalite en caracteristique nulle}, lem. \ref{lemme platitude des grassmanniennes affines} et cor. \ref{corollaire grassmannienne affine reduite}, d'où la bijectivité à coefficients entiers de (\ref{equation surjectif distributions normalisation}). 

Puis on observe sans peine que l'algèbre de Lie
\begin{equation}
\on{Lie}\spf\Oh \subset \on{Dist}\spf\Oh
\end{equation} est un sous-module saturé de l'algèbre de distributions, c'est-à-dire le conoyau est sans torsion : en effet, si $n\delta$ s'annule sur $\Ih^2$, alors la distribution $\delta$ s'annule aussi sur $\Ih^2$. Par suite, l'homomorphisme
 	\begin{equation}\label{equation isomorphisme espaces tangents normalise} \on{Lie}\spf\widetilde{\Oh}\xrightarrow{\sim} \on{Lie}\spf\Oh
\end{equation} est bijectif.
Maintenant, on est prêt à prouver le théorème de normalité des schémas de Schubert, cf. th. \ref{theoreme normalite des schemas de Schubert}.

\begin{proof}[Démonstration du th. \ref{theoreme normalite des schemas de Schubert}]
On affirme qu'il suffit de construire une section de l'homomorphisme de normalisation \begin{equation}\label{equation normalisation anneaux locaux completes schubert}\Oh_{w} \rightarrow \widetilde{\Oh}_{w} \end{equation} entre les anneaux locaux séparés complétés du schéma de Schubert associé à $w$ et de son normalisé le long de leurs sections origines.
 En effet, le support du conoyau des faisceaux structuraux serait ainsi trivial, car stable par $L^+\underline{\Gscr_\abf}$ et fermé.
 
 On procède par un argument inductif de déformation, comp. avec \cite[p. 53]{Fal} et \cite[9.g]{PR}. Grâce à la résolution de Demazure, l'anneau $\Oh_w$ peut être filtré par une suite d'idéaux décroissante et exhaustive
 \begin{equation}
  0 \subset \dots \subset I_{n+1} \subset I_n \subset I_{n-1} \subset \dots \subset \Oh_w,
 \end{equation}
  telle que leurs conoyaux $A_n$ soient plats sur $\ZZ$ ayant $I_{n-1}$ pour idéal de carré nul. Alors, le point canonique $s_n$ de $\spf \Oh_w$ à valeurs dans $A_n$ peut être relevé uniquement en un point $\widetilde{s}_n$ de $\spf \widetilde{\Oh}_w$. 
  
  En effet, comme les espaces tangents commutent aux changements de base plats, on peut appliquer le critère de lissité formelle, ainsi que l'isomorphisme (\ref{equation isomorphisme espaces tangents normalise}), pour relever uniquement $s_{n+1}$ à $\spf \widetilde{\Oh}$. Vu que ce relèvement n'est autre que l'identité en caractéristique nulle, voir le lem. \ref{lemme normalite en caracteristique nulle}, la section $\widetilde{s}_n \in \spf \widetilde{\Oh}(A_n)$ doit se factoriser à travers $\spf \widetilde{\Oh}_w$. En particulier, les $\widetilde{s}_n$ s'assemblent les unes avec les autres pour donner un morphisme $\spec \Oh_w \rightarrow \spec \widetilde{\Oh}_w$, par le lemme de Chevalley.
\end{proof}

En collaboration avec Haines--Richarz, voir \cite{HLR}, nous avons déterminé tous les groupes $G$ dont les variétés de Schubert en caractéristique positive sont toutes normales, sous des hypothèses de ramification modérée, cf. \cite[prop. 2.1, th. 2.5]{HLR}. Ceci s'étend aussi au présent cadre.

\begin{cor}[Haines--L.--Richarz]\label{corollaire non normalite varietes de schubert}
	Pour que les variétés de Schubert $\Gr_{\Gscr_{\fbf,p},\leq w}$ soient toutes normales, il faut et il suffit que $p$ ne divise pas l'ordre du groupe fondamental $\pi_1(G^{\on{dér}})$ du groupe dérivé. 
\end{cor}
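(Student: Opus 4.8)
The plan is to prove the two implications separately, following the tamely ramified blueprint of \cite[prop. 2.1, th. 2.5]{HLR}: the sufficiency will be reduced to the simply connected case of Theorem \ref{theoreme normalite des schemas de Schubert}, while the necessity will be extracted from the reducedness criterion of Corollary \ref{corollaire grassmannienne affine reduite}.

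For the necessity I would argue contrapositively. If $p$ divides $|\pi_1(G^{\on{dér}})|$, then Corollary \ref{corollaire grassmannienne affine reduite} shows that $\Gr_{\Gscr_{\fbf,p}}$ is not geometrically reduced. As this ind-scheme is the filtered colimit of the Schubert varieties $\Gr_{\Gscr_{\fbf,p},\leq w}$ along closed immersions, at least one of them is non-reduced, hence non-normal, since a normal scheme is reduced. Thus the Schubert varieties cannot all be normal.

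For the sufficiency, assume $p\nmid|\pi_1(G^{\on{dér}})|$; I aim to reduce to the case $G=G^{\on{sc}}$ already settled in Theorem \ref{theoreme normalite des schemas de Schubert}. First the central torus $Z(G)^\circ$ may be discarded: it contributes to $\Gr_{\underline{\Gscr_\fbf}}$ only through the affine Grassmannian of an induced torus, which is étale over the base and merely permutes and translates the connected components, so every $\Gr_{\Gscr_{\fbf,p},\leq w}$ is isomorphic to a Schubert variety attached to $G^{\on{dér}}$. Next, the simply connected cover $\pi\colon G^{\on{sc}}\to G^{\on{dér}}$ is a central isogeny whose kernel has order $|\pi_1(G^{\on{dér}})|$ prime to $p$, hence is an étale group scheme over $\FF_p$. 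Therefore $\pi$ extends to an étale homomorphism of parahoric group schemes $\underline{\Gscr_\fbf^{\on{sc}}}\to\underline{\Gscr_\fbf}$, and the induced morphism of affine Grassmannians restricts to an isomorphism $\Gr_{\underline{\Gscr_\fbf^{\on{sc}}}}\xrightarrow{\sim}\Gr_{\underline{\Gscr_\fbf}}^{\circ}$ onto the neutral component, carrying Schubert varieties to Schubert varieties. Since every component of $\Gr_{\underline{\Gscr_\fbf}}$ is a $\widetilde{W}$-translate of the neutral one, it follows that each $\Gr_{\Gscr_{\fbf,p},\leq w}$ is isomorphic to a Schubert variety for the simply connected group $G^{\on{sc}}$, which is geometrically normal by Theorem \ref{theoreme normalite des schemas de Schubert}.

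The main obstacle is the sufficiency, precisely the assertion that the central isogeny induces an isomorphism of neutral components over $\FF_p$ at the parahoric level. The hypothesis $p\nmid|\pi_1(G^{\on{dér}})|$ is exactly what makes the kernel étale inside the loop group, so that no inseparability — and hence no extra nilpotents — can arise; this is the structural origin of the dichotomy, the wild case $p\mid|\pi_1(G^{\on{dér}})|$ turning the same map into a non-trivial universal homeomorphism responsible for the non-reducedness used above. Alternatively, one could bypass the isogeny comparison and rerun the deformation argument of Theorem \ref{theoreme normalite des schemas de Schubert} verbatim: the distribution computation of Lemma \ref{lemme engendrement distributions} holds for arbitrary $G$, and Corollary \ref{corollaire grassmannienne affine reduite} guarantees that the special fibre stays reduced whenever $p\nmid|\pi_1(G^{\on{dér}})|$, which is all that the lifting of the normalization needs; care would then be required to anchor the induction in characteristic zero for non--simply connected $G$.
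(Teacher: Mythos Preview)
Your sufficiency argument is along the right lines and essentially coincides with the paper's (terse) strategy: the key input is that the kernel of $G^{\on{sc}}_{\eta_p}\to G_{\eta_p}$ is étale precisely when $p\nmid|\pi_1(G^{\on{dér}})|$, after which the comparison of neutral components reduces everything to Theorem~\ref{theoreme normalite des schemas de Schubert}. The paper packages both this reduction and the étaleness criterion under a reference to \cite[prop.~2.1]{HLR}.

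Your necessity argument, however, has a genuine gap. Over a field the Schubert varieties $\Gr_{\Gscr_{\fbf,p},\leq w}$ are by construction reduced: they are schematic images of the smooth Demazure resolutions, equivalently reduced orbit closures. Hence the ind-scheme presentation of $\Gr_{\Gscr_{\fbf,p}}$ as a colimit of Schubert varieties only recovers $(\Gr_{\Gscr_{\fbf,p}})^{\on{red}}$, and the non-reducedness supplied by Corollary~\ref{corollaire grassmannienne affine reduite} says nothing about any individual $\Gr_{\Gscr_{\fbf,p},\leq w}$. What does become non-reduced are the fibres $\Gr_{\underline{\Gscr_\fbf},\leq w}\otimes\FF_p$ of the integral Schubert schemes, but these are nilpotent thickenings of the $\Gr_{\Gscr_{\fbf,p},\leq w}$ and their non-reducedness does not force the underlying reduced variety to be non-normal. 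The paper proceeds differently: it shows that when $p\mid n$ the kernel of $G^{\on{sc}}_{\eta_p}\to G_{\eta_p}$ contains a non-étale finite multiplicative subgroup (via \cite[ass.~12]{Hein}), and then invokes the main argument of \cite[prop.~2.1]{HLR}, which extracts an explicit obstruction to normality directly from the non-étale isogeny rather than from any reducedness statement.

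A side remark on your alternative route: Lemma~\ref{lemme engendrement distributions} is stated and proved only under the hypothesis $G=G^{\on{sc}}$ inherited from Lemma~\ref{lemme engendrement algebres de lie}, so you cannot invoke it ``for arbitrary $G$''. Were the distribution generation to hold without that hypothesis, the deformation proof of Theorem~\ref{theoreme normalite des schemas de Schubert} would go through unchanged and contradict the very corollary under discussion.
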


\begin{proof}
	L'affirmation cruciale qu'on va prouver est que le noyau de \begin{equation}G^{\on{sc}}_{\eta_p}\rightarrow G_{\eta_p}\end{equation} est étale si et seulement si $p$ ne divise pas l'ordre $n$ du groupe fondamental du dérivé de $G$. En effet, on voit par réduction au cas déployé, cf. prop. \ref{proposition plongement restrictions des scalaires groupe de tits}, que le noyau générique modulo $p$ est contenu dans $T^{\on{sc}}_{\eta_p}[n]$. Si $(p,n)=1$, ce sous-groupe de $n$-torsion est fini étale. Sinon, on peut déduire grâce à \cite[ass. 12]{Hein} que le noyau contient toutefois un sous-groupe fini et multiplicatif d'ordre $n$, donc non étale. Armés de telle information, on peut appliquer l'argument principal de \cite[prop. 2.1]{HLR} pour déduire l'existence d'une obstruction dans les cas souhaités.
\end{proof}

Le corollaire suivant est dû à plusieurs mathématiciens qui l'ont démontré à des degrés divers de généralité.
 
\begin{cor}[Faltings, Pappas--Rapoport, Haines--L.--Richarz]\label{corollaire grassmannienne affine reduite}
La grassmannienne affine $\Gr_{\underline{\Gscr_\fbf}}$ est réduite si et seulement si $G$ est semi-simple. Pour que ses fibres soient aussi réduites, il faut et il suffit que $G$ soit simplement connexe.
\end{cor}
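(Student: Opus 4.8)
Le plan est de traiter séparément les deux équivalences, en ramenant chaque fois la question soit au cas simplement connexe déjà acquis, soit à la présence de nilpotents produite par un tore central, et en exploitant la platitude sur $\ZZ$ pour passer de l'espace total à ses fibres.

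Pour l'énoncé sur l'espace total, je commencerais par remarquer que, $\Gr_{\underline{\Gscr_\fbf}}$ étant plat sur $\ZZ$ d'après le lem. \ref{lemme platitude des grassmanniennes affines}, son faisceau structural est sans $\ZZ$-torsion et s'injecte donc dans sa restriction à la fibre générique ; une section nilpotente serait alors de $\ZZ$-torsion, donc nulle. Ainsi $\Gr_{\underline{\Gscr_\fbf}}$ est réduite si et seulement si sa fibre en caractéristique nulle $\Gr_{\Gscr_{\fbf,0}}$ est réduite. Il reste à caractériser ce fait en caractéristique nulle, et j'invoquerais la dichotomie classique de Pappas--Rapoport, cf. \cite[th. 6.1, prop. 6.5]{PR} : lorsque $G$ est semi-simple, on se ramène par l'isogénie centrale $G^{\on{sc}}\to G$ --- dont le noyau est étale en caractéristique nulle, de sorte que les composantes connexes de $\Gr_{\Gscr_{\fbf,0}}$ sont des translatées de la composante neutre réduite --- au cas simplement connexe réglé au lem. \ref{lemme normalite en caracteristique nulle} et dans \cite[prop. 9.9]{PR} ; réciproquement, si $G$ n'est pas semi-simple, son cocentre $D=G/G^{\on{dér}}$ est un tore non trivial, le morphisme induit $\Gr_{\Gscr_{\fbf,0}}\to \Gr_{\mathscr{D}_0}$ vers la grassmannienne affine du cocentre est fidèlement plat, et cette dernière n'est pas réduite --- sa composante neutre porte le point nilpotent $1+\epsilon t^{-1}$ à valeurs dans $\QQ[\epsilon]$ sur un facteur $\GG_m$ ---, donc $\Gr_{\Gscr_{\fbf,0}}$ n'est pas réduite.

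Pour l'énoncé sur les fibres, je distinguerais trois cas. Si $G=G^{\on{sc}}$ est simplement connexe, le th. \ref{theoreme normalite des schemas de Schubert} affirme que les schémas de Schubert $\Gr_{\underline{\Gscr_\fbf},\leq w}$ sont géométriquement normaux sur $\ZZ$, donc à fibres géométriques normales, en particulier réduites ; comme ces schémas épuisent la grassmannienne affine, toutes les fibres de $\Gr_{\underline{\Gscr_\fbf}}$ sont réduites. Si $G$ est semi-simple mais non simplement connexe, alors $\pi_1(G^{\on{dér}})=\pi_1(G)$ est non trivial et divisible par un premier $p$ ; d'après \cite[th. 1.5, prop. 3.4]{HLR} (comp. avec le cor. \ref{corollaire non normalite varietes de schubert}) la fibre $\Gr_{\underline{\Gscr_\fbf},\leq w}\otimes \FF_p$ d'un schéma de Schubert convenable n'est pas réduite, donc $\Gr_{\Gscr_{\fbf,p}}$ non plus par changement de base. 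Enfin, si $G$ n'est pas semi-simple, le calcul du paragraphe précédent fournit déjà une fibre non réduite en caractéristique nulle. Les fibres sont donc toutes réduites exactement lorsque $G=G^{\on{sc}}$.

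L'obstacle principal, et le seul endroit où la généralité sauvagement ramifiée intervient vraiment, réside dans les deux cas extrêmes : d'une part, garantir que la grassmannienne affine simplement connexe soit réduite à fibres réduites, ce qui repose sur le th. \ref{theoreme normalite des schemas de Schubert} et donc sur l'argument d'algèbres de distributions du lem. \ref{lemme engendrement distributions}, lequel supplée à l'engendrement par l'algèbre de Lie qui fait défaut en caractéristique résiduelle $2$ quand $\Phi_G$ est non réduit ; d'autre part, exclure que les fibres soient réduites lorsque $\pi_1(G^{\on{dér}})$ rencontre la caractéristique résiduelle, ce pour quoi le fait, établi avec Haines--Richarz dans \cite{HLR}, que les fibres des schémas de Schubert ne sont pas réduites doit être transposé au présent cadre pseudo-réductif. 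Les ingrédients torique et de platitude étant comparativement souples, le contenu véritable est l'importation uniforme, à travers tous les premiers, de la trichotomie normal/non-normal.
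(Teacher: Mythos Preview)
Il y a une lacune sérieuse dans le cas simplement connexe : tu affirmes que \og les schémas de Schubert épuisent la grassmannienne affine \fg{}, mais c'est précisément le contenu de l'énoncé dans ce cas. Les schémas de Schubert $\Gr_{\underline{\Gscr_\fbf},\leq w}$ sont par définition des sous-schémas fermés intègres de $\Gr_{\underline{\Gscr_\fbf}}$ ; qu'ils l'épuisent \emph{topologiquement} résulte de la décomposition de Bruhat, mais rien n'exclut a priori que $\Gr_{\underline{\Gscr_\fbf}}$ porte des nilpotents non vus par ces fermés --- c'est d'ailleurs exactement ce qui se produit lorsque $G$ n'est pas semi-simple. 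La preuve du papier comble cette lacune en montrant, via le lem.~\ref{lemme engendrement distributions} sur les distributions, que le voisinage formel $\spf\Oh_p$ de l'origine dans $\Gr_{\Gscr_{\fbf,p}}$ est déjà contenu dans la colimite des schémas de Schubert (les deux ont même algèbre de distributions car opérés par $L\underline{U_\pm}$), puis en propageant par translation à tous les points à valeurs dans les anneaux artiniens locaux. Le th.~\ref{theoreme normalite des schemas de Schubert} seul ne suffit pas : il décrit les schémas de Schubert, pas l'ind-structure ambiante de $\Gr_{\underline{\Gscr_\fbf}}$.

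Une seconde lacune apparaît dans le cas semi-simple non simplement connexe : de ce que la fibre $\Gr_{\underline{\Gscr_\fbf},\leq w}\otimes \FF_p$ d'un schéma de Schubert soit non réduite, tu ne peux pas conclure que $\Gr_{\Gscr_{\fbf,p}}$ est non réduite --- un sous-schéma fermé d'un schéma réduit peut parfaitement être non réduit (songer à $\spec \FF_p[x]/(x^2)\hookrightarrow \AAA^1_{\FF_p}$). L'argument \og par changement de base \fg{} que tu invoques ne marche pas dans ce sens. Le papier contourne ceci en appliquant directement les arguments de \cite[\S7]{HLR}, qui établissent la non-réduction de la grassmannienne $\Gr_{\Gscr_{\fbf,p}}$ elle-même dès que le noyau de $G^{\on{sc}}_{\eta_p}\to G_{\eta_p}$ n'est pas étale, et non pas celle d'un sous-schéma de Schubert.
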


\begin{proof}
	On prend l'opportunité pour reprouver que la grassmannienne affine est géométriquement réduite au cas où $G$ est simplement connexe, sans recourir ni à \cite[cor. 11]{Fal} en caractéristique nulle ni au lem. \ref{lemme platitude des grassmanniennes affines}. Le voisinage formel $\spf \Oh_p$ de la grassmannienne affine $\Gr_{\Gscr_{\fbf,p}}$ modulo $p$ est contenu dans la colimite des schémas de Schubert, parce que tous les deux sont opérés par les groupes de lacets radiciels, donc les algèbres de distributions coïncident, voir le lem. \ref{lemme engendrement distributions}. Les points à valeurs dans les anneaux artiniens locaux se translatent sans peine vers ce voisinage formel, d'où notre affirmation. 
	
	En caractéristique nulle, on peut montrer aisément que $\Gr_{\Gscr_{\fbf,0}}$ est réduite si et seulement si $G$ est semi-simple, voir \cite[th. 6.1, prop. 6.5]{PR}, d'où l'assertion à coefficients entiers grâce au lem. \ref{lemme platitude des grassmanniennes affines}. Lorsque $G$ est semi-simple à groupe fondamental $\pi_1(G)$ non trivial, un nombre premier $p$ divisant le cardinal en est choisi et nous observons que le noyau de $G^{\on{sc}}_{\eta_p}\rightarrow G_{\eta_p}$ n'est pas étale, à cause de \cite[ass. 12]{Hein}. Par conséquent, il s'avère par une application soigneuse des arguments de \cite[\S7]{HLR} que $\Gr_{\Gscr_{\fbf,p}}$ n'est pas réduite. Il semble que ceci pourrait aussi découler de l'étude du morphisme d'algèbre de distributions entières, mais nous n'avons pas pu montrer que le conoyau est sans torsion divisible, comme par exemple $\QQ/\ZZ$.
\end{proof}

Cette section se conclut par donner une application à la géométrie des schémas de Richardson $\Gr_{\underline{\Gscr_\fbf}, \leq v}^{\geq w}$ sur les entiers, cf. (\ref{equation definition schema de richardson}), qui n'est parue dans la littérature des groupes de Kac--Moody affines, tant que nous le savons, cf. aussi \cite[rem. 5.10]{KS14}, mais qui est probablement connue parmi les experts, voir \cite[lem. 1]{BL01} pour les variétés de drapeaux finies et \cite[th. 3.1]{KS14} en caractéristique nulle.

\begin{cor}[Brion--Lakshimibai, Mathieu--Kumar--Schwede, L.]\label{corollaire schemas de richardson}
	Sous l'hypothèse $G^{\on{dér}}=G^{\on{sc}}$, alors les schémas de Richardson sont plats, intègres de dimension relative $l(v)-l(w)$, de Cohen--Macaulay et géométriquement normaux à fibres compatiblement scindées en caractéristique positive.
\end{cor}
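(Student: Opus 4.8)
Le plan est de combiner la normalité et la cohérence de Cohen--Macaulay des schémas de Schubert, cf. th. \ref{theoreme normalite des schemas de Schubert}, avec celles des quotients locaux des cycles de Schubert, cf. th. \ref{theoreme cycles de schubert topologie et normalite}, au moyen d'un scindage de Frobenius simultanément compatible aux \emph{deux} stratifications de Bruhat opposées, à la suite de Brion--Lakshmibai \cite{BL01} pour le cas fini et de Mathieu--Kumar--Schwede \cite{KS14} dans le cadre de Kac--Moody. On se ramène d'emblée au cas $G=G^{\on{sc}}$ simplement connexe grâce à la $z$-extension canonique (\ref{equation z-extension canonique tordu}), comme dans la preuve du th. \ref{theoreme normalite des schemas de Schubert}, de sorte que la grassmannienne affine soit réduite, cf. cor. \ref{corollaire grassmannienne affine reduite}, et que les $\Gr_{\underline{\Gscr_\fbf},\leq v}$ soient géométriquement normaux et de Cohen--Macaulay. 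On rappelle que, par définition (\ref{equation definition schema de richardson}), le schéma de Richardson $\Gr_{\underline{\Gscr_\fbf},\leq v}^{\geq w}$ est l'intersection schématique du schéma de Schubert $\Gr_{\underline{\Gscr_\fbf},\leq v}$, invariant par $L^+\underline{\Gscr_\abf}$, et du cycle $\Gr_{\underline{\Gscr_\fbf}}^{\geq w}$, invariant par $L^-\underline{\Gscr_\abf}$.

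La première étape se déroulera en caractéristique $p>0$. On construira un scindage de Frobenius d'un ouvert $L^-\underline{\Gscr_\abf}$-stable assez grand de $\Gr_{\Gscr_{\fbf,p}}$, cf. (\ref{equation ouverts invariants par cordes de la grassmannienne affine}), simultanément compatible à tous les schémas de Schubert $\Gr_{\Gscr_{\fbf,p},\leq u}$ pour $u\leq v$ et à tous les cycles $\Gr_{\Gscr_{\fbf,p}}^{\geq w'}$. Pour cela, on reprendra le faisceau inversible ample $\Lh$ de degré $1$ sur les droites de Schubert, cf. cor. \ref{corollaire groupe de picard}, et on exhibera une section globale d'une puissance convenable de $\Lh$ dont le diviseur des zéros est la somme des diviseurs de bord des deux stratifications opposées ; sa puissance $(p-1)$-ième fournira le scindage canonique à la Mathieu, via le critère de Mehta--Ramanathan, cf. \cite[\S1]{BK}, exactement comme dans la preuve de la prop. \ref{proposition varietes de schubert proprietes}. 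L'intersection schématique $\Gr_{\Gscr_{\fbf,p},\leq v}^{\geq w}$ étant alors compatiblement scindée, elle sera réduite et elle-même scindée en caractéristique $p$.

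On établira ensuite l'intégrité et la dimension. La cellule de Richardson ouverte $\Gr_{\Gscr_{\fbf,p},v}\cap\Gr_{\Gscr_{\fbf,p}}^{w}$ est non vide si et seulement si $w\leq v$, d'après le critère démontré au th. \ref{theoreme cycles de schubert topologie et normalite}, et, le cas échéant, elle est lisse, irréductible et de dimension $l(v)-l(w)$, ce qui se lira sur la décomposition radicielle (\ref{equation decomposition cordes conjugaison par iwahory-weyl}) et sur l'action contractante de $\GG_{m,\ZZ}$, cf. prop. \ref{proposition dynamique groupes d'arcs et de cordes}. Comme elle est schématiquement dense dans la fibre, chaque fibre du schéma de Richardson sera intègre de dimension $l(v)-l(w)$. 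La platitude sur $\ZZ$ et la dimension relative en résulteront en combinant la réduction ci-dessus, la platitude du schéma de Schubert, cf. lem. \ref{lemme platitude des grassmanniennes affines}, et celle du cycle, cf. th. \ref{theoreme cycles de schubert topologie et normalite}, jointes à l'égalité des dimensions fibre à fibre. Pour la normalité géométrique et la cohérence de Cohen--Macaulay, on construira une résolution de Richardson comme produit fibré de deux résolutions de type Demazure, l'une issue du côté $L^+$ et l'autre du côté $L^-$, à la suite de \cite[th. 3.1]{KS14} ; le scindage compatible précédent, joint à l'annulation des images directes supérieures du faisceau structural, cf. \cite[lem. 3.1]{KS09} et prop. \ref{proposition varietes de schubert proprietes}, montrera que cette résolution est rationnelle, d'où la normalité et la cohérence de Cohen--Macaulay des fibres en caractéristique $p$. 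La normalité en caractéristique nulle sera fournie par \cite[th. 3.1]{KS14} et l'argument du lem. \ref{lemme normalite en caracteristique nulle}, et l'on passera aux entiers en invoquant la platitude sur $\ZZ$ et la normalité géométrique des fibres, qui entraînent la normalité du schéma total.

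La partie difficile sera la construction du scindage de Frobenius canonique compatible simultanément aux deux familles de Schubert opposées ; une fois celui-ci disponible, la réductibilité et le scindage des intersections en découleront formellement. Le second point délicat sera de contrôler l'annulation cohomologique le long de la résolution de Richardson afin d'en tirer la cohérence de Cohen--Macaulay : il faudra vérifier que l'intersection est bien propre, c'est-à-dire de la dimension attendue $l(v)-l(w)$, ce qui est assuré par l'étude des cellules ci-dessus, puis que les deux projections de Demazure restent scindées relativement. Le reste ne sera que routine, en transportant par platitude les énoncés fibre par fibre, comme dans la comparaison avec la théorie de Kac--Moody de l'annexe \ref{comparaison avec les varietes de demazure de kac-moody}.
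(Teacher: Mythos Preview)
Ta construction du scindage de Frobenius simultanément compatible aux deux stratifications opposées est sur la bonne voie et coïncide avec l'argument que le papier invoque pour cette partie, à savoir le scindage $L^+\underline{\Gscr_\abf}$-canonique de Mathieu rédigé par Kumar--Schwede \cite[prop.~5.3]{KS14}.

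En revanche, ta stratégie pour la normalité et la propriété de Cohen--Macaulay comporte une lacune réelle. Une \og résolution de Richardson comme produit fibré de deux résolutions de type Demazure, l'une issue du côté $L^+$ et l'autre du côté $L^-$\fg{} n'est pas bien définie dans ce cadre : il n'y a pas de résolution de Bott--Samelson disponible pour les cycles $\Gr_{\underline{\Gscr_\fbf}}^{\geq w}$ (seuls leurs quotients locaux sont de type fini, et le papier n'en construit aucune désingularisation), et quand bien même on en disposerait, le produit fibré de deux morphismes birationnels au-dessus d'une base singulière n'est en général ni lisse ni même normal. Ce n'est d'ailleurs pas ce que fait \cite[th.~3.1]{KS14}, qui raisonne en caractéristique nulle via les singularités log-terminales, argument qui ne se transporte pas tel quel en caractéristique positive. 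Ton argument de platitude sur $\ZZ$ est lui aussi incomplet : la platitude de deux fermés et la constance de la dimension de leur intersection n'entraînent pas la platitude de cette intersection schématique.

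Le papier contourne entièrement ces difficultés par l'astuce de Brion--Lakshmibai \cite[lem.~1]{BL01}. On considère l'espace de convolution $L\underline{G}\times^{L^+\underline{\Gscr_\abf}}\Gr_{\underline{\Gscr_\fbf},\leq v}$, muni du morphisme multiplication $m$ vers $\Gr_{\underline{\Gscr_\fbf}}$ (fibration localement triviale pour la topologie ouverte, à fibres des variétés de Schubert) et de la projection $\pi$ vers $\Gr_{\underline{\Gscr_\abf}}$. On pose $Z=m^{-1}(\Gr_{\underline{\Gscr_\fbf}}^{\geq w})$ ; comme $m$ est localement triviale, $Z$ hérite directement de la platitude, de la normalité géométrique et de Cohen--Macaulay, en combinant le th.~\ref{theoreme cycles de schubert topologie et normalite} pour la base et le th.~\ref{theoreme normalite des schemas de Schubert} pour la fibre. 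Or la fibre $\pi^{-1}(e)$ s'identifie au schéma de Richardson, et par $L^-\underline{\Gscr_\abf}$-équivariance de $\pi$ on obtient $\pi^{-1}(L^{--}\underline{\Gscr_\abf})\simeq L^{--}\underline{\Gscr_\abf}\times \Gr_{\underline{\Gscr_\fbf},\leq v}^{\geq w}$. Ainsi le schéma de Richardson multiplié par un ind-schéma lisse est ouvert dans $Z$, d'où immédiatement toutes les propriétés géométriques énoncées hormis le scindage, sans aucune résolution supplémentaire à construire.
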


\begin{proof}
	La preuve qui va suivre est reprise de \cite[lem. 1]{BL01}. On considère la fibration 
	\begin{equation}L\underline{G}\times^{L^+\underline{\Gscr_\abf}}\Gr_{\underline{\Gscr_\fbf}, \leq v} \rightarrow \Gr_{\underline{\Gscr_\fbf}}\end{equation} 
	localement triviale pour la topologie ouverte, voir cor. \ref{corollaire immersion ouverte}, dont les fibres sont isomorphes à certaines variétés de Schubert. Puis on prendre l'ind-schéma $Z$ déduit par changement de base le long de $\Gr_{\underline{\Gscr_\fbf}}^{\geq w} $ et qui est naturellement opéré par $L^-\underline{\Gscr_\abf}$. D'autre part, l'on a une application projection canonique de $Z$ vers $\Gr_{\underline{\Gscr_\abf}}$, dont le produit fibré avec la section origine s'identifie sans peine au schéma de Richardson $\Gr_{\underline{\Gscr_\fbf}, \leq v}^{\geq w}$. Évoquant la $L^-\underline{\Gscr_\abf}$-équivariance de ce morphisme, nous arrivons à ce que l'image réciproque dans $Z$ de la cellule ouverte $L^{--}\underline{\Gscr_\abf}$ de $\Gr_{\underline{\Gscr_\abf}}$ est isomorphe à son produit avec $\Gr_{\underline{\Gscr_\fbf}, \leq v}^{\geq w}$. En particulier, les propriétés géométriques qui ont été énoncées sauf le scindage découlent très vite de leurs analogues pour les schémas de Schubert, cf. th. \ref{theoreme normalite des schemas de Schubert}, ainsi que pour les quotients locaux des cycles de Schubert, cf. th. \ref{theoreme cycles de schubert topologie et normalite}. 
	
	Le part sur le scindage est un théorème non publié de Mathieu, qui fut rédigé par Kumar--Schwede en \cite[prop. 5.3]{KS14}. L'outil principal est une notion de scindage $L^+\underline{\Gscr_\abf}$-canonique au sens de \cite[déf. 4.1.1]{BK}, en termes du sous-tore maximal $\underline{S}$ et des sous-groupes radiciels $\underline{V_\alpha}$ associés aux racines réelles simples positives $\alpha \in \Delta_G^{\on{ré}}$. On s'en sert pour montrer que les cycles de Schubert en caractéristique $p$ sont compatiblement scindés, en appliquant l'opération du Borel opposé $L^-\underline{\Gscr_\abf}$, donc il en est de même des schémas de Richardson. 
\end{proof}

On obtient la description ci-dessous du groupe de Picard des schémas de Schubert $\Gr_{\underline{\Gscr_{\fbf}},\leq w}$.

\begin{cor}\label{corollaire groupe de picard}
	Si $G^{\on{dér}}=G^{\on{sc}}$, alors le foncteur de Picard rigidifié
	\begin{equation}
	\on{Pic}(\Gr_{\underline{\Gscr_\fbf}, \leq w}) \simeq  \prod_{s \leq w, l(s)=1}\ZZ \cdot\Lh_s
	\end{equation} est localement libre en les faisceaux inversibles  induits par les diviseurs de Richardson $\Gr_{\underline{\Gscr_\fbf}, \leq w}^{\geq s}$, où la longueur de $s \leq w$ est égale à $1$. 
\end{cor}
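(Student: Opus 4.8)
The plan is to prove the statement for the rigidified relative Picard functor $\underline{\on{Pic}}_{\Gr_{\underline{\Gscr_\fbf},\leq w}/\ZZ}$ and to descend it from the geometric fibres. First I would note that this functor is representable by a separated $\ZZ$-group scheme, locally of finite presentation: the schemes of Schubert are flat and proper over $\ZZ$ with geometrically integral fibres (th.~\ref{theoreme normalite des schemas de Schubert}) and carry the tautological section $n_w\cdot e$, while cohomological flatness follows from the vanishing of $R^1\pi_*\Oh$ already exploited in prop.~\ref{proposition varietes de schubert proprietes} and cor.~\ref{corollaire normalises des schemas de schubert}. Since every geometric fibre satisfies $H^1(\Gr_{\underline{\Gscr_\fbf},\leq w}\otimes k,\Oh)=0$, the tangent space of $\underline{\on{Pic}}$ vanishes, so $\underline{\on{Pic}}$ is formally étale, hence étale and separated over $\ZZ$. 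Conceptually, the absence of nontrivial connected finite étale covers of $\spec\ZZ$ (Minkowski) already forbids any exotic component; concretely it will be enough to produce globally defined sections that form a basis in each fibre.

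The fibrewise computation over an algebraically closed field $k$ is the classical one and is valid in every characteristic thanks to normality (th.~\ref{theoreme normalite des schemas de Schubert}). I would use the Demazure resolution $\pi\colon\on{Dem}_\wf\otimes k\to\Gr_{\underline{\Gscr_\fbf},\leq w}\otimes k$, an iterated $\PP^1$-fibration, so that $\on{Pic}(\on{Dem}_\wf\otimes k)$ is free of rank $l(w)$ by the projective bundle formula. As $\pi_*\Oh=\Oh$ and $\pi$ is birational onto its normal target, $\pi^*$ embeds $\on{Pic}(\Gr_{\underline{\Gscr_\fbf},\leq w}\otimes k)$ as the subgroup of line bundles of degree $0$ along every curve contracted by $\pi$; a Bruhat bookkeeping of these curves, exactly as in the Kac--Moody treatment of Kumar, cf.~\cite{KumDem}, identifies this subgroup with the free abelian group on the classes of the codimension one Richardson subvarieties $\Gr_{\underline{\Gscr_\fbf},\leq w}^{\geq s}$ with $s\leq w$ and $l(s)=1$. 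Equivalently these divisors are the irreducible components of the complement of the big cell $n_wL^{--}\underline{\Gscr_\fbf}\cdot e\cap\Gr_{\underline{\Gscr_\fbf},\leq w}$, an affine space of trivial Picard group, so excision yields generation, while pairing against the Schubert curves of the tower gives a unitriangular matrix, whence freeness of rank equal to the cardinality of $\{s\leq w:\ l(s)=1\}$, independently of $k$. That each Richardson divisor is Cartier, so that $\Lh_s$ is genuinely invertible even in residual characteristic $2$, follows from the ample line bundles of th.~\ref{theoreme cycles de schubert topologie et normalite} together with the Frobenius splitting of cor.~\ref{corollaire schemas de richardson}.

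Finally I would assemble the integral statement. By cor.~\ref{corollaire schemas de richardson} the Richardson schemes $\Gr_{\underline{\Gscr_\fbf},\leq w}^{\geq s}$ are flat over $\ZZ$ with integral normal fibres of relative dimension $l(w)-1$; being fibrewise effective Cartier divisors inside a flat ambient they are relative effective Cartier divisors, hence define invertible sheaves $\Lh_s$ over $\ZZ$ whose restriction to each fibre is the basis found above. The resulting homomorphism $\prod_{s\leq w,\,l(s)=1}\ZZ\cdot\Lh_s\to\underline{\on{Pic}}$ is a map of étale separated $\ZZ$-group schemes which is an isomorphism on every geometric fibre, hence an isomorphism, and this is precisely the asserted description $\on{Pic}(\Gr_{\underline{\Gscr_\fbf},\leq w})\simeq\prod_{s\leq w,\,l(s)=1}\ZZ\cdot\Lh_s$. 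The hard part is exactly this integral descent: I must rule out extra Picard classes or torsion appearing in small residual characteristics, which is why the characteristic-free normality of the schemes of Schubert and the uniform vanishing $R^1\pi_*\Oh=0$ coming from Frobenius splitting, together with the étaleness of $\underline{\on{Pic}}$ over $\ZZ$, are indispensable.
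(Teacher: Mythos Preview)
Your argument is correct and reaches the same conclusion, but it is organized quite differently from the paper's proof.

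The paper proceeds directly over $\ZZ$ in three short steps. First, it constructs each $\Lh_s$ on the \emph{whole} grassmannienne $\Gr_{\underline{\Gscr_\fbf}}$ by observing that the local quotients $L^{--}\underline{\Gscr_\abf}(n)\backslash \Gr_{\underline{\Gscr_\fbf}}^{v\geq}$ of th.~\ref{theoreme cycles de schubert topologie et normalite} are smooth, so the codimension-one Schubert cycles yield honest Cartier divisors there; restricting to $\Gr_{\underline{\Gscr_\fbf},\leq w}$ gives the $\Lh_s$. Second, it invokes cor.~\ref{corollaire schemas de richardson} to read off that the degree of $\Lh_s$ on the Schubert line $\Gr_{\underline{\Gscr_\fbf},\leq t}$ is the Kronecker $\delta_{st}$. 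Third, it quotes the standard Demazure-resolution exercise (\cite[cor.~12]{Fal}, \cite[XII, prop.~6]{Mat}) that, thanks to normality (th.~\ref{theoreme normalite des schemas de Schubert}), a line bundle on $\Gr_{\underline{\Gscr_\fbf},\leq w}$ is determined by its degrees on these curves. No relative Picard functor, no étaleness, no Minkowski.

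Your route via representability and étaleness of $\underline{\on{Pic}}$, followed by a fibrewise computation and a global gluing, is more functorial and makes the role of $H^1(\Oh)=0$ very transparent; it would also adapt more readily to other base rings. The price is heavier machinery for what the paper dispatches in a few lines. One point worth tightening in your write-up: your justification that the Richardson divisors are Cartier \emph{inside the singular Schubert scheme} is phrased a bit elliptically. The clean way, which is exactly what the paper does, is to build $\Lh_s$ on the ambient ind-scheme using the smooth local quotients and only then restrict; your appeal to ``the ample line bundles of th.~\ref{theoreme cycles de schubert topologie et normalite}'' is pointing at this, but it would be clearer to say so explicitly rather than to speak of relative Cartier divisors in a singular flat ambient.
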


\begin{proof}
	Remarquant que les quotients locaux de (\ref{equation quotients locaux des cycles de schubert}) avec $n$ assez grand définissent des $1$-cycles intègres dans des schémas lisses, on arrive aux fibrés en droites $\Lh_s$ sur $\Gr_{\underline{\Gscr_\fbf}}$ munis d'une section globale invariante par $L^-\underline{\Gscr_\abf}$. 
	D'après le cor. \ref{corollaire schemas de richardson}, nous savons que le degré de $\Lh_s$ restreint à la droite projective $\Gr_{\underline{\Gscr_\fbf}, \leq t}$ est égal à $1$ ou $0$ selon que $s$ et $t$ sont égaux ou différents.
	
	Il reste à vérifier que les degrés d'un fibré $\Lh$ sur les droites projectives stables sous $L^+\underline{\Gscr_\abf}$ le caractérisent à isomorphisme près. Ceci n'est qu'un exercice en utilisant les résolutions de Demazure, en vertu du th. \ref{theoreme normalite des schemas de Schubert}, cf. \cite[cor. 12]{Fal} et \cite[XII, prop. 6]{Mat}. 	
\end{proof}

Typiquement l'opération de $L^{--}\underline{\Gscr_\abf}$ sur les fibrés en droites ne s'étend pas au groupe de lacets $L\underline{G}$, voir le cor. \ref{corollaire charge centrale} ci-dessous, mais juste à une extension centrale de ceci par le groupe multiplicatif. En effet, à chaque faisceau inversible $\Lh$ sur la grassmannienne affine $\Gr_{\underline{\Gscr_\fbf}}$, on lui associe le foncteur \begin{equation}\label{equation extension centrale associe au fibre en droites} \widehat{L\underline{G}}\{\Lh\}: R \mapsto (g, \alpha), \end{equation} qui paramétrise les éléments $g \in L\underline{G}(R)$ joints aux isomorphismes $g^*\Lh\xrightarrow{\alpha}\Lh$. Il s'ensuit aisément du th. \ref{theoreme normalite des schemas de Schubert} que, si $G=G^{\on{sc}}$ est simplement connexe, alors le foncteur $\widehat{L\underline{G}}\{\Lh\}$ est un $\ZZ$-ind-groupe extension centrale de $L\underline{G}$ par $\GG_{m,\ZZ}$.

\begin{cor}\label{corollaire charge centrale}
	Si $G=G^{\on{sc}}$ est presque simple, alors la correspondance \begin{equation}\Lh \mapsto \widehat{L\underline{G}}\{\Lh\}\end{equation} induite par (\ref{equation extension centrale associe au fibre en droites}) définit un homomorphisme \begin{equation}\label{equation homomorphisme charge centrale facette quelconque}c_{\fbf} :\on{Pic}(\Gr_{\underline{\Gscr_\fbf}}) \rightarrow \on{Ext}_{\on{cent}}(\GG_{m,\ZZ},L\underline{G}),\end{equation}
	dont l'image est libre de rang $1$.
\end{cor}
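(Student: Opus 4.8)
Le plan est d'abord de vérifier que $c_\fbf$ est un homomorphisme de groupes, puis de déterminer son image en identifiant son noyau au sous-groupe des fibrés inversibles munis d'une structure $L\underline{G}$-équivariante. Pour l'additivité, on remarque que la construction (\ref{equation extension centrale associe au fibre en droites}) est compatible au produit tensoriel : étant donnés deux faisceaux inversibles $\Lh_1, \Lh_2$ sur $\Gr_{\underline{\Gscr_\fbf}}$, la correspondance $((g,\alpha_1),(g,\alpha_2)) \mapsto (g, \alpha_1 \otimes \alpha_2)$ fournit un morphisme
\begin{equation}
\widehat{L\underline{G}}\{\Lh_1\} \times_{L\underline{G}} \widehat{L\underline{G}}\{\Lh_2\} \rightarrow \widehat{L\underline{G}}\{\Lh_1 \otimes \Lh_2\},
\end{equation}
équivariant pour l'application produit $\GG_{m,\ZZ} \times \GG_{m,\ZZ} \rightarrow \GG_{m,\ZZ}$, ce qui n'est autre que la description de la somme de Baer des deux extensions de gauche. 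Comme $\widehat{L\underline{G}}\{\Oh\}$ est l'extension scindée et $\widehat{L\underline{G}}\{\Lh^{-1}\}$ l'extension opposée, on conclut que $\Lh \mapsto \widehat{L\underline{G}}\{\Lh\}$ est bien un homomorphisme $c_\fbf$.

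Ensuite, une classe $c_\fbf(\Lh)$ s'annule si et seulement si l'extension centrale $\widehat{L\underline{G}}\{\Lh\}$ possède une section, c'est-à-dire si et seulement si $\Lh$ admet une structure $L\underline{G}$-équivariante. Puisque $\Gr_{\underline{\Gscr_\fbf}}$ s'identifie au faisceau quotient $L\underline{G}/L^+\underline{\Gscr_\fbf}$, de tels fibrés correspondent aux caractères de $L^+\underline{\Gscr_\fbf}$. Or un tel caractère s'annule sur le radical pro-unipotent $L^{++}\underline{\Gscr_\fbf}$, cf. prop. \ref{proposition decomposition de levi}, donc se factorise par le facteur de Levi $\underline{M_\fbf}$ ; comme $G=G^{\on{sc}}$, le groupe $X^*(\underline{M_\fbf})$ des caractères de $\underline{M_\fbf}$ est libre de rang égal au rang de $G$ diminué du rang semi-simple de $\underline{M_\fbf}$. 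On obtient de la sorte que $\ker c_\fbf$ est libre de ce même rang.

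Il ne reste alors qu'à comparer les rangs. D'après le cor. \ref{corollaire groupe de picard} (en passant à la limite sur $w$), le groupe $\on{Pic}(\Gr_{\underline{\Gscr_\fbf}})$ est libre, de rang égal au nombre de réflexions simples n'appartenant pas à $W_\fbf$ ; comme $G$ est presque simple, le diagramme de Dynkin affine est connexe et possède exactement un sommet de plus que le diagramme fini, de sorte que ce rang surpasse de $1$ celui de $\ker c_\fbf$. L'image $\on{Pic}(\Gr_{\underline{\Gscr_\fbf}})/\ker c_\fbf$ est donc de rang $1$. Le point le plus délicat sera de prouver qu'elle est sans torsion, c'est-à-dire que le sous-groupe des fibrés équivariants est saturé ; de façon équivalente, il faudra montrer qu'un faisceau inversible de charge centrale nulle est automatiquement $L\underline{G}$-équivariant. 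Pour cela, on introduira la charge centrale comme homomorphisme $\on{Ext}_{\on{cent}}(\GG_{m,\ZZ},L\underline{G}) \rightarrow \ZZ$ obtenu par restriction au sous-groupe de lacets d'un $\on{SL}_2$ affine attaché à une racine affine simple : un faisceau ample y induit une extension de niveau strictement positif, donc non scindée, et le contrôle des sections globales fourni par le th. \ref{theoreme normalite des schemas de Schubert} (réalisation du module basique de plus haut poids) permettra de voir que la charge centrale est injective sur l'image de $c_\fbf$, laquelle sera ainsi libre de rang $1$.
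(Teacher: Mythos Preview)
Votre identification du noyau de $c_\fbf$ avec les fibrés $L\underline{G}$-équivariants et le calcul de rangs suivent essentiellement la même voie que l'article, à ceci près que vous omettez de justifier l'injectivité de l'application d'oubli $\on{Pic}^{L\underline{G}}(\Gr_{\underline{\Gscr_\fbf}}) \rightarrow \on{Pic}(\Gr_{\underline{\Gscr_\fbf}})$ : l'article note que le groupe des caractères de $L\underline{G}$ est trivial (parce qu'il est engendré par $L\underline{U_\pm}$), de sorte qu'une structure équivariante est unique lorsqu'elle existe, et on identifie alors le groupe de Picard équivariant à $X^*(\underline{M_\fbf})$.

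La véritable divergence concerne la liberté de l'image, c'est-à-dire la saturation du noyau. L'article se ramène au cas iwahorique $\fbf=\abf$ et effectue un calcul explicite de réseau : l'inclusion $X^*(\underline{S}) \hookrightarrow \bigoplus_s \ZZ\Lh_s$ est donnée (au signe près) par $\chi \mapsto (\langle \chi, a_s^\vee\rangle)_s$, où $a_s$ est la racine positive de $\Phi_{\fbf_s}$, et l'on conclut en observant que $X^*(\underline{S})$ se projette isomorphiquement sur le quotient obtenu en oubliant un $\Lh_s$ bien choisi (celui associé à la réflexion simple ne fixant pas un sommet spécial $x$ à système résiduel $\Phi_G^{\on{nm}}$). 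C'est un argument purement combinatoire sur les coracines affines.

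Votre approche par restriction à un \og $\on{SL}_2$ affine\fg{} est en revanche problématique. D'une part, le $\on{SL}_2$ attaché à une racine affine simple est un sous-groupe de dimension finie de $L\underline{G}$, et toute extension centrale de $\on{SL}_2$ par $\GG_m$ est scindée : la restriction à ce sous-groupe ne donne donc aucune information. Si vous entendez plutôt $L\on{SL}_2$ pour une coracine de $G$, il vous faut déjà connaître le cas de base $\on{Ext}_{\on{cent}}(\GG_{m,\ZZ},L\on{SL}_2)\simeq\ZZ$ sur $\ZZ$ et contrôler le \og niveau\fg{} de la restriction, ce qui n'est pas immédiat. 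D'autre part, invoquer la réalisation des sections globales comme module de plus haut poids revient à utiliser le dictionnaire de Kac--Moody de l'annexe, lequel s'appuie précisément sur ce corollaire (voir l'extension centrale universelle définie juste après) : il y a donc un risque de circularité. L'argument de saturation par projection de l'article est à la fois plus direct et autonome.
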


\begin{proof}
	Que l'extension centrale (\ref{equation extension centrale associe au fibre en droites}) soit scindée revient au même que dire que le faisceau inversible $\Lh$ admette une structure $L\underline{G}$-équivariante. Compte tenu de ce que le groupe des caractères de $L\underline{G}$ est trivial, puisque ce dernier est schématiquement engendré par $L\underline{U_\pm}$, comp. avec la preuve de prop. \ref{proposition dynamique groupes d'arcs et de cordes}, on voit que la structure équivariante est uniquement déterminée, lorsqu'elle existe, c'est-à-dire l'application
	\begin{equation}\label{equation homomorphisme groupe de picard equivariant}\on{Pic}^{L\underline{G}}(\Gr_{\underline{\Gscr_\fbf}})\hookrightarrow \on{Pic}(\Gr_{\underline{\Gscr_\fbf}})\end{equation} 
	est injective dont l'image s'identifie au noyau de l'homomorphisme (\ref{equation homomorphisme charge centrale facette quelconque}). Le raisonnement ci-dessus montre aussi que $c_\fbf$ commute aux changements de facettes.
	
	 Comme l'origine est stable par $L^+\underline{\Gscr_\fbf}$, une telle structure équivariante donne un caractère entier \begin{equation}\chi: L^+\underline{\Gscr_\fbf} \rightarrow \underline{M_\fbf} \rightarrow \GG_{m,\ZZ}\end{equation} du groupe d'arcs par restriction à la fibre de $\Lh$. Réciproquement, un $\ZZ$-caractère $\chi$ de $\underline{M_\fbf}$ fournit par inflation à $L^+\underline{\Gscr_\fbf}$ et puis par induction à $L\underline{G}$ un faisceau inversible \begin{equation}\Lh_\chi:=L\underline{G}\times^{L^+\underline{\Gscr_\fbf}}\Oh_\chi\end{equation} sur la grassmannienne affine muni d'une opération compatible par le groupe de lacets $L\underline{G}$, cf. \cite[prop. 1.5]{Iv76}. En particulier, il en résulte que le groupe de Picard équivariant
	 \begin{equation}\label{equation fibres en droite equivariants grassmannienne}
	 \on{Pic}^{L\underline{G}}(\Gr_{\underline{\Gscr_\fbf}}) \simeq X^*(\underline{M_\fbf})
	 \end{equation} s'identifie au groupe de caractères du groupe épinglé $\underline{M_\fbf}$, dont le rang coïncide avec la dimension de $\fbf$, tandis que le rang de $\on{Pic}(\Gr_{\underline{\Gscr_\fbf}})$ est strictement supérieur par une unité, cf. cor. \ref{corollaire groupe de picard}. 
	 
	 Pour voir que l'image de $c_\fbf$ est libre, on peut supposer que $\fbf=\abf$ est une alcôve. Appliquant encore la même identification (\ref{equation fibres en droite equivariants grassmannienne}) pour calculer le groupe de Picard des droites de Schubert, on remarque alors sans peine que (\ref{equation homomorphisme groupe de picard equivariant}) s'écrit au signe près comme 
	 \begin{equation}\label{equation charge centrale formule explicite coefficients kac-moody}
	 \bigoplus_s\pm a_s^{\vee} : X^*(\underline{S}) \rightarrow{} \ZZ\Lh_s,
	 \end{equation} où $a_s \in \Phi_{\fbf_s}$ en est la seule racine positive.
	 On laisse le soin au lecteur de montrer que le membre de gauche se projette isomorphiquement sur le quotient du membre de droite obtenu en oubliant le faisceau inversible $\Lh_s$ correspondant à la réflexion simple $s$ ne fixant pas un point spécial $x$ adhérente à $\abf$, dont le système de racines résiduel $\Phi_x=\Phi_G^{\on{nm}}$ ne contienne que les racines non multipliables.
\end{proof}

Désormais, pour abréger, on note $c$ le seul épimorphisme \begin{equation}\label{equation charge centrale entiers}\on{Pic}(\Gr_{\underline{\Gscr_\abf}}) \rightarrow \ZZ\end{equation} pour l'alcôve donnée $\abf$, dont le noyau coïncide avec le sous-groupe des faisceaux inversibles qui admettent une structure $L\underline{G}$-équivariante, cf. (\ref{equation homomorphisme groupe de picard equivariant}) et tel que les faisceaux en droites semi-amples soient appliqués sur $\ZZ_{\geq0}$. L'entier $c(\Lh)$ s'appelle la charge centrale de $\Lh$ et l'extension centrale $\widehat{L\underline{G}}$ de \ref{equation extension centrale associe au fibre en droites} obtenu en prenant $\Lh$ tel que $c(\Lh)=1$ s'appelle l'extension centrale universelle de $L\underline{G}$.

\section{Déformation globale sur la droite affine cyclotomique}\label{section grassmanniennes beilinson-drinfeld}

La partie présente est consacrée à l'étude des déformations à la Beilinson--Drinfeld des grassmanniennes affines associées aux groupes parahoriques $\underline{\Gscr_\fbf}$ sur la droite affine à coefficients cyclotomiques entiers, cf. prop. \ref{proposition drs modele immobilier groupe de tits}. On démontre notamment leur projectivité et aussi un théorème de cohérence à la Zhu, cf. \cite[ths. 2, 3]{ZhuCoh} à coefficients cyclotomiques entiers. Ceci sera appliqué dans un travail futur pour répondre à la conjecture de Scholze--Weinstein sur les modèles locaux, cf. \cite[conj. 21.4.1]{SchBerk}, dans des cas non modérément ramifiés, voir aussi \cite[IV, \S4.2]{LouDiss}.

\subsection{Projectivité et le lieu admissible}\label{grassmannienne de beilison-drinfeld et ind-projectivite} 

La grassmannienne affine qui fut étudiée au \S\ref{section grassmanniennes affines} était de nature locale, en ce qui concerne sa dépendance de la donnée d'une uniformisante, cf. déf. \ref{definition grassmannienne affine locale}. Si nous voulons la déformer en un équivalent globale, il faut faire varier l'uniformisante, et l'on arrive à la définition suivante due à Beilinson--Drinfeld, cf. \cite[4.3.14]{BDr} :

\begin{defn}\label{definition grassmannienne affine globale}
Soient $A$ un anneau, $X$ une courbe lisse au-dessus de $A$ et $\Grm$ un schéma en groupes sur $X$. La grassmannienne affine globale $\on{GR}_{\Grm}$ associée à cette donnée est le pré-faisceau sur $X$ qui, à chaque application $Y\rightarrow X$ quasi-compacte de $A$-schémas, lui associe les classes d'isomorphisme de $\Grm$-torseurs au-dessus de $X\otimes_A Y$ munis d'une trivialisation en dehors du graphe $\Gamma_{Y/X}$ de $Y\rightarrow X$. 
\end{defn}

On dispose aussi pour ces objets globaux des groupes d'arcs et de lacets 
\begin{equation}
L^+\Grm: R \mapsto \Grm(R\otimes_A X ),\end{equation}\begin{equation}
L \Grm: R \mapsto \Grm(R\otimes_A X \setminus \Gamma_{R/X}),
\end{equation} dont le quotient pour la topologie étales'identifie à $\on{GR}_{\Grm}$, cf. \cite[lem. 1.12]{Rich}. Le produit fibré de $\on{GR}_{\Grm}$ avec une section $x:\spec A \rightarrow X$ devient ainsi isomorphe, d'après la descente de Beauville--Laszlo, à la grassmannienne affine locale $\Gr_{\Grm_{X,x}}$ associée au $A\pot{z_x}$-groupe $\Grm_{X,x}$ déduit de $\Grm$ par changement de base, où $A\pot{z_x}$ est le voisinage formel de $X$ en la section $x$. Nous avertissons le lecteur que, si l'on applique cette identification pour étudier les fibres géométriques de $\on{GR}_{\Grm}$ sur les points génériques d'une fibre $X_k$ de $X\rightarrow \spec A$, alors il pourrait être compliqué de saisir correctement le séparé complété l'uniformisante $z_x$ résultante et que ceci a déjà provoqué d'ailleurs quelques maladresses dans la littérature, cf. \cite{RichErr}.

\begin{propn}[Pappas--Zhu]\label{proposition representabilite grassmannienne affine globale}
	Soient $A$ un anneau de Dedekind, $X$ une $A$-courbe lisse et géométriquement connexe et $\on{G}$ un $X$-groupe lisse, affine et connexe. Alors, la grassmannienne affine globale $\on{GR}_{\on{G}}$ est représentable par un ind-schéma séparé de type fini.
\end{propn}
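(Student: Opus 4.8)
Le plan est de transposer à la situation globale la stratégie du cas local, cf. prop. \ref{proposition representabilite grassmannienne affine locale}, le rôle de l'uniformisante étant joué par le diviseur de graphe. D'abord, je me ramènerais au cas du groupe linéaire $\on{GL}_{n,X}$. Comme $\on{G}$ est affine, plat et de présentation finie sur le schéma régulier $X$ de dimension $\leq 2$, le théorème d'immersion de Thomason, voir \cite{Thom} et comp. avec \cite[cor. 11.8]{PZh}, fournit une immersion fermée de $X$-groupes $\on{G}\hookrightarrow \on{GL}_{n,X}$ dont le faisceau quotient fppf $Q:=\on{GL}_{n,X}/\on{G}$ est représentable par un $X$-schéma quasi-affine.

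Ensuite, l'étape cruciale consiste à montrer que le morphisme induit $\on{GR}_{\on{G}} \rightarrow \on{GR}_{\on{GL}_{n,X}}$ est représentable par une immersion quasi-compacte, comp. avec \cite[prop. 1.2.6]{ZhuIntro}. Par descente, un point de $\on{GR}_{\on{GL}_{n,X}}$ à valeurs dans $Y\rightarrow X$ correspond à un $\on{GL}_{n,X}$-torseur $\Eh$ trivialisé hors du graphe $\Gamma_{Y/X}$ ; son relèvement à $\on{GR}_{\on{G}}$ équivaut à la donnée d'une section du $Q$-fibré associé à $\Eh$ prolongeant au-dessus du graphe la section canonique définie par la trivialisation sur le complément. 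Puisque $Q$ est quasi-affine, donc ouvert dans un $X$-schéma affine $\overline{Q}$, l'existence du prolongement dans $\overline{Q}$ est une condition fermée et l'appartenance à l'ouvert $Q$ une condition ouverte, d'où le caractère d'immersion quasi-compacte.

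Il reste à traiter directement le cas de $\on{GL}_{n,X}$. Ici, $\on{GR}_{\on{GL}_{n,X}}$ classifie les modifications du fibré trivial le long de $\Gamma_{Y/X}$, qui est un diviseur de Cartier relatif effectif sur $X\otimes_A Y$. Par la descente de Beauville--Laszlo, voir \cite[th.]{BLDesc}, ceci revient à modifier un réseau dans le complété formel le long de $\Gamma_{Y/X}$, cf. \cite[th. 1.1.3]{ZhuIntro}. En bornant la modification par des puissances de l'idéal du graphe, j'écrirais $\on{GR}_{\on{GL}_{n,X}}=\on{colim}_N \on{GR}^{\leq N}$, chaque terme $\on{GR}^{\leq N}$ étant un sous-foncteur fermé représentable par une grassmannienne relative, donc un $A$-schéma projectif ; en particulier $\on{GR}_{\on{GL}_{n,X}}$ est un ind-schéma séparé de type fini. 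La combinaison avec l'immersion quasi-compacte ci-dessus entraîne alors que $\on{GR}_{\on{G}}$ est représentable par un ind-schéma séparé de type fini.

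Le principal obstacle sera l'analyse du cas $\on{GL}_{n,X}$ sur une base bidimensionnelle : contrôler la théorie des réseaux et le recollement de Beauville--Laszlo relativement au diviseur de graphe $\Gamma_{Y/X}$, qui n'est pas principal en général, et vérifier soigneusement la bornitude des modifications afin que chaque $\on{GR}^{\leq N}$ soit de type fini. La représentabilité de l'immersion $\on{GR}_{\on{G}}\rightarrow \on{GR}_{\on{GL}_{n,X}}$, bien que formellement analogue au cas local, réclame aussi un soin supplémentaire dû à la quasi-affinité de $Q$ et à la délicatesse des complétés apparaissant aux points génériques des fibres, cf. la mise en garde suivant la déf. \ref{definition grassmannienne affine globale}.
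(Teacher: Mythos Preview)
Ta proposition est correcte et suit exactement l'approche du papier : la démonstration y est réduite à une seule ligne renvoyant à la procédure de la prop.~\ref{proposition representabilite grassmannienne affine locale}, et tu as déplié précisément cette procédure (plongement à la Thomason dans $\on{GL}_{n,X}$ à quotient quasi-affine via \cite[cor.~11.8]{PZh}, immersion quasi-compacte induite entre grassmanniennes à la \cite[prop.~1.2.6]{ZhuIntro}, puis description modulaire par réseaux pour $\on{GL}_n$). Les obstacles que tu anticipes sont réels mais se lèvent sans surprise, le diviseur de graphe étant un diviseur de Cartier relatif effectif sur une base régulière de dimension $\leq 2$.
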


\begin{proof}L'assertion se démontre par la même procédure de la prop. \ref{proposition representabilite grassmannienne affine locale}.
\end{proof}

On veut analyser les grassmanniennes affine globales de nos modèles parahoriques $\underline{\Gscr_\fbf}$ des groupes de Tits $\underline{G}$ sur la droite affine $\AAA^1_\ZZ$.

\begin{thm}[Pappas--Zhu, L.]\label{theoreme grassmannienne globale projective}
La grassmannienne affine globale $\on{GR}_{\underline{\Gscr_\fbf}}$ est projective.
\end{thm}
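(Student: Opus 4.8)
Le plan est de déduire l'énoncé de la représentabilité déjà acquise, combinée à l'ind-projectivité des grassmanniennes affines locales des fibres parahoriques (éventuellement pseudo-réductives) de $\underline{\Gscr_\fbf}$. D'après la prop. \ref{proposition representabilite grassmannienne affine globale}, une immersion fermée $\underline{\Gscr_\fbf}\hookrightarrow \on{GL}_{n,\AAA^1_\ZZ}$ à quotient quasi-affine induit une immersion quasi-compacte $\on{GR}_{\underline{\Gscr_\fbf}}\hookrightarrow \on{GR}_{\on{GL}_n}$ d'ind-schémas sur $X:=\AAA^1_\ZZ$, et le but est ind-projectif sur $X$ par la description en réseaux. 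Comme un sous-ind-schéma fermé d'un ind-schéma ind-projectif sur $X$ est encore ind-projectif sur $X$, il suffit de montrer que cette immersion quasi-compacte est fermée, c'est-à-dire ind-propre sur $X$. Tout étant noethérien et de type fini, le critère valuatif ramène ceci à l'assertion suivante : pour tout anneau de valuation discrète $R$ de corps des fractions $F$, tout $F$-point de $\on{GR}_{\underline{\Gscr_\fbf}}$ relevant un $R$-point de $\on{GR}_{\on{GL}_n}$ se prolonge, de façon unique, en un $R$-point.

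Je me donnerais un tel $R$ au-dessus de $x:\spec R\to X$, correspondant à $t\mapsto r\in R$. Le graphe $\Gamma_x\subset X_R$ est découpé par $z:=t-r$ et le complété de $X_R$ le long de $\Gamma_x$ s'identifie à $\spec R\pot z$. Par la descente de Beauville--Laszlo \cite[th.]{BLDesc}, la donnée du torseur et de sa trivialisation hors du graphe se localise à ce voisinage formel, de sorte que le problème de prolongement devient le critère valuatif pour le morphisme local $\Gr_{\underline{\Gscr_{\fbf,z}}}\to \Gr_{\on{GL}_n,R\pot z}$ sur $R$, où $\underline{\Gscr_{\fbf,z}}:=\underline{\Gscr_\fbf}\otimes_{\ZZ[t],\,t\mapsto r+z} R\pot z$. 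Il revient donc au même de vérifier que ce morphisme local est une immersion fermée, ou encore que $\Gr_{\underline{\Gscr_{\fbf,z}}}$ est ind-propre sur $R$.

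Or les deux fibres de $\underline{\Gscr_{\fbf,z}}$ sur $\spec R$ sont des groupes parahoriques sur $\kappa\pot z$ (pour $\kappa=F$ ou le corps résiduel $k$) : c'est le changement de base de type diagonal de la rem. \ref{remarque autres changements de base groupe de tits}(2), sous lequel la spécialisation en $t=\bar r$ reste totalement, et éventuellement inséparablement, ramifiée dès que $p=\on{car}(k)$ divise $e$, précisément le phénomène pseudo-réductif que notre théorie sait traiter. Par suite, le cor. \ref{corollaire specialises modeles immobiliers en caracteristique p} et la prop. \ref{proposition projectivite grassmannienne affine locale} entraîneront que les fibres $\Gr_{\underline{\Gscr_{\fbf,z}}\otimes\kappa}$ sont ind-projectives. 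Jointe à la platitude (lem. \ref{lemme platitude des grassmanniennes affines}, dans sa version changée de base) et à la constance des combinatoires d'Iwahori--Weyl (prop. \ref{proposition combinatoire caracteristiques differentes}), qui borne uniformément les strates de Schubert le long de la spécialisation, cette projectivité fibre à fibre permettra de reprendre l'argument du \S\ref{section grassmanniennes affines} -- schémas de Schubert relatifs et résolutions de Demazure, cf. déf. \ref{definition schemas de schubert} -- pour voir que l'adhérence de chaque $L^+$-orbite bornée dans $\Gr_{\underline{\Gscr_{\fbf,z}}}$ est propre sur $R$ et de formation compatible à $\spec F\hookrightarrow \spec R$, d'où le prolongement cherché.

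L'obstacle principal sera précisément ce dernier point : contrôler la grassmannienne affine locale en $t=\bar r$ après le changement de base diagonal, là où le groupe cesse d'être réductif. Dans le cadre modérément ramifié de \cite{PZh} les fibres correspondantes sont réductives et l'on conclut sur-le-champ, tandis qu'ici toute la difficulté se trouve absorbée par les résultats de projectivité parahorique pseudo-réductive, prop. \ref{proposition projectivite grassmannienne affine locale}. Enfin, le fibré déterminant tiré en arrière de $\on{GL}_n$ se restreint à un fibré relativement ample sur l'image fermée, fournissant gratuitement l'ind-projectivité sur $X$ une fois la fermeture établie, sans qu'aucune construction séparée d'un fibré ample ne soit requise.
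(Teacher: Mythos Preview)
Ton approche diffère de celle de l'article et comporte un vrai trou à l'étape finale. L'article ne passe pas par le critère valuatif : il construit directement des sous-schémas de Schubert \emph{globaux} $\on{GR}_{\underline{\Gscr_\fbf},\leq\mu}$ sur $\AAA^{1/\infty}$ (déf.~\ref{definition schemas de schubert globaux}), en s'appuyant sur le lem.~\ref{lemme grassmannienne globale tore} pour le tore, qui fournit des sections $z^\mu$ à valeurs dans $\on{GR}_{\underline{\Tscr}}$. Ces schémas de Schubert globaux sont projectifs sur la base, et leurs fibres géométriques recouvrent toutes les translations du groupe d'Iwahori--Weyl, donc $\on{GR}_{\underline{\Gscr_\fbf}}$ tout entier. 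C'est précisément cette construction globale qui évite de devoir refaire la théorie de Schubert au-dessus de chaque anneau de valuation.

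Le point faible de ta proposition est le passage où tu écris que la projectivité fibre à fibre \og permettra de reprendre l'argument du \S\ref{section grassmanniennes affines}\fg{} pour obtenir des schémas de Schubert relatifs propres sur $R$. Ce n'est pas une déduction, c'est l'énoncé à démontrer. Les constructions du \S\ref{section grassmanniennes affines} (déf.~\ref{definition schemas de schubert}, résolutions de Demazure) reposent sur des représentants $n_w\in\underline{N}(\ZZ[t^{\pm1}])$, et le morphisme $\ZZ[t^{\pm1}]\to R\rpot{z}$ via $t\mapsto r+z$ n'a de sens que si $r+z$ est inversible dans $R\rpot{z}$, ce qui échoue dès que $r$ est un non-inversible non nul de $R$. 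Les références que tu invoques (prop.~\ref{proposition combinatoire caracteristiques differentes}, cor.~\ref{corollaire specialises modeles immobiliers en caracteristique p}, rem.~\ref{remarque autres changements de base groupe de tits}(2)) ne traitent que les changements de base $\ZZ[t]\to\FF_p\pot{t}$ ou $t\mapsto\varpi$, pas le cas général. Enfin, la projectivité des deux fibres d'un ind-schéma séparé sur un trait n'entraîne pas à elle seule l'ind-propreté relative : il faut exhiber une présentation par des fermés propres sur $R$ compatibles à la spécialisation, et c'est exactement ce que les schémas de Schubert globaux de l'article fournissent. Pour compléter ta preuve, tu serais donc amené à reconstruire, au-dessus de chaque $R$, l'objet que le lem.~\ref{lemme grassmannienne globale tore} et la déf.~\ref{definition schemas de schubert globaux} donnent une fois pour toutes sur $\AAA^{1/\infty}$.
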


Vérifions tout d'abord que les fibres se comportent comme envisagé\footnote{Mentionnons que le lemme ci-dessous a pris la plupart des experts par surpris : en effet, le \cite[th. 1.19]{Rich} affirme que $G_{\eta_p}$ devrait être toujours réductif pour que cela se vérifie, et dont la preuve suppose erronément que cette grassmannienne est constante. On renvoie le lecteur à l'errata \cite{RichErr} qui fut originalement motivée par notre découverte et au \cite[cor. 5.3]{LouBT}, lequel est étroitement lié à la \cite[conj. 10.3.I]{BLR} sur les modèles de Néron globaux.}. 

\begin{lem}\label{lemme fibres grassmannienne globale}
	Les fibres géométriques de $\on{GR}_{\underline{\Gscr_\fbf}}$ sont projectives.
\end{lem}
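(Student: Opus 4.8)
Le plan est de ramener la projectivité de chaque fibre géométrique à celle d'une grassmannienne affine locale et d'appliquer le critère de la prop. \ref{proposition projectivite grassmannienne affine locale}. Conformément à la descente de Beauville--Laszlo rappelée avant l'énoncé, la fibre de $\on{GR}_{\underline{\Gscr_\fbf}}$ en un point géométrique $x:\spec k \rightarrow \AAA^1_\ZZ$, où $k$ est algébriquement clos et $t\mapsto \tau \in k$, s'identifie à la grassmannienne affine locale $\Gr_{\Grm_x}$ du $k\pot{z}$-groupe \[\Grm_x:=\underline{\Gscr_\fbf}\otimes_{\ZZ[t],\, t\mapsto \tau+z} k\pot{z},\] où $z=t-\tau$ est l'uniformisante du disque formel. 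Comme $k$ est parfait et $\Grm_x$ lisse, affine et connexe, grâce au th. \ref{theoreme affinite modeles immobiliers}, il suffira de montrer que $\Grm_x$ est parahorique au sens de \cite{LouBT} pour conclure que $\Gr_{\Grm_x}$ est un ind-schéma projectif.

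Je distinguerais ensuite les cas selon la valeur de $\tau$ et la caractéristique de $k$. Si $\tau=0$, le disque est centré à l'origine : en caractéristique nulle $\Grm_x$ est le modèle parahorique modérément ramifié $\Gscr_{\fbf,0}$ de la déf. \ref{definition modeles immobiliers du groupe de tits}, et en caractéristique $p$ le cor. \ref{corollaire specialises modeles immobiliers en caracteristique p} l'identifie au modèle immobilier $\Gscr_{\fbf,p}$ de $G_{\eta_p}$ associé à $\fbf$, qui est parahorique par construction. Si $\tau\neq 0$, alors $t=\tau+z$ est inversible dans $k\pot{z}$, le morphisme $\ZZ[t]\rightarrow k\pot{z}$ se factorise par $\ZZ[t^{\pm 1}]$ et $\Grm_x=\underline{G}\otimes k\pot{z}$ est le changement de base du groupe de Tits. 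En caractéristique nulle, $e$ est inversible et $\underline{G}\otimes k\pot{z}$ est un $k\pot{z}$-groupe réductif à bonne réduction, cf. rem. \ref{remarque faits basiques groupe de tits}, dont la grassmannienne affine est ind-projective.

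Le cas véritablement délicat, qui recouvre les points géométriques au-dessus du point générique de $\AAA^1_{\FF_p}$ --- là précisément où la preuve de \cite[th. 1.19]{Rich} était lacunaire, cf. la note en bas de page --- est celui où $\tau\neq 0$ et $\on{car} k=p$. J'analyserais le changement de base de la donnée radicielle schématique $(\underline{T},\underline{U_a})_{a\in \Phi_G^{\on{nd}}}$ le long de $t\mapsto \tau+z$. En écrivant $d=p^m d'$ avec $p\nmid d'$, la racine $d'$-ième de $\tau+z$ existe déjà dans $k\pot{z}$ par le lemme de Hensel, car $k$ est algébriquement clos, tandis qu'un calcul de Frobenius montre que $(\tau+z)^{1/p^m}$ engendre l'extension purement inséparable totalement ramifiée $k\rpot{z^{1/p^m}}$. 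Il s'ensuit que $\underline{G}\otimes k\rpot{z}$ se déploie par une extension purement inséparable et est donc pseudo-réductif à système de racines $\Phi_G$, par le même raisonnement que celui de la prop. \ref{proposition fibre generique quasi-reductive} ; comme son corps résiduel $k$ est parfait, il relève de la théorie de Bruhat--Tits du th. \ref{theoreme theorie de bruhat-tits pseudo-reductive}. De plus, $\underline{T}\otimes k\pot{z}$ redonne le modèle de Néron connexe du tore induit et les $\underline{U_a}\otimes k\pot{z}$ sont les stabilisateurs radiciels de la valuation d'origine, de fonction quasi-concave identiquement nulle ; par l'unicité des solutions aux lois birationnelles, cf. th. \ref{theoreme d'existence et unicite des groupes attaches aux drs}, $\Grm_x$ s'identifie au modèle parahorique spécial attaché à ce sommet. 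Il faudra seulement prendre garde, dans le cas barcelonais, à ce que la valuation de $z^{1/2}$ déplace ce sommet vers un point de système de racines résiduel $\Phi_G^{\on{nm}}$, comme indiqué à la rem. \ref{remarque autres changements de base groupe de tits}, ce qui n'affecte nullement la parahoricité. Dans tous les cas $\Grm_x$ est parahorique, et la prop. \ref{proposition projectivite grassmannienne affine locale} achèvera la preuve.
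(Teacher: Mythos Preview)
Your proof is correct and shares the same core idea as the paper's: identify each geometric fibre with a local affine Grassmannian and verify that the relevant completion of $\underline{\Gscr_\fbf}$ is parahoric, so that prop.~\ref{proposition projectivite grassmannienne affine locale} applies.

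The execution differs. The paper first invokes a constructibility argument---the locus of proper fibres is constructible and $\AAA^1_\ZZ$ is Jacobson---so that it suffices to check closed points, all of which lie over some $\overline{\FF}_p$; it then dispatches these by citing prop.~\ref{proposition drs modele immobilier groupe de tits} and rem.~\ref{remarque autres changements de base groupe de tits} without further case analysis. You instead treat every geometric point directly, splitting into $\tau=0$ versus $\tau\neq 0$ and characteristic $0$ versus $p$, and in the delicate case ($\tau\neq 0$, $\on{car}\,k=p$) you spell out explicitly how the extension $k\pot{z}[(\tau+z)^{1/d}]$ decomposes and why the base-changed donnée radicielle schématique is that of a special parahoric. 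This has the merit of handling head-on the geometric points over the generic point of $\AAA^1_{\FF_p}$---precisely the case flagged in the footnote as the one where \cite[th.~1.19]{Rich} goes wrong---rather than sidestepping it via constructibility. The paper's route is shorter; yours is more self-contained and makes the parahoricity visible in every regime.
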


\begin{proof}
On n'a besoin que de comprendre les fibres géométriques sur les points fermés du schéma de Jacobson $\AAA^1_\ZZ$, parce que la partie des fibres propres est constructible. Mais alors le lemme résulte de ce que $\underline{\Gscr_\fbf}\otimes \FF^{\on{alg}}_p\pot{u}$ soit parahorique, où $u=t-a$ et $a \in \FF^{\on{alg}}_p$, cf. prop. \ref{proposition drs modele immobilier groupe de tits} et rem. \ref{remarque autres changements de base groupe de tits}. Voir aussi \cite[prop. 6.4]{PZh}.
\end{proof}

L'étape suivante tourne autour des tores induits.

\begin{lem}\label{lemme grassmannienne globale tore}
	$\on{GR}_{\underline{\Tscr}}$ est une réunion disjointe dénombrable de schémas finis.
\end{lem}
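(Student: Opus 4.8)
The plan is to strip the torus down to the multiplicative group by Weil descent and then to recognise the resulting object as a bounded lattice space. First I would use that $\on{GR}$ carries products of $X$-group schemes to fibre products over $X=\AAA^1_\ZZ$ (a torsor under a product being a product of torsors), so that $\on{GR}_{\underline{\Tscr}}=\prod^{X}_{j\in J}\on{GR}_{\Res_{\ZZ[t^{1/d_j}]/\ZZ[t]}\GG_m}$; since a finite fibre product over $X$ of countable disjoint unions of finite $X$-schemes is again of that type, it suffices to treat a single factor $\Res_{\ZZ[s]/\ZZ[t]}\GG_m$ with $s^{d}=t$. Writing $\pi\colon X'=\spec\ZZ[s]\to X$ for the associated finite flat cover, the adjunction between Weil restriction and base change identifies a $\Res_{X'/X}\GG_m$-torsor on $\spec R[t]$, trivialised off the graph $V(t-a)$, with a line bundle on $\spec R[s]$ trivialised off $\pi^{-1}V(t-a)=V(s^{d}-a)$. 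Thus, through Beauville--Laszlo descent \cite[th.]{BLDesc}, $\on{GR}_{\underline{\Tscr}}(R,a)$ becomes the set of invertible fractional ideals $\Ih\subset R[s][1/h]$, with $h=s^{d}-a$, satisfying $\Ih[1/h]=R[s][1/h]$ --- exactly the classical lattice model for $\GG_m$, now read off the cover $X'$.

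Next I would filter this lattice space by the bounded modifications $\on{GR}^{\leq N}=\{\,h^{N}R[s]\subseteq \Ih\subseteq h^{-N}R[s]\,\}$, so that $\on{GR}_{\underline{\Tscr}}=\bigcup_{N}\on{GR}^{\leq N}$ with transition maps closed immersions. Each $\on{GR}^{\leq N}$ embeds into the relative Grassmannian of $R[s]$-stable submodules of the finite free module $h^{-N}R[s]/h^{N}R[s]$, whence it is projective --- in particular proper and of finite type --- over $X$; and over any geometric point of $X$ its fibre is the set of lattices of bounded valuation inside a fixed cocharacter lattice $\ZZ^{m}$, which is finite. A proper, quasi-finite, finite type morphism being finite, each $\on{GR}^{\leq N}\to X$ is then finite, which is exactly the asserted finiteness in the inductive sense. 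Grouping these finite layers by the étale--locally constant degree (Kottwitz) invariant, and using the discreteness of the $\GG_m$-Grassmannian, realises $\on{GR}_{\underline{\Tscr}}$ as a countable disjoint union of $X$-pieces, each exhausted by finite $X$-schemes and, on every geometric fibre, literally a countable disjoint union of the points of $\ZZ^{m}$; representability throughout is furnished by prop. \ref{proposition representabilite grassmannienne affine globale}.

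The main obstacle is to guarantee finiteness of the layers \emph{uniformly} over all of $X$, including the collision locus where $h=s^{d}-a$ acquires multiple roots: at $a=0$, and, more seriously, in residue characteristics $p\mid d$ where $\ZZ[s]/\ZZ[t]$ degenerates into a wildly ramified, purely inseparable extension. The virtue of the cover trick is precisely that it dissolves this wildness, for after passing to $X'=\spec\ZZ[s]$ the group is the honest $\GG_m$, whose global affine Grassmannian is ind-finite with discrete fibres over any base; hence no positive-dimensional vertical component can appear even where the divisor $V(s^{d}-a)$ is non-reduced. What remains is to check that the fibres stay finite while their cardinality drops across such points --- from $\ZZ^{d}$ generically, or $\ZZ^{d/p^{e}}$ in characteristic $p$, down to $\ZZ$ along the totally ramified locus --- and this is the harmless semicontinuity of the lattice model, compatible with the projectivity of the geometric fibres recorded in lem. \ref{lemme fibres grassmannienne globale}.
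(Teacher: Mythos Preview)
Your approach is sound and in fact follows almost exactly the line the paper says it used in a previous draft (\og en construisant des sections explicites \fg{} à la \cite[prop.~3.4]{ZhuCoh} et \cite[prop.~4.2.8]{Lev}); the published proof simply invokes \cite[lem.~3.8, \S4.4.2]{HRi2} as a black box. The reduction to a single Weil-restricted $\GG_m$, the passage to line bundles on the cover $X'=\spec\ZZ[s]$, and the argument that each bounded piece $\on{GR}^{\leq N}$ is finite over $X$ (proper plus quasi-finite) are all correct, and your remark that working on $X'$ makes the wild ramification invisible is exactly the right observation.

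The one place where your write-up is loose is the last clause. Grouping by the norm/Kottwitz invariant does \emph{not} produce finite pieces: the fibres of $\on{GR}_{\Res_{X'/X}\GG_m}\to\on{GR}_{\GG_m}=\underline{\ZZ}\times X$ over a geometric point where $s^d-a$ splits are the affine hyperplanes $\{(n_1,\dots,n_d):\sum n_i=k\}$, which are infinite. What you actually need is that the closed immersion $\on{GR}^{\leq N}\hookrightarrow\on{GR}^{\leq N+1}$ is also \emph{open}; then every connected component of $\on{GR}$ lies in some $\on{GR}^{\leq N}$ and is therefore finite over $X$. This openness is precisely what the \og discreteness of the $\GG_m$-Grassmannian \fg{} you invoke gives, but you should make it explicit: after the finite flat base change $X'\to X$ (or a further one splitting $s^d-a$), $\on{GR}^{\leq N+1}$ becomes a disjoint union of copies of the base indexed by a box in $\ZZ^d$, and $\on{GR}^{\leq N}$ is the sub-box, visibly clopen; then descend. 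With that sentence added, your proof is complete and is essentially the content of \cite[lem.~3.8, \S4.4.2]{HRi2}.
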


\begin{proof}
	Cela résulte presque directement de \cite[lem. 3.8, \S4.4.2]{HRi2}. Dans une première version de l'article, nous avons suivi plutôt les démonstrations de \cite[prop. 3.4]{ZhuCoh} et \cite[prop. 4.2.8]{Lev}, en construisant des sections explicites.
\end{proof}

La fibre géométrique générique de $\on{GR}_{\underline{\Tscr}}$ en caractéristique nulle est isomorphe à la grassmannienne affine locale constante $\Gr_{T_H}$, donc numérotée par le groupe des cocaractères géométriques $X_*(T_H)$ du tore $T$, comp. avec \cite[prop. 6.5]{PZh}. Le point associé au copoids $\mu \in X_*(T_H)$ s'étend uniquement par le lemme précédent en un point \begin{equation}z^{\mu} : \AAA^{1/\infty}\rightarrow \on{GR}_{\underline{\Tscr}}\end{equation} de la grassmannienne affine globale à valeurs dans le spectre de $\ZZ[\zeta_\infty,t^{1/\infty}]$. Les spécialisés de $z^\mu$ aux corps $k$ le long de $x: \spec k \rightarrow \AAA^{1/\infty}$ sont des translations dans le groupe d'Iwahori--Weyl de $\underline{\Tscr}\otimes k \rpot{z_{x}}$, où $z_{x}$ désigne une uniformisante associée à ce point géométrique. Habituellement, ceux-ci sont interprétés en termes de l'application de Kottwitz, mais celle-ci n'est malheureusement pas disponible dans notre cadre.

\begin{defn}\label{definition schemas de schubert globaux}
	Soit $\mu \in X_*(T_H)_+$ un cocaractère géométrique dominant de $T$. Le schéma de Schubert global, noté $\on{GR}_{\underline{\Gscr_\fbf}, \leq \mu}$, est l'image schématique du morphisme orbite \begin{equation}L^+\underline{\Gscr_\fbf} \rightarrow \on{GR}_{\underline{\Gscr_\fbf}} , \end{equation} \begin{equation}
	g \mapsto gz^\mu
	\end{equation} défini au-dessus de $\AAA^{1/\infty}$.
\end{defn}

Disons un mot sur quelques fait élémentaires concernant ce schéma de Schubert global. Il est sans torsion, stable sous $L^+\underline{\Gscr_\fbf}$ et projectif, grâce à la preuve de \cite[prop. 6.5]{PZh}. Néanmoins, on ne sait pas pour l'instant s'il est plat, voir cependant le th. \ref{theoreme normalite schema de schubert global et fibres geometriques admissibles}. 

Quoiqu'il en soit, les réduits des fibres géométriques de $\on{GR}_{\underline{\Gscr_\fbf}, \leq \{\mu\}}$ sont isomorphes à la réunion de certaines variétés de Schubert de la grassmannienne affine locale $\Gr_{\underline{\Gscr_\fbf}\otimes k\pot{z_x}}$ correspondante. Ceci nous amène à la définition du lieu admissible :
\begin{equation}\label{equation definition lieu admissible}\on{Adm}_{\underline{\Gscr_\fbf}\otimes k\pot{z_x},\leq \mu}:= \bigcup_{\lambda\in W_G\cdot \mu} \Gr_{\underline{\Gscr_\fbf}\otimes k\pot{z_x}, \leq z^{\lambda}},\end{equation}
lequel est évidemment contenu dans la fibre \begin{equation}\label{equation fibre speciale schema de schubert global}\on{GR}_{\underline{\Gscr_\fbf}, \leq \mu}\otimes_{\ZZ[\zeta_\infty,t^{1/\infty}],x} k \subset \Gr_{\underline{\Gscr_\fbf}\otimes k\pot{z_x}}.\end{equation} On verra ci-dessous que ces deux sous-schémas fermés coïncident dès que le dérivé de $G$ est simplement connexe.

\begin{proof}[Démonstration du th. \ref{theoreme grassmannienne globale projective}]
La représentabilité par un ind-schéma quasi-projectif fut déjà remarquée dans la prop. \ref{proposition representabilite grassmannienne affine globale} et il suffit de montrer l'ind-propreté de $\on{GR}_{\underline{\Gscr_\fbf}}$. Leurs fibres géométriques sont projectives d'après le lem. \ref{lemme fibres grassmannienne globale}. Nous avons déjà remarqué que les schémas de Schubert globales sont projectifs, en répétant la preuve de \cite[prop. 6.5]{PZh}. 

Par suite, il reste à prouver que tout point de $\on{GR}_{\underline{\mathscr{G}_\fbf}}$ en est contenu dans la colimite de ses sous-schémas de Schubert. Mais la fibre géométrique de celle-ci sur $\overline{x}$ contienne dans toutes les variétés de Schubert de $\Gr_{\underline{\Gscr_\fbf}\otimes k\pot{z_{\overline{x}}}}$ associées aux translations du groupe d'Iwahori--Weyl, recouvrant totalement l'espace topologique de cette grassmannienne affine locale.
\end{proof}

\subsection{Le théorème de la cohérence}\label{normalite des varietes de schubert globales et leurs composantes irreductibles}

Enfin, on étudie la géométrie du schéma de Schubert globale $\on{GR}_{\underline{\Gscr_{\fbf}},\leq \mu}$ dans le cas où le dérivé $G^{\on{dér}}=G^{\on{sc}}$ est simplement connexe. Pour formuler le th. \ref{theoreme normalite schema de schubert global et fibres geometriques admissibles} ci-dessous, on a besoin de montrer d'abord sa variante cohomologique due à Pappas--Rapoport, cf. \cite[conj. 10.5]{PR}, et améliorée par Zhu, cf. \cite[th. 2]{ZhuCoh}.

\begin{thm}[Zhu]\label{theoreme de coherence cohomologie fibres amples}
Soit $\Lh$ un faisceau inversible ample sur $\on{GR}_{\underline{\Gscr_\fbf}}^{\on{réd}}$ et supposons que $G^{\on{dér}}=G^{\on{sc}}$. Alors l'entier positif \begin{equation}\dim_kH^0(\on{Adm}_{\underline{\Gscr_\fbf}\otimes k\pot{z_x}},\Lh)\end{equation} ne dépend pas du choix de $x$.	
\end{thm}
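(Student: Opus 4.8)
The plan is to adapt Zhu's proof of the coherence conjecture \cite[th. 2]{ZhuCoh} to our integral, cyclotomic and possibly pseudo-reductive setting, the decisive new inputs being the normality, Cohen--Macaulayness and compatible Frobenius splitting of the local twisted Schubert schemes supplied by th. \ref{theoreme normalite des schemas de Schubert}, th. \ref{theoreme cycles de schubert topologie et normalite} and prop. \ref{proposition varietes de schubert proprietes}. Since $\AAA^{1/\infty}=\spec\ZZ[\zeta_\infty,t^{1/\infty}]$ is integral, it suffices to compare each fibre with the one at the generic point $\eta$, whose residue field has characteristic zero and over which $G$ is split, so that $\on{Adm}_{\underline{\Gscr_\fbf}\otimes k(\eta)\pot{z_\eta}}$ reduces to the single Schubert variety $\Gr_{\underline{\Gscr_\fbf}\otimes k(\eta)\pot{z_\eta},\leq z^\mu}$, normal by lem. \ref{lemme normalite en caracteristique nulle}. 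First I would choose, for a given point $x$, a trait $\spec\Oh$ with $\Oh$ a discrete valuation ring of the function field centred at $x$, restrict $\on{GR}_{\underline{\Gscr_\fbf},\leq\mu}$ to it, and pass to the scheme-theoretic closure $\overline{\on{GR}}_{\leq\mu}$ of the generic fibre: this closure is automatically $\Oh$-flat, its generic fibre is $\Gr_{\leq z^\mu}$, and its special fibre is a closed subscheme of the admissible locus (\ref{equation definition lieu admissible}).

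The key step will be to identify the reduced special fibre of $\overline{\on{GR}}_{\leq\mu}$ with the whole admissible locus and to see that it is already reduced. Here I would globalise the compatible Frobenius splittings of prop. \ref{proposition varietes de schubert proprietes}: each $\Gr_{\leq z^\lambda}$ with $\lambda\in W_G\cdot\mu$ is normal and compatibly split, and since all translations $z^\lambda$ in a single Weyl orbit share the same length modulo $W_\fbf$, the admissible locus is equidimensional of dimension $\dim\Gr_{\leq z^\mu}$. Flatness of $\overline{\on{GR}}_{\leq\mu}$ forces its special fibre to have this same dimension, hence to exhaust every irreducible component of $\on{Adm}$, while the compatible splitting forces reducedness; thus the special fibre equals $\on{Adm}_{\underline{\Gscr_\fbf}\otimes k\pot{z_x}}$. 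Because $\Lh$ restricts to an ample line bundle on every fibre, the Frobenius splitting yields $H^i(\on{Adm},\Lh)=0$ for $i>0$, and the same vanishing holds on the generic fibre.

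With $\Oh$-flatness of $\overline{\on{GR}}_{\leq\mu}$ and the fibrewise vanishing of higher cohomology established, I would conclude by the theorem on cohomology and base change: $H^0(\overline{\on{GR}}_{\leq\mu},\Lh)$ is a locally free $\Oh$-module whose formation commutes with base change, so its rank computes both $\dim_{k(\eta)}H^0(\Gr_{\leq z^\mu},\Lh)$ at the generic point and $\dim_k H^0(\on{Adm}_{\underline{\Gscr_\fbf}\otimes k\pot{z_x}},\Lh)$ at the closed point. As the left-hand quantity does not involve $x$ and $\dim H^0$ is insensitive to the residue field extension $k/k(x)$, this proves the asserted independence of $x$.

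The hard part will be the reducedness of the special fibre in residual characteristic $2$ when $\Phi_G$ is non-reduced, where the barcelonais phenomena of ex. \ref{exemple absolument non reduit} intervene. Exactly as in the proof of th. \ref{theoreme normalite des schemas de Schubert}, the Lie-algebra generation of lem. \ref{lemme engendrement algebres de lie} fails and must be replaced by its higher-order differential analogue, the distribution computation of lem. \ref{lemme engendrement distributions}; this is what guarantees that the compatible splitting of the local Schubert varieties descends to the global flat closure and controls the possible embedded components of the special fibre. A secondary subtlety, already responsible for the inaccuracies corrected in \cite{RichErr}, is the correct identification of the complete local ring $k\pot{z_x}$ at the generic points of a characteristic-$p$ fibre, which must be settled before $\on{Adm}$ is even well posed.
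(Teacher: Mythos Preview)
Your argument is circular: the identification of the special fibre of the flat closure $\overline{\on{GR}}_{\leq\mu}$ with the admissible locus \emph{as a scheme} is precisely the content of th.~\ref{theoreme normalite schema de schubert global et fibres geometriques admissibles}, whose proof in the paper \emph{uses} th.~\ref{theoreme de coherence cohomologie fibres amples} as input. Your justification ``the compatible splitting forces reducedness'' does not work: the Frobenius splitting of prop.~\ref{proposition varietes de schubert proprietes} tells you that the admissible locus, as a reduced subvariety of the local affine Grassmannian, is compatibly split; it says nothing about an a~priori unknown infinitesimal thickening of it such as the special fibre of $\overline{\on{GR}}_{\leq\mu}$. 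A splitting of $Y_{\on{red}}$ inside an ambient scheme never forces an arbitrary closed subscheme $Z$ with $Z_{\on{red}}=Y_{\on{red}}$ to be reduced. Likewise, you have not argued why the special fibre of the flat closure cannot acquire extra top-dimensional components outside $\on{Adm}$; equidimensionality from flatness only bounds the dimension, it does not pin down the components.

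The paper's proof avoids the global Beilinson--Drinfeld Grassmannian entirely and works purely with the \emph{local} integral Schubert schemes $\Gr_{\underline{\Gscr_\fbf},\leq w}$ over $\ZZ$ already controlled by th.~\ref{theoreme normalite des schemas de Schubert}. The two ingredients are: first, the central charge of $\Lh$ restricted to each fibre is constant in $x$ (this is where the exotic/barcelonais cases in characteristic $e$ require the Kac--Moody comparison of the appendix, a point you do not address at all); second, a Mayer--Vietoris inclusion-exclusion expresses $\dim H^0(\bigcup_i \Gr_{\leq w_i},\Lh)$ as an alternating sum over intersections, which are reduced by compatible splitting and hence equal to combinatorially determined unions of smaller Schubert varieties, independent of $p$. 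Descending induction reduces to a single Schubert variety, where th.~\ref{theoreme normalite des schemas de Schubert} gives $\ZZ$-flatness with normal fibres and vanishing higher cohomology, hence constancy of $\dim H^0$. Your final paragraph invoking lem.~\ref{lemme engendrement distributions} is misplaced: that lemma feeds into th.~\ref{theoreme normalite des schemas de Schubert}, which is an \emph{input} here, not into the present argument.
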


\begin{proof}
	Le premier truc que nous devons établir est que, pour un faisceau ample inversible sur la grassmannienne globale $\on{GR}_{\underline{\Gscr_\fbf},\leq \mu}$, alors la charge centrale de la restriction de $\Lh$ à $\Gr_{\underline{\Gscr_\fbf}\otimes k\pot{z_x}}$ définie à (\ref{equation charge centrale entiers}) est un invariant de $\Lh$ qui ne dépend pas du point $x$. On remarque que, lorsque le groupe d'Iwahori-Weyl $\widetilde{W}$ de $\underline{\Gscr_\fbf}\otimes k\rpot{z_x}$ n'est pas irréductible, la charge centrale doit plutôt s'entendre comme un $n$-uplet d'entiers, chacun d'eux associé à une composante irréductible du groupe d'Iwahori-Weyl ; que la charge centrale soit constante doit être entendu au sens des morphismes de spécialisation de la grassmannienne affine globale. 
	
	Ceci se trouve dans \cite[prop. 4.1]{ZhuCoh} à coefficients dans $k$ pour les groupes modérément ramifiés, mais nous allons donner une preuve explicite dans le cas où la grassmannienne affine est associée à un groupe pseudo-réductif exotique et barcelonais. Autrement dit, nous supposons que $\Phi_G$ est irréductible et que $p=e=2,3$. Or, selon les ex. \ref{exemple exotique basique} et \ref{exemple absolument non reduit}, nous disposons d'une application canonique \begin{equation}G_{\eta_e} \rightarrow \overline{G},\end{equation} où $\overline{G}$ est la restriction des scalaires purement inséparable d'un $\FF_e$-groupe déployé simplement connexe, tel que l'on ait des bijections d'immeubles \begin{equation}\Iscr(G_{\eta_e}, \FF_e\rpot{t})\simeq \Iscr(\overline{G}, \FF_e\rpot{t}),\end{equation} de points entiers et rationnels \begin{equation}G_{\eta_e}(\FF_e^{\on{alg}}\pot{t}) \simeq \overline{G}(\FF_e^{\on{alg}}\pot{t}) ,\end{equation} induisant un homéomorphisme universel \begin{equation}\Gr_{\Gscr_{\fbf,e}}\rightarrow \Gr_{\overline{\Gscr_{\fbf}}},\end{equation} cf. \cite[prop. 3.6]{LouBT}. Les morphismes induits $\PP^1_s \rightarrow \PP^1_s$ entre les droites de Schubert ont degrés soit $1$, soit $e$, bien déterminés par la combinatoire. Pour que la charge centrale soit constante, il faut alors balancer cet effet de sorte que les raisons $a_s^{\vee}/\overline{a}^\vee_s$ des coefficients de Kac--Moody, c'est-à-dire la valeur $c(\Lh_s)$ de la charge centrale $c$ dans $\Lh_s$, cf. cor. \ref{corollaire charge centrale}, associés à l'un et l'autre groupe soient égaux à $e$ ou à $1$, selon les degrés ci-dessus soient égaux à $1$ ou $e$. La relation prévue entre les coefficients peut être facilement vérifiée aux tableaux de \cite[annexe]{Car}. Ici on utilise alors la comparaison de l'annexe \ref{comparaison avec les varietes de demazure de kac-moody}, cf. (\ref{equation comparaison algebres de lie}).
	
	Finalement, nous voulons montrer des égalités \begin{equation}\dim_{\FF_p}H^0\Big(\bigcup_{i=1}^{n}\Gr_{\Gscr_{\fbf,p},\leq w_i},\Lh\Big)=\dim_{\FF_l}H^0\Big(\bigcup_{i=1}^{n}\Gr_{\Gscr_{\fbf,l},\leq w_i},\Lh\Big),\end{equation}
	où $w_i \in \widetilde{W}/W_\fbf$ sont des éléments quelconques, $p,l \in \PP \cup \{0\}$ sont des caractéristiques des corps premiers et $\Lh$ est un faisceau inversible très ample sur $\Gr_{\Gscr_\fbf}$, cf. \cite[cor. 4.4]{ZhuCoh}. Cela revient au même que savoir que la formation des réunions des schémas de Schubert commute aux changements de base quelconques. Nous observons d'abord que, grâce à un argument avec une suite exacte du type Mayer--Vietoris, on a une formule \begin{equation}\dim_{\FF_p}H^0\Big(\bigcup_{i=1}^{n}\Gr_{\Gscr_{\fbf,p},\leq w_i},\Lh\Big)= \sum_{J \subseteq I} (-1)^{j-1} \dim_{\FF_p} H^0\big(\bigcap_{j\in J}\Gr_{\Gscr_{\fbf,p},\leq w_j},\Lh\big)\end{equation} de type inclusion-exclusion, puisque les groupes de cohomologie supérieurs s'annulent. Comme les intersections des variétés de Schubert sont réduites sont réduites, puisque scindées dès que $p>0$, cf. prop. \ref{proposition varietes de schubert proprietes}, elles coïncident avec la réunion \begin{equation} \bigcap_{j\in J}\Gr_{\Gscr_\fbf,p,\leq w_j}= \bigcup_{k \in K} \Gr_{\Gscr_\fbf,p,\leq v_k}\end{equation} des variétés de Schubert contenues là, où l'ensemble des $v_k$ ne dépend pas de la caractéristique $p$, voir \cite[prop. 2.8]{RichDipl}. En particulier, on se ramène par récurrence décroissante au cas où $\lvert I \rvert=1$, lequel résulte de notre résultat sur les schémas de Schubert, voir th. \ref{theoreme normalite des schemas de Schubert}.
\end{proof}

Sur une base unidimensionnelle, le théorème précédent suffit pour démontrer que les schémas de Schubert globaux ont de fibres géométriques réduites, données par les lieux admissibles, cf. \cite[th. 3, \S4.2]{ZhuCoh} ainsi que \cite[th. 9.1]{PZh}. Dans la présente situation, il va falloir être plus soigneux avec l'argumentation géométrique.

\begin{thm}[L.]\label{theoreme normalite schema de schubert global et fibres geometriques admissibles}
Si le groupe dérivé $G^{\on{dér}}=G^{\on{sc}}$ est simplement connexe, alors le schéma de Schubert global $\on{GR}_{\underline{\Gscr_{\fbf}},\leq \mu}$ est normal, plat, géométriquement réduit et connexe, à fibres égales à $\on{Adm}_{\underline{\Gscr_\fbf}\otimes k\pot{z_x},\leq \mu}$.
\end{thm}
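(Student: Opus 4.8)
The plan is to reduce first to the simply connected case and then to bootstrap the known one‑dimensional statements to the two‑dimensional base. Using the canonical $z$-extension \eqref{equation z-extension canonique tordu} and the hypothesis $G^{\on{dér}}=G^{\on{sc}}$, the group $\underline{\Gscr_\fbf}$ is a central modification of its simply connected cover whose kernel is a torus; since the torus contributes only the disjoint union of finite schemes $\on{GR}_{\underline{\Tscr}}$ of lem. \ref{lemme grassmannienne globale tore}, this modification affects neither flatness, nor normality, nor the reduced fibres, exactly as in the proof of th. \ref{theoreme normalite des schemas de Schubert}, so I would assume $G=G^{\on{sc}}$. I would then descend everything to a finite level $B=\spec\ZZ[\zeta_N,t^{1/N}]$ over which $z^\mu$ and $X:=\on{GR}_{\underline{\Gscr_\fbf},\leq\mu}$ are already defined; $B$ is regular Noetherian of dimension $\leq 2$. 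As $X$ is the schematic image of the orbit map from the reduced, pro-smooth arc group $L^+\underline{\Gscr_\fbf}$, it is reduced and connected; it is moreover torsion-free over $\ZZ$, and its characteristic-$0$ generic fibre is the reductive Schubert variety $\Gr_{\leq z^\mu}$, normal by lem. \ref{lemme normalite en caracteristique nulle}.

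Next I would install the one-dimensional input. Over any localisation of $B$ at a height-one prime, and also after the diagonal base change $\ZZ[t]\to\Oh$ of rem. \ref{remarque autres changements de base groupe de tits} to a mixed-characteristic discrete valuation ring, the coherence theorem \ref{theoreme de coherence cohomologie fibres amples}, combined with the characteristic-independence of the Iwahori--Weyl combinatorics (prop. \ref{proposition combinatoire caracteristiques differentes}) and with semicontinuity, yields — exactly as in Zhu, cf. \cite[th.~3]{ZhuCoh} and \cite[th.~9.1]{PZh} — that $X$ is flat over the one-dimensional base with geometrically reduced fibres equal to the admissible locus $\on{Adm}_{\underline{\Gscr_\fbf}\otimes k\pot{z_x},\leq\mu}$. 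In particular, for each prime $p$ the equicharacteristic family $X\otimes\overline{\FF}_p$ is flat over the $t^{1/N}$-line $C$ with reduced admissible fibres. The identification of the reduced fibre with $\on{Adm}_{\leq\mu}$ rests on the Birkhoff decomposition and the finiteness of $\on{GR}_{\underline{\Tscr}}$ (lem. \ref{lemme grassmannienne globale tore}): these show that the reduced fibre is the $L^+\underline{\Gscr_\fbf}$-invariant union of the Schubert varieties indexed by $W_G\cdot\mu$, of dimension constant in $x$.

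The main obstacle, flagged in the introduction, is that over the two-dimensional base flatness is no longer automatic. I would obtain it from the local criterion of flatness over a regular base: at a closed point $b\in B$ with regular system of parameters $(p,\pi)$, where $\pi$ is a uniformiser in the $t$-adic direction, $X$ is flat over $\Oh_{B,b}$ as soon as $(p,\pi)$ is a regular sequence on each local ring $\Oh_{X,x}$. Now $p$ is a non-zero-divisor because $X$ is torsion-free over $\ZZ$, and $\pi$ is a non-zero-divisor modulo $p$ precisely because $X\otimes\overline{\FF}_p$ is flat over $C$ — the equicharacteristic one-dimensional result of the previous step. Hence $X$ is flat over $B$; and since every fibre sits over a point lying either in the characteristic-$0$ locus or on one of the equicharacteristic lines $C$, every fibre is geometrically reduced and equal to $\on{Adm}_{\leq\mu}$.

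It then remains to prove normality by Serre's criterion $R_1+S_2$. For $R_1$ the codimension-one points of $X$ lie either in the normal characteristic-$0$ generic fibre, or at a generic point of a fibre over a height-one prime, where flatness over the corresponding discrete valuation ring together with the reducedness of that fibre forces $\Oh_{X,x}$ to be a discrete valuation ring. For $S_2$, since $X$ is now flat over the regular, hence Cohen--Macaulay, base $B$, the depth formula for flat local homomorphisms gives $\on{depth}\,\Oh_{X,x}=\height(b)+\on{depth}\,\Oh_{X_b,x}$; as the fibres are reduced, hence $S_1$, the right-hand side is $\geq 2$ whenever $\dim\Oh_{X,x}\geq 2$, so $S_2$ holds \emph{without} requiring the fibres themselves to be $S_2$ or Cohen--Macaulay (which is what makes the union-of-Schubert-varieties fibres harmless). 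Thus $X$ is normal; equivalently the normalisation map is an isomorphism, which is the formulation announced in the introduction and which one may alternatively reach by using the one-dimensional results to kill the conductor. Connectedness follows from $X$ being the image of the connected group $L^+\underline{\Gscr_\fbf}$, and the equality of the fibres with the admissible loci has been recorded above, completing the proof. The crux throughout is the two-dimensional flatness, which I reduce — via the regular-sequence criterion — to the two one-dimensional coherence statements.
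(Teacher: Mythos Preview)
Your argument has a genuine gap at its pivotal step. You assert that ``for each prime $p$ the equicharacteristic family $X\otimes\overline{\FF}_p$ is flat over the $t^{1/N}$-line $C$ with reduced admissible fibres'', presenting this as a consequence of the one-dimensional coherence theorem. But Zhu's one-dimensional result applies to the schematic image $Y_p$ of the orbit map taken \emph{over} $C$, not to $X\otimes\overline{\FF}_p$. Formation of schematic images does not commute with the non-flat base change $\ZZ\to\FF_p$, so a priori $Y_p\subsetneq X\otimes\overline{\FF}_p$, and the latter may carry embedded or non-reduced components supported on $\{t^{1/N}=0\}$. Likewise the diagonal base change to a mixed-characteristic DVR $\Oh$ tells you only about the schematic image over $\Oh$, not about $X\otimes_B\Oh$. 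Flatness of $X$ over the height-one localisations of $B$ is automatic for an integral scheme dominating $B$, and gives the fibre only at the \emph{generic} point of the $t$-line in characteristic $p$; it says nothing about the full scheme $X\otimes\FF_p$ over $C$. Hence your regular-sequence criterion is circular: the assertion ``$\pi$ is a non-zero-divisor modulo $p$'' is exactly the statement that $X\otimes\FF_p$ has no $t$-torsion, which is what you set out to prove.

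The paper sidesteps this by never attempting to prove flatness of $X$ directly. It first shows, via generic flatness combined with specialisation along rank-one valuation rings mapping into $X$, that every fibre has the \emph{topology} of the admissible locus, whence $R_1$ for $X$. It then passes to the normalisation $\widetilde X$: since $X$ is already flat outside a closed set of codimension $\geq 2$ and $\widetilde X$ is $S_2$, Raynaud's depth argument (cf.\ prop.~\ref{proposition enveloppe affine} and \cite[lem.~VII.3.2]{Ray}) forces $\widetilde X$ to be flat over the regular two-dimensional base. With flatness of $\widetilde X$ in hand, the one-dimensional coherence argument and Zariski's main theorem show that $\widetilde X\otimes k\hookrightarrow X\otimes k$ is a closed immersion for every fibre, and Nakayama yields $\widetilde X\simeq X$. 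Your Serre-criterion outline once flatness is known is fine; but the passage to flatness must go through the normalisation, not through a direct regular-sequence bootstrap.
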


\begin{proof}
Nous avons déjà vu à (\ref{equation definition lieu admissible}) et (\ref{equation fibre speciale schema de schubert global}) que le lieu admissible \begin{equation} \on{Adm}_{\underline{\Gscr_\fbf}\otimes k\pot{z_x},\leq \mu}\subset \on{GR}_{\underline{\Gscr_{\fbf}},\leq \mu} \otimes_{\AAA^{1/\infty},x} k\end{equation} se trouve dedans la fibre spéciale du schéma de Schubert en tant que sous-schémas fermés de grassmannienne affine locale $\Gr_{\underline{\Gscr_\fbf}\otimes k\pot{z_x}}$. Afin d'obtenir l'égalité désirée, il faudra calculer les sections globales de fibrés en droites très amples, dont l'outil principal est le th. \ref{theoreme de coherence cohomologie fibres amples}. 

Cependant, cette utilisation aura lieu seulement quand le schéma en question est plat sur la base, pour que la caractéristique d'Euler des fibres soit localement constante. Par suite, le th. \ref{theoreme de coherence cohomologie fibres amples} entraîne que l'affirmation du présent énoncé est vraie au-dessus d'un ouvert $U \subset \AAA^{1/\infty}$ à complément pro-fini. De plus, en prenant des changements de base aux anneaux de valuation $V$ de rang $1$ sur le schéma de Schubert global dont le point fermé s'envoie sur un point générique de la fibre sur $x$, on en déduit que l'espace topologique de $\on{GR}_{\underline{\Gscr_{\fbf}},\leq \mu}$ est le bon, c'est-à-dire notre assertion sur ses fibres est valable à structures non réduites près. En particulier, le schéma de Schubert global est régulier en codimension $1$ par comparaison de cardinaux.

Considérons ainsi le schéma de Schubert global normalisé $\widetilde{\on{GR}}_{\underline{\Gscr_{\fbf}},\leq \mu}$ dans son corps des fractions. Celui-ci est plat sur la base grâce au \cite[lem. VII.3.2]{Ray}, voir aussi la prop. \ref{proposition enveloppe affine}, vu que la codimension du lieu plat du schéma de Schubert global est au moins $2$ et que celui-ci est régulier en codimension $1$. Puis, on affirme que l'application \begin{equation}\widetilde{\on{GR}}_{\underline{\Gscr_{\fbf}},\leq \mu} \otimes_{x} k \rightarrow \on{GR}_{\underline{\Gscr_{\fbf}},\leq \mu} \otimes_{x} k\end{equation} est une immersion fermée pour tout point $x$ à valeurs dans $k$. En effet, le $\AAA^{1/\infty}_k$-schéma plat \begin{equation}\widetilde{\on{GR}}_{\underline{\Gscr_{\fbf}},\leq \mu} \otimes_{\AAA^{1/\infty}_{\ZZ[\zeta_\infty]}}\AAA^{1/\infty}_k\end{equation} déduit par changement de base s'applique isomorphiquement d'après le théorème principal de Zariski sur l'adhérence plate de \begin{equation}\on{GR}_{\underline{\Gscr_{\fbf}},\leq \mu} \otimes_{\AAA^{1/\infty}_{\ZZ[\zeta_\infty]}}\AAA^{1/\infty}_k,\end{equation} car cette dernière est normale, géométriquement réduite et à fibres données par les lieux admissibles, cf. th. \ref{theoreme de coherence cohomologie fibres amples}. Enfin, le fait que l'application de normalisation
\begin{equation}
\widetilde{\on{GR}}_{\underline{\Gscr_{\fbf}},\leq \mu} \rightarrow \on{GR}_{\underline{\Gscr_{\fbf}},\leq \mu} 
\end{equation}
 soit un isomorphisme devient une conséquence du lemme de Nakayama, comp. avec la fin de la preuve de \cite[prop. 8.1]{PZh}.
\end{proof}

\begin{rem}
	Pour finir cette section, étendrons-nous un peu sur les schémas déduits du schéma de Schubert global $\on{GR}_{\underline{\Gscr_{\fbf}},\leq \mu}$ par quelques changements de base particuliers. En caractéristiques égales, les objets échéants généralisent à un certain sens les modèles locaux en caractéristiques égales au sens de Zhu, voir \cite[déf. 3.1, th. 3]{ZhuCoh}, et au sens de Richarz, voir \cite[déf. 2.5]{Rich}. En caractéristique mixte, alors on obtient en même temps les modèles locaux au sens de Pappas--Zhu, voir \cite[déf. 7.1, th. 9.1]{PZh} et de Levin, voir \cite[déf. 4.2.1, th. 4.3.2]{Lev}, mais aussi quelques-uns nouveaux en des cas sauvagement ramifiés. En guise d'application, nous avons étendu \cite[th. 2.14, cor. 2.16]{HPR} concernant la conjecture de Scholze--Weinstein sur les modèles locaux à presque tous les cas de type abélien dans \cite[IV, prop. 4.22, cor. 4.24]{LouDiss}. Récemment, on a fait des avances non négligeables sur le cas général de la conjecture, dont on rapportera dans un article futur.
\end{rem}

\begin{appendix}

	\section{Le lien avec le cadre de Kac--Moody} \label{comparaison avec les varietes de demazure de kac-moody} 

Nous arrivons maintenant au paragraphe concernant le lien entre les $\ZZ[t^{\pm1}]$-groupes de Tits $\underline{G}$ de la déf. \ref{definition groupe de Tits}, ainsi que leurs $\ZZ[t]$-modèles parahoriques $\underline{\Gscr_\fbf}$ associés aux facettes $\fbf \subset \Ascr(\underline{G},\underline{S},\ZZ\rpot{t})$ par la déf. \ref{definition modeles immobiliers du groupe de tits}, et les $\ZZ$-groupes $\Gh_\ZZ$ de (\ref{equation groupe de kac-moody entiers}), ainsi que leurs sous-groupes paraboliques $\Ph^{\pm}_\ZZ$ à (\ref{equation parabolique positif kac-moody entiers}) et (\ref{equation parabolique negatif kac-moody entiers}), construits par Mathieu dans le cadre de Kac--Moody affine, voir les cors. \ref{corollaire comparaison kac-moody parabolique} et \ref{corollaire comparaison groupes kac-moody}. Cela résulte également d'un théorème de comparaison entre nos grassmanniennes affines $\Gr_{\underline{\Gscr_\fbf}}$ et les variétés de drapeaux affines $\Fh_{\gf/\pf,\ZZ}$ de Kac-Moody, cf. th. \ref{theoreme comparaison kac-moody schubert}. À la fin, on pourra regarder les $\underline{\Gscr_\fbf}$ comme des sortes de \og délacés \fg{} des groupes de Kac--Moody affines à coefficients entiers.

\subsection{Les $\ZZ$-groupes de Kac--Moody et leurs variétés de drapeaux}\label{subsection theorie de kac-moody}
Rappelons les notions basiques sur les algèbres de Kac--Moody et les groupes qui leur sont associés. 

On dit qu'une matrice $A \in M_n(\ZZ)$ carrée d'ordre $n$ à entrées entières est de Cartan généralisée, cf. \cite[déf. 3.7]{Mar}, si les conditions suivantes sont remplies : $a_{ii}=2$, $a_{ij} \in \ZZ_{\leq 0}$ et $a_{ij} \neq 0$ si et seulement si $a_{ji} \neq 0$. On fixe désormais une matrice de Cartan généralisée $A$ de rang $r$, supposée d'ailleurs symétrisable. 

La définition suivante est une synthèse de \cite[déf. 3.10, déf. 7.9]{Mar} :

\begin{defn}Une réalisation entière de $A$ est la donnée d'un $5$-tuplet \begin{equation}\label{equation realisation entiere kac-moody}(\tf, P, P^{\vee}, \Pi, \Pi^{\vee})\end{equation} qui se compose d'un $\QQ$-espace vectoriel $\tf$ de dimension $2n-r$, des $\ZZ$-réseaux duaux \begin{equation} P \subset \tf^\vee\text{ et }P^{\vee} \subset \tf\end{equation} et des parties libres \begin{equation}\Pi^\vee=\{ \alpha_i^\vee: i=1, \dots, n\} \subset \tf\text{ et } \Pi=\{ \alpha_i: i=1, \dots, n \} \subset \tf^\vee\end{equation} à matrice d'accouplement $A$, c'est-à-dire $\langle \alpha_i^{\vee}, \alpha_j\rangle=a_{ij}$, de sorte que, lorsqu'on pose \begin{equation}Q=\bigoplus_{i=1}^n\ZZ \alpha_i \text{ et }Q^{\vee}=\bigoplus_{i=1}^n \ZZ \alpha_i^\vee,\end{equation} alors $Q \subset P$, $Q^{\vee} \subset P^\vee$ et $P^{\vee}/Q^{\vee}$ est sans torsion. 
\end{defn}
	
	Ces données sont uniquement déterminées par la matrice $A$ à isomorphisme unique près, cf. \cite[ex. 3.11, ex. 7.10]{Mar}. Les algèbres de Kac-Moody sont définies en termes des présentations comme suit, comp. avec \cite[ch. I, (I)-(VII)]{Mat} et \cite[déf. 3.17]{Mar}.

\begin{defn}
	L'algèbre de Kac--Moody $\gf$ attachée à $A$ est la $\QQ$-algèbre de Lie engendrée par $\tf$ et par des $\tf$-vecteurs propres $e_i$ et $f_i$ à valeurs propres $\pm \alpha_i$, où $i=1, \dots, n$, tels que \begin{equation} [e_i,f_j]=-\delta_{ij}\alpha_i^\vee\end{equation} et soumis aux relations de Serre \begin{equation}
	(\on{ad}e_i)^{1-a_{ij}}e_j=0,
	\end{equation}
	\begin{equation}
	(\on{ad}f_i)^{1-a_{ij}}f_j=0
	\end{equation}  pour tout $1\leq i \neq j \leq n$.
\end{defn} 

Ainsi que pour les algèbres de Lie semi-simples, il faut regarder $\tf$ comme sous-algèbre de Cartan de $\gf$ et les $e_i$ et $f_i$ comme vecteurs radiciels associés aux racines simples $\pm \alpha_i$ du système de racines $\Phi$ de $\gf$ suivant $\tf$. 

Expliquons la construction des $\QQ$-sous-groupes paraboliques de Kac--Moody à la suite de Kumar, voir \cite[ch. VI]{Kum}. Étant donnée une sous-algébre parabolique positive \begin{equation} \tf \subset \pf^+ \subset \gf\end{equation} invariante sous $\tf$, alors on a un choix unique de décomposition $\tf$-équivariante de Levi \begin{equation}\pf^+=\mf \oplus \nf^+.\end{equation} Ceci nous amène à considérer, d'un côté, le seul $\ZZ$-groupe épinglé $\Mh_\ZZ$ associé à la donnée radicielle de $\mf$ induite par la réalisation entière (\ref{equation realisation entiere kac-moody}) ci-dessus, et de l'autre côté, le seul $\QQ$-groupe pro-unipotent \begin{equation} \Uh^+_\QQ := \on{exp}(\widehat{\nf}^+)\end{equation}  associé au séparé complété de $\nf^+$ pour sa topologie naturelle, d'après \cite[th. 4.4.19]{Kum}. Puis, on voit que l'exponentielle relève l'action de l'algèbre de Lie en une opération \begin{equation}\Mh_\QQ \times \Uh^+_\QQ \rightarrow \Uh_\QQ^+,\end{equation} de sorte qu'on puisse prendre le produit semi-direct \begin{equation} \Ph_\QQ^+:=\Mh_\QQ\ltimes \Uh_\QQ^+.\end{equation}

En ce qui concerne les modèles entiers des groupes de Kac--Moody, la stratégie adoptée remonte à Kostant \cite{Kos}. L'idée est que leur géométrie devrait être codifiée dans la famille de leurs opérateurs différentiels supérieurs, cf. preuve du th. \ref{theoreme normalite des schemas de Schubert} en \S\ref{normalite des varietes de schubert}. Par suite, on a besoin de trouver un $\ZZ$-réseau convenable \begin{equation}\label{equation reseau algebre enveloppante kac-moody} U_\ZZ(\gf) \subset U(\gf) \end{equation} dedans l'algèbre enveloppante de l'algèbre de Kac--Moody $\gf$. Ceci fut mené dans le présent cadre par Tits, voir \cite{Ti13}, \cite[\S4]{TitsUniq} et \cite[déf. 7.3]{Mar}.

\begin{defn}\label{definition forme de kostant-tits}
	La $\ZZ$-forme de Kostant--Tits notée $U_{\ZZ}(\gf)$ de l'algèbre enveloppante de $\gf$ est la sous-$\ZZ$-algèbre associative 
	\begin{equation}\label{equation forme de kostant-tits algebre enveloppante}
	U_\ZZ(\gf):=\ZZ\Big[\frac{e_i^n}{n!},\frac{f_i^n}{n!},\binom{h}{n} \Big]
	\end{equation}engendrée par des puissances divisées tordues, où $h \in P^\vee$.
\end{defn}

Alors, on peut prolonger le $\QQ$-groupe pro-algébrique $\Uh^+_\QQ$ en un $\ZZ$-schéma en groupes affine et plat \begin{equation} \Uh^+_\ZZ:=\spec \Gamma(\Uh^+_\ZZ, \Oh), \end{equation} en prenant pour sections globales le dual gradué de $U_\ZZ(\nf^+)$, cf. \cite[déf. 8.41]{Mar} ; celui-ci coïncide avec la partie de $\Gamma(\Uh^+_\QQ, \Oh)$ duale au réseau de (\ref{equation reseau algebre enveloppante kac-moody}) par rapport à l'accouplement \begin{equation} \Gamma(\Uh^+_\QQ, \Oh) \times U(\nf^+) \rightarrow \QQ \end{equation} provenant du lien entre les distributions et l'algèbre enveloppante de Lie, cf. \cite[lem. 8]{Mat} et \cite[6.2]{TitsBour}. On remarque que le $\ZZ$-groupe épinglé $\Mh_\ZZ$ opère encore sur $\Uh_\ZZ^+$ d'où le modèle entier \begin{equation}\label{equation parabolique positif kac-moody entiers} \Ph_\ZZ^+:=\Mh_\ZZ \ltimes \Uh_\ZZ^+,\end{equation} voir \cite[cor. 3.10]{Rou16}.

Il y a une théorie des $\gf$-modules irréductibles intégrables $V(\lambda)$ de plus grand poids $\lambda$, où ce dernier est un caractère régulier dominant de $\Mh_\ZZ$, cf. \cite[\S9]{Kac} et \cite[\S4.1]{Mar}, déterminés en tant que quotients du module de Verma
\begin{equation}\label{equation module weyl comme quotient verma rationnels}
U(\gf)\otimes_{U(\pf)}\QQ_\lambda \twoheadrightarrow V(\lambda).
\end{equation} La $\ZZ$-forme $U_\ZZ(\gf)$ nous permet d'introduire le réseau \begin{equation} V_\ZZ(\lambda)\subset V(\lambda)\end{equation} donné par l'image de $U_\ZZ(\gf)\otimes_{U_\ZZ(\pf^+)} \ZZ_\lambda$ suivant (\ref{equation module weyl comme quotient verma rationnels}), voir \cite[ex. 7.23]{Mar}. Ceci étant, on se tourne vers les schémas de Schubert sur les entiers, cf. \cite[p. 37]{Mat2}.

\begin{defn}\label{definition kac-moody schemas de schubert}
	Soit $\lambda$ un poids régulier dominant de $\Mh_\ZZ$. Alors, à chaque $w \in W/W_\pf$, on lui associe le schéma de Schubert \begin{equation} \Sh_{w,\lambda,\ZZ}:= \overline{\Bh_\ZZ^+\cdot v_{w\lambda}} \subset \PP(V_\ZZ(\lambda)) \end{equation} donné par l'orbite fermée d'un générateur $v_{w\lambda}$ de l'espace de poids $w\lambda$ de $V_\ZZ(\lambda)$ dedans l'ind-espace projectif qui lui est associé.
\end{defn}

Heureusement, les schémas de Schubert normalisés ne dépendent que du niveau parabolique $\pf$ et pas du choix de poids $\lambda$, grâce à un argument topologique, cf. \cite[lem. 55]{Mat}, avec les schémas de Demazure 
\begin{equation}\label{equation schemas de demazure kac-moody}\Dh_{\wf,\ZZ}:=\Ph_{s_{i_1},\ZZ}^+\times^{\Bh^+_\ZZ} \dots \times^{\Bh^+_\ZZ} \Ph_{s_{i_n},\ZZ}^+/\Bh^+_\ZZ.\end{equation}
 Puis on dispose de même du théorème de normalité dans le cadre de Kac--Moody :
 \begin{thm}[Mathieu, Littelmann]\label{theoreme kac-moody normalite}
 	Les schémas de Schubert $\Sh_{w,\ZZ}$ associés à $w \in W/W_\pf$ sont géométriquement normaux.
 \end{thm}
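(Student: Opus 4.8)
The plan is to follow the Frobenius-splitting method of Mathieu, supplemented by Kumar's representation-theoretic argument in characteristic zero, in close parallel with the proof of th. \ref{theoreme normalite des schemas de Schubert}. First I would fix a reduced word $\wf=[s_{i_1}:\dots:s_{i_n}]$ representing $w\in W/W_\pf$ and recall that the Demazure scheme $\Dh_{\wf,\ZZ}$ of (\ref{equation schemas de demazure kac-moody}) is smooth and projective over $\ZZ$, being an iterated $\PP^1$-fibration, and that the multiplication morphism $\Dh_{\wf,\ZZ}\to \Sh_{w,\ZZ}$ is proper and birational onto its image. The geometric fibres of $\Dh_{\wf,\ZZ}$ over the prime fields are iterated projective-line fibrations, hence connected with vanishing higher cohomology of the structure sheaf; thus the Stein factorization identifies the normalised Schubert scheme with the middle term and the normalisation map with a universal homeomorphism, exactly as in prop. \ref{proposition varietes de schubert proprietes}.

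Next, in positive characteristic $p>0$, I would construct a Frobenius splitting of $\Dh_{\wf,\FF_p}$ compatible with all boundary sub-Demazure schemes $\Dh_{\uf,\FF_p}\subset\Dh_{\wf,\FF_p}$. The essential input is an ample line bundle on the ind-flag variety whose restriction to every Schubert line $\PP^1$ has degree one, coming from the very ample $\Oh(1)$ on $\PP(V_\ZZ(\lambda))$ for a regular dominant $\lambda$; this lets me compute the anticanonical bundle of $\Dh_{\wf,\FF_p}$ and invoke the Mehta--Ramanathan criterion \cite[prop. 1.3.11]{BK}. The splitting descends along the Stein factorization, so that the normalised Schubert schemes $\widetilde{\Sh}_{w,\FF_p}$ are compatibly split, hence reduced and weakly normal, and the transition maps between them are closed immersions.

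Normality in characteristic zero is Kumar's theorem, cf. \cite{KumDem}: working with the integrable highest-weight modules $V(\lambda)$ of (\ref{equation module weyl comme quotient verma rationnels}), one identifies the global sections of the ample bundle on the normalised Schubert variety with those on $\Sh_{w,\QQ}$ as irreducible highest-weight $\on{Lie}$-modules of the same highest weight, forcing the normalisation to be an isomorphism. Finally, to glue the two characteristics over $\ZZ$ I would argue as in the proof of th. \ref{theoreme normalite des schemas de Schubert}: one reduces to inverting the normalisation morphism on the completed local rings at a base point, where flatness over $\ZZ$ and the reducedness of the fibres provided by the splitting permit a deformation argument through the square-zero filtration induced by the Demazure resolution. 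Since the Kostant--Tits $\ZZ$-form $U_\ZZ(\gf)$ is by construction generated by the divided powers attached to the simple root subgroups, cf. déf. \ref{definition forme de kostant-tits}, the induced map on tangent/distribution spaces is bijective, and Nakayama produces the desired section. The hard part will be the anticanonical computation together with the verification of the Mehta--Ramanathan hypothesis for the full Kac--Moody Demazure tower, and then the control of the integral structure so that the characteristic-$p$ reducedness and the characteristic-$0$ normality assemble into geometric normality over $\ZZ$.
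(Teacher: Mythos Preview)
Your approach is correct and follows Mathieu's Frobenius-splitting strategy, but the final step by which you pass from ``normalised Schubert schemes are nice'' to ``the Schubert schemes themselves are normal'' differs from what the paper cites. The paper's sketch singles out the \emph{Demazure character formula} for the integral highest-weight modules $V_\ZZ(\lambda)$ as the pivotal input: Mathieu establishes it via splitting techniques (and Littelmann independently via his path model), and normality is then read off from the equality of $H^0$-dimensions on $\Sh_{w}$ and its normalisation for all ample $\Lh$. You instead transplant the Faltings-style distribution/tangent-space argument from the proof of th.~\ref{theoreme normalite des schemas de Schubert} into the Kac--Moody setting. That is legitimate and has the virtue of making the two normality theorems look uniform; what it costs is that you must know, intrinsically on the Kac--Moody side, that the distribution algebra of the formal neighbourhood of the base point in $\Fh_{\gf/\pf,\ZZ}$ is generated by the divided powers of the simple root vectors. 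Your sentence ``$U_\ZZ(\gf)$ is by construction generated by the divided powers attached to the simple root subgroups'' is not quite what d\'ef.~\ref{definition forme de kostant-tits} says, since the generators $\binom{h}{n}$ for $h\in P^\vee$ are also imposed there; what you actually need is that $U_\ZZ(\nf^-)$ is generated by the $f_i^{[n]}$, which is a PBW-type statement for the Kostant--Tits form rather than a tautology. Note also that the paper deliberately proves th.~\ref{theoreme normalite des schemas de Schubert} and th.~\ref{theoreme kac-moody normalite} by \emph{independent} methods and only afterwards, in th.~\ref{theoreme comparaison kac-moody schubert}, uses both simultaneously to identify the two theories; your route, while valid, blurs that separation.
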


\begin{proof}[Esquisse de la preuve]
Ceci repose sur la formule de caractère de Demazure pour les $U_\ZZ(\gf)$-modules irréductibles $V_\ZZ(\lambda)$ de plus grand poids. Mathieu recourt dans \cite[cor. 1]{Mat2} aux techniques de scindage pour les variétés de Schubert normalisées en caractéristique $p>0$, tandis que Littelmann applique sa méthodes des chemins pour y parvenir, cf. \cite[\S8]{Lit}.
\end{proof}
  La variété de drapeaux de Kac--Moody à niveau parabolique $\pf$ introduite par Mathieu est donc la limite inductive filtrée \begin{equation}\Fh_{\gf/\pf,\ZZ} :=\on{colim}_{w\in W/W_\pf}\Sh_{w,\ZZ}\end{equation} des schémas de Schubert. Il ne reste qu'à ériger le groupe complet $\Gh_\ZZ$ de Kac--Moody. 
  
  Supposons pour l'instant que $\pf^+=\bff^+$ est une sous-algèbre borélienne. On remarque d'abord que le schéma de Demazure $D_{\wf,\ZZ}$ de (\ref{equation schemas de demazure kac-moody}) possède un $\Bh_\ZZ^+$-torseur naturel \begin{equation}\Eh_{\wf,\ZZ}:=\Ph_{s_{i_1},\ZZ}^+\times^{\Bh^+_\ZZ} \dots \times^{\Bh^+_\ZZ} \Ph_{s_{i_n},\ZZ}^+. \end{equation} Il en résulte alors un $\Bh_\ZZ^+$-torseur canonique sur $\Sh_{w,\ZZ}$ noté $\Gh_{w,\ZZ}$ et déterminé en tant que l'enveloppe affine de $\Eh_{\wf,\ZZ}$, voir \cite[prop. 26]{Mat}. Ainsi, l'on note 
  \begin{equation}\label{equation groupe de kac-moody entiers}\Gh_\ZZ:=\on{colim}_{w\in W} \Gh_{w,\ZZ} \end{equation} l'ind-schéma en groupes affine sur $\ZZ$ qui est la limite directe des torseurs précédents, comp. avec \cite[p. 45]{Mat2} et \cite[\S3.6]{Rou16}. 
  
  Plus généralement, les sous-groupes paraboliques positifs $\Ph_\ZZ^+$ se plongent dans $\Gh_\ZZ$ en tant que sous-$\ZZ$-groupes fermés, tels que le quotient pour la topologie plate \begin{equation}\Gh_\ZZ/\Ph_\ZZ^+ \simeq \Fh_{\gf/\pf,\ZZ} \end{equation} s'identifie canoniquement à la variété de drapeaux partielle. 
  
  En outre, le groupe de Kac--Moody possède également des sous-groupes paraboliques négatifs ou opposés \begin{equation}\label{equation parabolique negatif kac-moody entiers} \Ph^-_\ZZ \subset \Gh_\ZZ,\end{equation} définis comme répulseurs dans $\Gh_\ZZ$ d'un copoids dominant régulier de $\Mh_\ZZ$, tels que l'on ait une décomposition de Levi
  \begin{equation}
  \Ph^-_\ZZ=\Mh_\ZZ\ltimes \Uh_\ZZ^-,
  \end{equation}
  où l'on note $\Uh_\ZZ^-$ le répulseur strict, et que l'application
  \begin{equation}
  \Uh_\ZZ^- \times \Ph_\ZZ^+ \rightarrow \Gh_\ZZ
  \end{equation} définisse une immersion ouverte et affine, voir \cite[lem. 4.1]{HLR} pour toutes ces affirmations et comparer avec la prop. \ref{proposition dynamique groupes d'arcs et de cordes}.

\subsection{La comparaison avec les grassmanniennes affines}\label{subsection comparaison de kac-moody}
Maintenant, on va considérer toutes les constructions du paragraphe précédent dans le cas particulier où $\gf$ est de type affine, c'est-à-dire $r=n-1$. 

Or, la classification des algèbres de Kac--Moody est faite en termes de diagrammes de Dynkin affines, cf. \cite[\S4.8]{Kac} et \cite[p. 77]{Mar} : on a la partie des diagrammes non tordus \begin{equation}A_n^{(1)}, B_n^{(1)}, C_n^{(1)}, D_n^{(1)}, E_6^{(1)}, E_7^{(1)}, E_8^{(1)}, F_4^{(1)}, G_2^{(1)} ;\end{equation} celle des diagrammes tordus réduits \begin{equation}A_{2n-1}^{(2)}, D_n^{(2)}, E_6^{(2)}, D_4^{(3)} ;\end{equation} et enfin le seul diagramme tordu non réduit \begin{equation} A_{2n}^{(2)}.\end{equation} 

Quoiqu'il en soit, nous disposons em même temps d'un et d'un seul $\QQ(t)$-groupe $G$ simplement connexe et absolument presque simple à isomorphisme près, satisfaisant à l'hyp. \ref{hypothese groupe dans la fibre generique}, et dont le diagramme de Dynkin affine est celui donné, qu'on fixera désormais. L'objet du présent paragraphe est de comparer les données géométriques de Kac--Moody affines décrites ci-dessus à celles du \S\ref{section grassmanniennes affines}. 

Traitons d'abord les opérateurs différentiels. On note ci-dessous $\widetilde{L\underline{G}}$ le produit semi-direct
\begin{equation}
\widetilde{L\underline{G}}:=\widehat{L\underline{G}} \rtimes \GG_{m,\ZZ}
\end{equation}
 de l'extension centrale universelle $ \widehat{L\underline{G}}$ de (\ref{equation charge centrale entiers}) par l'action de rotation du groupe multiplicatif $\GG_{m,\ZZ}$, voir le lem. \ref{lemme action de rotation t unite}. Ces définitions s'appliquent également aux groupes d'élastiques $L^{\on{gr}}\underline{G}$, donc on fera usage des mêmes notations.

\begin{lem}\label{lemme identification forme de tits et distributions}
	Il y a un isomorphisme $W^{\on{af}}$-équivariant
	\begin{equation}\label{equation identification distributions kac-moody}U_\ZZ(\gf)\simeq \on{Dist}\big(\widetilde{L^{\on{gr}}\underline{G}}\big)\end{equation}
	de $\ZZ$-algèbres associatives.
\end{lem}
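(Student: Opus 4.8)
The plan is to identify both associative $\ZZ$-algebras with a common object by matching their standard generators, reducing everything to rank-one computations and invoking the generation statement of the lem. \ref{lemme engendrement distributions}. First I would work rationally. The generic fibre of $\underline{G}$ is quasi-déployé simplement connexe, and forming its elastic loop algebra, adjoining the central charge of the cor. \ref{corollaire charge centrale} and the rotation derivation of the lem. \ref{lemme action de rotation t unite}, yields exactly the affine Kac--Moody algebra $\gf$ attached to the prescribed affine Dynkin diagram; this is the loop-algebra realization of (twisted) affine Lie algebras, including the non-reduced case $A_{2n}^{(2)}$. Concretely, I would fix the isomorphism
\[
\gf \xrightarrow{\ \sim\ } \on{Lie}\big(\widetilde{L^{\on{gr}}\underline{G}}\big)\otimes\QQ,
\]
sending each Chevalley generator $e_i$ (resp. $f_i$) to the canonical generator of the Lie algebra of the simple real affine root subgroup $\underline{V_{\alpha_i}}$ (resp. $\underline{V_{-\alpha_i}}$) for $\alpha_i \in \Delta_G^{\on{ré}}$, and identifying $\tf$ with the Lie algebra of $\underline{S}\times\GG_{m,\ZZ}\times\GG_{m,\ZZ}$, the last two factors carrying the central charge and the rotation. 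Since in characteristic $0$ the distribution algebra of an (ind-)group scheme is the enveloping algebra of its Lie algebra, this upgrades to an isomorphism $U(\gf)\simeq\on{Dist}(\widetilde{L^{\on{gr}}\underline{G}})\otimes\QQ$.

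The heart of the matter is to show that this rational isomorphism carries the Kostant--Tits lattice $U_\ZZ(\gf)$ of the déf. \ref{definition forme de kostant-tits} isomorphically onto $\on{Dist}(\widetilde{L^{\on{gr}}\underline{G}})$. By the lem. \ref{lemme engendrement distributions} the distribution algebra of $L^{\on{gr}}\underline{G}$ is generated by those of $L^{\on{gr}}\underline{U_\pm}$ and of the torus, and adjoining the central charge and rotation tori gives the full target; using the affine root decomposition, it is in turn generated by the $\on{Dist}(\underline{V_{\pm\alpha_i}})$ and the enlarged torus. Now each real root subgroup $\underline{V_{\pm\alpha_i}}$ is isomorphic to $\GG_a$ over $\ZZ$ --- this is exactly where the modification de Tits of \S\ref{facteurs de 2 dans le cas non reduit} is used to avoid denominators in the multipliable and non-reduced cases --- so its distribution algebra is the divided-power algebra $\ZZ[e_i^n/n!]$, while the torus contributes the binomial algebra $\ZZ[\binom{h}{n}]$ for $h\in P^\vee$. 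These are precisely the generators of $U_\ZZ(\gf)$ in (\ref{equation forme de kostant-tits algebre enveloppante}); since the Lie-algebra identification of the first step sends $e_i,f_i$ and the coweights to the corresponding group-theoretic data, the two lattices have identical generators inside $U(\gf)\otimes\QQ$ and therefore coincide.

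Finally, $W^{\on{af}}$-equivariance. Both actions are implemented by the same elements: on the distribution side by conjugation by the integral representatives $n_w\in\underline{N}(\ZZ[t^{\pm 1}])$ built from the $m_a$ of the prop. \ref{proposition drs groupe de tits}, and on the Kac--Moody side by the braid lifts $\widetilde{s}_i$ of the simple reflections. As $\widetilde{s}_i$ is the image of $m_{\alpha_i}=x_{\alpha_i}(1)x_{-\alpha_i}(1)x_{\alpha_i}(1)$ under the Lie-algebra identification, the constructed isomorphism intertwines the two actions, and equivariance is inherited from that of the generating data.

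The main obstacle is the integral lattice-matching step: one must verify that, after the modification de Tits, the divided powers arising from the plural and restricted-scalar root subgroups reproduce exactly the divided powers of the $e_i,f_i$ of the Kostant--Tits form, with the normalization of the central charge dictated by the cor. \ref{corollaire charge centrale}. The generation lem. \ref{lemme engendrement distributions} is precisely what reduces this verification to the tractable absolument simple rank-one situation, where it becomes the elementary comparison of $\on{Dist}(\GG_a)$ and $\on{Dist}(\GG_m)$ with their standard divided-power and binomial bases.
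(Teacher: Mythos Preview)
Your strategy is the same as the paper's --- identify the Lie algebras rationally, then match the integral lattices using the generation statement of lem.~\ref{lemme engendrement distributions} --- but the integral lattice-matching step has a gap.

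You assert that, via the affine root decomposition, $\on{Dist}(\widetilde{L^{\on{gr}}\underline{G}})$ is generated by the $\on{Dist}(\underline{V_{\pm\alpha_i}})$ for the \emph{simple} affine roots together with the enlarged torus. But lem.~\ref{lemme engendrement distributions} and the product structure of $L^{\on{gr}}\underline{U_\pm}$ only give generation by the divided-power algebras of \emph{all} real root subgroups $\underline{V_\alpha}$, $\alpha\in\Phi_G^{\on{ré}}$. Reducing from all real roots to the simple ones is precisely equivalent to the inclusion $\on{Dist}(\widetilde{L^{\on{gr}}\underline{G}})\subset U_\ZZ(\gf)$ you are trying to prove, so the argument as written is circular.

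The paper closes this gap with an extra ingredient you do not invoke: it first checks that the evident basis of $\on{Lie}(\widetilde{L^{\on{gr}}\underline{G}})$ furnished by the quasi-système de Tits is a Chevalley basis of $\gf_\ZZ$ in Mitzman's sense (including his modification for $A_{2n}^{(2)}$, which matches the modification de Tits of \S\ref{facteurs de 2 dans le cas non reduit}). This is exactly what guarantees that the divided powers $e_\alpha^{[n]}$ for \emph{every} real root $\alpha$ already lie in the Kostant--Tits form $U_\ZZ(\gf)$, hence that the generators of $\on{Dist}(\widetilde{L^{\on{gr}}\underline{G}})$ supplied by lem.~\ref{lemme engendrement distributions} are contained in $U_\ZZ(\gf)$. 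Once you add this step, your outline becomes the paper's proof; without it, the reverse inclusion is unjustified. Your treatment of $W^{\on{af}}$-equivariance is fine.
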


\begin{proof} 
La première étape consiste à construire l'isomorphisme rationnel, ce qui ramène le problème à trouver un isomorphisme \begin{equation}\label{equation comparaison algebres de lie}\gf \simeq \on{Lie}\big(\widetilde{L^{\on{gr}}G}\big),\end{equation} compte tenu de ce que le membre de droite de (\ref{equation identification distributions kac-moody}) n'est autre que l'algèbre enveloppante de celui de (\ref{equation comparaison algebres de lie}). Cette dernière identification est toutefois une conséquence des descriptions explicites des algèbres de Kac--Moody affines $\gf$ dans \cite[ths. 7.4, 8.3]{Kac}, jointes à la détermination de la charge centrale dans (\ref{equation charge centrale formule explicite coefficients kac-moody}).

Penchons-nous sur l'égalité \begin{equation}\gf_\ZZ =\on{Lie}\big(\widetilde{L^{\on{gr}}\underline{G}}\big)\end{equation}
de réseaux entiers des algèbres de Lie. Il s'avère que la base évidente du membre de droite induite par le quasi-système de Tits est une base de Chevalley du membre de gauche au sens de \cite[\S\S2-3]{Mitz} ; on attire l'attention particulière du lecteur à la modification de \cite[(3.5.33)]{Mitz} pour le cas non réduit, qui correspond exactement à celle de la déf. \ref{definition quasi-systeme de Tits}.

 Or, comme \begin{equation}\on{Dist}(\GG_{a,\ZZ})=\ZZ\Big[\frac{X^n}{n!}\Big] \text{ et } \on{Dist}(\GG_{m,\ZZ})=\ZZ\Big[\binom{X}{n}\Big],\end{equation} cf. \cite[I, ex. 7.8]{Jan}, on en tire l'inclusion $U_\ZZ(\gf)\subset \on{Dist}(\widetilde{L^{\on{gr}}\underline{G}})$ énoncée par (\ref{equation identification distributions kac-moody}). D'autre part, le lem. \ref{lemme engendrement distributions} entraîne que cette dernière algèbre est engendrée par $\on{Dist}(L^{\on{gr}}\underline{U}^{\pm})$ et $\on{Dist}(\widetilde{S})$. Par conséquent, on achève le lemme voulu.
\end{proof}

\begin{rem}
	L'algèbre de distributions de $L^{\on{gr}}\GG_{m,\ZZ}$ peut être calculée explicitement. Celle-ci est donnée en fonction de certains polynômes en un nombre dénombrable de variables qui correspondent à une base de $\on{Lie}(L^{\on{gr}}\GG_{m,\ZZ})$, la difficulté résidant en les déterminer. Raisonnant par récurrence, on récupère les polynômes de Garland--Mitzman, voir \cite[(5.7), th. 5.8]{Gar1}, \cite[\S4.1]{Mitz} et \cite[ex. 8.60]{Mar}.
\end{rem}

Grâce à la prop. \ref{proposition parties parahoriques et facettes}, on dispose d'une correspondance bijective entre les facettes $\fbf \subset \Ascr(\underline{G},\underline{S},\ZZ\rpot{t})$ et les sous-algèbres paraboliques positives $\pf^+ \subset \gf$. Ceci se géométrise comme suit :

\begin{cor}\label{corollaire comparaison kac-moody parabolique}
 Il y a un et un seul isomorphisme \begin{equation}\Ph_\ZZ^+ \simeq \widetilde{L^+\underline{\Gscr_\fbf}}\end{equation} relevant celui des algèbres de distributions graduées.
\end{cor}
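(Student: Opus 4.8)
The plan is to reconstruct $\Ph_\ZZ^+$ and $\widetilde{L^+\underline{\Gscr_\fbf}}$ factor by factor from the graded comparison of distribution algebras of the lem. \ref{lemme identification forme de tits et distributions}, exploiting that both sides carry parallel Levi decompositions. On the Kac--Moody side one has $\Ph_\ZZ^+=\Mh_\ZZ\ltimes\Uh_\ZZ^+$ by (\ref{equation parabolique positif kac-moody entiers}); on the loop side the Levi decomposition (\ref{equation decomposition de levi arcs}), extended by the central and rotation copies of $\GG_{m,\ZZ}$ which define the tilde, yields $\widetilde{L^+\underline{\Gscr_\fbf}}=\widetilde{\underline{M_\fbf}}\ltimes L^{++}\underline{\Gscr_\fbf}$, where $\widetilde{\underline{M_\fbf}}$ denotes $\underline{M_\fbf}$ enlarged by these two tori. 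Under the bijection of la prop. \ref{proposition parties parahoriques et facettes} the positive parabolic $\pf^+$ attached to $\fbf$ matches the parahoric part $\Phi_{\fbf,\geq 0}^{\on{af}}$, so by $W^{\on{af}}$-equivariance the isomorphism (\ref{equation identification distributions kac-moody}) restricts to a graded isomorphism $U_\ZZ(\pf^+)\simeq\on{Dist}\big(\widetilde{L^+\underline{\Gscr_\fbf}}\big)$, which is the map I must lift.

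First I would match the reductive factors. Both $\Mh_\ZZ$ and $\widetilde{\underline{M_\fbf}}$ are split pinned reductive $\ZZ$-groups: their root systems coincide, the Levi system $\Phi_\mf$ being identified with the residual system $\Phi_\fbf$, and their cocharacter lattices both equal the full $P^\vee$ of the realization (\ref{equation realisation entiere kac-moody}), the surplus of rank two over the relative rank of $G$ accounting exactly for the central and rotation directions. By the classification of split reductive $\ZZ$-groups by pinned root data, cf. \cite[exp. XXIII, exp. XXIV]{SGA3}, there is a unique isomorphism of pinned groups, and restricting the graded comparison to the Levi subalgebra $\mf$ shows that it induces the prescribed map on distributions.

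Then I would match the pro-unipotent radicals. Both $\Uh_\ZZ^+$ and $L^{++}\underline{\Gscr_\fbf}$ are pro-unipotent, pro-smooth and flat over $\ZZ$, graded by the rotation action with finite free graded pieces, and carry distribution algebra $U_\ZZ(\nf^+)$: for $\Uh_\ZZ^+$ this is the definition, its global sections being the graded dual of $U_\ZZ(\nf^+)$, whereas for $L^{++}\underline{\Gscr_\fbf}$ it follows from (\ref{equation identification distributions kac-moody}) together with the root decomposition (\ref{equation decomposition produit racines affines groupe d'arcs}). For such graded pro-unipotent $\ZZ$-groups the coordinate Hopf algebra is the graded dual of the distribution Hopf algebra, so the comparison, once seen to respect the grading and the coalgebra structure, dualizes to $\Uh_\ZZ^+\simeq L^{++}\underline{\Gscr_\fbf}$. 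The Levi acts on each radical by the adjoint action stored inside $U_\ZZ(\gf)$, so the two isomorphisms are equivariant and glue into $\Ph_\ZZ^+\simeq\widetilde{L^+\underline{\Gscr_\fbf}}$. Uniqueness is then automatic: a homomorphism between these graded groups lifting a fixed map on distributions is determined by the dual map on coordinate rings, and on the reductive factor the lift of a pinned datum is unique.

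The hard part will be the integral bookkeeping in the unipotent step, namely checking that the comparison is an isomorphism of graded coalgebras over $\ZZ$ — not merely of associative algebras — with flat finite-rank graded pieces, so that graded biduality between functions and distributions genuinely applies. This is exactly where the Chevalley structure constants, including the modification de Tits of the non-reduced case matched against Mitzman's \cite[(3.5.33)]{Mitz} via la déf. \ref{definition quasi-systeme de Tits}, must be shown to coincide on the two sides; the delicate Garland--Mitzman polynomials enter only through the Cartan part and are already subsumed in the lem. \ref{lemme identification forme de tits et distributions}.
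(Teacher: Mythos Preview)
Your proposal is correct and follows essentially the same two-step strategy as the paper: match the Levi factors via their root data, then match the pro-unipotent radicals via distributions. The one difference worth noting is how each handles what you call the ``hard part''. You propose to check that the isomorphism of distribution algebras from the lem.~\ref{lemme identification forme de tits et distributions} is in fact a coalgebra (Hopf) isomorphism, and then invoke graded biduality directly. The paper instead first produces the group isomorphism over $\QQ$ --- on the unipotent side simply by exponentiating the Lie-algebra identification (\ref{equation comparaison algebres de lie}) via \cite[th.~4.4.19]{Kum} --- so that the Hopf structure is automatic; extending to $\ZZ$ then becomes a pure lattice question, settled by the fact \cite[I, prop.~10.12]{Jan} (quoted in \cite[rem.~3.5.1(1)]{BTII}) that a smooth connected unipotent group is recovered from its algebra of distributions. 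This buys the paper exactly the bookkeeping you flagged: no separate coalgebra check, no biduality statement to prove, and the Mitzman/Tits structure constants only enter through lem.~\ref{lemme identification forme de tits et distributions} itself.
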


\begin{proof}
L'identification (\ref{equation comparaison algebres de lie}) concernant les algèbres de Lie fournit déjà un isomorphisme \begin{equation} \Uh_\QQ^+ \simeq L^{++}\Gscr_{\fbf,0}\end{equation} entre les radicaux pro-unipotents sur les rationnels, d'après \cite[th. 4.4.19]{Kum}. D'autre part, les données entières définissant les facteurs de Levi de (\ref{equation parabolique positif kac-moody entiers}) et (\ref{equation decomposition de levi arcs}) coïncident aux extensions près, d'où une identification
\begin{equation}
\Mh_\ZZ \simeq \widetilde{\underline{M_\fbf}}.
\end{equation} Ceci entraîne qu'on puisse identifier
\begin{equation}
\Ph_\QQ^+ \simeq \widetilde{L^{+}\Gscr_{\fbf,0}}
\end{equation} les schémas en groupes sur les rationnels comme envisagé.

Maintenant, nous voulons améliorer ce résultat en étendant les isomorphismes aux modèles entiers. On s'est ramené à construire un isomorphisme entre les radicaux unipotents. En vertu de \cite[rem. 3.5.1(1)]{BTII}, démontré dans \cite[I, prop. 10.12]{Jan}, on peut retrouver les $\ZZ$-groupes lisses unipotents et connexes en connaissant leur algèbres de distributions. En particulier, l'affirmation en résulte du lem. \ref{lemme identification forme de tits et distributions}, passant à la limite projective.
\end{proof}

\begin{thm}[Faltings, Pappas--Rapoport, L.]\label{theoreme comparaison kac-moody schubert}
	Il y a des isomorphismes équivariants naturels 
	\begin{equation}\Sh_{w,\ZZ} \simeq  \Gr_{\underline{\Gscr_\fbf},\leq w}\end{equation}
	entre les schémas de Schubert associés à $w \in W^{\on{af}}/W_\fbf$ en chacun des cadres, induisant un isomorphisme global	$\Fh_{\gf/\pf,\ZZ} \simeq  \Gr_{\underline{\Gscr_\fbf}}$.
\end{thm}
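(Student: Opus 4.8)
The plan is to reduce the comparison to a statement about Demazure resolutions, which on both sides are assembled from the rank-one data already identified in Corollaire \ref{corollaire comparaison kac-moody parabolique}, and then to recover each Schubert scheme as the Stein factorisation of its Demazure morphism, using the normality theorems available in the two frameworks. First I would record that, under the bijection of Proposition \ref{proposition parties parahoriques et facettes} between the facets $\fbf$ and the positive parabolic subalgebras $\pf \subset \gf$, one has $W_\fbf = W_\pf$, and since $\gf$ is of affine type $W = W^{\on{af}}$; thus the two indexing sets $W^{\on{af}}/W_\fbf$ and $W/W_\pf$ coincide. For a reduced word $\wf = [s_{i_1} : \dots : s_{i_n}]$ lifting $w$, Corollaire \ref{corollaire comparaison kac-moody parabolique}, applied to the walls $\fbf_{i_j}$ and to the alcove $\abf$, furnishes compatible isomorphisms $\Ph_{s_{i_j},\ZZ}^+ \simeq \widetilde{L^+\underline{\Gscr_{\fbf_{i_j}}}}$ and $\Bh_\ZZ^+ \simeq \widetilde{L^+\underline{\Gscr_\abf}}$. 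Since the central $\GG_{m,\ZZ}$ and the rotation torus entering the decoration $\widetilde{(\cdot)}$ are common to every factor and to the base, they cancel in the iterated contracted products, whence a canonical isomorphism
\[ \Dh_{\wf,\ZZ} \simeq \on{Dem}_{\wf}, \]
functorial in subwords $\uf \subset \wf$.

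Next I would exploit normality. As $G = G^{\on{sc}}$ throughout the appendix, Théorème \ref{theoreme normalite des schemas de Schubert} together with Corollaire \ref{corollaire grassmannienne affine reduite} shows that $\Gr_{\underline{\Gscr_\fbf},\leq w}$ is integral, flat and geometrically normal over $\ZZ$, while Proposition \ref{proposition varietes de schubert proprietes} shows that the Demazure resolution $\on{Dem}_{\wf} \to \Gr_{\underline{\Gscr_\fbf},\leq w}$ is proper, birational and surjective with geometrically connected fibres, i.e. it is the Stein factorisation and realises its target as the normal image. Théorème \ref{theoreme kac-moody normalite} and Mathieu's construction yield exactly the same properties for $\Dh_{\wf,\ZZ} \to \Sh_{w,\ZZ}$. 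Both morphisms are $\Bh^+_\ZZ$-equivariant and restrict to an isomorphism onto the open Bott--Samelson cell, which on each side is the $\Bh^+$-stable open affine space through the point indexed by $w$; under the identification $\Dh_{\wf,\ZZ} \simeq \on{Dem}_{\wf}$ these open cells match. Hence the two Demazure morphisms have the same geometrically connected fibres, and the uniqueness of the Stein factorisation produces a canonical, equivariant isomorphism
\[ \Sh_{w,\ZZ} \simeq \Gr_{\underline{\Gscr_\fbf},\leq w}. \]

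To glue these into the global statement I would pass to the colimit over $w \in W^{\on{af}}/W_\fbf$. The isomorphisms just obtained are compatible with the transition closed immersions on both sides, because those transitions are induced by the inclusions $\Dh_{\uf,\ZZ} \subset \Dh_{\wf,\ZZ}$ through the common Stein factorisations; consequently
\[ \Fh_{\gf/\pf,\ZZ} = \on{colim}_{w} \Sh_{w,\ZZ} \simeq \on{colim}_{w} \Gr_{\underline{\Gscr_\fbf},\leq w} = \Gr_{\underline{\Gscr_\fbf}}. \]

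The hard part will be to make rigorous the claim that the two Demazure morphisms collapse exactly the same fibres, so that their Stein factorisations genuinely coincide rather than merely sharing a function field. I would secure this through the equivariance and open-cell argument above, bootstrapped from characteristic zero, where the identification is the classical comparison between the affine Grassmannian of the loop group and the Kac--Moody flag variety (the normality input being Lemme \ref{lemme normalite en caracteristique nulle}), and then propagated to every prime using flatness over $\ZZ$, cf. Lemme \ref{lemme platitude des grassmanniennes affines}, together with the reducedness of the special fibres recorded in Corollaire \ref{corollaire grassmannienne affine reduite} and the compatible Frobenius splittings of Proposition \ref{proposition varietes de schubert proprietes}. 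An equivalent and perhaps cleaner route is to form the scheme-theoretic graph of the birational map inside $\Gr_{\underline{\Gscr_\fbf},\leq w} \times \Sh_{w,\ZZ}$ and to observe that both projections factor the two maps out of $\on{Dem}_{\wf}$, are proper and birational onto normal targets, and have finite fibres by the cell identification, hence are isomorphisms by Zariski's main theorem.
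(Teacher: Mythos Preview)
Your strategy coincides with the paper's: identify the Demazure schemes via Corollaire~\ref{corollaire comparaison kac-moody parabolique}, then descend to Schubert schemes using the two normality theorems (Théorème~\ref{theoreme normalite des schemas de Schubert} and Théorème~\ref{theoreme kac-moody normalite}). The paper is equally terse at the descent step --- it cites \cite[lem.~33]{Mat} --- but it also records the alternative argument of \cite[p.~54]{Fal} that makes the descent transparent and that you are missing: the ample line bundle $\Lh_\lambda$ on the common Demazure scheme is determined by the character $\lambda$ of $\Bh^+_\ZZ\simeq\widetilde{L^+\underline{\Gscr_\abf}}$ alone, hence is the same on both sides; its dual global sections are the Weyl module $V_\ZZ(\lambda)$ in either framework (via Lemme~\ref{lemme identification forme de tits et distributions}); therefore both Schubert schemes embed as the closure of the \emph{same} $\Bh^+_\ZZ$-orbit in the \emph{same} projective space $\PP(V_\ZZ(\lambda))$, and so coincide.

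There is a genuine gap in your own handling of the descent. Identifying the Demazure \emph{schemes} does not identify the Demazure \emph{morphisms}: those are multiplication maps taking values in $\Gh_\ZZ$ resp.\ $\widetilde{L\underline{G}}$, and the identification of these ambient groups (Corollaire~\ref{corollaire comparaison groupes kac-moody}) is deduced \emph{from} the present theorem, so is not available as input. Your ZMT alternative does not close this either. The graph closure $\Gamma\subset\Sh_{w,\ZZ}\times\Gr_{\underline{\Gscr_\fbf},\leq w}$ of the birational map on open cells can in principle have positive-dimensional fibres over the boundary: for a concrete example satisfying every hypothesis you list (proper, birational, surjective, geometrically connected fibres, normal targets on both sides), take a blow-up $\widetilde{S}=\on{Bl}_pS$ of a smooth surface at a point and embed $\Gamma=\widetilde{S}\hookrightarrow S\times\widetilde{S}$ by $(\text{blow-down},\on{id})$; the first projection is then not an isomorphism. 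The ``cell identification'' controls only the open stratum and says nothing about finiteness of fibres over $\partial\Sh_{w,\ZZ}$. Replace this step by the line-bundle argument of the previous paragraph and the rest of your proof (cancellation of the central and rotation tori in the contracted product, compatibility with subwords, passage to the colimit) goes through as written.
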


\begin{proof}
On reprend les arguments de \cite[8.d, 9.h]{PR}. Le cor. \ref{corollaire comparaison kac-moody parabolique} fournit un isomorphisme équivariant des résolutions de Demazure, d'où l'identification des schémas de Schubert d'après \cite[lem. 33]{Mat}, appliquant simultanément les théorèmes de normalité en chacun des cadres étudies, cf. th. \ref{theoreme normalite des schemas de Schubert} et th. \ref{theoreme kac-moody normalite}. 

À son tour, la démonstration implicite de \cite[p. 54]{Fal} tire profit de ce que les sections globales duales des faisceaux amples s'identifient
\begin{equation}
\Gamma(\Fh_{\gf/\pf,\ZZ}, \Lh_{-\lambda})^\vee \simeq V_\ZZ(\lambda)
\end{equation} aux modules irréductibles intégrables de l'algèbre de distributions. Identifiant les mêmes objets dans le cadre des grassmanniennes affines, on voit que tous les deux schémas de Schubert se plongent dans les mêmes espaces projectifs.
\end{proof}

Le théorème ci-dessus permet ainsi de comparer également les groupes complets, complétant le dictionnaire entre la théorie de Kac--Moody affine et les grassmanniennes affines.

\begin{cor}[Haines--L.--Richarz]\label{corollaire comparaison groupes kac-moody}
	Il y a un isomorphisme canonique \begin{equation}\Gh_\ZZ \simeq \widetilde{LG}\end{equation} induisant des identifications \begin{equation}\Ph_\ZZ^{\pm} \simeq \widetilde{L^{\pm}\underline{\Gscr_\fbf}}\end{equation} entre les sous-groupes paraboliques et les sous-groupes d'arcs et/ou de cordes.
\end{cor}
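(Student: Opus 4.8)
La stratégie consiste à reprendre les arguments de \cite[8.d, 9.h]{PR} dans le présent cadre tordu entier, en s'appuyant sur la collaboration \cite{HLR}. On dispose déjà de l'identification des variétés de drapeaux $\Fh_{\gf/\pf,\ZZ} \simeq \Gr_{\underline{\Gscr_\fbf}}$ fournie par le th. \ref{theoreme comparaison kac-moody schubert}, ainsi que de l'isomorphisme des paraboliques positifs $\Ph_\ZZ^+ \simeq \widetilde{L^+\underline{\Gscr_\fbf}}$ du cor. \ref{corollaire comparaison kac-moody parabolique}, lequel relève l'identification des facteurs de Levi $\Mh_\ZZ \simeq \widetilde{\underline{M_\fbf}}$. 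Il s'agit de relever ces données en un isomorphisme des groupes complets $\Gh_\ZZ \simeq \widetilde{LG}$, et d'en déduire ensuite le cas des paraboliques négatifs.

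Pour construire l'isomorphisme des groupes complets, je rappellerais d'abord que, par définition (\ref{equation groupe de kac-moody entiers}), $\Gh_\ZZ$ est la limite inductive des enveloppes affines $\Gh_{w,\ZZ}$ des $\Bh_\ZZ^+$-torseurs $\Eh_{\wf,\ZZ}$ au-dessus des schémas de Demazure, cf. \cite[prop. 26]{Mat} et \cite[\S3.6]{Rou16}. Or, le cor. \ref{corollaire comparaison kac-moody parabolique} produit déjà un isomorphisme équivariant des résolutions de Demazure $\Dh_{\wf,\ZZ} \simeq \on{Dem}_\wf$, ainsi que des paraboliques minimaux $\Ph^+_{s_i,\ZZ} \simeq \widetilde{L^+\underline{\Gscr_{\fbf_i}}}$ qui servent à les bâtir. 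On en tire une identification des $\Bh^+_\ZZ$-torseurs $\Eh_{\wf,\ZZ}$ avec leurs analogues du côté des grassmanniennes affines, puis de leurs enveloppes affines par la fonctorialité de la prop. \ref{proposition enveloppe affine}, et enfin, en passant à la colimite, l'isomorphisme voulu, $\widetilde{LG}$ admettant la présentation analogue comme limite de torseurs d'après \cite{HLR}. Cet isomorphisme est canonique, car il relève celui des algèbres de distributions graduées du lem. \ref{lemme identification forme de tits et distributions}.

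Quant aux paraboliques, le cas positif $\Ph_\ZZ^+ \simeq \widetilde{L^+\underline{\Gscr_\fbf}}$ est compatible avec l'isomorphisme global par construction même. Pour le cas négatif, je rappellerais que $\Ph^-_\ZZ$ est défini comme répulseur dans $\Gh_\ZZ$ d'un copoids dominant régulier du facteur de Levi $\Mh_\ZZ$, cf. (\ref{equation parabolique negatif kac-moody entiers}), tandis que le groupe de cordes $\widetilde{L^-\underline{\Gscr_\fbf}}$ s'identifie au répulseur correspondant de $\widetilde{LG}$ pour l'action du même tore via l'identification $\Mh_\ZZ \simeq \widetilde{\underline{M_\fbf}}$, cf. prop. \ref{proposition dynamique groupes d'arcs et de cordes} et \cite[lem. 4.1]{HLR}. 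Comme l'isomorphisme $\Gh_\ZZ \simeq \widetilde{LG}$ obtenu est équivariant sous ce $\GG_{m,\ZZ}$, les répulseurs stricts et non stricts se correspondent, d'où les identifications $\Ph^{\pm}_\ZZ \simeq \widetilde{L^{\pm}\underline{\Gscr_\fbf}}$ annoncées.

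L'obstacle principal réside dans le contrôle précis des facteurs supplémentaires, à savoir l'extension centrale $\widehat{L\underline{G}}$ par $\GG_{m,\ZZ}$ et le facteur de rotation, qui doivent correspondre exactement à la partie centrale et à la dérivation de l'algèbre de Kac--Moody affine $\gf$. Ceci est justement assuré au niveau des algèbres de distributions par le lem. \ref{lemme identification forme de tits et distributions}, dont la preuve repose sur le calcul de la charge centrale (\ref{equation charge centrale formule explicite coefficients kac-moody}) et sur le lem. \ref{lemme engendrement distributions}; le relèvement de cet isomorphisme d'algèbres en un isomorphisme de (ind-)schémas en groupes lisses et connexes s'effectue, comme dans la preuve du cor. \ref{corollaire comparaison kac-moody parabolique}, en reconstruisant ces groupes à partir de leurs distributions, cf. \cite[rem. 3.5.1(1)]{BTII} et \cite[I, prop. 10.12]{Jan}, puis en passant à la limite projective.
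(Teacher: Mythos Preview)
Your proposal is correct and follows the same route as the paper, which simply renvoie à \cite[prop. 4.3]{HLR} for the construction of the isomorphism $\Gh_\ZZ \simeq \widetilde{L\underline{G}}$ and then invokes the prop.~\ref{proposition dynamique groupes d'arcs et de cordes} together with \cite[lem. 4.1]{HLR} for the identification $\Ph_\ZZ^- \simeq \widetilde{L^-\underline{\Gscr_\fbf}}$; you have essentially unpacked what that citation contains, via the colimit of $\Bh^+_\ZZ$-torseurs sur les schémas de Demazure, and your treatment of the negative parabolics via répulseurs is exactly the paper's. One small slip: the fonctorialité of the enveloppe affine that you need for the torsors $\Eh_{\wf,\ZZ}$ is just that of $X \mapsto \spec \Gamma(X,\Oh_X)$ for quasi-affine schemes, and does not rely on the prop.~\ref{proposition enveloppe affine}, which concerns specifically group schemes over a regular base of dimension $\leq 2$.
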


\begin{proof}
	On renvoie à \cite[prop. 4.3]{HLR} pour plus de détails. Notons que l'identification des sous-groupes paraboliques opposés aux sous-groupes de cordes résultent plus aisément de la prop. \ref{proposition dynamique groupes d'arcs et de cordes} et de \cite[lem. 4.1]{HLR}. 
\end{proof}

\end{appendix}
\bibliographystyle{amsalpha}

\begin{thebibliography}{9999999}
	%\bibitem[I]{LouBT} {\it Théorie de Bruhat--Tits pour les groupes quasi-réductifs}.\\
	%\bibitem[III]{HLR} {\it On the normality of Schubert varieties: remaining cases in positive characteristic}.\\
	%\bibitem[IV]{LouDiss} {\it Zwei Ansätze zu lokalen Modellen}.\\
	
	\bibitem[Ana73]{Anan} S. Anantharaman. \textit{Schémas en groupes, espaces homogènes et espaces algébriques sur une base de dimension 1}. Mémoires SMF \textbf{33} (1973), 5--79.
	\bibitem[BL94]{BLConfBlocks} A. Beauville \& Y. Laszlo. \textit{Conformal blocks and generalized theta functions.} Comm. Math. Phys. \textbf{164} (1994), nº2, 385--419.
	\bibitem[BL95]{BLDesc}  A. Beauville \& Y. Laszlo. \textit{Un lemme de descente}. C. R. Acad. Sci. \textbf{320} (1995), nº3, 335–340.
	\bibitem[BD96]{BDr} A. Beilinson \& V. Drinfeld. \textit{Quantization of Hitchin's integrable system and Hecke eigensheaves}. \url{http://math.uchicago.edu/~drinfeld/langlands/QuantizationHitchin.pdf}.
	\bibitem[BoT78]{BoT2} A. Borel \& J. Tits. \textit{Théorèmes de structure et de conjugaison pour les groupes algébriques linéaires}. C. R. Acad. Sci. \textbf{287} (1978), 55--57.
	\bibitem[BLR90]{BLR} S. Bosch, W. Lütkebohmert \& M. Raynaud. \textit{Néron Models}. Ergeb. Math. Grenzgeb. (3) \textbf{21}, Springer-Verlag (1990).
	\bibitem[BK05]{BK} M. Brion \& S. Kumar. \textit{Frobenius splitting methods in geometry and representation theory}, Progress Math. \textbf{231}, Birkhäuser (2005).
	\bibitem[BL01]{BL01} M. Brion \& V. Lakshmibai. \textit{A geometric approach to standard monomial theory}. Rep. Theory \textbf{7} (2003), 651–680. 
	\bibitem[BT72]{BTI} F. Bruhat \& J. Tits. \textit{Groupes réductifs sur un corps local : I. Données radicielles valuées}. Publ. math. I.H.É.S. \textbf{41} (1972), 5-251.
	\bibitem[BT84a]{BTII} F. Bruhat \& J. Tits. \textit{Groupes réductifs sur un corps local : II. Schémas en groupes. Existence d'une donnée radicielle valuée}. Publ. math. I.H.É.S. \textbf{60} (1984), 5--184.
	\bibitem[BT84b]{BTIV} F. Bruhat \& J. Tits. \textit{Schémas en groupes et immeubles des groupes classiques sur un corps local}, Bul. SMF \textbf{112} (1984), 259--301.
	\bibitem[BT87]{BTV} F. Bruhat \& J. Tits. \textit{Schémas en groupes et immeubles des groupes classiques sur un corps local. II : groupes unitaires}, Bul. SMF \textbf{115} (1987), 141--195.
	\bibitem[Ca05]{Car} R. Carter. \textit{Lie Algebras of Finite and Affine Type}, Cambridge Stud. Adv. Math. \textbf{96}, Cambridge Univ. Press (2005).
	\bibitem[Co12]{ConFin} B. Conrad. \textit{Finiteness theorems for algebraic groups over function fields}, Comp. Math \textbf{148} (2012), 555--639.
	\bibitem[Co14]{Co14} B. Conrad. \textit{Reductive group schemes}. Autour des schémas en groupes I, Panor. Synth. \textbf{42}--\textbf{43}, SMF (2014), 93--444.
	\bibitem[CGP15]{CGP} B. Conrad, O. Gabber \& G. Prasad. \textit{Pseudo-reductive groups}, New Math. Monogr. \textbf{26}, Cambridge Univ. Press (2015).
	\bibitem[CP16]{CP} B. Conrad \& G. Prasad. \textit{Classification of pseudo-reductive groups}, Ann. Math. Studies \textbf{191}, Princeton Univ. Press (2016).
	\bibitem[dHL18]{dHL18} M. de Cataldo, T. Haines \& L. Li. \textit{Frobenius semisimplicity for convolution morphisms}. Math. Zeitschrift {\bf 289} (2018), 119--169.
	\bibitem[DG70]{DG} M. Demazure \& P. Gabriel. \textit{Groupes algébriques}. Masson and North-Holland (1970).
	\bibitem[DS95]{DS} V. Drinfeld \& C. Simpson. \textit{$B$-structures on $G$-bundles and local triviality}, Math. Res. Lett. \textbf{2} (1995), nº6, 823–829.
	\bibitem[EGAIV]{EGAIV} J. Dieudonné \& A. Grothendieck. EGA IV. \textit{Éléments de géométrie algébrique. Étude locale des schémas et des morphismes de schémas.} Publ. Math. I.H.É.S. \textbf{20} (1964), \textbf{24} (1965), \textbf{28} (1966), \textbf{32} (1967).
	\bibitem[Fa03]{Fal} G. Faltings. \textit{Algebraic loop groups and moduli spaces of bundles.} J. EMS \textbf{5}, nº1, 41--68 (2003).
	\bibitem[Ga78]{Gar1} H. Garland. \textit{The arithmetic theory of loop algebras}, J. Algebra \textbf{53} (1978), 480--551.
	\bibitem[Go01]{GoertzI} U. Görtz. \textit{On the flatness of models of certain Shimura varieties of PEL-type}. Math. Ann. \textbf{321}, nº3, 689--727 (2001).
	\bibitem[Go03]{GoertzII} U. Görtz. \textit{On the flatness of local models for the symplectic group}. Adv. Math. \textbf{176}, nº1, 89--115 (2003).
	\bibitem[Hai15]{Hai15} T. Haines. {\it On Satake parameters for representations with parahoric fixed vectors}. IMRN \textbf{20} (2015), 10367--10398.
	\bibitem[Hai18]{Hai} T. Haines. \textit{Dualities for root systems with automorphisms and applications to non-split groups}. Rep. Theory \textbf{22} (2018), 1--26.
	\bibitem[HLR20]{HLR} T. Haines, J. Lourenço \& T. Richarz. \textit{On the normality of Schubert varieties: remaining cases in positive characteristic}. \url{https://arxiv.org/abs/1806.11001}.
	\bibitem[HR20]{HRi2} T. Haines \& T. Richarz. \textit{The test function conjecture for local models of Weil-restricted groups}. Comp. Math. \textbf{156} (2020), 1348--1404.
	\bibitem[HR21]{HRi} T. Haines \& T. Richarz. \textit{The test function conjecture for parahoric local models}. J. AMS \textbf{34} (2021), 135--218.
	\bibitem[HPR18]{HPR} X. He, G. Pappas \& M. Rapoport. \textit{Good and semi-stable reductions of Shimura varieties}. J. Éc. polyt. Math. \textbf{7} (2020), 497--571.
	\bibitem[Hei10]{Hein} J. Heinloth. \textit{Uniformization of $\Gh$-bundles}, Math. Ann. \textbf{347} (2010), nº3, 499--528.
	\bibitem[HNY13]{HNY} J. Heinloth, B.-C. Ngô \& Z. Yun. \textit{Kloosterman sheaves for reductive groups}, Ann. Math. \textbf{177} (2013), 241--310.
	\bibitem[Iv76]{Iv76} B. Iversen. \textit{The geometry of algebraic groups}. Adv. Math. \textbf{20} (1976), nº1, 57–85.
	\bibitem[Ja07]{Jan} J. Jantzen. \textit{Representations of algebraic groups}. Amer. Math. Soc. (2007).
	\bibitem[Kac90]{Kac} V. Kac. \textit{Infinite dimensional Lie algebras}. 3\up{ème} éd., Cambridge Univ. Press (1990).
	\bibitem[KS09]{KS09} M. Kashiwara \& M. Shimozono. \textit{Equivariant $K$-theory of affine flag manifolds and affine Grothendieck polynomials.} Duke Math. J. \textbf{148}, 501–538 (2009).
	\bibitem[Kir17]{Kir2} D. Kirch. \textit{Unitary local models in the wildly ramified case, even dimension}. Article non publié.
	\bibitem[Kos66]{Kos} B. Kostant. \textit{Groups over $\ZZ$}. Algebraic groups and discontinuous subgroups, Proc. Sympos. Pure Math. \textbf{9}, AMS (1966), 90--98.
	\bibitem[Kum87]{KumDem} S. Kumar. \textit{Demazure character formula in arbitrary Kac--Moody setting}, Invent. Math. \textbf{89} (1987), 395--423.
	\bibitem[Kum97]{KumGr} S. Kumar. \textit{Infinite Grassmannians and moduli spaces of $G$-bundles}. Vector Bundles on Curves -- New Directions. Lect. Notes Math. \textbf{1649}, Springer (1997).
	\bibitem[Kum02]{Kum} S. Kumar. \textit{Kac--Moody Groups, their Flag Varieties and Representation Theory}. Progress in Mathematics, \textbf{204}, Birkhäuser (2002).
	\bibitem[KS14]{KS14} S. Kumar \& K. Schwede. \textit{Richardson varieties have Kawamata log terminal singularities.} IMRN \textbf{3} (2014), 842–-864.
	\bibitem[Lan96]{LndvCpc} E. Landvogt. \textit{A compactification of the Bruhat--Tits building}, Lect. Notes Math. \textbf{1619}, Springer-Verlag (1996).
	\bibitem[Lev16]{Lev} B. Levin. \textit{Local models for Weil-restricted groups}, Comp. Math. \textbf{152} (2016), 2563--2601.
	\bibitem[Lit98]{Lit} P. Littelmann. \textit{Contracting modules and standard monomial theory for symmetrizable Kac--Moody algebras}, J. AMS \textbf{11} (1998), 551--567.
	\bibitem[Lou21]{LouBT} J. Lourenço. \textit{Théorie de Bruhat--Tits pour les groupes quasi-réductifs}. J. Inst. math. Jussieu (2021), 1--32. \url{https://arxiv.org/abs/2001.05362}
	\bibitem[Man80]{Man80} M. Manaresi. \textit{Some properties of weakly normal varieties}. Nagoya Math. J. \textbf{77} (1980), 61–74.
	\bibitem[Mar18]{Mar} T. Marquis. \textit{An introduction to Kac--Moody groups over fields}. Textbooks Math., EMS (2018).
    \bibitem[Mat88]{Mat} O. Mathieu. \textit{Formules de caractères pour les algèbres de Kac--Moody générales}, Astérisque \textbf{159}--\textbf{160} (1988).
    \bibitem[Mat89]{Mat2} O. Mathieu. \textit{Construction d'un groupe de Kac--Moody et applications}, Comp. Math. \textbf{69} (1989) nº1, 37--60.
    \bibitem[MRR20]{RichDil} A. Mayeux, T. Richarz \& M. Romagny. \textit{Torsors under Néron blow-ups}. \url{https://arxiv.org/abs/2001.03597}.
    \bibitem[MS82]{MS82} J. Milne \& K.-Y. Shih. \textit{Conjugates of Shimura varieties}. Hodge cycles, motives, and Shimura varieties, Lect. Notes Math. \textbf{900}, 280--356 (1982). 
    \bibitem[Mit85]{Mitz} D. Mitzman. \textit{Integral bases for affine Lie algebras and their universal enveloping algebras}, Contemp. Math. \textbf{40}, AMS (1985).
	\bibitem[PR08]{PR} G. Pappas \& M. Rapoport. \textit{Twisted loop groups and their affine flag varieties}. Adv. Math. \textbf{219}, nº1, 118--198 (2008).
	\bibitem[PZ13]{PZh} G. Pappas \& X. Zhu. \textit{Local models of Shimura varieties and a conjecture of Kottwitz}. Invent. Math. \textbf{194}, nº1, 147--254 (2013).
	\bibitem[PL20]{LouDiss} J. N. Pereira Lourenço. \textit{Théorie de Bruhat--Tits, grassmanniennes affines et modèles locaux}. Dissertation, Rheinische Friedrich-Wilhelms-Universität Bonn (2020).
	\bibitem[RZ96]{RZ} M. Rapoport \& T. Zink. \textit{Period spaces for $p$-divisible groups}. Ann. Math. Studies \textbf{141}, Princeton Univ. Press (1996).
	\bibitem[Ray70]{Ray} M. Raynaud. \textit{Faisceaux amples sur les schémas en groupes et les espaces homogènes}. Lect. Notes Math. \textbf{119}, Springer-Verlag, Berlin (1970).
	\bibitem[Ri13]{RichDipl} T. Richarz. \textit{Schubert varieties in twisted affine flag varieties and local models}. J. Algebra \textbf{375} (2013), 121--147.
	\bibitem[Ri16]{Rich} T. Richarz. \textit{Affine Grassmannians and geometric Satake equivalences.} IMRN \textbf{2016}, nº 12, 3717–3767 (2016).
	\bibitem[Ri19a]{Ri19a} T. Richarz. \textit{Spaces with $\GG_m$-action, hyperbolic localization and nearby cycles}. J. Alg. Geo. \textbf{28} (2019), 251--289.
	\bibitem[Ri19b]{RichErr} T. Richarz. \textit{Erratum to \textquotedblleft Affine Grassmannians and geometric Satake equivalences\textquotedblright}. IMRN, rnz210 (2019).
	\bibitem[RS20]{RS20} T. Richarz \& J. Scholbach. \textit{The intersection motive of the moduli stack of shtukas}. Forum Math., Sigma \textbf{8} (2020), 1--99.
	\bibitem[Ro18]{Ros} Z. Rosengarten. \textit{Tate duality in positive dimensions over function fields}. \url{https://arxiv.org/pdf/1805.00522.pdf}.
	\bibitem[Rou16]{Rou16} G. Rousseau. \textit{Groupes de Kac-Moody déployés sur un corps local II. Masures ordonnées}, Bul. SMF \textbf{144} (2016), nº4, 613--692.
	\bibitem[SW20]{SchBerk} P. Scholze \& J. Weinstein. \textit{Berkeley lectures on $p$-adic geometry}. Ann. Math. Studies \textbf{207} (2020).
	\bibitem[SGA1]{SGA1} SGA 1, \textit{Revêtements étales et groupe fondamental. I}, Séminaire de Géométrie Algébrique. Dirigé par A. Grothendieck. Lect. Notes Math. \textbf{224}, Springer-Verlag (1971).
    \bibitem[SGA3]{SGA3} SGA 3, \textit{Schémas en groupes. III : Structure des schémas en groupes réductifs}, Séminaire de Géométrie
    Algébrique. Dirigé par M. Demazure et A. Grothendieck. Réédité par P. Gille et P. Polo. Doc. math. \textbf{7}--\textbf{8}, SMF (2011).
    \bibitem[Sol18]{Sol} M. Solleveld. \textit{Pseudo-reductive and quasi-reductive groups over non-archimedean local fields}. J. Algebra \textbf{510} (2018), 331--392.
    \bibitem[StaProj]{StaProj} Stacks Project. \url{https://stacks.math.columbia.edu}.
    \bibitem[St59]{St1} R. Steinberg. \textit{Variations on a theme of Chevalley}. Pacific J. Math. \textbf{9}, nº3, 875--891 (1959).
    \bibitem[St68]{St2} R. Steinberg. \textit{Endomorphisms of linear algebraic groups.} Memoirs AMS \textbf{80} (1968).
    \bibitem[Ti66]{TitsClas} J. Tits. \textit{Classification of algebraic semi-simple groups}. Algebraic groups and discontinuous subgroups, Proc. Symp. Pure Math. \textbf{9}, AMS (1966), 33--62.
    \bibitem[Ti84]{TitsOber} J. Tits. \textit{Groups and group functors attached to Kac--Moody data}, Arbeitstagung Bonn 1984, 193--223.
    \bibitem[Ti87]{TitsUniq} J. Tits. \textit{Uniqueness and presentation of Kac--Moody groups over fields}, J. Algebra \textbf{105} (1987), nº2, 542--573.
	\bibitem[Ti89]{TitsBour} J. Tits. \textit{Groupes associés aux algèbres de Kac--Moody.} Astérisque, \textbf{177}--\textbf{178} (1989), Séminaire Bourbaki, exp. nº700, 7--31.
	\bibitem[Ti92]{TitsTwin} J. Tits. \textit{Twin buildings and groups of Kac--Moody type}. Groups, combinatorics and Geometry, LMS Lect. Series \textbf{165}, Cambridge Univ. Press (1992), 249--286.
	\bibitem[Ti13]{Ti13} J. Tits. \textit{Résumés des cours au Collège de France 1973-2000}. Doc. Math. \textbf{12}, SMF (2013).
	\bibitem[Tho87]{Thom} R. Thomason. \textit{Equivariant resolution, linearization and Hilbert's fourteenth problem over arbitrary base schemes}. Adv. Math. \textbf{65} (1987), nº1, 16--34.
	\bibitem[WW80]{WW} W. Waterhouse \& B. Weisfeiler. \textit{One-dimensional affine group schemes}, J. Algebra \textbf{66} (1980), nº2, 550--568.
	\bibitem[Zh14]{ZhuCoh} X. Zhu. \textit{On the coherence conjecture of Pappas and Rapoport.}. Ann. Math. (2) \textbf{180}, nº1 (2014), 1--85.
	\bibitem[Zh17]{ZhuIntro} X. Zhu. \textit{An introduction to affine Grassmannians and the geometric Satake equivalence}. Geometry of moduli spaces and representation theory. IAS/Park City Math. Series \textbf{24} AMS (2017), 59--154.
\end{thebibliography}

\end{document}